\documentclass[preprint,3p]{amsart}
\usepackage{amsmath}
\usepackage{amsfonts}
\usepackage{amsthm}
\usepackage{amssymb, textcomp}
\usepackage{url}
\usepackage[usenames]{color}
\usepackage{xcolor}
\usepackage{stmaryrd}
\usepackage{verbatim}

\usepackage{mathtools}

\usepackage[T1]{fontenc}
\usepackage[utf8]{inputenc}
\usepackage{graphicx}
\usepackage{caption}
\usepackage{subcaption}
\usepackage[a4paper,width=150mm,top=30mm,bottom=25mm,bindingoffset=6mm]{geometry}
\usepackage[hidelinks]{hyperref}
\usepackage{mathdots}
\usepackage[cleardoublepage=empty]{scrextend}

\usepackage{fancyhdr}

\fancypagestyle{fancy}{
\fancyhf{}
\fancyhead[RO,LE]{\thepage}
\fancyhead[RE]{\nouppercase{\leftmark}}
\fancyhead[LO]{\nouppercase{\rightmark}}

}

\fancypagestyle{fancy_biblio}{
\fancyhf{}
\fancyhead[RO,LE]{\thepage}
\fancyhead[LO,RE]{Bibliography}

}

\usepackage[backend=biber,style=trad-alpha,natbib=true, safeinputenc, doi=false, isbn=false, url=false, eprint=true]{biblatex}
\usepackage[autostyle=true]{csquotes} 

\AtEveryBibitem{%
	\ifentrytype{book}{%
		\clearfield{pages}%
		\clearfield{pagetotal}%
	}{}%
}

\newcommand{\SA}{\mathfrak{R}}
\newcommand{\tmm}{\boldsymbol{\tilde{m}}}

\newcommand{\mm}{\boldsymbol{m}}

\newcommand{\ord}{\ell}

\usepackage{enumitem}

\usepackage{amsmath, mathtools}
\usepackage{amstext}
\usepackage{amsthm}
\usepackage{amssymb}
\usepackage{stmaryrd}
\usepackage{graphicx}
\usepackage{bbm}
\usepackage{comment}
\usepackage{faktor,url}
\usepackage{xfrac}
\usepackage{hyperref}

\numberwithin{equation}{section}
\theoremstyle{plain}
\newtheorem{thm}{\protect\theoremname}[section]
  \theoremstyle{definition}
  \newtheorem{defn}[thm]{\protect\definitionname}
  \theoremstyle{remark}
  \newtheorem{rem}{\protect\remarkname}
  \theoremstyle{plain}
  \newtheorem{lem}[thm]{\protect\lemmaname}
  \theoremstyle{plain}
  \newtheorem{prop}[thm]{\protect\propositionname}
 \theoremstyle{plain}

\providecommand{\definitionname}{Definition}
\providecommand{\lemmaname}{Lemma}
\providecommand{\propositionname}{Proposition}
\providecommand{\remarkname}{Remark}
\providecommand{\theoremname}{Theorem}
\providecommand{\conjecturename}{Conjecture}

\newcommand{\R}{\mathbb{R}}
\newcommand{\Z}{\mathbb{Z}}
\newcommand{\N}{\mathbb{N}}
\newcommand{\T}{\mathbb{T}}

\newcommand{\D}{\mathcal{D}}

\newcommand{\OO}{\mathcal{O}}

\newcommand{\C}{\mathbb{C}}

\newcommand{\MM}{\mathcal{M}}

\newcommand{\Id}{\mathrm{Id}}

\newcommand{\M}{\mathcal{M}}

\newcommand{\bigO}{\mathcal{O}}

\usepackage{color}

\title{Universal Dynamics in a family of Celestial Mechanics models}

\author[M. Garrido]{Miguel Garrido}
\address[MG]{Institute of Science and Technology Austria, Am Campus 1, 3400 Klosterneuburg, Austria}
\email{miguel.garrido@ist.ac.at}

\author[P. Mart\'in]{Pau Mart\'in}
\address[PM]{ Departament de Matem\`atiques, Universitat Polit\`ecnica de Catalunya, Diagonal 647, 08028 Barcelona, Spain \& Centre de Recerca Matemàtica, Edifici C, Campus Bellaterra, 08193 Bellaterra, Spain}
\email{p.martin@upc.edu}
\date{\today}

\begin{document}

\begin{abstract}
%
%
Universal maps (maps whose renormalized iterations approximate every map in a given class) are locally generic in several spaces of diffeomorphisms~\cite{BonattiDiazPMIHES2003,TuraevCMP2015}. In fluid dynamics, steady Euler flows whose Poincar\'e maps are universal are also known to be locally dense~\cite{BergerFlorioPeraltaSalas}. Motivated by Arnold's vision~\cite{Arnold66} that the complexity of orbits in celestial mechanics and that in fluids should be comparable, in the present paper we address the question of the existence of universal maps in celestial mechanics. We give a partial answer by proving a weak, finite-dimensional form of universal dynamics. More concretely, we show that every symplectic embedding of the disk into $\mathbb{R}^2$ can be approximated, with arbitrary precision, by a renormalization of a Poincar\'e map of a restricted planar circular $(n+1)$-body problem, for suitable $n$ and appropriate choice of the masses of the primaries. The proof relies on Gonchenko--Shilnikov--Turaev theory~\cite{Turaev03,GSTtangencies} and requires control of the dynamics near homoclinic tangencies of arbitrarily high order; the techniques developed in~\cite{GMP2025} are essential for such control.
\end{abstract}

\maketitle

\tableofcontents

\section{Introduction}
In \cite{Arnold66}, V.I. Arnold expressed the following view:
\begin{quote}
    ``The flow of a vector field $X$ satisfying $\mathrm{curl}\, X = \lambda \cdot X$ probably displays stream lines with topologies as complicated as those of orbits in Celestial Mechanics.''
\end{quote}

If a vector field $X$ satisfies $\mathrm{curl} X = \lambda \cdot X$, for some real number $\lambda \neq 0$, then $X$ is called $\lambda$-Beltrami. This type of fields are important in the study of ideal fluids in equilibrium.

In \cite{BergerFlorioPeraltaSalas}, Berger, Florio and Peralta-Salas proved that Beltrami fields can exhibit universal dynamics: there exists a Beltrami field $X$ such that any area preserving diffeomorphism of the disk can be approximated, with arbitrary precision, by the Poincaré map of $X$ at some transverse section. In fact, they proved that there exists a locally dense set comprised of Beltrami fields displaying this property.

Moreover, the authors of \cite{BergerFlorioPeraltaSalas} state that their result is indeed stronger than Arnold's vision, since at the moment there was no example of universal dynamics in Celestial Mechanics. In the present work we show that inside a specific family of celestial mechanics models one can find, to any desired degree of accuracy, any conservative dynamics, thereby completing Arnold's vision.

We remark that establishing the occurrence of phenomena for maps coming from Celestial Mechanics is usually much more involved than when one has the freedom to choose among all symplectic maps, due to the presence of singularities and degeneracies. For instance, Arnold diffusion was already proved for an ad hoc model in 1964 (see \cite{ArnoldDiffusion_1964}), whereas in the setting of Celestial Mechanics this has only been achieved recently (see \cite{DKdlRS2019, Diffusion_4body_PBIHES} for the first complete analytical proofs).

\subsection{Universal dynamics.}
A recent survey of results on wild dynamics, including universality, can be found in \cite{BergerICM2026}; we follow it for the next definitions:
\begin{rem}
The terms in bracket refer to the definitions in the symplectic category. Ignore the brackets for the general case.
\end{rem}

\begin{defn}\label{def:primitive_renormalization}
 Let $M$ be a (symplectic) $d$-dimensional manifold and $f \colon M \to M$ a (symplectic) diffeomorphism. A map $F: U\subset \R^d \to V\subset \R^d$ is a (symplectic) \emph{primitive renormalization} of $f$ if there is a (conformally symplectic) diffeomorphism $\phi: U\cup V\hookrightarrow M$ and an integer $k$ such that:
\[ f^k\circ \phi|_{U} = \phi \circ F\]
and the sets $(f^\ell\circ \phi(U))_{0\le \ell< k}$ are disjoint.
\end{defn}

\begin{defn}\label{def:universal_diffeo}
A (symplectic) diffeomorphism $f$ is $C^r$-\emph{universal} if the set of its (symplectic) primitive renormalizations is dense in the space of orientation preserving (symplectic) $C^r$-embeddings of the $d$-dimensional ball $\mathbb{B}$ into $\mathbb{R}^d$
\end{defn}

The notion of universal dynamics was first introduced by Bonatti and Díaz in \cite{BonattiDiazPMIHES2003}, where they proved that, for any compact manifold $M$ of dimension $d \ge 3$, $C^1$-universality is a locally generic property for $C^1$-diffeomorphisms on $M$. Their mechanism consists in constructing a \textit{wild homoclinic class} (wild meaning that it does not admit any dominated splitting) using \textit{blenders} obtained from heterodimensional cycles (a cycle connecting two saddle points with different number of stable directions). The notion of \textit{blender} was first introduced in \cite{BonattiDiazAnnals1996} and has found widespread use within smooth dynamics. We highlight the recent application of symplectic blenders to celestial mechanics models by Guàrdia and Paradela \cite{GuardiaParadela2025arxiv} to show a strong form of topological instability; see also \cite{paradela2026arxiv} for a higher dimensional version. Li and Turaev \cite{LiTuraev2026arxiv} have also recently proved, through a different mechanism, the emergence of symplectic blenders close to symplectic maps displaying a one-dimensional KAM curve with a transverse homoclinic orbit to it; they discuss a possible application of their results to celestial mechanics too. Finally, in the contact flow setting, blenders have recently been constructed by Chaidez~\cite{chaidez2025blenders}.

In the case of surface diffeomorphisms, since a surface cannot support saddles with different number of stable directions, heterodimensional cycles and blenders are topologically precluded. Thus, in dimension $d=2$, universality can instead be achieved through renormalization of the dynamics close to homoclinic tangencies of arbitrarily high order: rescaled first-return maps near a chain of such tangencies converge to Hénon-like polynomial maps; in turn, by Turaev's polynomial-approximation theorem \cite{Turaev03}, every symplectic diffeomorphism can be $C^r$-approximated by iterates of such Hénon-like maps. This approach was carried out in \cite{GSTtangencies} by Gonchenko, Shilnikov and Turaev to show that, in the case of symplectomorphisms, for $d=2$ and $r \in \{2,\ldots,\infty,\omega\}$ (where $r=\omega$ corresponds to the real-analytic case), $C^r$-universal maps form a residual subset in the \textit{Newhouse domains} of the space of symplectic two-dimensional $C^r$ diffeomorphisms. Newhouse domains are open subsets in the space of diffeomorphisms consisting of those which exhibit homoclinic tangencies. With regard to celestial mechanics, Newhouse domains were first constructed, in the restricted planar circular three-body problem (RPC3BP), by Gorodetski and Kaloshin in the unpublished manuscript \cite{GorodetskiK12}; in this work Newhouse domains appear at the unfolding of a homoclinic quadratic tangency between the invariant manifolds of a degenerate parabolic saddle ``at infinity''. In \cite{Giraltcoorbitalchaos}, Baldomá, Giralt and Guàrdia also obtained Newhouse domains in the RPC3BP, in their case related to a hyperbolic periodic point. Finally, in \cite{GMP2025}, the aforementioned result in \cite{GorodetskiK12} was obtained using different techniques, which will be crucial for the present paper.

To complete the picture on universality in dynamics, in \cite{GelfreichTuraev2010RCD} Gelfreich and Turaev showed that that, for $d=2$, a generic symplectic map with an elliptic periodic point is $C^\omega$-universal. This has been recently generalized to arbitrary (even) dimension $d$ in \cite{BergerTuraev2025arxiv}. Finally, in the general non-symplectic setting, Turaev \cite{TuraevCMP2015} showed that, for $d=2$ and $2 \le r \le \infty$, $C^r$-universal maps form a locally generic subset of the space of $C^r$ diffeomorphisms on any surface. See also \cite{BergerGH2025Inventiones} for a recent improvement in Turaev's argument.

As can be noted from the previous exposition, it is still an open question whether $C^r$-universal maps are locally generic in the space of $C^r$-diffeomorphism on a manifold of dimension $d \ge 3$ when $2 \le r \le \infty$.

Turning back to the framework of the present work, universality is obtained, similarly to \cite{BergerFlorioPeraltaSalas}, via Gonchenko--Shilnikov--Turaev (GST) Theory~\cite{Turaev03, GSTtangencies}. More precisely,  we prove the following theorem.
\begin{thm}
    \label{thm:main_theorem_introduction} For every $r \in \{2,\ldots,\infty,\omega\}$, the space of symplectic  $C^r$-embeddings of the disk into $\mathbb{R}^2$ is contained in the $C^r$-closure of the set of symplectic primitive renormalizations of symplectic maps induced by a family of Celestial Mechanics models.
\end{thm}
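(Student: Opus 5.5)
The strategy is to combine Gonchenko--Shilnikov--Turaev renormalization theory with the control of high-order tangencies from \cite{GMP2025}. By Turaev's polynomial approximation theorem \cite{Turaev03}, every symplectic $C^r$-embedding $F$ of the disk into $\mathbb{R}^2$ can be $C^r$-approximated, on the disk, by a finite composition of symplectic Hénon-like maps
\[
(x,y)\mapsto \bigl(y,\,-x+P_j(y)\bigr),
\]
with polynomials $P_j$ of some degree $N$. It therefore suffices to show that each such composition is, up to arbitrary precision, a primitive renormalization of a Poincaré map of a restricted planar circular $(n+1)$-body problem, for suitable $n$ and suitable masses of the primaries.

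The construction proceeds in four steps.

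\textbf{Step 1: the model and the parabolic saddle.} First I fix the family: $n$ primaries on a rigid relative equilibrium (or circular configuration) and a massless body. In the rotating frame, after McGehee-type coordinates at infinity, there is a degenerate parabolic periodic orbit at infinity. Its stable and unstable manifolds are analytic and smooth away from infinity, as used in \cite{GMP2025}.

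\textbf{Step 2: the splitting function and high-order tangencies.} For large Jacobi constant, the splitting of the invariant manifolds is governed by a Melnikov-type function $M(\theta;\mu)$, where $\mu$ denotes the mass parameters. Its Fourier coefficients depend on the multipole moments of the configuration of primaries. My plan is to show that, by increasing $n$ and choosing the masses, the leading harmonics of $M$ can be prescribed independently. One can then tune the parameters so that $M$ has a zero of order $N$ (or several such zeros). Using the exponentially small or polynomial error control of \cite{GMP2025}, this yields a homoclinic tangency of order $N$ for the actual map, together with an unfolding that is versal in the relevant parameters.

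\textbf{Step 3: GST rescaling.} Near such a tangency I apply GST rescaling \cite{GSTtangencies}. The first-return map along the homoclinic orbit, $T_1\circ T_0^k$, is renormalized by rescaling with the contraction and expansion rates of the local passage $T_0^k$. As $k\to\infty$, it converges in $C^r$ to $(x,y)\mapsto(y,-x+P(y))$, and $P$ can be prescribed via the unfolding parameters. Chains of several tangencies, arising from distinct zeros of $M$, give compositions of such maps. Disjointness of the iterates required in Definition~\ref{def:primitive_renormalization} follows because the small box travels once around the homoclinic loop.

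\textbf{Step 4: the obstacle at the parabolic point.} The main difficulty is that the saddle is parabolic rather than hyperbolic. The local map $T_0^k$ near infinity has polynomial, not exponential, contraction, so the standard GST normal form (Shilnikov cross-form with exponentially small remainders) is not directly available. I would first derive, following \cite{GMP2025}, a $C^r$ normal form for $T_0^k$ in suitable blow-up coordinates with explicit power-law asymptotics in $k$. I would then check that the rescaled return map still converges to the Hénon-like family, with the remainder controlled in $C^r$ uniformly in the parameters.

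A second delicate point, if time is limited, is that the parameters (masses) are finite-dimensional while Step 2 needs $N$ independent unfolding parameters. This is why $n$ must grow with the degree required by Turaev's approximation. For $r=\omega$, I would obtain the approximation on a complex neighborhood, so that convergence holds in the analytic topology.
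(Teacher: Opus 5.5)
Your Steps 1 and 4 match the paper: the parabolic orbit at infinity, and the Shilnikov maps of \cite{GMP2025} with power-law rates $T^{-2/3}$, $T^{-5/3}$ and complex-domain estimates for $r=\omega$. The reduction you build on, however, has a gap.

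You use Turaev's theorem in its composition form. You then need to realize $H_{P_m}\circ\cdots\circ H_{P_1}$ with \emph{different} polynomials, and you delegate this to ``chains of several tangencies arising from distinct zeros of $M$''. That sentence carries the whole difficulty and is not justified. You would need several high-order tangencies, on distinct homoclinic orbits, to exist simultaneously and to unfold independently in separate blocks of parameters. You would also need the renormalization scales along the chain, which depend on each passage time $T_i$ and on each tangency, to be compatible, so that the composite rescaled map converges to the composition. Nothing in your Step 2 produces such independent simultaneous tangencies. In the paper, the tangency is pinned at $\beta=\pi/2$ by the reversibility coming from symmetric masses, and within that symmetric family its mirror image at $-\pi/2$ unfolds in lockstep.

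The missing idea is that Turaev's theorem, in the form \cite[Theorem~1]{Turaev03}, \cite[Lemma~9]{GSTtangencies}, gives approximation by $\phi^{-1}\circ H_g^N\circ\phi$. This is an iterate of a \emph{single} Hénon-like map after an affine change of coordinates. Hence one tangency of order $\ord$ with a generic $\ord$-parameter unfolding suffices. The paper obtains $\Psi_{\kappa,T}^{-1}\circ\mathcal{P}^{L+T}_{\varphi_T(\kappa)}\circ\Psi_{\kappa,T}=H_{g_\kappa}+\mathcal{O}(T^{-b})$ with $g_\kappa(P)=\kappa_0+\kappa_1P+\dots+\kappa_{\ord-1}P^{\ord-1}-P^{\ord+1}$, and then just iterates this renormalized return map.

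Your Step 2 also does not identify a regime in which the Melnikov function actually detects tangencies beyond quadratic ones. With large Jacobi constant as the only asymptotic parameter, the angle is fast and the splitting is exponentially small. The harmonics $L^{[k]}$ have size $G_0^{k-3/2}e^{-kG_0^3/3}$ times a mass factor. Solving $\partial_\beta L=\dots=\partial^{\ord+1}_\beta L=0$ non-degenerately requires the error to be below the $N$-th harmonic. The exponentially-small theory of \cite{GMS2016} does not give this, and \cite{GMP2025} is not a source of such control, since it concerns the local passage near infinity.

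The paper adds a \emph{regular} small parameter: a heavy central mass $m=1/\varepsilon$ at the center of the $N$-gon. It fixes $G_0$ large first and then takes $\varepsilon$ small, so that $Y^u-Y^s=-\frac{\varepsilon}{y_h}\bigl((G_0^3-r_h^{-2})\partial_\beta L+\mathcal{O}(\varepsilon)\bigr)$ is dominated by the Melnikov term for that fixed $G_0$. This needs several further ingredients:
\begin{itemize}
\item a continuation of the centered $N$-gon with a $\mu$-ball independent of $\varepsilon$;
\item the explicit linearization $D=\mathrm{diag}(d_k)\hat{\overline{\M}}\mu$ with $d_k\propto 1-ikc_k$, together with the non-degeneracy $ikc_k\neq1$;
\item because the equations are strongly nonlinear in $\beta$, the symmetric mass choice that places the tangency at $\beta=\pi/2$, where the even-order equations vanish identically.
\end{itemize}
Your idea that the leading harmonics can be prescribed independently is the right intuition. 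Without the regular parameter and these ingredients, though, Step 2 does not yet yield a tangency of the actual Poincaré map.
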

This theorem is an immediate consequence of Theorem~\ref{thm:uni_main_result}.
In a sense, such \textit{family} of maps can be said to be $C^r$-universal (compare with Definition~\ref{def:universal_diffeo}).

We remark that, while Beltrami fields are solutions to a PDE, which can be considered a system with infinite degrees of freedom, models in classical celestial mechanics consist of a finite number $n$ of discrete particles following Newton's law of gravitation and therefore have $6n$-degrees of freedom and a finite number of possible free parameters. Since the applicability of GST universality theory requires an arbitrary number of free parameters, we are forced to consider a \textit{family} of models instead of a concrete one.

It might still be possible to obtain universality for a specific celestial mechanics model (rather than for a family of models) by extending GST theory or by applying a different framework. Nevertheless, we believe that the intermediate steps that we carry out in the present work in order to apply GST theory might be of independent interest.

\subsection{Central configurations.} As previously explained, in order to apply GST universality theory, an arbitrarily high number of free parameters is required. We will consider a family of celestial mechanics models consisting of restricted $n+1$ body problems, where the dynamics of an infinitesimal body under the gravitational attraction of other $n$ massive bodies (called the primaries) is studied. In these models, the infinitesimal body is affected by, but does not affect, the $n$ primaries. Hence, a solution of the $n$ body problem has to be prescribed for the motion of the $n$ primaries. The only known explicit solutions of the $n$ body problem arise from (planar) \textit{central configurations}. A central configuration is a special arrangement of $n$ point masses with the property that the net acceleration vector exerted on each body by all the others points towards the center of mass of the system and has a magnitude proportional to the distance to the center. Planar central configurations yield a type of periodic solutions of the $n$ body problem, called \textit{homographic}, where the whole configuration just rotated rigidly around its center of mass with constant angular speed; for this reason, planar central configurations are also named \textit{relative equilibria} (i.e., equilibria in a rotating frame of reference).  An excellent exposition on central configurations can be found in \cite{Moeckel2015CourseCC}. 

The algebraic equations defining central configurations are invariant under the group of similarities of Euclidean space, which comprises translations, rotations, reflections and homotheties. The question of the finiteness, modulus similarities, of the number of planar central configuration of $n$ bodies for any fixed choice of $n$ positive masses appears as Problem 6 in Smale's list of problems for the 21st century \cite{SmaleProblems1998, SmaleProblems2000}. Hampton and Moeckel \cite{HamptonMoeckel2006Inventiones} proved finiteness in the case $n=4$, and Albouy and Kaloshin \cite{AlbouyKaloshin2012Annals} proved finiteness when $n=5$ except for masses in an exceptional closed algebraic subset of codimension 2.\footnote{They also gave an alternative proof of the case $n=4$ which is less computation-heavy than Hampton-Moeckel's original computed-assisted proof.} Moreover, Lindstrom \cite{Lindstrom2001TAMS} showed that the number of $n$-body central configurations is finite when $n-1$ masses are located at fixed positions in the plane, for $n>2$ and $n \neq 4$; the case $n=4$ is proved in \cite{AlvarezCorberaDelgadoLlibre2005PAMS}.

Remarkably, the conjecture on finiteness is false if one allows negative masses: Roberts \cite{Roberts1999PhysicaD} showed that for a $5$-body system with ``masses'' $(1,1,1,1,-1/4)$ there is a continuous $1$-parameter family of non-equivalent central configurations.

Since we are concerned with families of central configurations with an arbitrarily high number of bodies $n \in \mathbb{N}$, we mention two classical examples of such families\footnote{In \cite{Xia1991JDE}, Xia also presents a method for adding an arbitrary number of small masses to an existing nondegenerate central configuration via an iterative process.}: Moulton's collinear \cite{MoultonCollinear1910Annals}, where all the bodies lay in a line; and Maxwell's (centered) regular $N$-gon \cite{MaxwellRing}, where $N=n-1$ bodies of equal masses are located at the vertices of a regular $N$-gon (together with an optional additional body at its center). This latter type of configurations is the one we consider in the present paper.

In particular, we will take the central body to be much more massive than the ones in the ring. The limiting case of a central configuration of $n=N+1$ masses where $N$ of them tend to zero is usually referred to in the literature as \textit{the $(1+N)$-body problem} (the order of the summands indicating the different magnitude of the masses). There is a vast literature in the $(1+N)$-body problem, see for instance the recent paper \cite{ChenHanLvHouYang2026CMDA} and references therein. Some of the fundamental works in this subject are due to Moeckel \cite{MoeckelJDDE1994, Moeckel1997JDDE}. When $N \gtrsim e^{73}$ and under the assumption that the $N$ masses tending to zero are equal, it was proved in \cite{Casasayas1994CentralConfigs} that the only central configuration of the $1+N$-body problem is the regular $N$-gon with the large body in its barycenter (in \cite{Hall1988} this had been proved for $N \gtrsim e^{27000}$); there is numerical evidence \cite{CorsLlibreOlle2004CMDA} that this is also true for $N \ge 9$. In \cite{RennerSicardy2004CMDA}, the existence of central configurations of the $(1+N)$-body problem, for given angular separation between the bodies, is studied. 

Another key property used in our arguments is that the centered $N$-gon is a non-degenerate central configuration for a large enough central mass. The Implicit Function Theorem then guarantees the existence of nearby central configurations under small perturbations of the masses in the ring. The non-degeneracy of this configuration was first analyzed in \cite{palmore1976}; subsequently Meyer and Schmidt \cite{MeyerSchmidt1988} (see also \cite[Section 6]{SchmidtRecifeLectures}) provided corrections to that paper. The non-degeneracy has also been addressed recently in \cite{zhouxia2025, zhouxia2026}. While the existence of nearby central configurations is established in the aforementioned works, in the present paper we are concerned with obtaining explicit expressions for the first order (in the masses) of the positions of the bodies in the configuration. To achieve this, ideas from \cite{MoeckelJDDE1994} play an essential role (see also \cite{Hall1988} and \cite{Casasayas1994CentralConfigs}).

Regarding the (non-centered) regular $N$-gon central configuration, Perko and Walter \cite{PerkoWalter1985PAMS} proved that placing $N$ bodies at the vertices of regular $N$-gon yields a central configuration if and only if all their masses are equal. Moreover, Chenciner \cite[Section~7]{Chenciner2004ConjectureCC} conjectured that it is the sole central configuration such that all the bodies lie on a circle and the center of mass coincides with the center of the circle. This has been proved affirmatively, apart from the classical case $N = 3$, for $N=4$ in \cite{Hampton2003Proceedings}, for $N=5$ in \cite{LlibreValls2015JDDE} and for $N\le 6$ in \cite{Zhiqiand2023JDE}. Moreover, for any $N \in \mathbb{N}$ and under the additional assumption that all the masses in the circle are equal, Cors, Hall and Roberts \cite{CorsHallRoberts2014PhysicaD} gave also a positive answer to the conjecture. Other related results can be found in \cite{TangZhu2024PhysicaD, CorberaValls2019JDDE}. In any case, by a result of Hampton \cite{Hampton2016CMDA}, the central configurations that we consider in Section~\ref{sec:family_CC} cannot provide a counterexample to Chenciner's conjecture.

For our construction of homoclinic tangencies of high order, the symmetries of both the regular $N$-gon and its continuation for small variations of the masses have a crucial role. We mention that in \cite{Montaldi2015CMDA} Montaldi shows the existence of central configurations for every possible symmetry type and for any symmetric choice of masses.

We will find central configurations with $N+1$ masses, with $N$ arbitrarily large, but fixed. We do not explore how the central configurations we obtain behave when $N$ goes to infinity, although our result suggests that the first order dependence on the masses of the positions in the central configuration has a well defined limit as $N\to \infty$.

Central configurations have several applications within celestial mechanics (see \cite{Saari1980CM}), we highlight their role in recent works in the analysis of collision dynamics \cite{Yu2024MAMS,MoeckelMontgomery2025JAMS,GierzkiewiczSchaeferZgliczynski2025CMP}, expanding parabolic orbits \cite{MadernaVenturelli2009ARMA} and non-integrability \cite{MaciejewskiPrzybylskaCombot2025JEMS}\footnote{In \cite{MaciejewskiPrzybylskaCombot2025JEMS}, the authors state that they use Moulton's collinear central configurations because of some of its known nice properties; it would be interesting to assess whether an analogous argument could be carried out for Maxwell's $N$-gon, using ideas from Section~\ref{sec:family_CC} of the present paper.}.

\subsection{Renormalization and Shilnikov maps.} Universality is obtained by the renormalization of a suitable map. This renormalization procedure, similar to the one in~\cite{GSTtangencies} (and also~\cite{Duarteabundance}) relies in the existence of a hyperbolic fixed point of a 2-d map whose invariant manifolds have tangencies. Roughly speaking, renormalization is a consequence of the local dynamics close to the hyperbolic point and the tangency. In our setting, where the map we consider is a suitable Poincaré map of a restricted planar $(n+1)$-body problem, the role of the hyperbolic point is played by a fixed point at which the map is tangent to the identity. However, it was already proven in~\cite{McGeheestablemfold}, although degenerate, the fixed point possesses stable and unstable manifolds that depend analytically on the values of the masses of the primaries. This fact and a $C^1$ control of the dynamics close to the fixed point was used in~\cite{Moserbook} to obtain oscillatory motions in the Sitnikov problem. It should be remarked that the standard $\lambda$-lemma~\cite{PalisdeMelo} does not hold in this situation. Consequently, one needs additional arguments to obtain a $C^k$ control of the dynamics close to the degenerate saddle. Here we use the generalization of the Shilnikov maps~\cite{Shilnikov67a} to this situation developed in~\cite{GMP2025} to obtain Newhouse domains in several instances of the restricted $n$-body problem. Here we generalize the renormalization argument in~\cite{GMP2025} to our setting.

\subsection{Detecting tangencies.} To detect  primary tangencies of arbitrary high order, it is necessary to perform a careful analysis of the invariant manifolds of some periodic orbit at infinity in the restricted planar circular $(n+1)$-body problem. This is done by considering a regime in which a suitable Melnikov function provides a \emph{first order} of the distance between the invariant manifolds. As a matter of fact, it is possible to apply the arguments in~\cite{GMS2016} to the present situation without modification to show that the Melnikov function is useful to detect tangencies of order 2 for any values of the masses in the central configuration, if some energy is large enough. However, for technical reasons, the arguments in~\cite{GMS2016} cannot ensure that this is the case for tangencies of higher order for any set of masses. Although it may be possible to extend those results to the present case, we use the approach in~\cite{MartinezPinyol,llibre1980oscillatory}, where,  taking large energy and using the masses of the primaries on the ring as parameters, while the central mass of the central configuration is taken large enough. In this situation, the Melnikov function allows us to obtain tangencies of arbitrary order.

\subsection{Organization of the article.} The rest of the article is organized as follows.

In Section~\ref{sec:main_results} we outline the proof of the main result and present the key intermediate theorems.

In Section~\ref{sec:family_CC} we provide the explicit first-order expansion (in the masses of the ring) of the positions of the bodies in the local continuation of the centered $N$-gon central configuration.

In Section~\ref{sec:RPCN2BP}, we consider the corresponding Restricted $(N+1)+1$ Body Problem where the $N+1$ primaries follow the homographic solutions corresponding to the continued central configurations, and study the motion of a massless particle which is affected by the gravitational attraction of the $N+1$ primaries but which does not affect them. Afterwards, using the classical Melnikov method, we analyze the splitting of the invariant manifolds of ``infinity''.

In Section~\ref{sec:tangencies}, we exploit the symmetries that our $(N+1)$-parametric family of periodic solutions for the $N+1$ primaries exhibits to obtain a tangency between the invariant manifolds of order $\ord(N)$, where $\{\ord(N) \mid N \in \mathbb{N}\}$ is unbounded.

In Section~\ref{uni_sec:renormalization}, we adapt the required renormalization scheme to the degenerate saddle at infinity. We remark that in \cite{GorodetskiK12}, Newhouse phenomenon for this same degenerate saddle was studied, but their method requires directly analyzing the $2$-jets of the equations in order to handle the case of a quadratic tangency. Because GST universality requires handling tangencies of arbitrarily high order, the methods developed in \cite{GMP2025} seem crucial, since they look more readily generalizable to tangencies of any order. The essential tools required from \cite{GMP2025} are collected in Appendix~\ref{uni_sec:Shilnikov}.

Once the renormalization scheme is implemented, a straightforward application of a theorem by Turaev \cite{Turaev03} yields our Main Result on universality in celestial mechanics, stated in Theorem~\ref{thm:uni_main_result}.

\section*{Acknowledgements}
M.\ G. has been partially supported by the Spanish Government grants PID2022-136613NB-I00 and PRE2020-096613; part of this work was completed while being part of Kaloshin Group at IST Austria, whose support is gratefully acknowledged. P. M. has been partially supported by the Spanish MINECO-FEDER Grant PID2024-158570NB-I00. 
This work is also supported by the Spanish State Research Agency, through the Severo Ochoa and Mar\'ia de Maeztu Program for Centers and Units of Excellence in R\&D (CEX2020-001084-M).

\section{Main results}\label{sec:main_results}




\subsection{A parametric family of planar central configurations}
\label{sec:configuacions_centrals_planes}


The planar restricted $(N+1)+1$-body problem that we will use to approximate any map
with prescribed accuracy is built on a periodic orbit of the $(N+1)$-body problem arising from a family of \emph{central configurations} depending on many parameters. The type of central configurations we are interested in are close to a regular $N$-gon, with masses equal to 1 in the vertices of the ring, and a large central mass (see Figure~\ref{fig:Ngon}). 
\begin{figure}[h]
\begin{center}
\includegraphics[width=.5\textwidth]{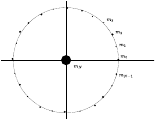}
\end{center}
\caption{The masses $m_j$, $j=0,\dots, N-1$, close to 1, at positions close to $e^{2\pi i j}$. The mass $m_N \gg 1$, close to the origin.}\label{fig:Ngon}
\end{figure}


In what follows, we will identify $\R^2$ with $\C$. In particular, given $q\in \C \equiv \R^2$, $\bar q$ will denote its complex conjugate and $|q|^2 = q \bar{q}$.

Let $N \ge 2$ and $\mm=(m_0,\ldots,m_N) \in \mathbb{R}_+^{N+1}$, a set of masses.
Let 
\begin{equation}\label{eq:center_mass_at_origin}
    M=\left\{ (q_0,\ldots,q_N) \in (\mathbb{R}^2)^{N+1} \mid \sum_{i=0}^{N} m_i q_i = 0 \right\} ,
\end{equation}
the set of configurations with the center of mass at the origin.

For $0 \le i < j \le N$, we define 
\[
\Delta_{i j} = \{ (q_0, \ldots, q_N) \in M \mid q_i = q_j \}
\]
and $\Delta = \bigcup_{i < j} \Delta_{i j}$, the collision manifold.

The \emph{potential function} of the $(N+1)$-body problem $V_{\mm} \colon M \setminus \Delta \to \mathbb{R}$ is defined by
\begin{equation}\label{eq:def_potential_N_body}
   V_{\mm}(q_0, \ldots, q_N) =  \sum_{ 0 \le i < j \le N} \frac{m_i m_j}{| q_i - q_j |}.
\end{equation}
We define the \emph{moment of inertia} as 
\begin{equation}\label{eq:def_moment_inertia}
    I_{\mm}(q_0, \ldots,q_N)= \sum_{i=0}^{N} m_i | q_i |^2 = \sum_{i=0}^{N} m_i  q_i \bar{q}_i.
\end{equation}
We consider the $N$ level set of $I_{\mm}$,
\begin{equation}\label{eq:normalization_moment_inertia}
    S_{\mm} = \{ (q_0, \ldots, q_N) \in M \mid I_{\mm}(q_0, \ldots,q_N) = N\} 
\end{equation}
and write $S_{\mm} \setminus (S_{\mm} \cap \Delta)$ as $S_{\mm} \setminus \Delta$. 

\begin{defn}\label{def:CC_critical_point}
$(q_0, \ldots, q_N) \in S_{\mm} \setminus \Delta$ is a \emph{central configuration} if and only if it satisfies the following system of algebraic equations
    \begin{equation}
    \label{def:sistemaCC}
    \partial_{q_j} V_{\mm} (q_0,\dots,q_N) = \frac{\lambda m_j}{2} \, \overline{q_j} ,\qquad j=0,\dots, N,
    \end{equation}
for some $\lambda \in \mathbb{R}$, that is, if it is a critical point of $V_{\mm}$ restricted to $S_{\mm} \setminus \Delta$.
\end{defn}

It is well known that, if $(q_0,\dots,q_N)\in S_{\mm}$ is a central configuration, then 
\begin{equation}
    \label{def:lambda_cc}
    \lambda =  -\frac{V_{\mm}(q_0,\dots,q_N)}{I_{\mm}(q_0,\dots,q_N)} = - 
    \frac{V_{\mm}(q_0,\dots,q_N)}{N}.
\end{equation}

\begin{rem} 
\label{rem:periode_solucio_circular}
    Given $(q_0,\ldots,q_N)$ a planar central configuration of $N+1$ bodies, then the functions
    \begin{equation}
    \label{def:Qj}
    Q_j(t)=e^{i \sqrt{-\lambda} \, t} \, q_j, \quad j=0,\ldots,N,
    \end{equation}
    define a solution of the corresponding $(N+1)$-Body Problem.
\end{rem}





Since we want to consider central configurations close to the polygonal one, with $N$ equal masses on a ring, we introduce $\mu_j$ by setting $m_j = 1+ \mu_j$, $j=0,\dots,N-1$, for the bodies in the ring. We emphasize that, while $m_j >0$, $\mu_j$ can take negative values. We will write $\mu = (\mu_0,\dots,\mu_{N-1})^\top$. 


Next theorem, whose proof is placed in Section~\ref{sec:family_CC}, provides the existence of a central configuration for each value of $\mu$ and $m_n$ and its explicit first-order dependence of the positions $q_j$ on the the values masses.

\begin{thm}
\label{thm:posicions_respecte_a_les_masses_s2}
For any $N\ge 3$, there exist $m^*,  \mu^* >0$ such that, for all $\|\mu\| < \mu^*$ and $m > m^*$, there exists a unique solution $q(\mu,m) \in S_{\mm} \setminus \Delta$ of~\eqref{def:sistemaCC} with masses $m_j = 1+\mu_j$, $j=0,\dots, N-1$, $m_N = m$, with $\lambda$ given by~\eqref{def:lambda_cc}, analytic in $\mu$ and $\varepsilon= 1/m$,  of the form
\[
q_j(\mu,\varepsilon) = \rho_j(\mu,\varepsilon) e^{i\theta_j(\mu,\varepsilon)}, \qquad j = 0,\dots, N-1,
\]
with $\theta_0(\mu,\varepsilon) = 0$, where, with the normalizing condition $\sum_{\ell= 0}^{N-1} \mu_\ell = 0$, 
\[
\begin{aligned}
\rho_j(\mu,\varepsilon) & =  1 + \OO(\mu) \OO(\varepsilon), &   j & = 0,\dots, N-1, \\ 
\theta_j(\mu,\varepsilon) & = \frac{2\pi j}{N} + \sum_{\ell=0}^{N-1} \left(\frac{1}{N} \sum_{k=1}^{N-1} c_k (\omega^{k(j-\ell)} - \omega^{-k \ell}) \right) \mu_\ell + \mathcal{O}(\varepsilon) + \mathcal{O}_2(\mu, \varepsilon) ,  & j & = 1,\dots, N-1,
\end{aligned}
\]
$c_k$ are some explicit constants that depend only on $N$, given in Theorem~\ref{thm:posicions_respecte_a_les_masses}, and
\[
q_N(\mu,\varepsilon) = - \varepsilon \sum_{\ell=0}^{N-1} (1+\mu_\ell) q_\ell(\mu,\varepsilon) = \OO(\mu) \OO(\varepsilon).
\]

Moreover, if the masses satisfy the symmetry condition $\mu_{j}=\mu_{N-j}$ for $j=1,\ldots,N-1$, then the corresponding central configuration is symmetric with respect to the real axis, with positions satisfying $q_j=\overline{q_{N-j}}$ for $j=1,\ldots,N-1$ and $q_0, q_N \in \mathbb{R}$.
\end{thm}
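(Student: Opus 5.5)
The plan is to treat the system \eqref{def:sistemaCC} as a singular perturbation problem in $\varepsilon=1/m$ and reduce it, by a Lyapunov--Schmidt argument, to a finite-dimensional equation for the angles of the ring bodies, in the spirit of \cite{MoeckelJDDE1994, Hall1988}.

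\emph{Reformulation.} First I eliminate $q_N$ using the centre of mass condition, $q_N=-\varepsilon\sum_{\ell}(1+\mu_\ell)q_\ell$. I then divide the equations for $j=0,\dots,N-1$ by $m=1/\varepsilon$. Writing $q_j=\rho_j e^{i\theta_j}$, the interaction of the ring body $j$ with the central mass gives at order $\varepsilon^0$ the Kepler-type balance $\rho_j^{-2}\approx-\lambda\varepsilon\rho_j/2$ after rescaling $\lambda=\lambda_0/\varepsilon+\dots$. This forces all $\rho_j$ to equal a common value, which the normalization $I_{\mm}=N$ fixes to be $1$ to leading order. It leaves the angles completely free, so the $\varepsilon=0$ problem is degenerate.

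\emph{Reduced angular equation.} I solve the radial components and the equation for $\lambda$ by the implicit function theorem. This is possible because the radial linearization at $\varepsilon=0$ is a nonzero multiple of the identity. It gives $\rho_j=1+\OO(\varepsilon)$ as analytic functions of $(\theta,\mu,\varepsilon)$. Substituting into the tangential components and dividing by $\varepsilon$, the angular equations become
\[
\partial_{\theta_j}W_\mu(\theta)+\OO(\varepsilon)=0,\qquad W_\mu(\theta)=\sum_{0\le i<j\le N-1}(1+\mu_i)(1+\mu_j)\,f(\theta_i-\theta_j),
\]
with $f$ the explicit even $2\pi$-periodic function coming from the mutual attraction on the unit circle plus the indirect (centre of mass) term. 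This is Moeckel's function, of the form $f(\theta)=\cos\theta+\tfrac{1}{2|\sin(\theta/2)|}$ up to constants and signs, which I would check in the derivation.

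\emph{Nondegeneracy and implicit function theorem.} The regular $N$-gon $\theta^0_j=2\pi j/N$ is a critical point of $W_0$ by symmetry. Its Hessian is a circulant matrix whose eigenvalues are
\[
\sigma_k=\sum_{s=1}^{N-1}f''(2\pi s/N)\bigl(1-\omega^{ks}\bigr),\qquad \omega=e^{2\pi i/N},\; k=0,\dots,N-1.
\]
The eigenvalue $\sigma_0=0$ corresponds to the rotational symmetry. I would fix this symmetry by imposing $\theta_0=0$.

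The main obstacle is to show that $\sigma_k\neq0$ for $k=1,\dots,N-1$. This is the nondegeneracy of the centred $N$-gon for large central mass. I would prove it by evaluating the sums in closed form using $\sum_s \omega^{ks}/\sin^{\alpha}(\pi s/N)$-type identities, or simply quote \cite{MeyerSchmidt1988, MoeckelJDDE1994}.

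Granting nondegeneracy, the implicit function theorem in $(\mu,\varepsilon)$ near $(0,0)$ gives a unique analytic solution $\theta(\mu,\varepsilon)$ with $\theta_0=0$. It holds for $\|\mu\|<\mu^*$ and $\varepsilon<1/m^*$, and the positions are then recovered from $\rho_j$ and $q_N$.

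\emph{First-order expansion and symmetry.} Differentiating the angular equation at $\mu=0$ gives
\[
\mathrm{Hess}\,W_0\cdot\partial_\mu\theta=-\partial_\mu\nabla W_0 .
\]
By symmetry, $\partial_{\mu_\ell}\partial_{\theta_j}W_0$ is a circulant expression in $j-\ell$. Inverting the Hessian on the orthogonal complement of the kernel via the discrete Fourier transform gives the terms $\frac1N\sum_k c_k\omega^{k(j-\ell)}$, with $c_k$ the ratio of the Fourier coefficient of the forcing to $\sigma_k$. Subtracting the value at $j=0$, to enforce $\theta_0=0$, produces the terms $-\omega^{-k\ell}$. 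The normalization $\sum\mu_\ell=0$ removes the $k=0$ mode.

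For the radii, the $\OO(\mu)\OO(\varepsilon)$ claim follows because at $\mu=0$ the equal-mass regular polygon is an exact solution with $\rho_j\equiv1$ and $q_N=0$ for all $\varepsilon$. Likewise, at $\varepsilon=0$ we have $\rho_j=1$. Since $q_N(0,\varepsilon)=0$ by the polygon symmetry, we also get $q_N=\OO(\mu)\OO(\varepsilon)$.

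Finally, if $\mu_j=\mu_{N-j}$, the map $(q_0,q_1,\dots,q_{N-1},q_N)\mapsto(\bar q_0,\bar q_{N-1},\dots,\bar q_1,\bar q_N)$ preserves the equations, the masses, $S_{\mm}$ and the condition $\theta_0=0$. By uniqueness it fixes the solution, which gives $q_j=\overline{q_{N-j}}$ and $q_0,q_N\in\R$.
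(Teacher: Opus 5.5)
Your proposal is correct and follows essentially the paper's route. You eliminate $q_N$ through the centre of mass and solve the radial (real) parts for $\rho$ by the implicit function theorem. You then diagonalize the circulant Jacobians $D_\theta\tilde G$ and $D_\mu\tilde G$ of the reduced angular equations by the discrete Fourier transform; your $\sigma_k$ and forcing coefficients are, up to normalization, the paper's $b_k$ and $a_k$, so $c_k=-a_k/b_k$. Finally you impose $\theta_0=0$ and get the reflection symmetry from local uniqueness. Your gradient formulation and your observation that $\rho_j-1$ vanishes on both $\{\mu=0\}$ and $\{\varepsilon=0\}$ are a cleaner packaging of the same computation.

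One caution: the correct reduced potential is $f(\theta)=\cos\theta-\frac{1}{2|\sin(\theta/2)|}$, with a relative minus sign. Hence for $k=\pm1$ the eigenvalue is $b_1=-\frac{N}{4}\bigl(\mathcal{B}_1(N)-2N\bigr)$, which involves a genuine cancellation: it changes sign between $N=6$ and $N=7$ and is tiny at $N=7$. Closed-form identities will not settle this, so, as the paper does, you must rely on the cited nondegeneracy results or on numerics for small $N$.
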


We fix $\theta_0 = 0$ and take the $(q_0,\dots,q_N)\in S_{\mm}$ to get rid of freedom of central configurations with respect to  rotations and scalings of the positions. Once these are fixed, we exploit the fact that the $N$-gon configuration is non-degenerate if $m\gg 1$ (see~\cite{palmore1976,MeyerSchmidt1988}). 

As a corollary of Theorem~\ref{thm:posicions_respecte_a_les_masses}, we have that, if $\varepsilon \to 0$, that is, if the central mass $m \to \infty$, the bodies in the ring tend to  the unit circle while the central mass tends to the origin. This was proven in~\cite{MoeckelJDDE1994}), assuming that there was a limit central configuration when $m \to \infty$. Theorem~\ref{thm:posicions_respecte_a_les_masses} ensures such limit exists.

The novelty and importance of Theorem~\ref{thm:posicions_respecte_a_les_masses_s2} is twofold. On the one hand, the parameters $\mu^*$ and $\varepsilon^*$ are fixed and independent. This will allow as to take $\varepsilon = 1/m$ as small as necessary while keeping $\mu$ in a fixed ball. On the other, it provides the explicit first order dependence on $\mu$, given by the coefficients $c_k$. They  are given in Lemmas~\ref{lem:DmutildeG} and~\ref{lem:DthetatildeG_rang}. This first order will appear later in the proof in an essential way.

\subsection{The Restricted Planar Circular $(N+1)+1$-Body Problem}

Let $(q_0(\mu,\varepsilon),\dots,q_N(\mu,\varepsilon))$ be the positions of the central configuration corresponding to masses $\mm = (1+\mu_0,\dots,1+\mu_{N-1},1/\varepsilon)$ given by Theorem~\ref{thm:posicions_respecte_a_les_masses_s2}. We will take  $\varepsilon$ and $\mu$ as small parameters.

In view of Remark~\ref{rem:periode_solucio_circular}, the period of the solution of the $(N+1)$-body problem with masses $\mm = (1+\mu_0,\dots,1+\mu_{N-1},1/\varepsilon)$ is $2\pi/\sqrt{-\lambda}$, where $\lambda$, given in~\eqref{def:lambda_cc}. Hence, it depends on $(\mu,\varepsilon)$.
We rescale the masses and positions in such a way that the chosen central configuration has total mass 1 and the corresponding circular solution of the $(N+1)$-body problem has period~$2\pi$. The following proposition collects the appropriate the scalings.

\begin{prop}
\label{prop:solucio_circular_periode_2pi}
Let $(q_0(\mu,\varepsilon),\dots,q_N(\mu,\varepsilon))$ be the positions of the central configuration corresponding to masses $\mm = (1+\mu_0,\dots,1+\mu_{N-1},1/\varepsilon)$ given by Theorem~\ref{thm:posicions_respecte_a_les_masses_s2}.
Let $\tmm = \mm/M$ and $\tilde{q}_j(\mu,\varepsilon) = M^{-1/3} \SA(M \tmm, N) q_j(\mu,\varepsilon)$, $j=0,\dots,N$, where $M = \sum_{j=0}^N m_j = N+1/\varepsilon$ and
\begin{equation}\label{def:SA}
    \SA(\mm, N) 
    = \left(\frac{1}{N} \sum_{0 \le j < k \le N} \frac{m_j m_k}{|q_j(\mu,\varepsilon)-q_k(\mu,\varepsilon)|} \right)^{1/3}.
\end{equation}
Then
\begin{equation}
\label{def:solucio_circular}
e^{it}(\tilde{q}_0(\mu,\varepsilon),\dots,\tilde{q}_N(\mu,\varepsilon))
 \end{equation}
is a solution of the $(N+1)$-body problem with potential $V_{\tmm}$ in~\eqref{eq:def_potential_N_body}.
\end{prop}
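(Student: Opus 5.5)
The plan is to verify the claim by a direct scaling computation, combining Remark~\ref{rem:periode_solucio_circular} with the homogeneity of the Newtonian equations under rescalings of masses, positions and time.

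First I write the equations of motion of the $(N+1)$-body problem in complex notation. For masses $\mm$ they read
\[
m_j \ddot Q_j = 2\,\partial_{\bar Q_j} V_{\mm}(Q), \qquad j=0,\dots,N,
\]
where $\partial_{\bar Q_j}$ is the Wirtinger derivative, so that $2\partial_{\bar Q_j}$ is the real gradient with respect to $Q_j$. Since $V_{\mm}$ is real, $\partial_{\bar q_j}V_{\mm}=\overline{\partial_{q_j}V_{\mm}}$. Hence the central configuration equations \eqref{def:sistemaCC} give $2\partial_{\bar q_j}V_{\mm}(q)=\lambda m_j q_j$. Substituting $Q_j(t)=e^{i\nu t}q_j$ and using the rotation invariance of $V_{\mm}$, both sides become $e^{i\nu t}$ times a constant. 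The equations then reduce to $-\nu^2 m_j q_j = \lambda m_j q_j$, that is, $\nu=\sqrt{-\lambda}$. This recovers Remark~\ref{rem:periode_solucio_circular}, which I may assume anyway. By \eqref{def:lambda_cc} and \eqref{def:SA},
\[
-\lambda = \frac{V_{\mm}(q)}{N}=\SA(\mm,N)^3 .
\]

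Next I establish the scaling law. Set $\tilde m_j = b\,m_j$ and $\tilde Q_j(t)=a\,Q_j(t/T)$. The potential satisfies $V_{b\mm}(aQ)=b^2a^{-1}V_{\mm}(Q)$, and therefore
\[
2\partial_{\bar{\tilde Q}_j}V_{b\mm}(\tilde Q)=b^2a^{-2}\,2\partial_{\bar Q_j}V_{\mm}(Q).
\]
On the other side, $\tilde m_j\ddot{\tilde Q}_j = b\,a\,T^{-2}\,m_j\ddot Q_j$. So $\tilde Q$ is a solution for the masses $b\mm$ exactly when $a/T^2=b/a^2$, i.e.\ $a^3=bT^2$.

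Now I apply this with $b=1/M$, so that $b\mm=\tmm$, and $Q_j(t)=e^{i\sqrt{-\lambda}\,t}q_j$. The rescaled solution is $\tilde Q_j(t)=a\,e^{i\sqrt{-\lambda}\,t/T}q_j$. To obtain frequency $1$, and hence period $2\pi$, I choose
\[
T=\sqrt{-\lambda}=\SA(\mm,N)^{3/2}.
\]
Then $a^3=T^2/M=\SA(\mm,N)^3/M$, which gives $a=M^{-1/3}\SA(\mm,N)$. Since $M\tmm=\mm$, this is exactly $M^{-1/3}\SA(M\tmm,N)$. Thus $\tilde Q_j(t)=e^{it}\tilde q_j(\mu,\varepsilon)$ with $\tilde q_j$ as in the statement, and this is a solution with potential $V_{\tmm}$.

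Finally, the scaling preserves the centre of mass condition \eqref{eq:center_mass_at_origin}, and $\SA>0$ because $V_{\mm}>0$ away from $\Delta$, so all quantities are well defined. I do not expect any real obstacle. The only delicate point is keeping the Wirtinger-derivative convention (the factor $2$ and the conjugation in \eqref{def:sistemaCC}) consistent with the equations of motion. If one prefers to avoid it, one can rely directly on Remark~\ref{rem:periode_solucio_circular}, so that only the scaling step is needed.
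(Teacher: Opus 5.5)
Your proof is correct and follows essentially the paper's scaling argument. The paper rescales the central configuration itself: $(\alpha q_j)$ is a central configuration for masses $\beta\mm$ with $\tilde\lambda=\lambda\beta/\alpha^3$, and one then imposes $\tilde\lambda=-1$ with $\beta=1/M$ via Remark~\ref{rem:periode_solucio_circular}. You instead rescale the time-dependent solution directly through $a^3=bT^2$, which is the same homogeneity computation, and your factor $a=M^{-1/3}\SA(\mm,N)$ matches the statement (the paper's proof writes $\alpha=\SA(\mm,N)$, dropping the $M^{-1/3}$).
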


\begin{proof}
Since, by Theorem~\ref{thm:posicions_respecte_a_les_masses_s2}, $(q_0(\mu,\varepsilon),\dots,q_N(\mu,\varepsilon))$ is a central configuration corresponding to masses $\mm=(m_0,\ldots,m_N)= (1+\mu_0,\dots,1+\mu_{N-1},1/\varepsilon)$ with $\lambda $ given by~\eqref{def:lambda_cc}, for any $\alpha, \beta >0$, $(\alpha q_0(\mu,\varepsilon),\dots,\alpha q_N(\mu,\varepsilon))$
is a central configuration corresponding to masses $\beta\mm$ with $\tilde \lambda = \lambda \beta/\alpha^3$. The circular solution corresponding to this central configuration has period $2\pi/\sqrt{-\tilde \lambda}$. Hence, choosing $\beta = 1/M$ and using the value of $\lambda$ in~\eqref{def:lambda_cc}, we have that $\alpha = \SA(\mm, N)$.
\end{proof}

Notice that, since $\varepsilon = 1/m_N$ and $\sum_{j=0}^{N-1} \mu_j = 0$, we have that
\begin{equation}
    \label{relacions_masses_tilde_i_epsilon}
\begin{aligned}
    \tilde m_j & = \frac{\varepsilon}{1+N\varepsilon} (1+\mu_j), \qquad j =0,\dots N-1, \\
    \tilde m_N & = \frac{1}{1+N\varepsilon}.
\end{aligned}
\end{equation}
Moreover, since $N$ is fixed, $\|\mu\| < \mu^*$,  and $M = (1+N\varepsilon)/\varepsilon$, Theorem~\ref{thm:posicions_respecte_a_les_masses} implies by a straightforward computation that
\begin{equation}
    \label{expansio_en_epsilon_R}
    M^{-1/3} \SA(\mm, N) = 1+\mathcal{O}(\varepsilon).
\end{equation}
In particular,  since
\[
\tilde{q}_N = - \varepsilon \sum_{j=0}^N m_j {\tilde{q}_j}
\]
and $\tilde{q}_{j \mid \varepsilon = 0} = q_j(\mu,\varepsilon)_{\mid \varepsilon = 0}$ only depends on $\mu$, analytically around $\mu = 0$, we have that $\tilde{q}_{N \mid \varepsilon = 0} = 0$.

Hence, the position and momentum $(q,p)\in \R^2\times \R^2$  of a massless body under the gravitational attraction of the primaries with mass $\tmm = (\tilde m_0,\dots, \tilde m_N)$ describing the circular solution of the $(N+1)$-body problem~\eqref{def:solucio_circular} is given by the Hamiltonian 
\begin{equation}\label{eq:Hamiltonian_ring_cart}
    H_{\mathrm{cart}}(q,p,t)= \frac{|p|^2}{2} - U_{\mathrm{cart}}(q,t;\mu,\varepsilon) ,
\end{equation}
 and 
\begin{equation}
\label{def:Ucart}
U_{\mathrm{cart}}(q,t;\mu,\varepsilon) = \sum_{j=0}^{N} \frac{\tilde{m}_j}{|q-e^{it} \tilde{q}_j(\mu,\varepsilon)|} .
\end{equation}
Introducing polar coordinates in the plane, $q=r e^{i \alpha}$, and $p = y \, e^{i \alpha} + i \, \frac{G}{r} \, e^{i \alpha}$, their conjugate momenta, 
Hamiltonian $H_{\mathrm{cart}}$ in~\eqref{eq:Hamiltonian_ring_cart} becomes
\begin{equation}\label{eq:Hamiltonian_ring_polar}
    H_{\mathrm{pol}}(r,y,\alpha,G, t; \mu,\varepsilon)= \frac{y^2}{2} + \frac{G^2}{2 r^2} - U_{\mathrm{pol}}(r,\alpha- t;\mu,\varepsilon) ,
\end{equation}
%
%
where
\begin{multline}\label{eq:potential_ring_polar}
    U_{\mathrm{pol}}(r,\phi;\mu,\varepsilon) = U_{\mathrm{cart}}(r e^{i(\phi+t)},t;\mu,\varepsilon) \\
    = \sum_{j=0}^{N} \frac{\tilde{m}_j}{|r e^{i (\phi+t)}-\tilde{q}_j(\mu,\varepsilon) e^{i t}|} = \sum_{j=0}^{N} \frac{\tilde{m}_j}{r|1-\frac{\tilde{q}_j(\mu,\varepsilon)}{r} e^{-i \phi}|} ,
\end{multline}
with $\tilde m_j$ given by~\eqref{relacions_masses_tilde_i_epsilon},
 is analytic on $(\mu,\varepsilon)$ in a neighborhood of $(0,0)$. Clearly, using~\eqref{relacions_masses_tilde_i_epsilon} and $\tilde{q}_{N \mid \varepsilon = 0} = 0$, we have that ${U_{\mathrm{pol}}}_{\mid \varepsilon = 0} = 1/r$. This fact, in combination with the standard expansion in Legendre polynomials (the center of mass of the central configuration is at the origin), gives
\begin{equation}
\label{expansio_U_pol}
U_{\mathrm{pol}}(r,\phi;\mu,\varepsilon) = \frac{1}{r}+\varepsilon \mathcal{O}\left( \frac{1}{r^3}\right).
\end{equation}

Although $H_{\mathrm{pol}}$ is a $2 \frac{1}{2}$ degrees of freedom Hamiltonian, we emphasize it depends on $(\alpha, t)$ only through $\phi=\alpha-t$. Hence, by using the new angle $\phi$, that is, writing $H_{\mathrm{pol}}$ in rotating coordinates, we rewrite $H_{\mathrm{pol}}$ as the 2~degrees of freedom Hamiltonian
\begin{equation}\label{eq:Hamiltonian_ring_polar_rot}
    H_{\mathrm{rot}}(r,y,\phi,G; \mu,\varepsilon)= \frac{y^2}{2} + \frac{G^2}{2 r^2} - U_{\mathrm{pol}}(r,\phi;\mu,\varepsilon) -G.
\end{equation}

Thanks to the last claim of Theorem~\ref{thm:posicions_respecte_a_les_masses_s2}, we will choose the central configuration to be symmetrical with respect to the real axis. This implies that the equations of motion of the RPC(N+2)BP are reversible with respect to the involution 
\[
(r,y,\phi, G) \mapsto (r,-y,-\phi,G) ,
\]
which translates into a symmetry in the parameterizations of the invariant manifolds $W_\infty^{s,u}$.

\subsection{Invariant manifolds of infinity and parameter dependence}

In view of~\eqref{expansio_U_pol}, the equations of motion associated to Hamiltonian $H_{\mathrm{rot}}$ in~\eqref{eq:Hamiltonian_ring_polar_rot} are
\begin{equation}\label{eq:equations_motion_rotating}
    \left\{\begin{aligned}
        \dot r &= y \\
        \dot \phi &= -1+\frac{G}{r^2} \\
        \dot y &= -\frac{1}{r^2}+\frac{G^2}{r^3} + \varepsilon \mathcal{O}\left( \frac{1}{r^4}\right)\\
        \dot  G &= \varepsilon \mathcal{O}\left( \frac{1}{r^3}\right)
    \end{aligned}\right.
\end{equation}
From the above equations, it is clear that, for any $G_0 \in \R$, the set
\[
\gamma_{G_0} =\{(r,y,\phi,G) \mid \; r = +\infty,\; y = 0,\; G = G_0\}
\]
is a periodic orbit (at infinity), with period $2\pi$. Hence, $\tilde{\Gamma}_{\infty} = \cup_{G_0 \in \R} \gamma_{G_0}$ is an invariant cylinder at infinity, foliated with periodic orbits. It is often called the \emph{parabolic} infinity, since $y_{\mid \tilde{\Gamma}_{\infty}} = 0$. In view of~\eqref{eq:Hamiltonian_ring_polar_rot} and~\eqref{expansio_U_pol}, it  is clear that
\[
\tilde{\Gamma}_{\infty} \cap \{H_{\mathrm{rot}} = - G_0\} = \gamma_{G_0}.
\]

Let $\Phi_{t}=(\Phi_{t}^r, \Phi_{t}^y, \Phi_{t}^\phi, \Phi_{t}^G)$ denote the flow associated to the Hamiltonian system corresponding to Hamiltonian $H_{\mathrm{rot}}$ in~\eqref{eq:Hamiltonian_ring_polar_rot}. It was proven in~\cite{McGeheestablemfold}  that the sets
\[
W_{\infty,G_0}^{s,u}= \left\{(r,y,\phi, G) \in \mathbb{R}^+ \times \mathbb{R} \times \mathbb{T} \times \mathbb{R} \colon \lim_{t \to \pm \infty} (\Phi_{t}^r,\Phi_{t}^y,\Phi_{t}^G)(r,y,\phi,G)= (\infty, 0,G_0) \right\}.
\]
are two dimensional analytic manifolds, which are called the stable and unstable manifolds of $\gamma_{G_0}$, respectively. They depend analytically on $\mu$, $\varepsilon$ and $G_0$. Since $\gamma_{G_0} \subset \{H_{\mathrm{pol}} = - G_0\}$, $W_{\infty,G_0}^{s,u}\subset \{H_{\mathrm{pol}} = - G_0\}$. The sets 
\[
W_\infty^{s,u} = \cup_{G_0 \in \R} W_{\infty,G_0}^{s,u}
\]
are the invariant manifolds of $\tilde{\Gamma}_{\infty}$.

Our aim is to find homoclinic tangencies between $W_{\infty,G_0}^{s,u}$ when varying the parameters of the system, that is, $\mu$, the values of the masses of the ring, $\varepsilon$, the value of the central mass, and $G_0$, the energy level. In our construction, unlike~\cite{GMS2016}, $\varepsilon$ and $G_0$ will not be independent: we will fix $G_0$ large enough, then choose accordingly $\varepsilon$ small enough. The fact that, by Theorem~\ref{thm:posicions_respecte_a_les_masses}, the mass parameters $\mu$ can be chosen independently of $\varepsilon$ will allow us to choose them in some appropriate ball. To do so, we will use the standard Melnikov approach, which will be valid in our setting because $\varepsilon$, unlike $G_0$ is a regular parameter of the problem. This fact will be clear in Section~\ref{sec:limit_integrable}. 

In order to make explicit the dependence on the energy level and to normalize the size of the ``homoclinic loop'' of the integrable limit (as explained in Section~\ref{sec:limit_integrable}, below),  we perform the following scaling.
Given $G_0>0$, we rescale the variables by
\begin{equation}
\label{escalat_G0}
r = G_0^2 \tilde r, \qquad y = G_0^{-1} \tilde y, \qquad \phi = \tilde \phi, \qquad G = G_0 \tilde G, 
\end{equation}
which is a conformally symplectic map with multiplier $G_0$. Clearly
\[
H_{\mathrm{rot}}(G_0^2 \tilde r,G_0^{-1} \tilde y,\tilde \phi,G_0 \tilde G; \mu,\varepsilon) = G_0^{-2} \mathcal{H}(\tilde r, \tilde y,\tilde \alpha, \tilde G;\mu,\varepsilon, G_0)
\]
where
\begin{equation}
\label{def:Hamiltonian_rescaled_rotating}
\mathcal{H}(\tilde r, \tilde y,\tilde \phi, \tilde G;\mu,\varepsilon, G_0) = \frac{\tilde y^2}{2} + \frac{\tilde G^2}{2 \tilde r^2} - \frac{1}{\tilde r} - G_0^3 \tilde G - \varepsilon V(\tilde r, \tilde \phi;\mu,\varepsilon, G_0)
\end{equation}
and
\begin{equation}\label{def:V_N2}
   \varepsilon V(\tilde r, \tilde \phi; \mu,\varepsilon, G_0) = - \frac{1}{\tilde r} + G_0^2 U_{\mathrm{pol}}(G_0^2 \tilde r, \tilde \phi; \mu,\varepsilon) = - \frac{1}{\tilde r} + \sum_{j=0}^{N} \frac{\tilde{m}_j}{\tilde r|1-\frac{\tilde{q}_j(\mu,\varepsilon)}{G_0^2 \tilde r} e^{-i \tilde \phi}|} .
\end{equation}
We recall that the mass parameters $\tmm$ are a function of $\varepsilon$ and $\mu$ through~\eqref{relacions_masses_tilde_i_epsilon}. Since ${U_{\mathrm{pol}}}_{\mid \varepsilon = 0} = 1/r$, Schwarz lemma ensures  that $\varepsilon$ is indeed a common factor in~\eqref{def:V_N2}. 

In these rescaled variables, the periodic orbit at infinity, $\gamma_{G_0}$ (which we denote by the same letter) is now
\begin{equation}
\label{gamma_G0_reescalada}
\gamma_{G_0} = \{(\tilde r,\tilde y,\tilde \phi,\tilde G) \mid \; \tilde r = +\infty,\; \tilde y = 0,\; \tilde G = 1\} \subset \{\mathcal{H} = -G_0^3\}.
\end{equation}
We will denote their invariant manifolds again by $W_{\infty,G_0}^{s,u}$. They satisfy $W_{\infty,G_0}^{s,u} \subset \{\mathcal{H} = -G_0^3\}$.


From now on, we will drop the tildes in the variables.

\subsection{The Kepler problem as the integrable limit}\label{sec:Kepler_approx_N2BP}
\label{sec:limit_integrable}
We write Hamiltonian $\mathcal{H}$ in~\eqref{def:Hamiltonian_rescaled_rotating} as
\[
\mathcal{H} = \mathcal{H}_0+\varepsilon V
\]
where $\mathcal{H}_0 = \mathcal{H}_{\mid \varepsilon = 0}$ is the  Hamiltonian corresponding to the Kepler problem in rotating coordinates,
\begin{equation}
	\label{def:Kepler_hamiltonian}
	\mathcal{H}_0(r,\phi, y, G;  G_0)  = \frac{y^2}{2} + \frac{G^2}{2 r^2}  - \frac{1}{r} - G_0^3 G,
\end{equation}
and $\varepsilon V$ is given in~\eqref{def:V_N2}.

Since $\mathcal{H}_0$ does not depend on $\phi$, the angular momentum $G$ is a conserved quantity. Hence, $\mathcal{H}_0$ has 2 independent first integrals in involution, and so is integrable. Therefore, for the Kepler problem, the invariant manifolds of $\gamma_{G_0}$ coincide along a 2-dimensional homoclinic manifold $W^0_{G_0} = W_{\infty, G_0 \mid \varepsilon = 0}^{s}= W_{\infty, G_0 \mid \varepsilon = 0}^{u} \subset \{\mathcal{H}_0 = -G_0^3\}$, which is given by
\begin{equation}\label{eq:inv_manif_2d_Kepler}
    W^0_{G_0} = \left\{(r,y,\phi,G)\mid \frac{y^2}{2} + \frac{G^2}{2 r^2}  - \frac{1}{r} = 0, \; G = 1\right\}. 
\end{equation}

\begin{figure}[h]
\begin{center}
\includegraphics[width=.5\textwidth]{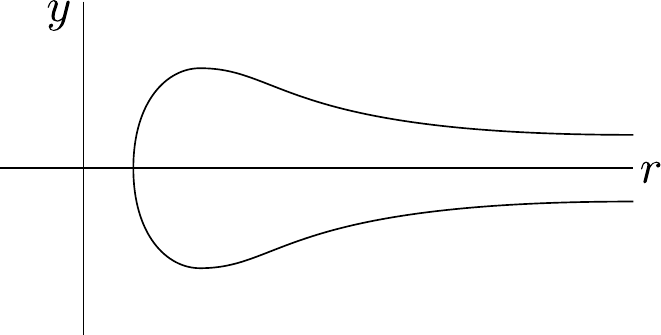}
\end{center}
\caption{The homoclinic manifold $W^0_{G_0}$ intersected with a section $\{\phi=\phi_0\}$.}\label{fig:separatrix}
\end{figure}
Now, the purpose of the scaling~\eqref{escalat_G0} becomes clear. In the rescaled variables, the homoclinic loop to the parabolic infinity at energy level $-G_0^3$ is independent of $G_0$. 
To use the Melnikov approach, as in~\cite{MartinezPinyol,llibre1980oscillatory}, we will need to take $G_0 \gg 1$.  A suitable parametrization of $W^0_{G_0}$ is obtained as follows.


The equations of motion corresponding to $\mathcal{H}_0$ in~\eqref{def:Kepler_hamiltonian} are 
\begin{equation}\label{eq:equations_motion_Kepler_rescaled_rotating}
    \left\{
\begin{aligned}
	\dot{r} &= y, \\
    \dot{\phi} &= \frac{G}{r^2} - G_0^3 ,\\
	\dot{y} &= \frac{G^2}{r^3} - \frac{1}{r^2}, \\
	\dot{G} &= 0 .\\
\end{aligned}\right.
\end{equation}
Notice that $\mathcal{H}_0$ is singular in the regime $G_0 \gg 1$ since, then, $\phi$ becomes a fast angle.

Let $(r_h(t),y_h(t),\phi_h(t),G_h(t))$ be the solutions of~\eqref{eq:equations_motion_Kepler_rescaled_rotating} satisfying
\[
y_h(0) = \phi_h(0) = 0, \qquad G_h(0) = 1,
\]
and $r_h(0)$ given by
\[
\mathcal{H}_0(r_h(0),y_h(0),\phi_h(0),G_h(0)) = -G_0^3,
\]
that is, $r_h(0) = 1/2$. In view of~\eqref{eq:equations_motion_Kepler_rescaled_rotating},
\[
\phi_h(t) = \alpha_h(t) - G_0^3 t,
\]
where $\dot \alpha_h = 1/r_h^2$. It belongs to $W^0_{G_0}$. It is clear that
\begin{equation}
	\Gamma_0(u,\beta) =
	\begin{pmatrix}
		r_h(u) \\\beta+ \alpha_h(u) - G_0^3 u\\ y_h(u) \\ 1 
	\end{pmatrix}, \qquad (u,\beta) \in \R\times \T .
\end{equation}
is a parametrization of $W^0_{G_0}$. It satisfies $\Phi_{t\mid \varepsilon = 0} \circ \Gamma_0 (u,\beta) = \Gamma_0(u+t,\beta)$.

The functions $r_h$, $y_h$, $\alpha_h$ are known. Next lemma, from~\cite{MartinezPinyol}, provides explicit expressions for them.
\begin{lem}
	\label{lem:rhyhalphah}
    Let $\tau(v)$ be the unique analytic function defined by
    \[
    v = \frac{1}{2} \left( \frac{1}{3} \tau^3 +\tau\right)
    \]
    such that $\tau$ is real for real values of $v$. Then
    \begin{enumerate}
        \item $r_h(v) = \tilde r_h(\tau(v))$, where
        \[
        \tilde \tau_h(\tau) = \frac{1}{2}(\tau^2+1),
        \]
        \item $\alpha_h(v) = \tilde \alpha_h(\tau(v))$, where
        \[
        \tilde \alpha_h(\tau) = 2 \arctan (\tau).
        \]
    \end{enumerate}
	
\end{lem}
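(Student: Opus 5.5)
We verify the claimed formulas directly on the Kepler separatrix $W^0_{G_0}$, working on the level set $G=1$, where $y^2/2 + 1/(2r^2) - 1/r = 0$. We parametrize it by $\tau$ and then recover the time variable.

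First we check consistency on the energy relation. Set $\tilde r_h(\tau)=\tfrac12(\tau^2+1)$, so that $\tilde r_h(0)=1/2=r_h(0)$. The zero-energy relation gives
\[
y^2 = \frac{2}{r}-\frac{1}{r^2} = \frac{2r-1}{r^2}.
\]
With $r=\tfrac12(\tau^2+1)$ we have $2r-1=\tau^2$, hence $y=\pm \tau/r$. We choose the sign $y=\tau/r$, so that $y>0$ for $t>0$ (escape) and $y(0)=0$, consistent with $y_h(0)=0$.

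Next we determine the time reparametrization. Since $\dot r = y$ and $dr/d\tau=\tau$, we get
\[
\frac{dv}{d\tau} = \frac{dr/d\tau}{y} = \frac{\tau}{\tau/r} = r = \tfrac12(\tau^2+1).
\]
Integrating with $v(0)=0$ gives $v=\tfrac12(\tau^3/3+\tau)$, which is exactly the defining relation. The map $\tau\mapsto v(\tau)$ is real-analytic and strictly increasing on $\R$, because its derivative $\tfrac12(\tau^2+1)$ is positive. It is a bijection of $\R$ with nonvanishing derivative, so the analytic inverse function theorem gives a unique real-analytic inverse $\tau(v)$ real on $\R$. Its analytic continuation to a complex neighborhood of $\R$ exists because $\tau^2+1\neq 0$ there; this settles the existence and uniqueness claim. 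Then $r_h(v):=\tilde r_h(\tau(v))$ solves $\dot r=y$, $\dot y = 1/r^3-1/r^2$. Indeed, $\dot y$ follows by differentiating the energy identity; at $\tau=0$, where $y=0$, one checks directly that $\ddot r = 1/r^3-1/r^2 = 8-4 = 4 = d^2r/dv^2$. Uniqueness of solutions of the ODE with the initial data $r_h(0)=1/2$, $y_h(0)=0$ then identifies this with the solution $r_h$.

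Finally we compute the angle. Since $\dot\alpha_h=1/r_h^2$ with $\alpha_h(0)=0$ (from $\phi_h(0)=0$), we have
\[
\frac{d\alpha}{d\tau} = \frac{1}{r^2}\frac{dv}{d\tau} = \frac{1}{r} = \frac{2}{1+\tau^2}.
\]
Hence $\alpha = 2\arctan\tau$, which gives $\tilde\alpha_h$. As a consistency check, $\alpha\to\pm\pi$ as $\tau\to\pm\infty$, giving total swept angle $2\pi$ as expected for a parabola. No step is a real obstacle. The only point requiring care is the choice of sign of $y$ and the justification that the inverse $\tau(v)$ is analytic; both are handled by the monotonicity of $v(\tau)$.
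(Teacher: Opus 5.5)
Your proof is correct, and it takes a different route from the paper. The paper gives no argument for this lemma; it imports it from \cite{MartinezPinyol} as the classical parametrization of the parabolic Kepler orbit.

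You instead give a self-contained verification. You read off $y=\tau/r$ from the zero-energy relation, derive the time change $dv/d\tau=\tilde r_h(\tau)$, and close the argument by uniqueness for $\dot r=y$, $\dot y=r^{-3}-r^{-2}$ with data $(1/2,0)$. The one step that looks delicate, dividing by $y$ at $\tau=0$, is harmless because the verification never needs it. Using $\tau'(v)=2/(1+\tau^2)$ and the chain rule, $\frac{d}{dv}\tilde r_h(\tau(v))=\frac{2\tau}{1+\tau^2}$ and $\frac{d}{dv}\frac{2\tau}{1+\tau^2}=\frac{4(1-\tau^2)}{(1+\tau^2)^3}=\tilde r_h^{-3}-\tilde r_h^{-2}$ for every real $\tau$. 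So even your separate check at $\tau=0$ can be dropped.

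The citation buys brevity. Your route makes explicit the identities the paper later uses without comment in the Melnikov computation: $ds=\tilde r_h(\tau)\,d\tau$ and $e^{i\alpha_h}=(1+i\tau)/(1-i\tau)$. These are what produce the factors $(\tau\mp i)$ in the integrals $N(q,m,n)$. Your route also locates the obstruction to continuing $\tau(v)$ at $\tau=\pm i$, i.e.\ $v=\pm i/3$. You also correctly read the paper's $\tilde\tau_h$ as a typo for $\tilde r_h$.
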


\subsection{Distance between the invariant manifolds of the parabolic infinity}

It is clear from~\eqref{eq:equations_motion_Kepler_rescaled_rotating} that, if $G_0$ is large enough, $\dot \phi <0$ in a neighborhood of $W_{\infty,G_0}^{s,u}$. Hence, the restriction of the flow of $\mathcal{H}$ at the energy level $\{\mathcal{H} = -G_0^3\}$ induces a well defined Poincar\'e
map 
\begin{equation}
\label{def:mapa_de_Poincare}
\mathcal{P}_{G_0,\phi_0}:\{\phi = \phi_0\}\mapsto \{\phi = \phi_0+2\pi\},
\end{equation}
with natural coordinates $(r,y) \mapsto \mathcal{P}_{G_0,\phi_0}(r,y)$.
The intersection of the stable and unstable invariant manifolds of $\gamma_{G_0}$, with $y>0$, with the section $\{\phi = \phi_0\}$
can be parametrized as
\[
\begin{aligned}
r & = r_h(v) \\
y & = Y_{\phi_0}^{u,s}(v,\mu,\varepsilon,G_0).
\end{aligned}
\]
The following theorem, whose proof is placed in Section~\ref{sec:prova_distancia}, provides the difference between these invariant manifolds.
\begin{thm}
\label{thm:Melnikov_domina}
    Assume $G_0>1$. Then 
    \[
   Y_{\phi_0}^{u}(v,\mu,\varepsilon,G_0)- Y_{\phi_0}^{s}(v,\mu,\varepsilon,G_0) = - \frac{\varepsilon}{y_h(v)} \left(\left(G_0^3-\frac{1}{r_h(v)^2}\right)\partial_\beta L_{\mid G_0^3v+\phi_0-\alpha_h(v)}+ \mathcal{O}(\varepsilon)\right),
    \]
    where 
    \begin{equation}
    \label{def:Melnikov}
    L(\beta; \mu,\varepsilon, G_0) = \int_{-\infty}^{+\infty} V(r_h(s), \beta -G_0^3 s + \alpha_h(s); \mu,\varepsilon, G_0) ds
    \end{equation}
    is the \emph{Melnikov potential} associated to $V$.
\end{thm}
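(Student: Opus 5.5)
We will use the classical Melnikov approach, treating $\varepsilon$ as a regular parameter while keeping $G_0>1$ fixed. The idea is to measure the splitting through the energy-like quantity $\mathcal{K}=\mathcal{H}_0+G_0^3G = \frac{y^2}{2}+\frac{G^2}{2r^2}-\frac1r$ (equivalently, through $G$ on the fixed energy level). This quantity vanishes on $W^0_{G_0}$ and is conserved by the unperturbed flow.

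First, we parametrize the perturbed manifolds $W^{u,s}_{\infty,G_0}$ as graphs over the unperturbed homoclinic manifold. Analyticity in $\varepsilon$ (from \cite{McGeheestablemfold}) gives $\Gamma^{u,s}(u,\beta)=\Gamma_0(u,\beta)+\varepsilon\Gamma_1^{u,s}(u,\beta)+\OO(\varepsilon^2)$. Since $\{\mathcal{H}=-G_0^3\}$ is preserved, on both manifolds we have $\mathcal{K} = G_0^3(G-1)+\varepsilon V$.

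Next, we compute $G$ on each manifold. Along a trajectory on $W^u$ we have $\dot G=\varepsilon\partial_\phi V$, and $G\to1$ as $t\to-\infty$. Integrating along the unperturbed orbit gives
\[
G^u-1=\varepsilon\int_{-\infty}^{0}\partial_\phi V(r_h(u+s),\beta+\alpha_h(u+s)-G_0^3(u+s))\,ds+\OO(\varepsilon^2),
\]
and analogously for $G^s$ over $(0,+\infty)$. Subtracting the two, the difference is $-\varepsilon\,\partial_\beta L$ evaluated at the appropriate phase, with $L$ as in \eqref{def:Melnikov}. Convergence of the integrals and uniformity of the $\OO(\varepsilon^2)$ remainder follow from $V=\OO(r^{-3})$, by \eqref{expansio_U_pol}, together with the parabolic growth $r_h(s)\sim s^{2/3}$.

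Then we convert this into a difference in $y$ on the section $\{\phi=\phi_0\}$ at fixed $r=r_h(v)$. Restricting $\mathcal{H}=-G_0^3$ to the section gives
\[
\frac{y^2}{2} = -\frac{G^2}{2r^2}+\frac1r+G_0^3(G-1)+\varepsilon V .
\]
Differentiating at fixed $r$ yields $y\,\delta y = (G_0^3 - G/r^2)\,\delta G$. With $G=1+\OO(\varepsilon)$ this gives
\[
Y^u-Y^s = \frac{1}{y_h(v)}\Bigl(G_0^3-\frac{1}{r_h(v)^2}\Bigr)(G^u-G^s)+\OO(\varepsilon^2),
\]
which has the claimed form. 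The phase comes from matching the section: the point with $r=r_h(v)$ and $\phi=\phi_0$ corresponds to $u=v$ and $\beta=\phi_0-\alpha_h(v)+G_0^3v$ up to $\OO(\varepsilon)$. The latter is precisely the argument $G_0^3v+\phi_0-\alpha_h(v)$ in the statement, and the $\OO(\varepsilon)$ shift in the phase only contributes to the error term.

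The main obstacle is justifying the first-order expansion uniformly near infinity, where the saddle is degenerate (parabolic) and standard regular perturbation theory fails. We intend to rely on McGehee coordinates $r=2/x^2$, in which the manifolds are analytic in $\varepsilon$, and to show that $\Gamma_1^{u,s}$ satisfies the variational equation with sufficient decay as $u\to\mp\infty$. Because $V$ decays like $x^6$, the forcing is integrable along parabolic orbits, so the boundary terms vanish. The estimates are needed only for $v$ in compact sets with $y_h(v)\neq0$, where dividing by $y_h$ is harmless.
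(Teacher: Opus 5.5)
Your argument is correct in outline, but it follows a different route from the paper.

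\textbf{The paper's route.} It uses the Hamilton--Jacobi formulation. It writes $W^{u,s}_{\infty,G_0}$ as Lagrangian graphs of generating functions $T^{u,s}=T_0+T_1^{u,s}$ in the variables $(v,\xi)$. It imports from \cite{GMS2016} the estimate $T_1^u-T_1^s=\varepsilon L(\xi+G_0^3v)+E+\OO(\varepsilon^2)$ (Proposition~\ref{prop:Delta_de_T}). It then reads off $Y^u-Y^s$ from $y=y_h^{-1}(\partial_vT-r_h^{-2}\partial_\xi T)$. The factor $G_0^3-r_h^{-2}$ appears because the first-order difference depends only on $\xi+G_0^3v$.

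\textbf{How the two compare.} Since $G=\partial_\xi T$, your flux integral computes exactly $\partial_\xi(T^u-T^s)$. Your energy-constraint step plays the role of the identity $\partial_v\Delta T=G_0^3\partial_\xi\Delta T$, with $\Delta T=T^u-T^s$. So the two computations are dual.

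The paper's route is short because all the analysis at the degenerate point at infinity is delegated to \cite{GMS2016}, which treats a much harder regime. That analysis includes:
\begin{itemize}
\item existence of the generating functions with the right boundary conditions;
\item uniform error bounds;
\item estimates on complex domains, which control the $\beta$-derivatives of the error through Cauchy estimates, as Section~\ref{sec:tangencies} needs.
\end{itemize}

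Your route is more elementary and exploits that $\varepsilon$ is a regular parameter at fixed $G_0$. Its whole cost is the step you flag. The linear part at infinity is trivial, so Gronwall gives nothing there, and McGehee's theorem does not by itself give the uniform-in-time closeness you use. You need a bootstrap showing $\Delta r/r=\OO(\varepsilon)$ and $\Delta\phi=\OO(\varepsilon)$ for all $s\le 0$. It does work, using:
\begin{itemize}
\item $G-1=\OO(\varepsilon/|s|)$ and your $\mathcal K=\OO(\varepsilon/|s|)$ along $W^u$;
\item the fact that the drift $-G_0^3 s$ in $\phi$ is identical for both orbits.
\end{itemize}
If the result is to feed the tangency argument, you should carry this out for complex $\beta$ in a strip.

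\textbf{A sign to fix.} With $\dot G=\varepsilon\partial_\phi V$, your flux gives $G^u-G^s=+\varepsilon\partial_\beta L$. The stated sign corresponds to $\mathcal H=\mathcal H_0+\varepsilon V$. The paper is itself inconsistent about this, so just fix one convention throughout.
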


In Section~\ref{sec:RPCN2BP} we will prove Theorem~\ref{thm:Melnikov_domina} and we will give an explicit expression of the Melnikov potential $L$ (as a function of $G_0$ and $\mu$, see Propositions~\ref{prop:Fourier_expansion_Melnikov_RPCN2BP} and~\ref{prop:expansion_Melnikov_positions}), necessary to obtain next theorem.

\begin{thm}
\label{thm:tangencies_sec2}
    Let $\ord \in \mathbb{N}$ arbitrary. Then there exist $N \in \mathbb{N}$ such that, for any $G_0 \gg 1$, there exist values $\mu^\ast, \varepsilon^\ast$ for which the one-dimensional invariant stable and unstable invariant manifolds of the fixed point $\gamma_{G_0}^{\ast}$ have a tangency of order $\ord$, at some point $z^{\ast}=(r^\ast, y^\ast)$, which splits non-degenerately in $\mu$.
\end{thm}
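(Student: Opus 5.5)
By Theorem~\ref{thm:Melnikov_domina}, the difference $Y^u_{\phi_0}-Y^s_{\phi_0}$ equals, up to the nonvanishing factor $-\varepsilon(G_0^3-r_h^{-2})/y_h$ and an $\mathcal{O}(\varepsilon^2)$ error, the function $\partial_\beta L$ evaluated at $\beta=G_0^3v+\phi_0-\alpha_h(v)$. Since $G_0^3-r_h(v)^{-2}\neq 0$ for $G_0\gg1$, the map $v\mapsto\beta$ is a local analytic diffeomorphism. A tangency of order $\ord$ of the manifolds at $v^*$ therefore amounts to a zero of multiplicity $\ord+1$ of the rescaled distance function $D(v)$. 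The plan is to produce such a zero first for $\partial_\beta L$ at $\varepsilon=0$, and then to continue it to small $\varepsilon>0$. The parameters $\mu$ play the role of unfolding parameters; this is where the independence of $\mu^*$ from $\varepsilon$ in Theorem~\ref{thm:posicions_respecte_a_les_masses_s2} is used.

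\textbf{Step 1: Fourier structure of $L$.} Expanding $V$ in Legendre polynomials and using the circular symmetry of the ring, write $L(\beta)=\sum_{k}L_k(\mu,G_0)e^{ik\beta}$ (this is Propositions~\ref{prop:Fourier_expansion_Melnikov_RPCN2BP}, \ref{prop:expansion_Melnikov_positions}). At $\mu=0$ the $N$-gon symmetry kills every harmonic except multiples of $N$. Each harmonic $k$ carries an exponentially small factor of order $e^{-c|k|G_0^3}$, which comes from the integral over the Kepler parabola via Lemma~\ref{lem:rhyhalphah}. To first order in $\mu$, the harmonics $k=1,\dots,N-1$ become linear in $\mu$, through the angular displacements $\theta_j$ with coefficients $c_k$ and the mass changes $\mu_j$.

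\textbf{Step 2: restriction to symmetric masses.} Impose $\mu_j=\mu_{N-j}$. By the last claim of Theorem~\ref{thm:posicions_respecte_a_les_masses_s2}, $L$ is then even in $\beta$, so $\partial_\beta L$ is a sine series. Truncate to harmonics $1\le k\le K$ with $K<N$; this leaves about $N/2$ free symmetric parameters. After factoring out the exponential scales, the linear map from $\mu$ to the low-order Fourier coefficients has, I claim, full rank. I would check this from the explicit $c_k$ (a discrete Fourier transform, hence invertible on the relevant subspace) together with the nonvanishing of the scalar Melnikov factors.

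\textbf{Step 3: producing the degenerate zero.} Choose $N$ large relative to $\ord$, and choose target coefficients so that a trigonometric polynomial $T(\beta)=\sum_{k\le K}a_k\sin k\beta$ has a zero of multiplicity exactly $\ord+1$ at some $\beta^*$ and is unfolded versally by the $a_k$. For example, take $T$ proportional to $\sin^{\ord+1}(\beta-\beta^*)$ modified to be odd, or place the zero at $\beta^*=\pi/2$. The difficulty is that the harmonics have wildly different sizes, $e^{-ckG_0^3}$. The cure is to rescale $\mu$ harmonic by harmonic, so that $\mu_k$ in Fourier variables is of size $e^{c(k-1)G_0^3}$ times the first one, and to keep every such $\mu$ inside the fixed ball. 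Taking $\mu=\delta\cdot(\text{rescaled})$ with $\delta$ tiny keeps the $\mathcal{O}_2(\mu)$ terms and the contributions of the harmonics $k\ge N$ (which are of size $e^{-cNG_0^3}$) negligible relative to the leading terms. The surjectivity from Step 2 then gives that the map from $\mu$ to the jet $\partial^j_\beta D(\beta^*)$, $j\le \ord+1$, is a submersion near the target. The implicit function theorem yields $\mu^*$ with the jet of order $\ord$ vanishing, the next derivative nonzero, and non-degenerate splitting in $\mu$.

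\textbf{Step 4: continuation in $\varepsilon$.} Finally, pass to $\varepsilon>0$: $D=\partial_\beta L+\mathcal{O}(\varepsilon)$ analytically, and by the submersion property the implicit function theorem again gives $\mu^*(\varepsilon)$. We need $\varepsilon^*\ll$ the smallest retained harmonic, hence $\varepsilon$ exponentially small in $G_0^3$, which is why $G_0$ is fixed first.

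The main obstacle is Step 3 combined with Step 4: the error $\mathcal{O}(\varepsilon)$ in Theorem~\ref{thm:Melnikov_domina} must be uniformly dominated in $C^{\ord+1}$ by exponentially small harmonics. This requires careful bookkeeping of the relative scales between $\delta$, $G_0$ and $\varepsilon$, and a verification that the relevant $c_k$ combinations do not vanish.
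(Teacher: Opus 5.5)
Your route differs from the paper's. The paper fixes $N=2\ord+1$, uses the coefficients $\mathcal{D}_{\le\ord}$ themselves as parameters (Lemma~\ref{lem:from_masses_to_coefficients}), and treats the $N$-th harmonic as an inhomogeneous source. That harmonic does \emph{not} vanish at the regular polygon, since $D_N(0)\neq0$. At $\beta=\pi/2$, parity makes the even-order equations automatic once the even harmonics are set to zero, and the odd-order ones form an invertible Vandermonde-type system. The Jacobian in $(\mathcal{E},\beta)$ is block triangular with blocks $\tilde W_{\le\ord}(\pi/2)$ and $\sigma\neq0$, so the system is square and nondegenerate, and the tuned harmonics come out at the same size $e^{-NG_0^3/3}$ as the source.

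You instead build the degenerate zero entirely from controlled harmonics and treat everything from harmonic $N$ on as error. That is a homogeneous problem whose solution set is a cone. It therefore needs at least $\ord+2$ controlled harmonics (so a larger $N$), and you must fix the overall scale by hand and apply the implicit function theorem transversally to it. In exchange you may choose $\beta^*$ and the model polynomial freely, e.g.\ $\sin\beta\cos^{\ord+1}\beta$ at $\pi/2$.

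As written, Step 3 does not work, because the two smallness requirements pull $\delta$ in opposite directions.
\begin{itemize}
\item The $N$-th harmonic already has size of order $G_0^{N-3/2}e^{-NG_0^3/3}$ at $\mu=0$ and does not scale with $\delta$, while your retained harmonics are proportional to $\delta$. Shrinking $\delta$ makes the harmonic-$N$ term \emph{less} negligible, not more.
\item With your anisotropic scaling, the largest Fourier mode $\hat\nu_k=\sum_j\omega^{-kj}\mu_j$ is $\hat\nu_K\sim\delta$, while $\hat\nu_1\sim\delta e^{-(K-1)G_0^3/3}$. The $\mathcal{O}_2(\mu)$ part of $D_1$ in general contains products of the large modes, of size up to $\delta^2$. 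These dominate the linear term you rely on unless $\delta\ll e^{-(K-1)G_0^3/3}$.
\end{itemize}
At best $\delta$ must sit in an exponentially thin window, up to powers of $G_0$, between $e^{-(N-K)G_0^3/3}$ and $e^{-(K-1)G_0^3/3}$, and all higher-order cross terms would still have to be checked.

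The clean repair is the paper's device. Invert $\nu\mapsto\varepsilon^{-1}\mathcal{D}_{\le K}$ and use these coefficients as parameters. Then the retained harmonics are exactly linear, and nonlinear effects enter only harmonics $k>K$, which carry an extra factor $e^{-(k-K)G_0^3/3}$. Fix the scale at $s\sim c\,G_0^{K-3/2}e^{-KG_0^3/3}$, so the largest coefficient lies in a fixed small ball. Against $s$ the $N$-th harmonic is relatively of order $e^{-(N-K)G_0^3/3}$, and no limit $\delta\to0$ is needed.

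Step 2's full-rank claim does not follow from invertibility of the discrete Fourier transform. By Proposition~\ref{prop:expansion_Melnikov_positions}, the linear coefficient of harmonic $k$ is $(-1)^k\varepsilon(1-ikc_k)\hat\nu_k$. Here the mass change and the angular displacement of the bodies compete, so you need $ikc_k\neq1$ for every retained $k$. This is Theorem~\ref{thm:maximal_rank_jacobian}, proved via Lemma~\ref{lem:BkA_identity} and Theorem~\ref{thm:Rk_not_1}. It is the genuinely computational input that both routes require and that your sketch leaves open.
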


This theorem is a rewording of Theorem~\ref{thm:tangencies}, in Section~\ref{sec:tangencies}, proven by varying the parameters $\mu$ to appropriately tune the perturbative potential $V$. 

\begin{figure}[h]
\begin{center}
\includegraphics[width=.7\textwidth]{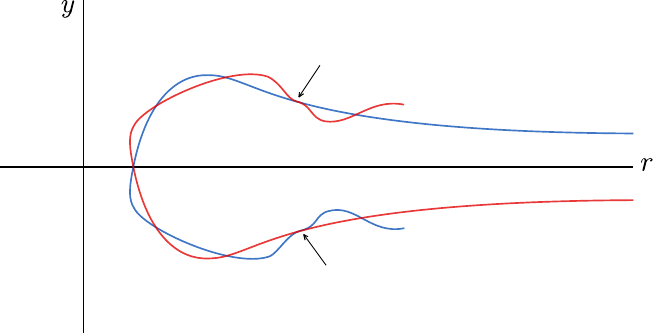}
\end{center}
\caption{Cubic homoclinic tangency (i.e., of order $\ord=2$) between the one-dimensional invariant manifolds ${W_{\infty,G_0}^{s,u} \cap \{\phi=\phi_0\}}$.}\label{fig:cubic_tangency}
\end{figure}

\subsection{Renormalization of the Poincar\'e map}

In Section~\ref{uni_sec:renormalization}, we extend the renormalization scheme for quadratic tangencies in the parabolic setting obtained in \cite{GMP2025} to tangencies of higher order, obtaining (see Proposition~\ref{uni_prop:mainresultrenormalization}):

\begin{thm}
\label{thm:renormalitzacio}
    Under the conditions stated above, there exists $L \in \mathbb{N}$ such that, for any $T \in \mathbb{N}$ sufficiently large, there exist:
    \begin{itemize}
        \item An affine change of parameters $\varphi_{T}:[-2,2]^\ord \to\mathbb{R}^\ord$, $\varphi_T \colon \kappa \mapsto \mu$, with $\kappa=(\kappa_0,\kappa_1,\ldots,\kappa_{\ord-1})$,
        \item A real-analytic change of variables $\Psi_{\kappa, T}\colon B_{10}\to \mathbb{C}^2$, defined in a complex ball of radius $10$ centered at the origin $B_{10} \subset \mathbb{C}^2$,
    \end{itemize}
    such that, as $T\to\infty$,
    \[
    \varphi_{T}([-2,2]^\ord)\to \mu^\ast,\qquad\qquad \Psi_{\kappa,T}(B_{10})\to z^\ast,
    \] 
    and,  uniformly for $(Q,P)\in B_{10}$, 
    \[
    \Psi_{\kappa,T}^{-1}\circ \mathcal{P}_{\varphi_{T}(\kappa)}^{L+T}\circ \Psi_{\kappa,T}(Q,P)=H_{g_\kappa}(Q,P)+\mathcal{O}(T^{-b}),
    \]
    for some positive real number $b>0$, where $H_{g_\kappa}\colon (Q,P) \mapsto (P, -Q+g_\kappa(P))$ is the Hénon-like map corresponding to the polynomial $g_\kappa(P)=\kappa_0+\kappa_1 P + \cdots + \kappa_{\ord-1} P^{\ord-1} -P^{\ord+1}$.
\end{thm}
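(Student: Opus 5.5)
The plan follows the Gonchenko--Shilnikov--Turaev renormalization near a homoclinic tangency, with the hyperbolic saddle replaced by the parabolic fixed point $\gamma_{G_0}$. The linear Shilnikov estimates are replaced by the parabolic Shilnikov maps of Appendix~\ref{uni_sec:Shilnikov}, imported from~\cite{GMP2025}. We write the $(L+T)$-th iterate as $\mathcal{P}^{L+T} = \mathcal{G}\circ \mathcal{T}_T$, where $\mathcal{T}_T$ is the local passage of $T$ iterates near the degenerate saddle and $\mathcal{G}=\mathcal{P}^L$ is the global transition along the homoclinic orbit through $z^\ast$. The number $L$ is fixed by the travel time from a neighbourhood of the unstable manifold near infinity to a neighbourhood of the stable manifold.

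\textbf{Local map.} We use McGehee-type coordinates $(x,\eta)$ near $\gamma_{G_0}$ that straighten $W^{s,u}_{\infty,G_0}$ (analytically in $\mu$) into the axes. In these coordinates the map is tangent to the identity, with leading nonlinearity of order $x^3$, and the results of~\cite{GMP2025} give a description of $\mathcal{T}_T$ in cross form. For a point entering at $(x_0,\eta_0)$ on a fundamental domain of $W^s$ and exiting after $T$ steps near $W^u$, the exit coordinate $\eta_1$ and the entrance coordinate $x_0$ are expressed as functions of $(\eta_0, x_1)$. We need the $C^r$ expansions, with explicit polynomial decay rates $T^{-a}$ replacing the hyperbolic rates $\lambda^T$, together with the area-preservation identity.

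\textbf{Global map.} We Taylor-expand $\mathcal{G}$ near the tangency point. Theorem~\ref{thm:tangencies_sec2} says that $W^u$ is mapped to a curve having contact of order $\ord$ with $W^s$ at $z^\ast$, and that the tangency unfolds nondegenerately in $\mu$. After an analytic choice of coordinates we can therefore write the relevant component as
\[
x=a(\mu)\,\eta^{\ord+1}+\sum_{k<\ord}\beta_k(\mu)\,\eta^k+\dots,
\]
where $\mu\mapsto(\beta_0,\ldots,\beta_{\ord-1})$ is a local diffeomorphism at $\mu^\ast$. This is where we use that the unfolding splits nondegenerately. Any extra parameters in $\mu$ are frozen at $\mu^\ast$.

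\textbf{Rescaling.} We compose $\mathcal{G}$ with the cross-form local map and introduce rescaled variables $Q,P$ and parameters $\kappa$. The scalings are powers of $T$ chosen so that the leading term becomes $-P^{\ord+1}$ and each $\beta_k$ is an affine function of $\kappa_k$; this defines $\varphi_T$. The symplectic structure then forces the remaining first coordinate to be $-Q$ up to errors, giving the Hénon-like map $H_{g_\kappa}$ with error $\mathcal{O}(T^{-b})$. Since the scalings tend to zero, we obtain $\varphi_T([-2,2]^\ord)\to\mu^\ast$ and $\Psi_{\kappa,T}(B_{10})\to z^\ast$. Analyticity on the complex ball $B_{10}$ follows from the analyticity of the Shilnikov expansions in a complex neighbourhood.

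\textbf{Main obstacle.} The hard step is balancing the scales. In the parabolic case the contraction and expansion along a passage are only polynomial in $T$, with exponents such as $T^{-2/3}$ or $T^{-1/3}$ in $x$ and $\eta$ and a possibly $\log T$-type correction in the time. We must verify that the exponents can be matched consistently with the $(\ord+1)$-th order term, so that every discarded term, including higher Taylor terms of $\mathcal{G}$ and the errors of the Shilnikov map, is a positive power of $T$ smaller than the retained terms. We would first compute the scaling exponents for general $\ord$ in the model case, and then check the error bounds against the $C^r$ remainders from~\cite{GMP2025}.
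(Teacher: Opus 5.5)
Your architecture is the paper's. You compose the parabolic Shilnikov passage of Proposition~\ref{uni_prop:Shilnikov} with the global map of Lemma~\ref{uni_lem:globalmap}, use the parameters to unfold the coefficients of $y^0,\dots,y^{\ell-1}$, rescale, and normalise with area preservation. Importing the Shilnikov asymptotics uses the normal form of \cite{GMP2025}, which requires $V$ to be even in $\phi$; this holds because the central configurations are symmetric. The gap is that you leave the rescaling as a to-do, and it is the whole content of Lemmas~\ref{uni_lem:renormfirststep} and~\ref{uni_lem:rescalingtoHenon}. Carried out, it does not work in the generality your plan asserts.

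\emph{Rates.} They are pure powers with no logarithm: $x_T,y_T,\partial_\eta x_T,\partial_\xi y_T=\mathcal{O}(T^{-2/3})$ and $\partial_\xi x_T,\partial_\eta y_T\sim T^{-5/3}$.

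\emph{Centring.} You must first find $\varepsilon_*,\xi_*,\eta_*=\mathcal{O}(T^{-2/3})$ such that $(\xi_*,\eta_*)$ is a fixed point of $(\Psi^s_T)^{-1}\circ\Psi_{glob,\varepsilon_*}\circ\Psi^u_T$. The rescaling is centred there.

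\emph{Scale.} The first component of this map is $\xi_*+c(\eta-\eta_*)+\cdots$, so the rescaling must be isotropic, $(\xi,\eta)=(\xi_*+\alpha Q,\eta_*+\alpha P)$. Producing $-P^{\ell+1}$ then forces $\alpha^{\ell}=\partial_\eta\tilde y_T(\xi_*,\eta_*)\sim T^{-5/3}$.

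\emph{Parameter offsets.} The monomials $P^j$ with $j\le\ell-1$ generated by the Shilnikov maps have coefficients $\mathcal{O}(T^{-2/3}\alpha^j)$, which are huge compared with $\alpha^{\ell+1}$. They are cancelled by the offsets in $\upsilon_j=\mathcal{O}(T^{-2/3})+\alpha^{\ell+1-j}\kappa_j$, which is why $\varphi_T$ has a large constant part.

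\emph{The missing constraint.} There is no parameter in front of $P^\ell$, since $g_\kappa$ has no such term. Its coefficient comes from $-(\ell+1)\eta_*\alpha^\ell$, from $d\,\partial_\eta^\ell\tilde x_T$, and from $\partial_\xi\tilde y_T$ times $\eta$-derivatives of the first component. It is therefore $\mathcal{O}(T^{-2/3}\alpha^\ell)$. After dividing by $\alpha^{\ell+1}$ it has relative size $T^{-2/3}/\alpha=T^{-2/3+5/(3\ell)}$, which tends to zero only for $\ell\ge 3$. This is why Section~\ref{uni_sec:renormalization} assumes $\ell\ge3$. The assumption is harmless here, since Theorem~\ref{thm:tangencies} provides arbitrarily large $\ell$. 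For $\ell\le2$, your claim that every discarded term is smaller by a positive power of $T$ is false. One then needs the extra centring condition of \cite{GMP2025}, namely a vanishing $\eta$-derivative at the fixed point. Your plan neither uses $\ell\ge3$ nor mentions this alternative.

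\emph{Symplectic normalisation.} The step is also misdescribed. What gets normalised is the $Q$-coefficient $d\gamma$ of the \emph{second} component, with $\gamma=\partial_\xi\tilde x_T/\partial_\eta\tilde y_T$, via the conjugation $(Q,P)\mapsto(-(d\gamma)^{-1}Q,P)$. Area preservation at the fixed point gives $-cd\gamma=1+\mathcal{O}(T^{-2/3})$, and this is what turns the first component into $P$.
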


Before stating the main result of the paper, we briefly outline the Gonchenko--Shilnikov--Turaev universality theory, following the exposition in \cite{BergerFlorioPeraltaSalas}. It is also worth recalling the definitions of primitive renormalization and universal map from the Introduction (Definitions \ref{def:primitive_renormalization} and \ref{def:universal_diffeo}, respectively). In what follows, let $r\in\{2,\ldots,\infty,\omega\}$.

Let $C^{r}_{\mathrm{Symp}}$ be the set of two-dimensional symplectic diffeomorphisms of class $C^{r}$.

Let $\mathbb{D}$ be the closed unit disk of $\mathbb{R}^2$ and let $\mathrm{Diff}^{r}_{\mathrm{Symp}}(\mathbb{D}, \mathbb{R}^2)$ be the set of symplectic diffeomorphisms of class $C^{r}$ from $\mathbb{D}$ onto their image in $\mathbb{R}^2$ .

Let $\mathbb{I} \subset \mathbb{R}$ denote a compact interval. For $J \ge 0$, let $\mathrm{Diff}^{r}_{\mathrm{Symp}}(\mathbb{D}, \mathbb{R}^2)_{\mathbb{I}^J}$ denote the space of $C^{r}$ families $(f_p)_{p \in \mathbb{I}^J}$ of maps $f_p \in \mathrm{Diff}^{r}_{\mathrm{Symp}}(\mathbb{D}, \mathbb{R}^2)$.

The following result is due to Turaev. It asserts that renormalizations of Hénon maps are dense in $\mathrm{Diff}^{r}_{\mathrm{Symp}}(\mathbb{D}, \mathbb{R}^2)$ and likewise for the families. See also its reformulation in \cite[Theorem~4.9]{BergerFlorioPeraltaSalas}.

\begin{thm}[{\cite[Theorem~1]{Turaev03}, \cite[Lemma~9]{GSTtangencies}}]\label{thm:Turaev03}
Let $r \in \{2,\ldots,\infty,\omega\}$. For every $J \ge 0$ and every non-empty open subset $\mathcal{O}_J \subset \mathrm{Diff}^{r}_{\mathrm{Symp}}(\mathbb{D}, \mathbb{R}^2)_{\mathbb{I}^J}$, there exist $k > 0$, a $C_{\mathrm{Symp}}^{r}$ family of polynomials $(g_p)_{p \in \mathbb{I}^J}$ of degree $< k$, an iteration $N \ge 1$ and an affine change of coordinates $\phi$ of $\mathbb{R}^2$ such that $(\phi^{-1} \circ H_{g_p}^N \circ \phi|_{\mathbb{D}})_{p \in \mathbb{I}^J}$ is in $\mathcal{O}_J$, where $H_{g_p}$ is the Hénon-like map $H_{g_p}\colon (Q, P) \mapsto (P, -Q+g_p(P))$.
\end{thm}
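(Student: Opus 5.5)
This is a cited result (Turaev's polynomial approximation theorem), so the plan follows its standard architecture rather than anything specific to celestial mechanics. Fix $J$ and a family $(f_p)_{p\in\mathbb{I}^J}$ in $\mathcal{O}_J$.

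\emph{Step 1: reduction to near-identity maps.} Since $\mathbb{D}$ is contractible, after composing with an affine symplectic map, each $f_p$ is isotopic to the identity through symplectic embeddings, continuously in $p$. I would use the Moser/Alexander trick $f_{p,s}(z)=s^{-1}f_p(sz)$ on a slightly larger disk. This writes $f_p$ as the time-one map of a time-dependent Hamiltonian $K_p(z,t)$ that is $C^r$ in $(z,t,p)$. Subdividing $[0,1]$ into $n$ pieces then expresses $f_p$ as a composition of $n$ maps, each $C^r$-close to the time-$1/n$ flow of an autonomous Hamiltonian.

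\emph{Step 2: splitting into polynomial shears.} I would approximate each $K_p(\cdot,t_i)$ in $C^{r+1}$ by polynomials in $(Q,P)$ whose coefficients depend $C^r$ on $p$. A polynomial Hamiltonian is a finite sum of monomials. Using rotated coordinates $(Q\cos\theta+P\sin\theta)^k$ for finitely many angles, each homogeneous polynomial becomes a sum of functions of one linear coordinate. A Trotter/Lie product splitting then approximates the time-$1/n$ flow by a composition of shears $(Q,P)\mapsto(Q,P+h(Q))$ in rotated linear coordinates, with error $\mathcal{O}(n^{-2})$ per step, hence $o(1)$ in total. A shear along the $P$-direction conjugated by the rotation $H_0\colon(Q,P)\mapsto(P,-Q)$ is exactly a Hénon-like map: $H_g=(Q,P)\mapsto(P,-Q+g(P))$. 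General linear symplectic maps are themselves products of such shears, so $f_p$ is $C^r$-approximated, up to an affine conjugacy, by a finite composition
\[
H_{g_{m,p}}\circ\cdots\circ H_{g_{1,p}}
\]
with polynomials $g_{i,p}$ depending $C^r$ on $p$.

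\emph{Step 3 (main obstacle): realizing the composition as one iterate.} The theorem needs a single polynomial $g_p$ and a single iterate $H_{g_p}^N$, not a composition of different $H_{g_i}$. Here I would follow Turaev's trick. Pick a Hénon-like map with a periodic orbit of period $m$ whose points $x_1,\dots,x_m$ are well separated. Then choose one polynomial $g_p$, of degree $<k$, that near each $P$-coordinate of $x_i$ agrees, after an affine rescaling of a tiny neighbourhood, with the local behaviour prescribed by $g_{i,p}$. This is a Lagrange/Hermite-type interpolation with large leading coefficients, and it is possible because the orbit points have distinct $P$-coordinates. The rescaled $m$-th iterate near $x_1$ is then the desired composition plus errors that vanish as the neighbourhood shrinks. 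Composing with the affine rescaling gives $\phi$. The hard part is keeping the rescaled map symplectic, i.e. conformal factors must cancel along the cycle, and doing so with $C^r$ control uniform in $p$. I would handle this by choosing all rescalings area-preserving up to a common factor absorbed into $\phi$. Finally, openness of $\mathcal{O}_J$ absorbs all approximation errors.
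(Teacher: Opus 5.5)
The paper gives no proof of this statement; it quotes it from Turaev and Gonchenko--Shilnikov--Turaev. Your outline follows the architecture of Turaev's argument: first write the family as a composition of $m$ Hénon-like factors $H_{g_{i,p}}$, with $m$ independent of $p$; then realize that composition as a rescaled $m$-th iterate of a single $H_{g_p}$ near a cycle with pairwise distinct $P$-coordinates $y_1,\dots,y_m$.

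\emph{Step 3 works as you describe, once made precise.} Fix the nodes $y_j$ independently of the rescaling factor $\delta$. By Hermite interpolation, prescribe the $d$-jet of $g_p$ at each $y_j$ so that $\delta^{-1}\bigl(g_p(y_j+\delta u)-y_{j-1}-y_{j+1}\bigr)$ agrees with $g_{j,p}(u)$ up to degree $d$, where $d$ bounds the degrees of the $g_{i,p}$. Then:
\begin{itemize}
\item In the coordinates $(y_{j-1},y_j)+\delta(X,Y)$, one step of $H_{g_p}$ is $H_{g_{j,p}}$ plus an error.
\item The interpolant has coefficients $O(\delta^{1-d})$, so the surplus Taylor terms are $O(\delta)$ after rescaling, and that error vanishes as $\delta\to 0$.
\item The dependence on $p$ stays $C^r$, because interpolation is linear in the data.
\item The same homothety ratio $\delta$ is used at every point of the cycle, so $\phi^{-1}\circ H_{g_p}^m\circ\phi$ is automatically area-preserving. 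There are no conformal factors to cancel, so the ``hard part'' you flag does not arise.
\end{itemize}

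\emph{The step that fails as written, for finite $r$, is the regularity bookkeeping in Steps 1--2.} The Alexander-trick isotopy $f_{p,s}(z)=s^{-1}\bigl(f_p(sz)-f_p(0)\bigr)$ has generating vector field involving $Df_p(sz)\,z$. That field is only $C^{r-1}$, so $K_p$ is only $C^r$ in $z$, as you yourself state. It therefore cannot be approximated in $C^{r+1}$. A $C^r$ polynomial approximation only makes the time-one maps $C^{r-1}$-close, so as written you would only get density in $C^{r-1}$; for $r=2$ this is a real loss.

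To fix it, first replace $(f_p)$ by a $C^\infty$ family inside the open set $\mathcal{O}_J$. One way is to take the near-identity pieces $f_{p,s_i}\circ f_{p,s_{i-1}}^{-1}$; these are $C^r$-small perturbations of the identity, because $s\mapsto f_{p,s}$ is continuous into $C^r$. Their generating functions are of class $C^{r+1}$, so they can be approximated in $C^{r+1}$ by polynomials with $C^r$ dependence on $p$. Then run your Hamiltonian/Trotter argument on the smoothed family. For $r=\infty,\omega$ nothing needs to change.
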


We can now state our main result, which is a immediate consequence of  Theorem~\ref{thm:renormalitzacio} together with Theorem~\ref{thm:Turaev03}.
\begin{thm}[Main Result]\label{thm:uni_main_result}
Let $r \in \{2,\ldots,\infty, \omega\}$. Let $\mathfrak{Y}$ be the set consisting of all Poincaré maps $\mathcal{P}$ corresponding to the RPC(N+2)BP (see \eqref{def:mapa_de_Poincare}) for all $N \in \mathbb{N}$, $N \ge 3$. Then the set of symplectic primitive renormalizations of maps in $\mathfrak{Y}$ is dense in $\mathrm{Diff}^r_{\mathrm{Symp}}(\mathbb{D}, \mathbb{R}^2)$.
\end{thm}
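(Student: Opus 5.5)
The argument combines Theorem~\ref{thm:renormalitzacio} with Turaev's Theorem~\ref{thm:Turaev03}, applied with $J=0$. Fix $F\in\mathrm{Diff}^r_{\mathrm{Symp}}(\mathbb{D},\mathbb{R}^2)$ and a $C^r$-neighbourhood $\mathcal{O}$ of $F$. Theorem~\ref{thm:Turaev03} provides a degree bound $k$, a polynomial $g$ of degree $<k$, an iterate $n_0$ and an affine map $\phi$ such that $\phi^{-1}\circ H_g^{n_0}\circ\phi|_{\mathbb{D}}\in\mathcal{O}$. Since $\mathcal{O}$ is open and the conditions are open, we may shrink it and perturb $g$ slightly.

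The first step puts $g$ in the normal form of Theorem~\ref{thm:renormalitzacio}. We choose $\ord$ even with $\ord+1\ge k$, and write $g(P)-\delta P^{\ord+1}$ for a tiny $\delta$. After an affine (conformally symplectic) rescaling $P\mapsto sP$ of the Hénon map, whose conjugacy is absorbed into $\phi$, the top coefficient becomes $-1$. The term $P^{\ord}$ is removed by a translation $P\mapsto P+c$, under which $H_g$ stays Hénon-like, again absorbed into $\phi$. This yields $g_\kappa$ as in the theorem, with $\kappa_0,\dots,\kappa_{\ord-1}$ arbitrary reals.

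Here the ordering of the limits matters. The rescalings make the coefficients and the region $\phi(\mathbb{D})$ large unless $\delta$ is handled carefully. One option is to note that $H_g^{n_0}$ on the compact set $\phi(\mathbb{D})$ and its images is $C^r$-close to $H_{g-\delta P^{\ord+1}}^{n_0}$ as $\delta\to 0$. It is cleaner, though, to apply Theorem~\ref{thm:Turaev03} directly in a form whose polynomials already have leading term $-P^{\ord+1}$, with all orbits staying in a fixed ball and coefficients in $[-2,2]$. This follows from GST by the same rescaling: replace $\phi$ by $\phi_\sigma=\sigma\phi$ and the polynomial by its conjugate, so that the coefficients shrink. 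I expect this normalization, namely ensuring the relevant orbit segments lie in $B_{10}$ and $\kappa\in[-2,2]^\ord$, to be the main bookkeeping obstacle. I would handle it by choosing the affine conjugacy so that the compact set $\bigcup_{j\le n_0}H_g^j(\phi(\mathbb{D}))$ sits inside the real ball of radius $10$.

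Next, Theorem~\ref{thm:tangencies_sec2} gives $N$ and parameters at which the Poincaré map has a tangency of order $\ord$, and Theorem~\ref{thm:renormalitzacio} then yields, for $T$ large, $\Psi_{\kappa,T}^{-1}\circ\mathcal{P}^{L+T}_{\varphi_T(\kappa)}\circ\Psi_{\kappa,T}=H_{g_\kappa}+\mathcal{O}(T^{-b})$ on $B_{10}$, analytically. Because the domain is complex, Cauchy estimates upgrade the uniform error to $C^r$ errors on a smaller real ball, for every $r$ including $\omega$ on a fixed complex neighbourhood. Iterating $n_0$ times, with all intermediate points in $B_{10}$, gives $\Psi^{-1}\circ\mathcal{P}^{n_0(L+T)}\circ\Psi\to H_{g_\kappa}^{n_0}$ in $C^r$.

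Finally we set $\Phi=\Psi_{\kappa,T}\circ\phi$, restricted to $\mathbb{D}\cup\phi^{-1}H^{n_0}\phi(\mathbb{D})$, as the chart. We must check that it is conformally symplectic and that the iterates $\mathcal{P}^\ell(\Phi(\mathbb{D}))$, $0\le\ell<n_0(L+T)$, are disjoint. For the first point, $\Psi$ from the renormalization construction is a composition of symplectic Shilnikov coordinates and affine scalings. If it is only approximately conformally symplectic, we correct it by a small symplectic adjustment, for instance by the Moser trick, which preserves the $C^r$ closeness. Disjointness holds because $\Phi(\mathbb{D})$ shrinks to $z^*$ while the orbit travels near infinity for $T$ steps and revisits the vicinity of $z^*$ only at multiples of $L+T$. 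At those return times, $H_{g_\kappa}^j(\phi(\mathbb{D}))$ for $j<n_0$ can overlap, so we may need $n_0$ to be the minimal return and $\phi(\mathbb{D})$ small. If needed, Turaev's theorem allows choosing $\phi(\mathbb{D})$ disjoint from its first $n_0-1$ images. With this in place, $\Phi^{-1}\circ\mathcal{P}^{n_0(L+T)}\circ\Phi\in\mathcal{O}$ for $T$ large, which proves density.
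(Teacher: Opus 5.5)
Your architecture is the paper's: it obtains Theorem~\ref{thm:uni_main_result} directly from Theorem~\ref{thm:renormalitzacio} and Theorem~\ref{thm:Turaev03}. However, the step you identify as the main obstacle, putting Turaev's polynomial into the form $g_\kappa$ with $\kappa\in[-2,2]^{\ord}$, is resolved by a claim that is false.

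For a nonlinear polynomial, the affine maps conjugating a Hénon-like map to a Hénon-like map are exactly $A(Q,P)=(sQ+c,sP+c)$. They act by $\hat g\mapsto \tilde g(P)=(\hat g(sP+c)-2c)/s$, so the coefficient of $P^j$ is multiplied by $s^{j-1}$. Once the top coefficient is pinned to $-1$ and the $P^{\ord}$ coefficient to $0$, no freedom is left. For $\hat g=g-\delta P^{\ord+1}$ with $\deg g\le\ord-1$ this forces $c=0$ and $s=\delta^{-1/\ord}$. Hence $\kappa_1=a_1$, which is scale invariant, and $\kappa_j=a_j\delta^{-(j-1)/\ord}$ for $j\ge2$, which blow up as $\delta\to0$. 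So ``replacing $\phi$ by $\sigma\phi$ so that the coefficients shrink'' cannot bring $\kappa$ into $[-2,2]^{\ord}$ in general. Choosing the conjugacy to fit the orbit into $B_{10}$ does nothing for the coefficients.

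Also, the region does not grow. Writing $K=\bigcup_{j\le n_0}H_g^j(\phi(\mathbb D))$, the region in normalized coordinates is $\delta^{1/\ord}K$, which shrinks. Moreover, your condition $\ord+1\ge k$ allows $\deg g=\ord$. Removing the $P^{\ord}$ term then needs $c\sim a_{\ord}/\delta$, which pushes the orbit out of $B_{10}$. Take $\ord\ge\max(k,3)$ instead, preferably odd, since Theorem~\ref{thm:tangencies} is proved for odd orders.

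The repair is to fix $\delta$ first and enlarge the parameter box. Choose $\delta$ so small that $\phi^{-1}\circ H^{n_0}_{g-\delta P^{\ord+1}}\circ\phi|_{\mathbb D}\in\mathcal O$ and $\delta^{1/\ord}K$ lies in the real ball of radius $9$. This places $\kappa$ in some box $[-R,R]^{\ord}$. Then use that Theorem~\ref{thm:renormalitzacio} holds with $[-2,2]^{\ord}$ replaced by any compact box. In the proof of Lemma~\ref{uni_lem:rescalingtoHenon} every error term is uniform for $\kappa$ in compact sets, since $\upsilon_j=\mathcal O(T^{-2/3})+\alpha^{\ord+1-j}\kappa_j$ with $\alpha\to0$. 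Only after this do you let $T\to\infty$.

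Two further corrections. First, the chart is not a composition of symplectic maps. The Shilnikov map $\Psi^u_T(\xi,\eta)=(x_T(\xi,\eta),\eta)$ has Jacobian $\partial_\xi x_T$, which is not constant by Proposition~\ref{uni_prop:Shilnikov}. Your Moser correction is therefore required, not optional. It works because, after the rescaling $h_\alpha$, the normalized area density is $1+\mathcal O(\alpha)$ on $B_{10}$ by Cauchy estimates.

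Second, disjointness at the times $j(L+T)$ requires $H^j_{g_\kappa}(\phi(\mathbb D))\cap\phi(\mathbb D)=\emptyset$ for $0<j<n_0$. This is not contained in Theorem~\ref{thm:Turaev03} as quoted. Your remark that Turaev's theorem lets you choose $\phi(\mathbb D)$ disjoint from its images needs a reference to a primitive version of that result. The paper is silent on this point as well.
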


In a sense, the \textit{family} $\mathfrak{Y}$ of Poincaré maps can be said to be $C^r$-universal (compare with Definition~\ref{def:universal_diffeo}). An informal reformulation of our main result would be that any symplectic diffeomorphism of the disk can be approximated, with arbitrary precision, by a renormalization of the Poincaré map of some celestial mechanics model.

\section{A family of central configurations with arbitrary number of parameters}\label{sec:family_CC}

\subsection{The continuation of the centered N-gon central configuration}

In this section we prove the main part of Theorem~\ref{thm:posicions_respecte_a_les_masses_s2}, that is, that the centered $N$-gon central configuration, that is $N$ bodies of mass 1 in the vertexes of a regular $N$-gon, forming a ring, and a central body of mass $m$, can be continued as a central configuration when the masses of the ring are varied and the central mass is large enough. The proof is inspired by ideas from \cite{MoeckelJDDE1994}; see also \cite{Casasayas1994CentralConfigs}. The fact that the configuration is symmetric if the masses satisfy certain symmetry condition is proven in Section~\ref{sec:cc_simetriques}.

Let $q_j\in \R^2$,  $j=0,\dots,N$, be $N+1$ bodies, with masses $m_j$ for $j=0,\dots, N-1$, and $m$ for $j = N$. In what follows, we will assume that $m_j$ is close to $1$ while $m\gg 1$. Assume that their center of mass lies at the origin, that is,
\[
\sum_{j=0}^{N-1} m_j q_j +m q_N = 0,
\]
and that $(q_0,\dots,q_N) \in S_{\mm}\setminus \Delta_{\mm}$. We write the gravitational potential $V_{\mm}$ in~\eqref{eq:def_potential_N_body} as
\begin{equation}
\label{def:VmU1U2}
V_{\mm} = U_1+ U_2
\end{equation}
where
\begin{align}
\label{def:U1}
U_1(q_0,\dots,q_{N-1}) & = \sum_{0\le j < k \le N-1}\frac{m_j m_k}{|q_j-q_k|} = \sum_{0\le j < k \le N-1}\frac{m_j m_k}{((q_j-q_k)(\bar q_j-\bar q_k))^{1/2}},
\\
\label{def:U2}
U_2(q_0,\dots,q_{N}) & = \sum_{j=0}^{N-1} \frac{m_j m}{|q_j-q_N|} = \sum_{j=0}^{N-1} \frac{m_j m}{((q_j-q_N)(\bar q_j-\bar q_N))^{1/2}},
\end{align}
We are identifying $\R^2$ with $\C$. Hence, $\bar z$ denotes de complex conjugate of $z\in \C$ and $|z|^2 = z \bar z$.

We recall that, from Definition~\ref{def:CC_critical_point}, $(q_0,\dots,q_N)$ is a central configuration if there exists $\lambda \in \R$ such that
\begin{equation}
    \label{def:cc_system}
    \partial_{q_j} V_{\mm} (q_0,\dots,q_N) = \frac{\lambda m_j}{2} \bar q_j, \qquad j=0,\dots, N.
\end{equation}
Using the decomposition of $V_{\mm}$ in~\eqref{def:VmU1U2}, equation~\eqref{def:cc_system} is equivalent to
\begin{equation}
    \label{def:cc_system2}
    \left\{
    \begin{aligned}
      \partial_{q_j} U_1 (q_0,\dots,q_{N-1}) +   \partial_{q_j} U_2 (q_0,\dots,q_{N}) & = \frac{\lambda m_j}{2} \bar q_j, \qquad j = 0,\dots,N-1 \\
      \partial_{q_N} U_2 (q_0,\dots,q_{N}) & = \frac{\lambda m}{2} \bar q_N.
    \end{aligned}
    \right.
\end{equation}

It  is immediate to check that, when $m_j =1$, $j=0,\dots,N-1$, and $m> 0$, a solution of~\eqref{def:cc_system2} is given
by $q_j = e^{2\pi i j/N+\theta}$, $j=0,\dots,N-1$, and $q_N=0$, for any $\theta \in \R$. This is the Centered $N$-gon central configuration. The freedom on the choice of $\theta$ is a consequence of the preservation of the total angular momentum.

Since we want to consider central configurations close to the polygonal one, we introduce $\mu_j$ by setting $m_j = 1+ \mu_j$, $j=0,\dots,N-1$. 

Let $\omega = e^{2\pi i/N}$ and $\theta^0_j = 2\pi j/N$, $j = 0, \dots, N-1$, $\hat \theta^0 = (\theta_1^0, \dots, \theta_{N-1}^0)^\top$. We introduce
the matrix $\MM$ by\footnote{$\M$ is the matrix of the discrete Fourier transform. It was also used in \cite{palmore1976} and \cite{MeyerSchmidt1988, SchmidtRecifeLectures}.}
\begin{equation}
\label{def:M}
   \MM_{j,k} = \omega^{jk}, \qquad j,k = 0,\dots,N-1.
\end{equation}
It satisfies
\[
\M^\top = \M, \qquad \overline{\MM} \MM = N \,\Id.
\]

Next theorem implies the first part of Theorem~\ref{thm:posicions_respecte_a_les_masses_s2}.

\begin{thm}
\label{thm:posicions_respecte_a_les_masses}
For any $N\ge 3$, there exist $m^*,  \mu^* >0$ such that, for all $\|\mu\| < \mu^*$ and $m > m^*$, there exists a unique solution $q(\mu,m)$ of~\eqref{def:cc_system2} with $\lambda$ given by~\eqref{def:lambda_cc}, analytic in $\mu$ and $\varepsilon= 1/m$,  of the form
\[
q_j(\mu,\varepsilon) = \rho_j(\mu,\varepsilon) e^{i\theta_j(\mu,\varepsilon)}, \qquad j = 0,\dots, N-1,
\]
with $\theta_0(\mu,\varepsilon) = 0$, $q_N(\mu,\varepsilon) = - \varepsilon \sum_{\ell=0}^{N-1} (1+\mu_\ell) q_\ell(\mu,\varepsilon)$,
where, with the normalizing condition $\sum_{\ell= 0}^{N-1} \mu_\ell = 0$,
\[
\rho_j(\mu,\varepsilon) =  1 + \OO(\mu)\OO(\varepsilon), \qquad  j = 0,\dots, N-1, 
\]
 and, denoting $\hat \theta = (\theta_1,\dots,\theta_{N-1})^\top$,
\[
\hat \theta (\mu,\varepsilon) = \hat \theta^0 + \frac{1}{N} \M \begin{pmatrix}
    -c^\top \\ \hat \D_c
\end{pmatrix}
(0,\Id) \overline{\M} \mu + \OO(\varepsilon)+\OO_2(\mu,\varepsilon),
\]
where $c=(c_1,\dots,c_{N-1})^\top$, $\hat \D_c = \mathrm{diag}\,(c_1,\dots,c_{N-1})$ and the coefficients
\[
c_j = - a_j/b_j, \qquad j = 1,\dots, N-1
\]
are given in Lemmas~\ref{lem:DmutildeG} and~\ref{lem:DthetatildeG_rang}.
\end{thm}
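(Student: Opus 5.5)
The plan is to reduce the central configuration system~\eqref{def:cc_system2} to a regular problem in $(\rho,\hat\theta)$ that can be solved by the Implicit Function Theorem at $(\mu,\varepsilon)=(0,0)$. The key device is a rescaling that removes the singular factor $m=1/\varepsilon$. First, the last equation of~\eqref{def:cc_system2} and the center of mass condition~\eqref{eq:center_mass_at_origin} let us eliminate $q_N = -\varepsilon\sum_\ell (1+\mu_\ell)q_\ell$. Substituting this into the ring equations, the dominant force on $q_j$ is $U_2$-driven, of order $m$. So we write $\lambda = m\Lambda$, with $\Lambda$ determined by~\eqref{def:lambda_cc}, and divide the ring equations by $m$. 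They become
\[
\frac{m_j}{2}\,\frac{\bar q_j - \bar q_N}{|q_j-q_N|^{3}}\;\cdot(-1) \;-\; \frac{\Lambda m_j}{2}\bar q_j \;+\; \varepsilon\, \partial_{q_j}U_1 = 0 ,
\]
which is analytic in $(\mu,\varepsilon)$ near $\varepsilon=0$.

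At $\varepsilon=0$ (with $q_N=0$) these equations say exactly that $|q_j|=\rho_j$ is constant, equal to a common value fixed by the normalization of $I_{\mm}$, while the angles $\theta_j$ are completely free. This is the degeneracy noted in~\cite{MoeckelJDDE1994}, so the IFT cannot be applied directly in the angular directions. To handle it, we split each complex equation into its radial part (the real part of $q_j$ times the equation) and its tangential part (the imaginary part). The radial equations are nondegenerate in $\rho$ at $\varepsilon=0$, because Kepler circular motion gives a nonzero derivative. Solving them gives $\rho_j = 1+\OO(\varepsilon)$, and the $\OO(\varepsilon)$ correction is in fact $\OO(\mu)\OO(\varepsilon)$ because at $\mu=0$ the regular $N$-gon already has $\rho_j\equiv1$.

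The tangential equations vanish identically at $\varepsilon=0$, so we divide them by $\varepsilon$. Using the rotational invariance, only the combination $\sum_\ell m_\ell\,\mathrm{Im}(\cdot)=0$ is automatically satisfied (angular momentum), so one equation is redundant. Fixing $\theta_0=0$ then leaves $N-1$ equations $\tilde G(\hat\theta,\mu,\varepsilon)=0$ in the unknowns $\hat\theta$. At $\varepsilon=0$ these reduce to the reduced ring-interaction equations
\[
\tilde G_j(\hat\theta,\mu,0)=\sum_{k\ne j} m_k\,\partial_{\theta_j}\bigl(2-2\cos(\theta_j-\theta_k)\bigr)^{-1/2} ,
\]
together with the correction from $q_N$ at order $\varepsilon$; that correction only shifts the center by $\OO(\mu)$ and enters $\tilde G$ linearly.

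The main step is to show that $D_{\hat\theta}\tilde G(\hat\theta^0,0,0)$ is invertible. This matrix is circulant because of the $\Z_N$ symmetry of the $N$-gon, so it is diagonalized by $\M$. Its eigenvalues $b_k$, $k=1,\dots,N-1$, are explicit sums $\sum_{s=1}^{N-1}\omega^{ks}f''(2\pi s/N)$, where $f(\theta)=(2-2\cos\theta)^{-1/2}$. The $k=0$ mode is exactly the removed rotation. I expect the proof that $b_k\ne 0$ for $k\neq 0$ to be the main obstacle. This is the content of the nondegeneracy results of~\cite{palmore1976,MeyerSchmidt1988}, which I would invoke, or else check via the sign of the cosine sums as in~\cite{Hall1988,Casasayas1994CentralConfigs}; that would be Lemma~\ref{lem:DthetatildeG_rang}.

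With invertibility in hand, the IFT gives a unique analytic $\hat\theta(\mu,\varepsilon)$ on a fixed ball $\|\mu\|<\mu^*$, $\varepsilon<1/m^*$. Its first order follows from $-D_{\hat\theta}\tilde G^{-1}D_\mu\tilde G$. The matrix $D_\mu\tilde G$ is also diagonalized by the discrete Fourier transform, with eigenvalues $a_k$ (Lemma~\ref{lem:DmutildeG}). Its $k=0$ mode is killed by the normalization $\sum\mu_\ell=0$, which explains the factor $(0,\Id)\overline{\M}\mu$. Finally, converting back from the Fourier basis, and recentering so that $\theta_0=0$ (this produces the row $-c^\top$), gives the stated formula with $c_k=-a_k/b_k$.
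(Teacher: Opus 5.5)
Your scheme matches the paper's: eliminate $q_N$, multiply by $\varepsilon$, solve the real parts for $\rho$, divide the imaginary parts by $\varepsilon$, fix $\theta_0=0$, and diagonalize both Jacobians with $\M$. The gap is in the $\varepsilon=0$ limit of the tangential equations. You treat the displaced central body as an order-$\varepsilon$ effect, or as a mere $\OO(\mu)$ shift of the centre, so your $b_k$ are the eigenvalues of the ring-interaction Hessian alone.

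In fact, with $q_N=\varepsilon Q(\rho,\theta,\mu)$ and $m=1/\varepsilon$, the change in the pull of the central mass on body $j$ is $m\cdot\OO(|q_N|)=\OO(1)$. That is the same size as the mutual attractions in the ring, so it survives at $\varepsilon=0$ once you multiply by $\varepsilon$ and divide the imaginary part by $\varepsilon$. Concretely, besides the ring term, $\tilde G_j(\theta,0,0)$ contains the indirect term $\mathrm{Im}\,(\overline{Q}e^{i\theta_j})=\sum_{k}\sin(\theta_k-\theta_j)$, which is the last sum in $B_j$. It vanishes at $\theta^0$, but $Q$ depends on $\theta$ at first order: $\partial_{\theta_k}Q=-ie^{i\theta_k}\neq 0$ even at $\mu=0$. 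It therefore adds to $D_\theta\tilde G(\theta^0,0,0)$ the circulant matrix with entries $\cos(2\pi(\ell-j)/N)$. Its only nonzero eigenvalues are $N/2$, on the modes $k=1$ and $k=N-1$. This is exactly the $+N^2/2$ in $b_1,b_{N-1}$ of Lemma~\ref{lem:DthetatildeG_rang}. The same term produces the $\pm iN^2/2$ in $a_1,a_{N-1}$ of Lemma~\ref{lem:DmutildeG}, so citing that lemma for $D_\mu\tilde G$ while dropping the term from $D_\theta\tilde G$ is inconsistent.

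This affects both halves of your argument. For existence, the ring-only eigenvalues all have one sign: up to convention they are $\sum_{s=1}^{N-1}(1-\cos(2\pi ks/N))f''(2\pi s/N)$ with $f''>0$. (Your formula $\sum_s\omega^{ks}f''(2\pi s/N)$ omits the diagonal contribution and does not vanish at $k=0$.) Their nonvanishing is therefore immediate and is not the real obstacle. The nontrivial check is
$b_1=b_{N-1}=-\frac{N}{4}\sum_{s=1}^{N-1}\frac{1+\cos^2(\pi s/N)}{\sin(\pi s/N)}+\frac{N^2}{2}\neq 0$.
This is a difference of two positive quantities that nearly cancel for $N=7$, and the Implicit Function Theorem must be applied to this true linearization, not to your reduced operator.

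For the first-order formula, your $b_1,b_{N-1}$ would give the wrong $c_1,c_{N-1}$. To fix this, keep $q_N=\varepsilon Q(\rho,\theta,\mu)$ inside the $U_2$ force and expand in $\varepsilon$, as the paper does with $F_j$. Then $\tilde G_j(\theta,0,0)=\mathrm{Im}\,B_j(\theta)$, and you should compute both $D_\theta\tilde G$ and $D_\mu\tilde G$ from that expression.
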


\begin{proof} 
\emph{Preliminary considerations.}
We recall that the solutions of~\eqref{def:cc_system2} satisfy
\begin{equation}
\label{def:equacio_CM}
\sum_{j=0} (1+\mu_j) q_j + m q_N = 0,
\end{equation}
that is, they must have the center of mass at the origin. 
This implies that, if~\eqref{def:equacio_CM} is satisfied, one of the equations of~\eqref{def:cc_system2} is redundant. We discard the last one, that is, the one with $j=N$.

We introduce the new unknowns $(\rho_j,\theta_j)$ by setting $q_j = \rho_j e^{i\theta_j}$, $j=0,\dots, N-1$, and define the function $q_N = \varepsilon Q(\rho,\theta,\mu)$ through~\eqref{def:equacio_CM}, that is,
\begin{equation}
    \label{def:QN}
    Q (\rho,\theta,\mu) = - \sum_{j=0}^{N-1} (1+\mu_j) \rho_j e^{i\theta_j}.
\end{equation}
We  denote the polygonal central configuration
\begin{equation}
\label{def:configuracio_central_poligonal}
    P_\varepsilon = \{\rho_j =1, \;\theta_j = 2\pi j/N, \; j=0,\dots,N-1, \;\|\mu\| = 0 \}.
\end{equation}
Clearly, $Q_{\mid P_\varepsilon} = 0$.

We first observe that, in view of~\eqref{def:lambda_cc} and~\eqref{def:VmU1U2},
\begin{equation}
\label{varepsilon_lambda}
-\varepsilon \lambda =  \frac{1}{N}\sum_{j=0}^{N-1} \frac{1+\mu_j}{\rho_j} + \varepsilon \Lambda(\rho,\theta,\mu,\varepsilon),
\end{equation}
where
\begin{multline}
\Lambda(\rho,\theta,\mu,\varepsilon) = 
\frac{1}{\varepsilon N}\sum_{j=0}^{N-1} \left(\frac{1+\mu_j}{[(\rho_j -\varepsilon Q (\rho,\theta,\mu)e^{-i\theta_j})(\rho_j -\varepsilon \overline{Q (\rho,\theta,\mu)} e^{i\theta_j})]^{1/2}} -\frac{1+\mu_j}{\rho_j}\right) \\
+ \frac{1}{N}\sum_{0\le j < k \leq N-1} \frac{(1+\mu_j)(1+\mu_k)}{[(\rho_j-\rho_k e^{i(\theta_k-\theta_j)})(\rho_j-\rho_k e^{-i(\theta_k-\theta_j)})]^{1/2}} 
\end{multline}
is analytic in all its variables in a neighborhood of the polygonal central configuration with infinity mass at the origin,
\[
P = \{\rho_j =1, \;\theta_j = 2\pi j/N, \; j=0,\dots,N-1, \;\|\mu\| = 0, \;\varepsilon = 0 \}.
\]

Now we perform the substitution of the new unknowns into equations~\eqref{def:cc_system2}, multiply  equation~$j$, for $j=0,\dots,N-1$, by $2e^{i\theta_j}$, divide it by $1+\mu_j$ and multiply all the equations by $\varepsilon$. Equations~\eqref{def:cc_system2} (recall we do not need to consider the last one) become, 
\begin{equation}
\label{def:equacions_CC_def}
 \frac{1}{N}\left(\sum_{k=0}^{N-1} \frac{1+\mu_k}{\rho_k} \right) \rho_j - \frac{1}{\rho_j^2} + 
    \varepsilon F_j(\rho,\theta,\mu,\varepsilon)  =  0, \qquad j = 0,\dots, N-1 ,
\end{equation}
where, for $j=0,\dots,N-1$,
\begin{multline}
\label{def:F_j}
F_j(\rho,\theta,\mu,\varepsilon) =  \Lambda(\rho,\theta,\mu,\varepsilon) \rho_j -\sum_{k\neq j} \frac{(1+\mu_k)}{[(\rho_j-\rho_k e^{i(\theta_k-\theta_j)})(\rho_j-\rho_k e^{-i(\theta_k-\theta_j)})]^{3/2}} (\rho_j -\rho_k e^{-i(\theta_k-\theta_j)}) \\ 
 - \varepsilon^{-1}\left(\frac{1}{[(\rho_j e^{i\theta_j}-\varepsilon Q(\rho,\theta,\mu))(\rho_j e^{-i\theta_j}-\varepsilon \overline{Q (\rho,\theta,\mu)})]^{3/2}} (\rho_j -\varepsilon \overline{Q (\rho,\theta,\mu)} e^{i\theta_j})) - \frac{1}{\rho_j^2}\right).
\end{multline}
The functions $F_j$, $j=0,\dots,N$, are analytic in all their variables in a neighborhood of $P$.
It is immediate to check that
\begin{equation}
\label{eq:Fj_en_poligon}
F_{j\mid \rho_j = 1,\, j=0,\dots,N-1,\, \mu = 0, \varepsilon = 0}  = A(\theta) + B_j(\theta), \qquad j=0,\dots N-1,
\end{equation}
where, using~\eqref{def:QN},
\begin{align}
\label{def:A_theta}
A(\theta) & = \frac{1}{N} \sum_{0\le \ell < k \leq N-1} \frac{1}{[(1-e^{i(\theta_k-\theta_\ell)})(1-e^{-i(\theta_k-\theta_\ell)})]^{1/2}}, \\
\label{def:B_j_theta}
B_j(\theta) & = - \sum_{k=0, \; k\neq j}^{N-1} \frac{1-e^{-i(\theta_k-\theta_j)}}{[(1-e^{i(\theta_k-\theta_j)})(1-e^{-i(\theta_k-\theta_j)})]^{3/2}} - \sum_{k=0}^{N-1} e^{-i(\theta_k-\theta_j)}.
\end{align}

We remark that~\eqref{def:equacions_CC_def} represents a set of $N$ complex equations, while $\rho$ and $\theta$ are $2N$ real parameters. However, if  $(\rho,\theta)$ is a solution of~\eqref{def:equacions_CC_def}, also is $(\rho,\tilde \theta)$,
where $\tilde \theta_j = \theta_j + \varphi$, for any $\varphi$. This is due to the symmetry by rotations. 

Since  for all $\varepsilon$,  $(\rho,\theta,\mu)$ of the form
\[
\rho_j = 1, \quad \theta_j = 2\pi j/N, \; j=0,\dots, N-1,  \quad \mu = 0,
\]
are solutions of~\eqref{def:cc_system2}, so they are of~\eqref{def:equacions_CC_def}. This can also be checked by a direct substition into~\eqref{def:equacions_CC_def}. 

We will proceed as follows.
\begin{itemize}
    \item First, we check that the real part of equations~\eqref{def:equacions_CC_def}, for $j=0,\dots,N-1$, defines an analytic function $\tilde \rho(\theta,\mu,\varepsilon)$.
    \item Second, we insert $\tilde \rho$ into the imaginary part of equations~\eqref{def:equacions_CC_def}, for $j=0,\dots,N-1$. These equations have an additional degenary, due to the invariance by rotations. Setting $\theta_0 = 0$, it is enough to consider a subset of $N-1$ equations. We check that they define 
    an analytic function $(\theta_1,\dots,\theta_{N-1}) = \hat \theta(\mu,\varepsilon)$. Hence, the first $N$ equations of~\eqref{def:equacions_CC_def} define functions
    \[
    (\rho,\theta) = (\tilde \rho(\hat \theta(\mu,\varepsilon),\mu,\varepsilon), 0, \hat \theta(\mu,\varepsilon)).
    \]
\end{itemize}

\emph{First step. The real part of~\eqref{def:equacions_CC_def}.}  We denote by $\tilde F$ the real part of the left hand side of~\eqref{def:equacions_CC_def}, that is,
\[
\tilde F_j (\rho,\theta,\mu,\varepsilon) = 
\frac{1}{N} \left(\sum_{k=0}^{N-1} \frac{1+\mu_k}{\rho_k} \right) \rho_j - \frac{1}{\rho_j^2} + \varepsilon \mathrm{Re}\, F_j(\rho,\theta,\mu,\varepsilon).
\]
Clearly,
\[
\partial_{\rho_k}\tilde F_{j\mid P} = 
\begin{cases}
    2, \qquad & \text{if $k=j$}, \\
    -1/N, \qquad & \text{if $k\neq j$}.
\end{cases}
\]
Hence, by Gershgorin circle theorem, $\det D_\rho \tilde F_{\mid P} \neq 0$. As a consequence, the equation $\tilde F = 0$ defines $\rho$ as a function $\tilde \rho$ of $(\theta,\mu,\varepsilon)$. It is clear that
\begin{equation}
\label{def:tilde_rho}
\tilde \rho (\theta,\mu,\varepsilon)= \rho_0(\mu) + \varepsilon \rho_1 (\theta,\mu,\varepsilon)
\end{equation}
is an analytic function of its variables, where the $j$ component of $\rho_0$ is
\begin{equation}
\label{def:rho_0}
\rho_{0,j}(\mu) = \frac{1}{\left(1+ \sum_{k=0}^{N-1} \frac{\mu_k}{N} \right)^{1/2}}
= 1 -\frac{1}{2} \sum_{k=0}^{N-1} \frac{\mu_k}{N} + \OO (\|\mu\|^2),
\end{equation}
which is independent of $j$, provided that $\sum_{k=0}^{N-1} \frac{\mu_k}{N} \neq -1$.

\emph{Second step. The imaginary part of~\eqref{def:equacions_CC_def}.} Let  $\varepsilon \tilde G$ be the imaginary part of the left hand side of the first $N$ equations of~\eqref{def:equacions_CC_def}, with the substitution $\rho = \tilde \rho (\theta,\mu,\varepsilon)$,
that is,
\begin{equation}
\label{def:tildeG}
\tilde G_j (\theta,\mu,\varepsilon) = \textrm{Im}\, F_j(\tilde \rho (\theta,\mu,\varepsilon),\theta,\mu,\varepsilon), \qquad j=0,\dots,N-1.
\end{equation}
At $\mu = 0$, $\varepsilon = 0$, since $\tilde \rho(\theta,0,0) = (1,\dots,1)^\top$, using~\eqref{eq:Fj_en_poligon}, \eqref{def:A_theta} and~\eqref{def:B_j_theta}, we have that
\[
\tilde G_j ( \theta,0,0) = \textrm{Im}\, B_j(\theta) =  - \frac{1}{2^{3/2}} \sum_{k=0, \; k\neq j}^{N-1} \frac{\sin(\theta_k-\theta_j)}{(1-\cos (\theta_k-\theta_j))^{3/2}} + \sum_{k=0, \; k\neq j}^{N-1} \sin(\theta_k-\theta_j),
\]
where the function $B_j$ was introduced in~\eqref{def:B_j_theta}.
Hence, for $0\le j \le N-1$,
\begin{equation}
\label{eq:partial_theta_j_tildeG_l_dif}
\partial_{\theta_\ell} \tilde G_j (\theta,0,0) =  \frac{1}{2^{5/2}} 
\frac{3+\cos(\theta_\ell-\theta_j)}{(1-\cos (\theta_\ell-\theta_j))^{3/2}} + \cos(\theta_\ell-\theta_j), \qquad \ell \neq j,
\end{equation}
and
\begin{equation}
\label{eq:partial_theta_j_tildeG_l_igual}
\partial_{\theta_j} \tilde G_j (\theta,0,0) =- \frac{1}{2^{5/2}} \sum_{k=0, \; k\neq j}^{N-1} \frac{3+\cos(\theta_k-\theta_j)}{(1-\cos (\theta_k-\theta_j))^{3/2}}-\sum_{k=0, \; k\neq j}^{N-1} \cos(\theta_k-\theta_j).
\end{equation}

\begin{lem}
\label{lem:DthetatildeG_rang} Let $\M$ be the matrix introduced in~\eqref{def:M}. Then, $D_\theta \tilde G_{\mid \theta = \theta^0,\, \mu = 0, \, \varepsilon = 0}$ satisfies
\[
(\overline{\MM} D_\theta \tilde G_{\mid \theta = \theta^0,\, \mu = 0, \, \varepsilon = 0} \MM)_{i,j} = 
\begin{cases}
    0, & \quad \text{if $i-j \neq 0 \mod N$}, \\
    b_j, & \quad \text{if $i-j = 0 \mod N$},
\end{cases}
\]
where, for $j = 0,\dots N-1$, $j\neq 1, N-1$,
\[
b_j = \frac{N}{2^{5/2}} \sum_{k=1}^{N-1} \frac{3+(\omega^k+\omega^{-k})/2}{(1-(\omega^k+\omega^{-k})/2)^{3/2}} (\omega^{kj}-1) =-\frac{N}{4} \sum_{k=1}^{N-1} \frac{1+\cos^2(\pi k /N)}{\sin^3 (\pi k/N)} \sin^2(\pi k j/N), 
\]
and
\[
\begin{aligned}
b_1 & = -\frac{N}{4} \sum_{k=1}^{N-1} \frac{1+\cos^2(\pi k /N)}{\sin^3 (\pi k/N)} \sin^2(\pi k /N) +\frac{N
^2}{2},\\
b_{N-1} & = -\frac{N}{4} \sum_{k=1}^{N-1} \frac{1+\cos^2(\pi k /N)}{\sin^3 (\pi k/N)} \sin^2(\pi k /N) +\frac{N
^2}{2}.
\end{aligned}
\]
In particular, $b_j = 0$ if and only if $j = 0$. Hence, $\textrm{rank}\, D_\theta \tilde G_{\mid \theta = \theta^0} = N-1$.
\end{lem}
\begin{rem}
    It can be readily proved that $b_1 = b_{N-1}$ are not zero at least for large $N$. In fact, from the proof of Proposition~\ref{prop:negativity_Ck}, this is the case for $N\ge 12$.
\end{rem}
\begin{proof}[Proof of Lemma~\ref{lem:DthetatildeG_rang}]
The expression of the coefficients of  $\overline{\MM} D_\theta \tilde G_{\mid \theta = \theta^0} \MM$ is obtained by a straightforward computation from~\eqref{eq:partial_theta_j_tildeG_l_dif}, \eqref{eq:partial_theta_j_tildeG_l_igual} and~\eqref{def:M}. The last claim is immediate from the expression of $b_j$.
\end{proof}

Now we compute $D_\mu \tilde G_{\mid \theta = \theta^0,\, \mu = 0, \, \varepsilon = 0}$. 
\begin{lem}
  \label{lem:DmutildeG}
The matrix $D_\mu \tilde G_{\mid \theta = \theta^0,\, \mu = 0, \, \varepsilon = 0}$ satisfies
\[
(\overline{\MM} D_\mu \tilde G_{\mid \theta = \theta^0,\, \mu = 0, \, \varepsilon = 0} \MM)_{j,k} = 
\begin{cases}
    0, & \quad \text{if $j-k \neq 0 \mod N$}, \\
    a_k, & \quad \text{if $j-k = 0 \mod N$},
\end{cases}
\]
where, for $k = 0,\dots N-1$, $k\neq 1, N-1$,
\[
a_k = - \frac{iN}{4} \sum_{\ell = 1}^{N-1} \frac{\cos(\pi \ell/N) \sin (2\pi k\ell/N)}{\sin^2(\pi \ell/N)}, 
\]
and
\[
\begin{aligned}
a_1 & = - \frac{iN}{4} \sum_{\ell = 1}^{N-1} \frac{\cos(\pi \ell/N) \sin (2\pi \ell/N)}{\sin^2(\pi \ell/N)}+\frac{i N^2}{2}, \\
a_{N-1} & = - \frac{iN}{4} \sum_{\ell = 1}^{N-1} \frac{\cos(\pi \ell/N) \sin (2\pi (N-1)\ell/N)}{\sin^2(\pi \ell/N)}-\frac{i N^2}{2}.
\end{aligned}
\]
In particular, $a_0 = 0$. 
\end{lem}
\begin{proof}
From~\eqref{def:tildeG} and~\eqref{def:F_j} (and using that some dependence of $F_j$ on $\rho$, $\theta$, $\mu$ and $\varepsilon$ is through $Q$ and $\overline{Q}$ in~\eqref{def:QN}), we introduce
\begin{equation}
\label{def:tildeF}
\tilde F(\rho,\theta,Q,\overline Q,\mu,\varepsilon) = \textrm{Im}\, F(\rho,\theta,Q,\overline Q,\mu,\varepsilon).
\end{equation}
We have that
\begin{align}
\label{DrhotildeFDmutilderho}
D_\mu \tilde G (\theta^0,0,0)  & = D_\rho \tilde F(\rho^0,\theta^0,0,0,0,0) D_\mu \tilde \rho(\theta^0,0,0) \\
\label{DQtildeF}
& + D_Q \tilde F(\rho^0,\theta^0,0,0,0,0) \left( D_\mu Q (\rho^0,\theta^0,0)+ D_\rho Q (\rho^0,\theta^0,0) D_\mu \tilde \rho (\theta^0,0,0) \right) \\
\label{DbarQtildeF}
& + D_{\overline{Q}} \tilde F(\rho^0,\theta^0,0,0,0,0) \left( D_\mu \overline{Q} (\rho^0,\theta^0,0)+ D_\rho \overline{Q} (\rho^0,\theta^0,0) D_\mu \tilde \rho (\theta^0,0,0) \right) \\
\label{Dmutildef}
& + D_\mu \tilde F(\rho^0,\theta^0,0,0,0,0).
\end{align}
We compute each term separately.

We first observe that, in view of \eqref{def:tilde_rho} and~\eqref{def:rho_0},
\begin{equation}
\label{partial_mu_k_tilde_rho}
\partial_{\mu_k} \tilde \rho_{j \mid \mu = 0, \,\varepsilon = 0} = -\frac{1}{2N}, \qquad j,k = 0, \dots, N-1.
\end{equation}

We begin with~\eqref{DrhotildeFDmutilderho}. From~\eqref{def:tildeF} and~\eqref{def:F_j}, we have that, for $\ell \neq j$
\[
\partial_{\rho_\ell} \tilde F_j(\rho^0,\theta^0,0,0,0,0) =-\frac{\sin (2\pi(\ell-j)/N)}{2^{5/2}\left(1-\cos(2\pi(\ell-j)/N)\right)^{3/2}}
\]
and
\[
\partial_{\rho_j} \tilde F_j(\rho^0,\theta^0,0,0,0,0)  = - \sum_{n\neq j} \frac{3 \sin (2\pi(n-j)/N)}{2^{5/2}\left(1-\cos(2\pi(n-j)/N)\right)^{3/2}}
\]
Hence, for~\eqref{DrhotildeFDmutilderho}, 
\[
\begin{aligned}
(D_\rho \tilde F(\rho^0,\theta^0,0,0,0,0) D_\mu \tilde \rho(\theta^0,0,0))_{j,k}& = 
\frac{1}{2^{7/2} N} \sum_{n\neq j} \frac{4 \sin (2\pi(n-j)/N)}{\left(1-\cos(2\pi(n-j)/N)\right)^{3/2}} \\
& = \frac{1}{2^{7/2}N} \sum_{n=1}^{N-1} \frac{4 \sin (2\pi n/N)}{\left(1-\cos(2\pi n/N)\right)^{3/2}}    = 0.
\end{aligned}
\]

Now we deal with~\eqref{DQtildeF} and~\eqref{DbarQtildeF}. 
We first observe that, in view of~\eqref{def:tilde_rho} and~\eqref{def:rho_0},
\[
\partial_{\rho_j} Q (\rho^0,\theta^0,0) = - \omega^j, \qquad \partial_{\rho_j} \overline{Q} (\rho^0,\theta^0,0) = - \omega^{-j}, \qquad j=0,\dots, N-1,
\]
which, using also~\eqref{partial_mu_k_tilde_rho}, implies
\[
(D_\rho Q (\rho^0,\theta^0,0) D_\mu \tilde \rho (\theta^0,0,0))_{j,k} = -\frac{1}{2N} \sum_{j=0}^{N-1} \omega^j = 0
\]
and
\[
(D_\rho \overline{Q} (\rho^0,\theta^0,0) D_\mu \tilde \rho (\theta^0,0,0))_{j,k} = -\frac{1}{2N} \sum_{j=0}^{N-1} \omega^{-j} = 0.
\]
Moreover,  since $\textrm{Im}\, (\overline{Q} \omega^j) = (\overline{Q} \omega^j - Q \omega^{-j})/(2i)$, in view of~\eqref{def:F_j},
\[
\begin{aligned}
\partial_Q \tilde F_j(\rho^0,\theta^0,0,0,0,0) & = -\frac{1}{2i}  \omega^{-j}\\
\partial_{\overline{Q}} \tilde F_j(\rho^0,\theta^0,0,0,0,0) & = \frac{1}{2i}  \omega^{j},
\end{aligned}
\]
and, by~\eqref{def:QN},
\[
\partial_{\mu_k} Q(\rho^0,\theta^0,0) = - \omega^{k}, \qquad \partial_{\mu_k} \overline{Q}(\rho^0,\theta^0,0) = - \omega^{-k}, \qquad k = 0,\dots,N-1,
\]
we have that, for $j,k = 0,\dots, N-1$,
\begin{multline*}
\left(D_Q \tilde F(\rho^0,\theta^0,0,0,0,0) \left( D_\mu Q (\rho^0,\theta^0,0)+ D_\rho Q (\rho^0,\theta^0,0) D_\mu \tilde \rho (\theta^0,0,0) \right)\right)_{j,k} \\
 = \left(D_Q \tilde F(\rho^0,\theta^0,0,0,0,0)  D_\mu Q (\rho^0,\theta^0,0)\right)_{j,k}
 = \frac{1}{2i} \omega^{k-j}
\end{multline*}
and
\begin{multline*}
\left(D_{\overline{Q}} \tilde F(\rho^0,\theta^0,0,0,0,0) \left( D_\mu \overline{Q} (\rho^0,\theta^0,0)+ D_\rho \overline{Q} (\rho^0,\theta^0,0) D_\mu \tilde \rho (\theta^0,0,0) \right)\right)_{j,k} \\
 = \left(D_{\overline{Q}} \tilde F(\rho^0,\theta^0,0,0,0,0)  D_\mu  \overline{Q}(\rho^0,\theta^0,0)\right)_{j,k}
 = -\frac{1}{2i} \omega^{-(k-j)}.
\end{multline*}

Finally we deal with~\eqref{Dmutildef}. In view of~\eqref{def:F_j}, we have that, if $k\neq j$,
\[
\partial_{\mu_k} \tilde F_j (\rho^0,\theta^0,0,0,0,0) = \frac{\textrm{Im}\, \omega^{-(k-j)}}{[(1-\omega^{k-j})(1-\omega^{-(k-j)})]^{3/2}} = - \frac{1}{2 i} \frac{\omega^{k-j}- \omega^{-(k-j)}}{[(1-\omega^{k-j})(1-\omega^{-(k-j)})]^{3/2}}
\]
and, for $k=j$,
\[
\partial_{\mu_j} \tilde F_j (\rho^0,\theta^0,0,0,0,0) = 0.
\]
Summarizing, we have proven that, if $k\neq j$,
\begin{equation}
\label{coeficients_D_mu_tilde_G_j_diferent_de_k}
(D_\mu \tilde G (\theta^0,0,0))_{j,k} = - \frac{1}{2 i}\frac{\omega^{k-j}- \omega^{-(k-j)}}{[(1-\omega^{k-j})(1-\omega^{-(k-j)})]^{3/2}}  + \frac{1}{2i}(\omega^{k-j}-\omega^{-(k-j)})
\end{equation}
and
\begin{equation}
\label{coeficients_D_mu_tilde_G_j_igual_a_k}
(D_\mu \tilde G (\theta^0,0,0))_{j,j} = 0.
\end{equation}

We finally compute $\overline{\M} D_{\mu} \tilde G(\theta^0,0,0) \M$, where $\M$ was defined in~\eqref{def:M}. First, from~\eqref{coeficients_D_mu_tilde_G_j_diferent_de_k} and~\eqref{coeficients_D_mu_tilde_G_j_igual_a_k},
\[
\begin{aligned}
(D_{\mu} \tilde G(\theta^0,0,0) \M)_{j,k} & = -\frac{1}{2i} \sum_{\ell = 0, \ell\neq j}^{N-1} 
\left(\frac{\omega^{\ell-j}- \omega^{-(\ell -j)}}{[(1-\omega^{\ell-j})(1-\omega^{-(\ell-j)})]^{3/2}} - (\omega^{\ell-j}-\omega^{-(\ell-j)}) \right) \omega^{k\ell}  \\
& =  -\frac{1}{2i} \sum_{\ell = 1}^{N-1} 
\left(\frac{\omega^{\ell}- \omega^{-\ell }}{[(1-\omega^{\ell})(1-\omega^{-\ell})]^{3/2}} - (\omega^{\ell}-\omega^{-\ell}) \right) \omega^{k(\ell+j)}.
\end{aligned}
\]
Hence, 
\[
\begin{aligned}
(\overline{\M} D_{\mu} \tilde G(\theta^0,0,0) \M)_{j,k} & = -\frac{1}{2i} \sum_{n=0}^{N-1} \omega^{-n j} \sum_{\ell = 1}^{N-1} 
\left(\frac{\omega^{\ell}- \omega^{-\ell }}{[(1-\omega^{\ell})(1-\omega^{-\ell})]^{3/2}} - (\omega^{\ell}-\omega^{-\ell}) \right) \omega^{k(\ell+n)} \\
& = -\frac{1}{2i}   \sum_{\ell = 1}^{N-1} 
\left(\frac{\omega^{\ell}- \omega^{-\ell }}{[(1-\omega^{\ell})(1-\omega^{-\ell})]^{3/2}} - (\omega^{\ell}-\omega^{-\ell}) \right) \omega^{k\ell} \sum_{n=0}^{N-1}  \omega^{n(k-j)} \\
& = 
\begin{cases}
 N \sum_{\ell = 1}^{N-1} 
a_{\ell,k} \omega^{k\ell}, & \; k = j, \\
0, & k \neq j,
\end{cases}
\end{aligned}
\]
where
\[
a_{\ell,k} = -\frac{1}{2i}  
\left(\frac{\omega^{\ell}- \omega^{-\ell }}{[(1-\omega^{\ell})(1-\omega^{-\ell})]^{3/2}} - (\omega^{\ell}-\omega^{-\ell}) \right) , \quad \ell = 1,\dots, N-1, \; k = 0,\dots,N-1.
\]
Note that the above expression for $(\overline{\M} D_{\mu} \tilde G(\theta^0,0,0) \M)_{j,k}$ can be further simplified. Indeed, first we remark that, since $a_{N-\ell,k} = - a_{\ell,k}$, and $\omega^{k(N-\ell)} = \omega^{-k\ell}$, we have that
\[
(\overline{\M} D_{\mu} \tilde G(\theta^0,0,0) \M)_{k,k} = \frac{1}{2}\sum_{\ell = 1}^{N-1} a_{\ell,k} (\omega^{k\ell}-\omega^{-k\ell}).
\]
Then, using that
\[
\frac{1}{2i}   \sum_{\ell = 1}^{N-1} 
 (\omega^{\ell}-\omega^{-\ell}) \omega^{k\ell} = 
 \begin{cases}
 0, \qquad & k \neq 1, N-1,\\
 -N/2i \qquad & k = 1,\\
 N/2i \qquad & k = N-1,\\
 \end{cases}
\]
we have that, for $k\neq 1,N-1$,
\begin{multline*}
(\overline{\M} D_{\mu} \tilde G(\theta^0,0,0) \M)_{k,k} = -\frac{N}{4i}\sum_{\ell = 1}^{N-1} 
\frac{(\omega^{\ell}- \omega^{-\ell })(\omega^{k\ell}-\omega^{-k\ell})}{[(1-\omega^{\ell})(1-\omega^{-\ell})]^{3/2}}  \\ 
= - \frac{iN}{2^{3/2}} \sum_{\ell = 1}^{N-1} \frac{\sin(2\pi \ell/N) \sin (2\pi k\ell/N)}{(1-\cos(2\pi \ell/N))^{3/2}}  = - \frac{iN}{4} \sum_{\ell = 1}^{N-1} \frac{\cos(\pi \ell/N) \sin (2\pi k\ell/N)}{\sin^2(\pi \ell/N)},
\end{multline*}
while
\[
(\overline{\M} D_{\mu} \tilde G(\theta^0,0,0) \M)_{1,1} = - \frac{iN}{4} \sum_{\ell = 1}^{N-1} \frac{\cos(\pi \ell/N) \sin (2\pi \ell/N)}{\sin^2(\pi \ell/N)}-\frac{N^2}{2i}
\]
and
\[
(\overline{\M} D_{\mu} \tilde G(\theta^0,0,0) \M)_{N-1,N-1} = - \frac{iN}{4} \sum_{\ell = 1}^{N-1} \frac{\cos(\pi \ell/N) \sin (2\pi(N-1) \ell/N)}{\sin^2(\pi \ell/N)}+\frac{N^2}{2i}.
\]
Hence the claim is proven.
In particular,  $(\overline{\M} D_{\mu} \tilde G(\theta^0,0,0) \M)_{0,0} = 0$.
\end{proof}

After Lemmas~\ref{lem:DthetatildeG_rang} and~\ref{lem:DmutildeG}, we can finish the proof of Theorem~\ref{thm:posicions_respecte_a_les_masses}.

We have seen in the \emph{first step}  that the real part of~\eqref{def:equacions_CC_def} defines $\rho = \tilde \rho(\theta,\mu,\varepsilon)$, in~\eqref{def:tilde_rho}, as an analytic function of $(\theta,\mu,\varepsilon)$ in a neighborhood of $(\theta^0,0,0)$. 

Due to the conservation of the angular momentum, one of the equations of the imaginary part of~\eqref{def:equacions_CC_def} is redundant. This is equivalent to fixing one of the angles. Then, 
 Lemma~\ref{lem:DthetatildeG_rang} implies that a subset of $N-1$ equations of the imaginary part of~\eqref{def:equacions_CC_def} defines $\theta = \Theta(\mu,\varepsilon)$ as an analytic function of $(\mu,\varepsilon)$ in a neighborhood of $(0,0)$. Let us compute the first order in $\mu$ of $\Theta$. We introduce
 $\phi = \theta-\theta^0$. In view of Lemmas~\ref{lem:DthetatildeG_rang} and~\ref{lem:DmutildeG}, finding the first order of $\Theta$ is equivalent to solve $\phi$ as a function of $\mu$ from
 \begin{equation}
 \label{primer_ordre_de_phi_en_mu}
 D_\theta \tilde G(\theta^0,0,0) \phi = - D_\mu  \tilde G(\theta^0,0,0) \mu
 \end{equation}
Introducing $\varphi$ and $\nu$ by setting $\phi = \M \varphi$ and $\mu = \M \nu$, respectively, $\hat \varphi = (\varphi_1,\dots,\varphi_{N-1})^\top$, $\hat \nu = (\nu_1,\dots,\nu_{N-1})^\top$,
 $\D_a = \mathrm{diag}\,(a_0,\dots, a_{N-1})$, $\D_b = \mathrm{diag}\,(b_0,\dots, b_{N-1})$, $\hat \D_a = \mathrm{diag}\,(a_1,\dots, a_{N-1})$, and $\hat \D_b = \mathrm{diag}\,(b_1,\dots, b_{N-1})$, where the coefficients $a_j$, $b_j$, $j=0,\dots,N-1$ are given in Lemmas~\ref{lem:DmutildeG} and~\ref{lem:DthetatildeG_rang}, respectively, equation~\eqref{primer_ordre_de_phi_en_mu} becomes
\begin{equation}
 \label{primer_ordre_de_varphi_en_nu}
 \D_b \varphi = -\D_a \nu.
 \end{equation}
Notice that, since $b_0 = a_0 = 0$, in~\eqref{primer_ordre_de_varphi_en_nu} $\varphi_0$ is free and $\varphi$ does not depend on $\nu_0$. 
Equation~\eqref{primer_ordre_de_varphi_en_nu} becomes
\begin{equation}
 \label{primer_ordre_de_hat_varphi_en_hat_nu}
 \hat \varphi =  \hat \D_c \hat \nu.
 \end{equation}
 where
 \begin{equation}
\hat \D_c =  \mathrm{diag}\,( c_1,\dots,c_{N-1}) = -  \hat \D_b^{-1} \hat \D_a =  - \mathrm{diag}\,( a_1/b_1,\dots,a_{N-1}/b_{N-1}).
 \end{equation}
Since $\varphi$ does not depend on $\nu_0$, we can take $\nu_0 = 0$. Since, by the definition of $\nu$, $\nu_0 = N^{-1} \sum_{\ell = 0}^{N-1} \mu_\ell$, this choice,  by~\eqref{def:tilde_rho} and~\eqref{def:rho_0}, implies that $\tilde \rho (\theta,\mu, \varepsilon)_{\mid \nu_0 = 0} = 1+ \varepsilon \rho_1(\theta,\mu, \varepsilon)_{\mid \nu_0 = 0} $. 

Hence, we obtain that
\[
\phi = \M \varphi = \M
\begin{pmatrix}
    \varphi_0 \\
    \hat \D_c \hat \nu 
\end{pmatrix}.
\]
Since $\varphi_0$ is free, we can choose it in such a way that $\phi_0 = 0$, that is, 
\[
\varphi_0 = - \sum_{\ell= 1}^{N-1} c_\ell \nu_\ell.
\]
This is equivalent to fix $\theta_0 = 0$. Hence, since, with the current choice of $\nu_0$,
\[
\frac{1}{N} \overline{\M} \mu = \begin{pmatrix}
    0 \\ \tilde \nu
\end{pmatrix},
\]
denoting $c = (c_1,\dots,c_{N-1})^\top$, we have
\[
\phi = \M 
\begin{pmatrix}
        -c^\top \\ \hat \D_c
    \end{pmatrix}  \hat \nu = \frac{1}{N} \M 
\begin{pmatrix}
        -c^\top \\ \hat \D_c
    \end{pmatrix} ( 0,\Id) \,\overline{\M} \mu,
\]
with the condition $\sum_{\ell=0}^{N-1} \mu_\ell = 0$.
\end{proof}

\subsection{Symmetric central configurations}
\label{sec:cc_simetriques}

Here we prove that it is possible to choose the central configuration to be symmetric with respect to the real axis by taking symmetric masses.

Define the index permutation $S \colon \{0, 1,\ldots,N-1\} \to \{0, 1,\ldots,N-1\} $, $S(\ell) = N-\ell \pmod{N}$ and the reflection with respect to the horizontal axis $\widehat{S} \colon \mathbb{R}^2 \to \mathbb{R}^2$, $\widehat{S}(x,y)=(x,-y)$.

Let $\mathfrak{S}(\mu_0,\mu_1,\ldots,\mu_{N-1},m) = (\mu_{S(0)},\mu_{S(1)},\ldots,\mu_{S(N-1)},m)$ and let $\widehat{\mathfrak{S}}(q_0,q_1,\ldots,q_{N-1},q_N) = (\widehat{S}(q_{S(0)}),\widehat{S}(q_{S(1)}),\ldots,\widehat{S}(q_{S(N-1)}),\widehat{S}(q_N))$.

Then, as a consequence of the uniqueness of solution that the implicit function theorem provides for a nondegenerate central configuration, Theorem~\ref{thm:posicions_respecte_a_les_masses} implies

 \begin{prop}\label{prop:symmetry_CC}
 Let $\mathcal{W}=\{(\mu, m) \colon \lVert \mu \rVert < \mu^\ast, m > m^\ast\}$ be the domain of definition of the central configuration $q(\mu, m)$ obtained in Theorem~\ref{thm:posicions_respecte_a_les_masses}. Assume $\tilde{\mathcal{W}} \subset \mathcal{W}$ satisfies $\mathfrak{S}(\tilde{\mathcal{W}}) = \tilde{\mathcal{W}}$. Then, for $(\mu, m) \in \tilde{\mathcal{W}}$, the central configuration $q=q(\mu, m)$ satisfies that $\widehat{\mathfrak{S}}(q(\mu, m)) = q(\mu, m)$.
\end{prop}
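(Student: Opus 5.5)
The argument is a symmetry-plus-uniqueness argument. The idea is to show that $\widehat{\mathfrak{S}}(q(\mu,m))$ is again a solution of the system produced in Theorem~\ref{thm:posicions_respecte_a_les_masses}, with the same normalizations and for the masses $\mathfrak{S}(\mu,m)$. Uniqueness then identifies it with $q(\mathfrak{S}(\mu,m),m)$.

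\textbf{Step 1: equivariance.} The potential $V_{\mm}$, the moment of inertia $I_{\mm}$ and the centre of mass condition are invariant under isometries of the plane applied to all bodies simultaneously. They are also invariant under relabelling the bodies, provided the masses are relabelled the same way. Complex conjugation $\widehat S$ is an isometry, and $S$ is a permutation of the ring indices that fixes the central body. Hence if $(q_0,\dots,q_N)$ solves \eqref{def:cc_system} with masses $(1+\mu_0,\dots,1+\mu_{N-1},m)$ and some $\lambda$, then $\widehat{\mathfrak{S}}(q)$ solves it with masses $\mathfrak{S}(\mu,m)$ and the same $\lambda$. Concretely, conjugating the equations replaces $\partial_{q_j}$ by $\partial_{\bar q_j}$, and since $V_{\mm}$ is real this is harmless. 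Moreover $\lambda=-V/N$ is preserved, the center of mass stays at the origin, and $\sum\mu_\ell=0$ is preserved under $S$.

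\textbf{Step 2: normalizations.} Theorem~\ref{thm:posicions_respecte_a_les_masses} fixes $\theta_0=0$ and gives $\rho_j = 1+\OO(\mu)\OO(\varepsilon)$ and $\theta_j$ close to $2\pi j/N$. Under $\widehat{\mathfrak{S}}$, the new $j$-th body is $\overline{q_{N-j}}$, whose angle is $-\theta_{N-j}$. This is close to $-2\pi(N-j)/N\equiv 2\pi j/N$, so the image lies in the same small neighborhood of the polygonal configuration $P$ where the implicit function theorem gives uniqueness. The new body $0$ is $\overline{q_0}$. Since $\theta_0=0$, $q_0$ is real and positive, so $\overline{q_0}=q_0$ and the condition $\theta_0=0$ is kept. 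The moduli $\rho$ are simply permuted, so the $\rho$-equations and the expansion for $\rho$ are respected as well.

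\textbf{Step 3: uniqueness.} The solution of Theorem~\ref{thm:posicions_respecte_a_les_masses} is unique in that neighborhood with $\theta_0=0$, and $\mathfrak{S}(\mu,m)$ lies in $\tilde{\mathcal{W}}\subset\mathcal W$. Therefore
\[
\widehat{\mathfrak{S}}(q(\mu,m)) = q(\mathfrak{S}(\mu,m),m).
\]
When $\mu$ is symmetric, that is $\mathfrak{S}(\mu,m)=(\mu,m)$, this gives $q_j=\overline{q_{N-j}}$ and $q_0,q_N\in\mathbb R$, which is the last claim of Theorem~\ref{thm:posicions_respecte_a_les_masses_s2}. Read literally, the statement asserts the fixed-point identity for every $(\mu,m)\in\tilde{\mathcal W}$. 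What the argument proves is the equivariance $\widehat{\mathfrak{S}}\circ q = q\circ\mathfrak{S}$, and the fixed-point form holds exactly at the $\mathfrak{S}$-invariant parameters.

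\textbf{Main obstacle.} The delicate point is Step 3: one must make sure that the uniqueness neighborhood produced by the two-stage implicit function argument is stable under $\widehat{\mathfrak{S}}$. That argument first solves for $\tilde\rho$ and then for $\hat\theta$ with $\theta_0=0$. I would handle this by noting that $\widehat{\mathfrak{S}}$ maps $P$ to itself, since $\overline{\omega^{N-j}}=\omega^j$, and acts on $(\rho,\theta)$ as an isometry with respect to the max norm on angles mod $2\pi$. One can therefore shrink the neighborhood to a $\widehat{\mathfrak{S}}$-invariant ball around $P$ and apply uniqueness there.
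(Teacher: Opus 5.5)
Your proposal is correct and is essentially the paper's argument, which just invokes the symmetry of the central configuration equations together with the local uniqueness given by the implicit function theorem; your equivariance $\widehat{\mathfrak{S}}\circ q = q\circ\mathfrak{S}$ and your check that the uniqueness neighbourhood can be taken $\widehat{\mathfrak{S}}$-invariant are exactly the details the paper leaves implicit. You are also right that the hypothesis on $\tilde{\mathcal W}$ has to be read pointwise (each $(\mu,m)\in\tilde{\mathcal W}$ fixed by $\mathfrak{S}$, i.e.\ $\mu_j=\mu_{N-j}$): for a merely setwise-invariant set such as $\mathcal W$ itself, the literal statement would force every configuration of the family to be symmetric, which fails for non-symmetric masses.
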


Therefore, by choosing the masses symmetrically in the ring, we obtain central configurations which are symmetric with respect to the horizontal axis.

\section{Invariant manifolds of infinity for the RPC(N+2)BP}\label{sec:RPCN2BP}

\subsection{Estimates for the distance between the invariant manifolds of $\gamma_{G_0}$}
\label{sec:prova_distancia}

In this section we will use some of the results in~\cite{GMS2016} to deduce Theorem~\ref{thm:Melnikov_domina}. In here the strategy is much simpler, because  $\varepsilon$ is a small parameter, while in~\cite{GMS2016} the analogous parameter was $\OO(1)$.

First, since the invariant manifolds $W_{\infty, G_0}^{s,u}$ are Lagrangian, they can be locally parameterized as graphs of generating functions $S(r, \phi; \mu,\varepsilon, G_0)$, which are solutions of the Hamilton-Jacobi equation
\begin{equation}\label{eq:HJ_RPCN2BP}
    \mathcal{H}(r, \phi, \partial_{r} S, \partial_\phi S; \tmm, G_0) = -G_0^3 , 
\end{equation}
where $\mathcal{H}$ is the Hamiltonian in \eqref{def:Hamiltonian_rescaled_rotating}, with appropriate boundary conditions.

For $\varepsilon = 0$, the Hamilton-Jacobi equation~\eqref{eq:HJ_RPCN2BP} becomes
\[
\frac12 \left(\partial_{r} S\right)^2 + \frac{1}{2 r^2} \left(\partial_\phi S\right)^2 -\frac{1}{r} - G_0^3 \partial_\phi S = -G_0^3.
\]
Thus, the generating function of the invariant manifolds (a homoclinic loop for $\varepsilon = 0$) is given by
\[
S_0(r,\phi) = \phi+ h(r),
\]
where $h$ is any function satisfying
\[
\frac12 \left(h'\right)^2 + \frac{1}{2 r^2}  -\frac{1}{r}  = 0.
\]
The strategy in~\cite{GMS2016} consists of finding solutions of~\eqref{eq:HJ_RPCN2BP} close to $S_0$, introducing $S_1$ by setting $S= S_0+S_1$. To do so, using the parametrization of the homoclinic given by Lemma~\ref{lem:rhyhalphah}, we introduce the change
\begin{equation}
    \label{def:canvi_a_v_xi}
    (r,\phi) = (r_h(v),\xi +\alpha_h(v))
\end{equation}
and the new generating function
\[
T(v,\xi) = S(r_h(v),\xi +\alpha_h(v)),
\]
which we write as $T=T_0+T_1$, where
\[
T_0(v,\xi) = S_0(r_h(v),\xi +\alpha_h(v)), \qquad T_1(v,\xi) = S_1(r_h(v),\xi +\alpha_h(v)).
\]
The associated Hamilton-Jacobi equation is
\begin{multline}
\label{eq:HJ}
\mathcal{H}\left(r_h(v),\xi+\alpha_h(v),y_h^{-1}(v)\left(\partial_v (T_0+T_1)(v,\xi)-r_h(v)^{-2}\partial_\xi (T_0+T_1)(v,\xi)\right),\partial_\xi (T_0+T_1)(v,\xi)\right) \\ = -G_0^3,
\end{multline}
with boundary conditions
\[
\begin{aligned}
\lim_{v\to -\infty} \partial_v T_1^u(v,\xi) & = \lim_{v\to -\infty} \partial_\xi T_1^u(v,\xi) = 0, \\
\lim_{v\to +\infty} \partial_v T_1^s(v,\xi) & = \lim_{v\to +\infty} \partial_\xi T_1^s(v,\xi) = 0,
\end{aligned}
\]
for the unstable and stable manifolds, respectively. 
If solutions $T^{u,s} = T_0+T_1^{u,s}$ of this equation are found, with the corresponding boundary conditions are found,  then
\[
\Gamma^{u,s}(v,\xi) = 
\begin{pmatrix}
    r_h(v)\\
    y_h(v) + y_h(v)^{-1}\left( \partial_v T^{u,s}_1(v,\xi)-r_h(v)^{-2}\partial_\xi T^{u,s}_1(v,\xi)\right)\\
    \xi + \alpha_h(v) \\
    1 + \partial_\xi T^{u,s}_1(v,\xi)
\end{pmatrix}
\]
are parametrizations of the unstable and stable manifolds of $\gamma_{G_0}$.

\begin{prop}
    \label{prop:Delta_de_T} Let $G_0 >1$. Then, for any $0 < v_- < v_+$, the Hamilton-Jacobi equation~\eqref{eq:HJ} has solutions $T^{u,s} = T_0+T_1^{u,s}$, defined and analytic in $(-\infty,v_+)\times \T$ and $(v_-,\infty)\times \T$, respectively, satisfying the corresponding boundary conditions. They satisfy, for $(v,\xi) \in (v_-,v_+)\times \T$,
    \[
    |T_1^u(v,\xi,\mu,\varepsilon,G_0)-T_1^s(v,\xi,\mu,\varepsilon,G_0) - \varepsilon L(\xi+G_0^3v,\mu,\varepsilon,G_0) -E| \le C \varepsilon^2,
    \]
    where $L$ is the Melnikov potential defined in~\eqref{def:Melnikov}.
\end{prop}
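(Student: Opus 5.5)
The plan is to follow the scheme of~\cite{GMS2016}, which simplifies considerably here because $\varepsilon$ is a regular small parameter and $G_0>1$ stays fixed. We insert $T=T_0+T_1$ into~\eqref{eq:HJ}. Since $T_0$ solves the $\varepsilon=0$ equation, expanding in $T_1$ gives an equation of the form
\[
\mathcal{L}T_1 = \varepsilon V\bigl(r_h(v),\xi+\alpha_h(v);\mu,\varepsilon,G_0\bigr) + \mathcal{Q}(v,\partial_v T_1,\partial_\xi T_1),
\]
up to the normalizing factor coming from the change of variables. Here $\mathcal{L}=\partial_v - G_0^3\partial_\xi$ is the linearization, namely the derivative along the unperturbed homoclinic flow $\dot v=1$, $\dot\xi=-G_0^3$. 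The remainder $\mathcal{Q}$ is quadratic in $(\partial_vT_1,\partial_\xi T_1)$, with coefficients involving $y_h^{-1}$ and $r_h^{-2}$.

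The operator $\mathcal{L}$ is inverted by integrating along characteristics. For the unstable manifold we set $\mathcal{G}^u[h](v,\xi)=\int_{-\infty}^0 h(v+s,\xi-G_0^3 s)\,ds$, and for the stable one we use $\mathcal{G}^s[h]=-\int_0^{+\infty}(\cdots)\,ds$. We then set up the fixed point $T_1=\mathcal{G}^{u,s}[\varepsilon V+\mathcal{Q}(\cdot,\partial T_1)]$ in Banach spaces of analytic functions on $(-\infty,v_+)\times\T_\sigma$ and $(v_-,\infty)\times\T_\sigma$, respectively. These spaces carry weighted norms adapted to the parabolic decay at infinity: $r_h\sim |v|^{2/3}$, and~\eqref{expansio_U_pol} gives $V=\OO(r^{-3})\sim |v|^{-2}$. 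Because $V$ decays integrably, $\mathcal{G}^{u,s}$ is bounded on these spaces.

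The main obstacle is the region near $v\to\pm\infty$. There $y_h\to0$, so the coefficients of $\mathcal{Q}$ degenerate, and a naive contraction fails. To handle it, we work in weighted norms of the type $\sup |v|^{\alpha}|\partial T_1|$ with suitable $\alpha$, or we simply invoke the existence and parametrization results of~\cite{GMS2016} (or McGehee's analytic stable manifold theorem~\cite{McGeheestablemfold}) for the tails. This gives $\partial T_1^{u,s}=\OO(\varepsilon)$ uniformly in the weighted norm. Since $\mathcal{Q}$ is quadratic, it contributes only $\OO(\varepsilon^2)$. Analyticity in $(v,\xi,\mu,\varepsilon)$ follows from the uniformity of the contraction.

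Next we subtract the two solutions on the overlap $(v_-,v_+)\times\T$:
\[
T_1^u-T_1^s=\varepsilon\int_{-\infty}^{+\infty}V\bigl(r_h(v+s),\xi-G_0^3s+\alpha_h(v+s)\bigr)\,ds+\OO(\varepsilon^2).
\]
After the substitution $s\mapsto s-v$, the integral equals $L(\xi+G_0^3v)$ by~\eqref{def:Melnikov}. The constant $E$ records the freedom of adding a constant to $T_1^u$ or $T_1^s$ in the generating functions, because the boundary conditions fix only derivatives. This gives the stated bound $\le C\varepsilon^2$.
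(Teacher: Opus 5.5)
Your linearization is correct. Inserting $T=T_0+T_1$ into~\eqref{eq:HJ} gives exactly
\[
\partial_vT_1-G_0^3\partial_\xi T_1=\varepsilon V-\frac{\bigl(\partial_vT_1-r_h^{-2}\partial_\xi T_1\bigr)^2}{2\,y_h^{2}}-\frac{(\partial_\xi T_1)^2}{2\,r_h^{2}}.
\]
However, the degeneracy that breaks your scheme is not at $v\to\pm\infty$. The function $y_h$ also has a simple zero at $v=0$, the pericenter $r_h(0)=1/2$. Since $v_+>0$, your unstable domain $(-\infty,v_+)\times\T_\sigma$ contains $v=0$, and for $v\in(0,v_+)$ the integral $\mathcal G^u[\mathcal Q]$ runs across it, where $y_h^{-2}$ has a double pole. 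That integral converges only if $\partial_vT_1-r_h^{-2}\partial_\xi T_1$ vanishes identically on $\{v=0\}$, and the first iterate already violates this. Averaging $\mathcal L T_1=\varepsilon V$ in $\xi$ gives $\partial_v\langle T_1\rangle=\varepsilon\langle V\rangle(r_h(v))$. Hence the $\xi$-mean of that combination at $v=0$ is $\frac{\varepsilon}{2}V^{[0]}(1/2)$, which is positive because every $D_{\ell,\ell}=\sum_j\tilde m_j|\tilde q_j|^{2\ell}$ in~\eqref{eq:potentialV_series_coefficients} is positive. So the second iterate $\mathcal G^u[\varepsilon V-\mathcal Q(\partial T_1^{(1)})]$ diverges for $v\ge 0$.

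This is not an artefact of the scheme. The turning point $\{y=0\}$ of the perturbed $W^u$ is displaced by $\OO(\varepsilon)$ from $r=1/2$. Consequently the first-order correction to $y$ behaves like $\varepsilon/y_h$, and the graph representation over $(r,\phi)$ (equivalently over $(v,\xi)$ through $r=r_h(v)$) ceases to be an $\OO(\varepsilon)$-perturbation of $W^0$ in an $\OO(\sqrt\varepsilon)$-neighbourhood of $v=0$. By contrast, the tails you flag are harmless once you use that, for nonzero $\xi$-harmonics, $\partial_\xi\mathcal G^{u,s}$ loses no decay (the phase speed tends to $-G_0^3\neq 0$). This gives $\partial T_1=\OO(\varepsilon|v|^{-2})$ and $\mathcal Q=\OO(\varepsilon^2|v|^{-10/3})$. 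Without that gain, plain $|v|^{-2}$ bounds do not close the contraction.

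The paper never faces this issue because it does not construct $T^{u,s}$ itself. It only checks that $\varepsilon V$ satisfies the hypotheses of Lemma~5.2 of~\cite{GMS2016}, with $\varepsilon$ in the role of their $\mu$, and quotes Theorem~3.2 there. The passage through the pericenter is therefore part of what is imported. To make your argument self-contained, proceed as follows.
\begin{enumerate}
\item Run your contraction for $T_1^u$ only on $(-\infty,-v_0]\times\T_\sigma$ for some fixed $v_0>0$. The stable one on $[v_-,\infty)\times\T_\sigma$ is fine as it stands, since $v_->0$.
\item Carry $W^u$ across the pericenter with the flow. For bounded times this is a regular perturbation, because $\varepsilon$ is a regular parameter and $G_0$ is fixed.
\item Write the result again in the graph form $\Gamma^u(v,\xi)$ for $(v,\xi)\in(v_-,v_+)\times\T$.
\item Show that the first order of the new $T_1^u$ is still $\varepsilon\mathcal G^u[V]$. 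One way is to view $T^u$ as the (renormalized) action of the orbits of $W^u$ ending at the given point. Since orbits are critical points of the action, its first variation at fixed endpoint is the integral of $V$ along the unperturbed homoclinic orbit.
\end{enumerate}
The free additive normalization of that action is where the constant $E$ enters, consistent with your reading of $E$.
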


\begin{proof}
    The proof is an immediate consequence of the results in~\cite{GMS2016}. Let us briefly explain the derivation. We first observe that the perturbative potential $\varepsilon V$ in~\eqref{def:V_N2} satisfies the hypothesis of Lemma~5.2 in~\cite{GMS2016}. We emphasize that the role of the parameter $\varepsilon$ here is played by the parameter $\mu$ in~\cite{GMS2016}. This implies that Theorem~3.2 of~\cite{GMS2016} holds in our case. This, in turn, implies the proposition. 
\end{proof}

Fixing now a section $\{\phi=\phi_0\}$, the two-dimensional invariant manifolds $W_{\infty, G_0}^{s,u}$ become one-dimensional invariant curves $\gamma^{s,u}$ of the Poincaré map 
$\mathcal{P}_{G_0, \phi_0}$ in~\eqref{def:mapa_de_Poincare}. Hence, the intersection of the invariant manifolds of $\gamma_{G_0}$ (in the energy level $-G_0^3$) with the section gives rise to the invariant curves $\gamma^{s,u}$ with parameterizations of the form 
\begin{equation}\label{eq:param_inv_curves_RPCN2BP} 
\begin{aligned}
r &= r_h(v) \\
y &= Y_{\phi_0}^{s,u}(v) \\ 
&= y_h(v)- y_h(v)^{-1}\left(\partial_v T_1^{s,u}(v,\xi) - r_h(v)^{-2} \partial_\xi T_1^{s,u}(v,\xi)\right)\Big|_{\xi=\phi_0-\alpha_h(v)}.
\end{aligned}
\end{equation}
Hence, Theorem~\ref{thm:Melnikov_domina} follows immediately from Proposition~\ref{prop:Delta_de_T} and~\eqref{eq:param_inv_curves_RPCN2BP}.

\subsection{Expansion of the potential $\varepsilon V$}

In this section we present a first expansion of the potential $\varepsilon V$ that we will need to compute the Melnikov potential. Recall that the rescaled mass parameters $\tmm =(\tilde m_0,\dots, \tilde m_N)$ are functions of $\mu$ and $\varepsilon$ through the formulas in~\eqref{relacions_masses_tilde_i_epsilon} and the positions of the primaries in the central configuration are given by $\tilde q_j(\mu,\varepsilon)$ in Proposition~\ref{prop:solucio_circular_periode_2pi} and Theorem~\ref{thm:posicions_respecte_a_les_masses_s2}.

\begin{prop}
\label{prop:primera_expansió_del_potencial}
The potential $\varepsilon V$ in~\eqref{def:V_N2} can be written as
\begin{equation}
\label{eq:potentialV_series}
 V(r, \phi; \mu,\varepsilon, G_0) = \frac{1}{2} V^{[0]} (r; \mu,\varepsilon, G_0) + \sum_{k \in \mathbb{N}} V^{[k]}(r; \mu,\varepsilon, G_0) \cos(k \phi)
\end{equation}
with 
\begin{equation}\label{eq:potentialV_series_coefficients}
    V^{[k]}(r; \mu,\varepsilon, G_0) = 2  \sum_{\ell \ge F(k)} \binom{-1/2}{\ell -k} \binom{-1/2}{\ell} \varepsilon^{-1} D_{\ell-k,\ell}( \mu,\varepsilon) \frac{1}{G_0^{2(2\ell-k)} r^{2\ell-k+1}}
\end{equation}
where
\begin{equation}
	D_{n,s} ( \mu,\varepsilon) =  (-1)^{n+s} \sum_{j=0}^{N} \tilde{m}_j \, \tilde{q}_j^n(\mu,\varepsilon) \overline{\tilde{q}_j}^s(\mu,\varepsilon).
\end{equation}
and 
\begin{equation}\label{def:F_expansionV}
    F(k) \coloneqq \begin{cases}
	1, &k = 0 \\
	2, &k = 1 \\
	k, &k \ge 2
\end{cases} .
\end{equation}
Furthermore,
\begin{equation}\label{eq:symmetry_Dns}
	D_{n,s}=\overline{D_{n,s}}=D_{s,n} .
\end{equation}
\end{prop}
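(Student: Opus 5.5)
We expand each term of $\varepsilon V$ in~\eqref{def:V_N2} by the binomial series. We start from the factorization $|1-z|^{-1} = (1-z)^{-1/2}(1-\bar z)^{-1/2}$ with $z = \tilde q_j e^{-i\phi}/(G_0^2 r)$, valid for $r$ large enough (say $G_0^2 r > \max_j|\tilde q_j|$, which holds in the region of interest since $|\tilde q_j|$ is bounded and $G_0>1$). Writing $(1-z)^{-1/2} = \sum_{a\ge 0}\binom{-1/2}{a}(-z)^a$ and similarly for $\bar z$, each term becomes
\[
\frac{\tilde m_j}{r}\sum_{a,s\ge 0}\binom{-1/2}{a}\binom{-1/2}{s}(-1)^{a+s}\frac{\tilde q_j^a\,\overline{\tilde q_j}^s}{(G_0^2 r)^{a+s}}e^{-i(a-s)\phi}.
\]
Summing over $j$ then produces exactly the coefficients $D_{a,s}$. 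Equivalently, one uses the generating-function (Legendre) expansion, but the double binomial series is cleaner.

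Next we group terms by frequency. We set $k=s-a$ (or its negative) and, for $k\ge0$, pair the term $(a,s)=(\ell-k,\ell)$ with its conjugate $(\ell,\ell-k)$. This requires the symmetry~\eqref{eq:symmetry_Dns}. The relation $D_{n,s}=\overline{D_{s,n}}$ is immediate because $\tilde m_j$ is real, and $(-1)^{n+s}$ is symmetric. The reality $D_{n,s}=\overline{D_{n,s}}$ follows from the reflection symmetry of the configuration given by Proposition~\ref{prop:symmetry_CC} (with symmetric masses): $\tilde q_{N-j}=\overline{\tilde q_j}$, $\tilde m_{N-j}=\tilde m_j$, and $\tilde q_0,\tilde q_N$ real, so conjugation permutes the summands. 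Hence the pair combines to $2D_{\ell-k,\ell}\cos(k\phi)$, while the diagonal $k=0$ terms give the $\tfrac12 V^{[0]}$ normalization. The exponent of $r$ is $1+(\ell-k)+\ell=2\ell-k+1$, and that of $G_0^2$ is $2\ell-k$, as claimed.

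It remains to subtract $1/r$ and divide by $\varepsilon$. The $(0,0)$ term is $\sum_j\tilde m_j/r=1/r$ because the total mass is $1$, so it cancels. The terms with $(a,s)\in\{(1,0),(0,1)\}$ vanish since $\sum_j\tilde m_j\tilde q_j=0$ (center of mass at the origin). This explains $F(0)=1$ (only $\ell\ge1$ survives at $k=0$) and $F(1)=2$ (the $\ell=1$, $k=1$ term is $D_{0,1}=0$). For $k\ge2$, the constraint $\ell-k\ge0$ gives $F(k)=k$. Finally, the division by $\varepsilon$ is legitimate because every remaining $D_{\ell-k,\ell}$ with $\ell\ge1$ is $\OO(\varepsilon)$. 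Indeed, $\tilde m_j=\OO(\varepsilon)$ for $j<N$, and $\tilde q_N=\OO(\varepsilon)$ while $\tilde m_N=\OO(1)$ with $\ell\ge1$; this is the Schwarz-lemma remark already made after~\eqref{def:V_N2}.

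The main obstacle is bookkeeping rather than analysis. We must make sure that the index ranges and factors of $2$ match~\eqref{eq:potentialV_series_coefficients} exactly, and that the reality of $D_{n,s}$ is justified. Reality genuinely needs the symmetric choice of masses, so we will state explicitly that we work under the symmetry hypothesis of Theorem~\ref{thm:posicions_respecte_a_les_masses_s2}. Convergence of the double series, absolute and uniform for $r\ge r_0>\max_j|\tilde q_j|/G_0^2$, justifies the rearrangement.
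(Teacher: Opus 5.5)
Your proposal is correct and follows essentially the same route as the paper. The shared steps are the double binomial expansion of $|1-z|^{-1}=(1-z)^{-1/2}(1-\bar z)^{-1/2}$, identifying the sums over $j$ with $D_{n,s}$, and cancelling the $(0,0)$ term against $-1/r$ (total mass $1$) and the $(1,0)$, $(0,1)$ terms by the center-of-mass condition. You then pair the frequencies $\pm k$ using $D_{n,s}=\overline{D_{s,n}}$ together with the reality of $D_{n,s}$ coming from the reflection-symmetric configuration. Your explicit remark that this reality requires the symmetric choice of masses, and your justification of the division by $\varepsilon$, state precisely what the paper leaves implicit.
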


\begin{proof}
We define, for $A \in \mathbb{R}$ and $B \in \mathbb{C}$, 
\begin{equation}
	\label{def:VAB}
	V_{A, B} (r,\phi)= \frac{A}{r \, \left|1 + B \, \frac{1}{G_0^2 r} e^{-i \phi}\right|} .
\end{equation}
Using  $\displaystyle {(1+z)}^{-\frac{1}{2}} = \sum_{l=0}^{\infty} \binom{-1/2}{l} z^l$, for $|z| < 1$, we have that
\begin{align*}
	V_{A, B} (r,\phi)& = \frac{A}{r} \frac{1}{\left( 1 + B \, \frac{1}{G_0^2 r} e^{-i \phi} \right)^{1/2}  \left( 1 + \overline{B} \, \frac{1}{G_0^2 r} e^{i \phi} \right)^{1/2}}  \\
	&= \frac{A}{r} \sum_{l \geq 0} \binom{-1/2}{l} B^l \frac{1}{G_0^{2l} r^l} e^{-i l (\alpha- t)} \sum_{s \geq 0} \binom{-1/2}{s} \overline{B}^s \frac{1}{G_0^{2m} r^s} e^{i s \phi} \\
	&= \sum_{l, s \geq 0} \binom{-1/2}{l} \binom{-1/2}{s} A \, B^{l} \overline{B}^s \frac{1}{G_0^{2(l+s)} r^{l+s+1}} e^{ -i (l-s) \phi} \\
	&= \sum_{n\ge 0} \sum_{ s =0}^n  \binom{-1/2}{n-s} \binom{-1/2}{s} A \, B^{n-s} \overline{B}^s \frac{1}{G_0^{2n} r^{n+1}} e^{ -i (n-2s) \phi} .
\end{align*}

Hence, defining
\[
	A_j = \tilde{m}_j \,, \quad B_j = -\tilde{q}_j(\mu,\varepsilon),
\]
we have that
\begin{equation}\label{eq:potential_V_expanded}
\begin{aligned}
      V(r, \phi; \mu,\varepsilon, G_0) &= - \frac{1}{r} + \sum_{j=0}^{N} V_{A_j, B_j} (r,\phi) \\
     &= - \frac{1}{r} + \sum_{n\ge 0} \sum_{ s =0}^n  \binom{-1/2}{n-s} \binom{-1/2}{s} \varepsilon^{-1} D_{n-s,s} (\mu,\varepsilon) \frac{1}{G_0^{2n} r^{n+1}} e^{ -i (n-2s) \phi}
\end{aligned}
\end{equation}
where
\begin{equation}\label{eq:def_Dns}
	D_{n,s}(\mu,\varepsilon) = \sum_{j=0}^{N} A_j \, B_j^{n} \overline{B_j}^s = (-1)^{n+s} \sum_{j=0}^{N} \tilde{m}_j \, \tilde{q}_j^n (\mu,\varepsilon)\overline{\tilde{q}_j}^s(\mu,\varepsilon).
\end{equation}

Note also that by definition $\overline{D_{n,s}}=D_{s,n}$ and that, being the central configuration symmetric with respect to the horizontal axis, we have that $D_{n,s}$ is always a real number. In conclusion, 
\[
D_{n,s}=\overline{D_{n,s}}=D_{s,n} .
\]

Taking into account that $D_{0,0}=\sum_{j=0}^N \tilde{m}_j$ and that, since the center of mass of the primaries is located at the origin, $D_{1,0}=-\sum_{j=0}^{N} \tilde{m}_j \tilde{q}_j =0$ and $D_{0,1}=-\sum_{j=0}^{N} \tilde{m}_j \overline{\tilde{q}_j} =0$, a straightforward computation shows that
\[
V(r, \phi; \mu,\varepsilon, G_0) = \sum_{n\ge 2} \sum_{ s =0}^n  \binom{-1/2}{n-s} \binom{-1/2}{s} \varepsilon^{-1} D_{n-s,s} (\mu,\varepsilon) \frac{1}{G_0^{2n} r^{n+1}} e^{ -i (n-2s) \phi} .
\]

Introducing the new set of indices
\[
\left\{
\begin{aligned}
	k & = -n+ 2s, \\
	\ell & = s,
\end{aligned}
\right.
\quad \text{if $k\ge 0$}, \qquad
\left\{
\begin{aligned}
	k & = -n + 2s, \\
	\ell & = n-s,
\end{aligned}
\right.
\quad \text{if $k< 0$}, 
\]
we have that
\begin{equation}
\label{eq:potentialV_series_prelimversion_1}
\begin{aligned}
 V(r, \phi; \mu,\varepsilon, G_0) & = \sum_{k \ge 0, \; \ell \ge |k|}   \binom{-1/2}{\ell -k} \binom{-1/2}{\ell} \varepsilon^{-1} D_{\ell-k,\ell}(\mu,\varepsilon) \frac{1}{G_0^{2(2\ell-k)} r^{2\ell-k+1}} e^{ i k \phi} \\
& + \sum_{k \le -1, \; \ell \ge |k|}   \binom{-1/2}{\ell} \binom{-1/2}{k+\ell} \varepsilon^{-1} D_{\ell,k+\ell} (\mu,\varepsilon)\frac{1}{G_0^{2(2\ell+k)} r^{2\ell+k+1}} e^{ i k \phi},
\end{aligned}
\end{equation}
or, equivalently,
\begin{equation}
\label{eq:potentialV_series_prelimversion_2}
\begin{aligned}
\varepsilon V(r, \phi; \mu,\varepsilon, G_0) & = \sum_{k \ge 0, \; \ell \ge k}   \binom{-1/2}{\ell -k} \binom{-1/2}{\ell} D_{\ell-k,\ell}(\mu,\varepsilon) \frac{1}{G_0^{2(2\ell-k)} r^{2\ell-k+1}} e^{ i k \phi} \\
& + \sum_{k \ge 1, \; \ell \ge k}   \binom{-1/2}{\ell} \binom{-1/2}{\ell-k} D_{\ell,\ell-k}(\mu,\varepsilon) \frac{1}{G_0^{2(2\ell-k)} r^{2\ell-k+1}} e^{ -i k \phi}.
\end{aligned}
\end{equation}

In the above expressions, the terms corresponding to $2\ell - |k|= 0$ and to $2\ell - |k|= 1$ are not present, since they correspond respectively to the terms coming from $n=0$ and $n=1$ in \eqref{eq:potential_V_expanded}. Taking this into account, together with the symmetry~\eqref{eq:symmetry_Dns}, we finally arrive at the claimed expression. 
\end{proof}

\subsection{First expansion of the Melnikov potential}

In this section we find a suitable first order of the Melnikov function $L$ in 

\begin{prop}\label{prop:Fourier_expansion_Melnikov_RPCN2BP}
    The function $L(\beta; \mu,\varepsilon, G_0)$ defined in \eqref{def:Melnikov} satisfies
    \begin{equation}\label{eq:melnikov_final}
          L(\beta; \mu,\varepsilon, G_0) = L^{[0]}(\mu,\varepsilon, G_0) + 2 \sum_{k \in \mathbb{N}} L^{[k]}(\mu,\varepsilon, G_0) \cos(k \beta) ,
    \end{equation}
    where
    \begin{equation}\label{eq:melnikov_final_coeffs}
    \begin{aligned}
	L^{[1]}(\mu,\varepsilon, G_0) &= \varepsilon^{-1} D_{1,2}(\mu,\varepsilon) G_0^{-1} N(1,2,1) \left(1+\bigO\left(G_0^{-1}\right)\right) \\
    &= C_1 \varepsilon^{-1} D_{1,2}(\mu,\varepsilon) G_0^{-3/2} e^{-G_0^3/3} \left(1+\bigO\left(G_0^{-1}\right)\right),\\
	L^{[k]}(\mu,\varepsilon, G_0) &= \varepsilon^{-1} D_{0,k}(\mu,\varepsilon) G_0^{-1} N(k,k,0) \left(1+\bigO\left(G_0^{-1}\right)\right) \\
    &= C_k \varepsilon^{-1} D_{0,k}(\mu,\varepsilon) G_0^{k-3/2} e^{-k G_0^3/3} \left(1+\bigO\left(G_0^{-1}\right)\right), \quad k \ge 2 ,
    \end{aligned}
    \end{equation}
    for some constants $C_k >0$ independent of $\mu$, $\varepsilon$ and $G_0$.
\end{prop}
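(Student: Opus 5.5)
The plan is to substitute the Fourier expansion of Proposition~\ref{prop:primera_expansió_del_potencial} into the definition~\eqref{def:Melnikov} of $L$, integrate term by term, and evaluate each term by complex-analytic methods using the explicit homoclinic parametrization of Lemma~\ref{lem:rhyhalphah}. Inserting~\eqref{eq:potentialV_series} gives $L$ as a sum of $\cos(k\beta)$ harmonics (the sine parts cancel by the time-reversal symmetry $r_h(-s)=r_h(s)$, $\alpha_h(-s)=-\alpha_h(s)$). The $k$-th coefficient is
\[
L^{[k]} = \sum_{\ell\ge F(k)} \binom{-1/2}{\ell-k}\binom{-1/2}{\ell}\,\varepsilon^{-1}D_{\ell-k,\ell}\,G_0^{-2(2\ell-k)}\, \mathcal{I}(k,\ell),
\qquad
\mathcal{I}(k,\ell)=\int_{-\infty}^{\infty}\frac{e^{ik(\alpha_h(s)-G_0^3 s)}}{r_h(s)^{2\ell-k+1}}\,ds .
\]
Term-by-term integration is justified because $r_h\ge 1/2$ and $|\tilde q_j|\le 1+\OO(\varepsilon)$. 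Hence the Legendre series converges uniformly and absolutely once $G_0^2/2>1$, with $\varepsilon^{-1}D_{n,s}$ bounded uniformly in $(\mu,\varepsilon)$ by~\eqref{relacions_masses_tilde_i_epsilon}.

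To compute $\mathcal{I}(k,\ell)$ I change variables to $\tau$ from Lemma~\ref{lem:rhyhalphah}. Then $ds=\tfrac12(1+\tau^2)\,d\tau$, $r_h=\tfrac12(1+\tau^2)$ and $e^{ik\alpha_h}=\left(\frac{1+i\tau}{1-i\tau}\right)^k$. This yields a rational function times $e^{-ikG_0^3(\tau^3/3+\tau)/2}$, with numerator and denominator involving $(1-i\tau)^{2\ell}$ and $(1+i\tau)^{2\ell-2k}$. For $k\ge1$ I deform the contour into the lower half-plane, where the exponential decays. The only singularity encountered is the pole at $\tau=-i$, of order $2\ell$. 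At this pole the exponent equals $-kG_0^3/3$, which produces the factor $e^{-kG_0^3/3}$.

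The main obstacle is that $\tau=-i$ is simultaneously a critical point of the phase, since $\frac{d}{d\tau}(\tau^3/3+\tau)=1+\tau^2$ vanishes there. So the naive residue computation does not simply give the highest derivative of the exponential. Instead I rescale locally, $\tau=-i+G_0^{-3/2}\sigma$ (a coalescing pole/saddle). The phase becomes $-kG_0^3/3 + \mathrm{const}\cdot\sigma^2+\OO(G_0^{-3/2}\sigma^3)$ and the pole becomes $\sigma^{-2\ell}$. The integral is then $e^{-kG_0^3/3}G_0^{3(2\ell-1)/2}$ times a universal constant, namely a Hankel-type integral of $\sigma^{-2\ell}e^{c\sigma^2}$, which is nonzero. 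The correction is relative $\OO(G_0^{-3/2})$, after justifying the deformation and bounding the contribution away from the local region. These constants are the $N(k,m,l)$ in the statement, and I expect checking that they are nonzero and of the correct sign (so that $C_k>0$) to be the most delicate explicit computation.

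Combining the two scalings, the $(k,\ell)$ term has size $G_0^{-2(2\ell-k)}G_0^{3\ell-3/2}e^{-kG_0^3/3}=G_0^{2k-\ell-3/2}e^{-kG_0^3/3}$, which decreases in $\ell$. Hence the dominant term is $\ell=F(k)$. For $k\ge2$ this is $\ell=k$, giving $D_{0,k}G_0^{k-3/2}$. For $k=1$ it is $\ell=2$, giving $D_{1,2}G_0^{-3/2}$; here $D_{1,2}=D_{2,1}$ by~\eqref{eq:symmetry_Dns}, consistent with the index used in the statement. Summing the geometric tail over $\ell>F(k)$ with the uniform bound on $\varepsilon^{-1}D$ gives the factor $1+\OO(G_0^{-1})$, which completes~\eqref{eq:melnikov_final_coeffs}.
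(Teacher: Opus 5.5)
Your skeleton is the paper's: insert \eqref{eq:potentialV_series}, integrate term by term, pass to $\tau$ via Lemma~\ref{lem:rhyhalphah}, and keep the summand $\ell=F(k)$. The difference lies in the analytic core. The paper quotes Propositions~19 and~22 of \cite{DKdlRS2019} for the bound $|N(k,n+k,n)|\le C^{n+k}G_0^{k-n-1/2}e^{-kG_0^3/3}$ and for its sharpness. You instead derive the asymptotics from the pole of order $2\ell$ sitting on the saddle $\tau=-i$. That is a sound, self-contained alternative, and it makes the constants explicit: integrating by parts on a horizontal line above $0$ gives
\[
\int\sigma^{-2\ell}e^{-k\sigma^2/2}\,d\sigma=\frac{(-1)^\ell\sqrt{2\pi}\,k^{\ell-1/2}}{(2\ell-1)!!},
\]
so nonvanishing is immediate.

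Two details need fixing.
\begin{enumerate}
\item The factor $e^{-ikG_0^3(\tau+\tau^3/3)/2}$ does not decay in the whole lower half-plane; at infinity it grows in the sector $-2\pi/3<\arg\tau<-\pi/3$. Shift specifically to the line $\mathrm{Im}\,\tau=-1+G_0^{-3/2}$. There $|1-i\tau|\ge G_0^{-3/2}$, $|1+i\tau|\ge 1$, and the exponential is at most $e^{k/2}e^{-kG_0^3/3}e^{-kG_0^3(\mathrm{Re}\,\tau)^2/4}$.
\item This gives $|\mathcal{I}(k,\ell)|\le K\,4^{\ell}G_0^{3\ell-3/2}e^{-kG_0^3/3}$ with $K$ depending only on $k$, uniformly in $\ell$. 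That uniform bound, not the fixed-$\ell$ asymptotics, is what summing the tail requires.
\end{enumerate}

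\textbf{The genuine gap is your last sentence.} Write $T_{k,\ell}$ for the $\ell$-th summand of your formula for $L^{[k]}$. A uniform bound on $\varepsilon^{-1}D_{n,s}$ only gives
\[
L^{[k]}=T_{k,F(k)}\bigl(1+\OO(G_0^{-3/2})\bigr)+\OO\bigl(G_0^{2k-F(k)-5/2}e^{-kG_0^3/3}\bigr),
\]
which is an additive error. It becomes a factor $1+\OO(G_0^{-1})$ only if $|\varepsilon^{-1}D_{F(k)-k,F(k)}|$ is bounded below, and it is not. Indeed, $D_{0,k}$ and $D_{1,2}$ vanish at $\mu=0$ for $k\not\equiv 0\pmod N$ (cf.\ Lemma~\ref{lem:from_masses_to_coefficients}). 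Moreover, in Section~\ref{sec:tangencies} the $D_k$ are exponentially small in $G_0$ and of very different sizes, and that is exactly where Lemma~\ref{lem:fita_de_C} uses the relative form.

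What is missing is a comparison of the tail coefficients with the leading one. Note that $D_{\ell-k,\ell}=(-1)^k\sum_j\tilde m_j|\tilde q_j|^{2(\ell-k)}\overline{\tilde q_j}^{\,k}$. At $\varepsilon=0$ the ring lies on one circle and the central body sits at the origin, since Theorem~\ref{thm:posicions_respecte_a_les_masses_s2} gives $\rho_j=1+\OO(\mu)\OO(\varepsilon)$ and $q_N=\OO(\mu)\OO(\varepsilon)$. Hence one gets
\[
\varepsilon^{-1}D_{\ell-k,\ell}=R^{2(\ell-F(k))}\,\varepsilon^{-1}D_{F(k)-k,F(k)}+\OO\bigl(K^{\ell}\varepsilon\|\mu\|\bigr),\qquad R=M^{-1/3}\SA(\mm,N)=1+\OO(\varepsilon),\quad \ell\ge F(k).
\]
So the tail is $\OO(G_0^{-1})$ relative to $T_{k,F(k)}$ only up to an additive $\OO\bigl(\varepsilon\|\mu\|\,G_0^{2k-F(k)-5/2}e^{-kG_0^3/3}\bigr)$. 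That term is absorbed only under a condition such as $\varepsilon\|\mu\|\lesssim|\varepsilon^{-1}D_{F(k)-k,F(k)}|$, which must be imposed, e.g.\ through the choice of $\varepsilon$ after $G_0$. The paper's proof is silent on this too: it records only the upper bound on $|L^{[k]}|$.

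\textbf{On the sign you postponed.} Your computation gives
\[
\mathcal{I}(k,\ell)\sim \frac{2^k\sqrt{2\pi}\,(kG_0^3)^{\ell-1/2}}{(2\ell-1)!!}\,e^{-kG_0^3/3}>0,
\]
while $\binom{-1/2}{\ell-k}\binom{-1/2}{\ell}$ has sign $(-1)^k$. Hence the constant multiplying $\varepsilon^{-1}D_{F(k)-k,F(k)}G_0^{2k-F(k)-3/2}e^{-kG_0^3/3}$ has sign $(-1)^k$; for instance $C_1=-\sqrt{2\pi}/8$. Likewise, the first equalities in \eqref{eq:melnikov_final_coeffs} acquire a factor $(-1)^k$. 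The paper's identification of the $\tau$-integral with $N(k,\ell,\ell-k)$ drops this factor, since $e^{2i\arctan\tau}=\tfrac{1+i\tau}{1-i\tau}=-\tfrac{\tau-i}{\tau+i}$. So you cannot prove $C_k>0$ for odd $k$. Fortunately, only $C_k\neq0$ and the size $|C_k|$ are used later.
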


\begin{proof}
From~\eqref{eq:potentialV_series} and~\eqref{eq:potentialV_series_coefficients}, it is clear that the potential $\varepsilon V$ defined in \eqref{def:V_N2} satisfies
\[
V(r,\phi;\mu,\varepsilon, G_0)=  \mathcal{O}\left( \frac{1}{G_0^4 r^3}\right).
\]
Note that  $V$  satisfies $\mathcal{H} = \mathcal{H}_0 + \varepsilon V$.
We recall that the Melnikov potential in~\eqref{def:Melnikov} is
\begin{equation}\label{def:Melnikov_RPCN2BP}
\begin{aligned}
    L(u,\beta) &\coloneqq \int_{-\infty}^{\infty} V \circ \Phi_0^s \circ \Gamma_0 (u,\beta)\, ds = \int_{-\infty}^{\infty} V  \circ \Gamma_0 (u+s,\beta)\, ds \\
    &= \int_{-\infty}^{\infty} V(r_h(u+s), \beta + \alpha_h(u+s) - G_0^3 (u+s) )\, ds \\
    &= \int_{-\infty}^{\infty} V(r_h(s), \beta + \alpha_h(s) - G_0^3 s )\, ds \eqqcolon L(\beta),
\end{aligned}
\end{equation}
where, to simplify notation, we have skipped the dependence of $V$ on $\mu$, $\varepsilon$ and $G_0$.

Define as in \cite{DKdlRS2019} the integrals
\begin{equation}\label{def:N}
    N(q, m, n) \coloneqq \frac{2^{m+n}}{G_0^{2m+2n-1}} \binom{-1/2}{m} \binom{-1/2}{n} \int_{-\infty}^{\infty} \frac{e^{i q (\tau + \tau^3/3) G_0^3/2}}{(\tau-i)^{2m} (\tau+i)^{2n}} d \tau  .
\end{equation}
Note that, for any $q \in \Z$, $n, m \ge 0$, 
\[N(q, m, n) = \overline{N(q, m, n)} = N(-q, n, m) \,.\]

In order to compute the Melnikov potential, we first take into account \eqref{eq:potentialV_series} so that
\begin{multline*}
 V(r_h(s), \beta + \alpha_h(s) - G_0^3 s; \mu,\varepsilon, G_0) = \sum_{\ell \ge 1} \binom{-1/2}{\ell} \binom{-1/2}{\ell} \varepsilon^{-1} D_{\ell,\ell}(\mu,\varepsilon) \frac{1}{G_0^{4\ell} r_h^{2\ell+1}(s)} \\
    + \sum_{k \in \mathbb{N}} \sum_{\ell \ge F(k)} \binom{-1/2}{\ell -k} \binom{-1/2}{\ell} \varepsilon^{-1} D_{\ell-k,\ell}(\mu,\varepsilon) \frac{e^{i k (\beta + \alpha_h(s) - G_0^3 s)}+e^{-i k (\beta + \alpha_h(s) - G_0^3 s)}}{G_0^{2(2\ell-k)} r_h^{2\ell-k+1}(s)}.
\end{multline*}

We now focus on computing
\[
\begin{aligned}
    I_{k,\ell}(\beta)  &\coloneqq \int_{-\infty}^{\infty} \binom{-1/2}{\ell -k} \binom{-1/2}{\ell} \frac{e^{i k (\beta + \alpha_h(s) - G_0^3 s)}}{G_0^{2(2\ell-k)} r_h^{2\ell-k+1}(s)} ds \\
    &= \frac{1}{G_0^{2(2\ell-k)}}\binom{-1/2}{\ell -k} \binom{-1/2}{\ell} e^{i k \beta} \int_{-\infty}^{\infty} \frac{e^{i k (\alpha_h(s) - G_0^3 s)}}{r_h^{2\ell-k+1}(s)} ds \\
    &= \frac{2^{2 \ell - k}}{G_0^{2(2\ell-k)}}\binom{-1/2}{\ell -k} \binom{-1/2}{\ell} e^{i k \beta} \int_{-\infty}^{\infty}  \frac{e^{-i k (\tau + \tau^3/3) G_0^3/2}}{(\tau-i)^{2(\ell-k)} (\tau+i)^{2 \ell}}  d \tau \\
    &= G_0^{-1} N(-k, \ell-k, \ell) e^{i k \beta} = G_0^{-1} N(k, \ell, \ell-k) e^{i k \beta},
\end{aligned}
\]
where we have used the change of variables $s=(\tau+\tau^3/3)/2$ from Lemma~\ref{lem:rhyhalphah}.

It is immediate to see that $I_{k,\ell}(-\beta) = \overline{I_{k, \ell}(\beta)}$.

Therefore we have that
\begin{equation}\label{eq:Melnikov_computed}
    \begin{aligned}
    L(\beta,\mu,\varepsilon,G_0) = L^{[0]} + 2 \sum_{k \in \mathbb{N}} L^{[k]}(\mu,\varepsilon,G_0) \cos(k \beta)
\end{aligned}
\end{equation}
with
\begin{equation}\label{eq:harmonicsL}
    L^{[k]}(\mu,\varepsilon,G_0) \coloneqq \sum_{\ell \ge F(k)} \varepsilon^{-1} D_{\ell-k,\ell}(\mu,\varepsilon) G_0^{-1} N(k, \ell, \ell - k)
\end{equation}
where, from \eqref{def:F_expansionV},
\[
    F(k) = \begin{cases}
	1, &k = 0 \\
	2, &k = 1 \\
	k, &k \ge 2
\end{cases} 
\]
and, from \eqref{eq:def_Dns} and \eqref{eq:symmetry_Dns},
\[
	D_{n,s}(\mu,\varepsilon) = (-1)^{n+s} \sum_{j=0}^{N} \tilde{m}_j \, \tilde{q}_j^n (\mu,\varepsilon) \overline{\tilde{q}_j}^s (\mu,\varepsilon)\in \mathbb{R},
\]
where the rescaled mass parameters $\tmm =(\tilde m_0,\dots, \tilde m_N)$ are functions of $\mu$ and $\varepsilon$ through the formulas in~\eqref{relacions_masses_tilde_i_epsilon} and  $\tilde q_j(\mu,\varepsilon)$ are defined in Proposition~\ref{prop:solucio_circular_periode_2pi}.  Hence,
\[
D_{n,s}(\mu,\varepsilon) = \frac{M^{-(n+s)/3} \SA(\mm, N)^{n+s}}{M} (-1)^{n+s} \sum_{j=0}^{N} m_j \, q_j^n(\mu,\varepsilon) \overline{q_j}^s(\mu,\varepsilon),
\]
where the positions $q_j(\mu,\varepsilon)$ are given in Theorem~\ref{thm:posicions_respecte_a_les_masses_s2}. We recall that $m_j = 1+\mu_j$, $j=0,\dots, N-1$,
and $m_N = 1/\varepsilon$.

Given $(m_0,\ldots,m_{N-1})$ masses of the bodies in the ring, define the permutation
\[\mathfrak{P}(m_0,\ldots,m_{N-1})=(m_{N-1},m_0,m_1,\ldots,m_{N-2}).
\]
By the definition of $\SA$, it is clear that 
\[
\SA(m_{N-1},m_0,m_1,\ldots,m_{N-2}, m)=\SA(\mathfrak{P}(m_0,\ldots,m_{N-1}),m)=\SA(m_0,\ldots,m_{N-1},m).
\]
Here it must be taken into account that the permutation $\mathfrak{P}$ of the masses of the bodies in the ring only causes at most a rotation of the whole central configuration, which does not affect the value of $\SA$.

The aforementioned symmetry of $\SA$ implies that in the expansion in $\mu$ of $\SA$, the linear terms in $\mu$ can only appear in the form $c \sum_{j=0}^{N-1} \mu_j$, for some common constant $c$. 

Therefore, assuming $\sum_{j=0}^{N-1} \mu_j = 0$, we have that 
\[
\begin{aligned}
    \frac{M^{-(n+s)/3} \SA(\mm, N)^{n+s}}{M} = \frac{(\varepsilon^{-1}+N)^{-(n+s)/3} \varepsilon^{-(n+s)/3}}{\varepsilon^{-1}+N} \left(1 + \mathcal{O}(\lVert \mu \rVert^2) + \mathcal{O}(\varepsilon) \right) .
\end{aligned}
\]

In conclusion, 
\begin{equation}\label{eq:simplified_values_Dns}
    D_{n,s}(\mu,\varepsilon) = (-1)^{n+s} \varepsilon \left(\sum_{j=0}^{N} m_j \, q_j^n(\mu,\varepsilon) \overline{q_j}^s(\mu,\varepsilon) \right) \left(1 + \mathcal{O}(\lVert \mu \rVert^2) + \mathcal{O}(\varepsilon) \right).
\end{equation}

Now, just out of convenience, we make the change of indices $n=\ell-k$ in \eqref{eq:harmonicsL}, arriving at
\[L^{[k]}(\mu,\varepsilon) = \sum_{n \ge \delta_0(k)} \varepsilon^{-1} D_{n, n+k}(\mu,\varepsilon) G_0^{-1} N(k, n+k, n) ,\]
where 
\[\delta_0(k) = \begin{cases}
    1 & \text{if} \quad k=0,1 \\
	0 & \text{if} \quad k \ge 2 .
\end{cases}\]

From Proposition~19 in~\cite{DKdlRS2019}, we obtain that there exists a constant $C>0$ such that, for any $k \ge 1$ and $n \ge 0$,
\[|N(k, n+k, n)| \le C^{n+k} G_0^{k-n-1/2} e^{-k G_0^3/3}\]
and, therefore,
\[|L^{[k]}| \le {(\tilde{C} G_0)}^{k-3/2} e^{-k G_0^3/3},
\]
for some constant $\tilde{C} > 0$.
These bounds are indeed sharp, as can be checked from Proposition~22 of~\cite{DKdlRS2019}.
\end{proof}

\subsection{Expansion of the coefficients of the Melnikov potential in the masses}
Using the expressions for the positions of the primaries as a function of their masses obtained in Section~\ref{sec:family_CC}, here we expand to first order in $\mu$ the coefficients $D_{1,2}$ and $D_{0,k}$, $k\ge2$ that appear in the Melnikov potential in Proposition~\ref{prop:Fourier_expansion_Melnikov_RPCN2BP}, see~\eqref{eq:melnikov_final}, \eqref{eq:melnikov_final_coeffs}, and~\eqref{eq:simplified_values_Dns}. These coefficients depend on $\mu$ and $\varepsilon$, but are independent of $G_0$.

\begin{prop}\label{prop:expansion_Melnikov_positions}
Let $D=(D_1,D_2,\ldots,D_{N-1})^\top$, where $D_1 \coloneqq D_{1,2}$ and $D_k \coloneqq D_{0,k}$ for $k=2,\ldots,N-1$ are the coefficients that appear in the Melnikov potential in Proposition~\ref{prop:Fourier_expansion_Melnikov_RPCN2BP}.

Then
\begin{equation}\label{eq:expansion_Melnikov_positions}
    D = \mathrm{diag}\,(d_1,\ldots,d_{N-1}) \hat{\overline{\M}} \mu,
\end{equation}
where $\mu=(\mu_0,\ldots,\mu_{N-1})^\top$, the matrix $\hat{\overline{\M}}$ is defined by $\hat{\overline{\M}}_{k,j} = \omega^{-kj}$, $k= 1,\dots,N-1$, $j=0,\ldots,N-1$, with $\omega = e^{2 \pi i/N}$, and 
\[
d_k \coloneqq (-1)^{k}\varepsilon \left(1- i k c_{k}\right) \left(1+\mathcal{O}_2(\mu) + \mathcal{O}(\varepsilon) \right) , \quad k=1,\ldots,N-1
\]
where $c_k$ are given in Theorem~\ref{thm:posicions_respecte_a_les_masses}.
\end{prop}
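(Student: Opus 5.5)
Starting from \eqref{eq:simplified_values_Dns}, the plan is to reduce $D_{n,s}$, up to the factor $(-1)^{n+s}\varepsilon(1+\OO(\|\mu\|^2)+\OO(\varepsilon))$, to the moment $\sum_{j=0}^{N} m_j q_j^n \overline{q_j}^s$ of the central configuration in Theorem~\ref{thm:posicions_respecte_a_les_masses}. Then I expand this moment to first order in $\mu$ at $\varepsilon=0$, imposing the normalization $\sum_\ell \mu_\ell=0$. The central body does not contribute at leading order. Indeed $q_N=\OO(\mu)\OO(\varepsilon)$ and $m_N=1/\varepsilon$, so $m_N q_N^n\overline{q_N}^s=\OO(\varepsilon^{n+s-1})$. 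For $n+s\ge 2$ this is $\OO(\varepsilon)$, and it is absorbed in the $\OO(\varepsilon)$ factor. Likewise $\rho_j=1+\OO(\mu)\OO(\varepsilon)$, so I may set $\rho_j=1$, and $q_j=e^{i\theta_j}$ up to admissible errors.

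For the ring bodies write $\theta_j=\theta_j^0+\phi_j$, where $\phi=\M\varphi$ with $\hat\varphi=\hat\D_c\hat\nu$ and $\mu=\M\nu$, $\nu_0=0$, as in the proof of Theorem~\ref{thm:posicions_respecte_a_les_masses}. For $D_k=D_{0,k}$ the moment is
\[
\sum_j (1+\mu_j)e^{-ik\theta_j}=\sum_j \omega^{-kj}\bigl(1+\mu_j-ik\phi_j\bigr)+\OO_2 .
\]
The zeroth-order term $\sum_j\omega^{-kj}$ vanishes for $1\le k\le N-1$. The term $\sum_j\omega^{-kj}\mu_j$ equals $(\hat{\overline{\M}}\mu)_k=N\nu_k$. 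For the third term, using $\overline{\M}\M=N\,\Id$, we get $\sum_j\omega^{-kj}\phi_j=(\overline\M\M\varphi)_k=N\varphi_k=Nc_k\nu_k$. The result is $N\nu_k(1-ikc_k)=(1-ikc_k)(\hat{\overline{\M}}\mu)_k$, which together with $(-1)^{0+k}\varepsilon$ gives exactly $d_k$.

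For $D_1=D_{1,2}$ the relevant moment is $\sum_j(1+\mu_j)e^{i\theta_j}e^{-2i\theta_j}=\sum_j(1+\mu_j)e^{-i\theta_j}$. This is the same computation with $k=1$, yielding $(1-ic_1)(\hat{\overline{\M}}\mu)_1$, and the sign is $(-1)^{3}=(-1)^1$, as claimed. The error terms $\OO_2(\mu)$ multiplying the linear part require some care, since the statement writes $d_k$ with a multiplicative factor. I will interpret the statement as saying that the remainder is quadratic in $\mu$ and $\OO(\varepsilon)$ relative to the linear term. I will state this honestly in the proof: the identity holds to first order in $\mu$, modulo $\OO(\varepsilon)$ corrections, which is what is used later.

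I expect the main obstacle to be the bookkeeping of the $\OO(\varepsilon)$ corrections to $\theta_j$ in Theorem~\ref{thm:posicions_respecte_a_les_masses}. Those corrections are not stated to vanish at $\mu=0$, although they must, since the regular polygon is an exact solution for every $\varepsilon$. I will use this fact, via uniqueness from the implicit function theorem, to write them as $\OO(\mu)\OO(\varepsilon)$, so that they fit inside the factor $(1+\OO(\varepsilon))$. A related point is that the rotation normalization $\theta_0=0$ must be consistent. Here the free component $\varphi_0$ contributes only to the $k=0$ mode, so it does not enter $D_k$ for $k\ge1$.
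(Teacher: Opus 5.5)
Your proposal is correct and follows essentially the same route as the paper. You discard the central body and the radial corrections, linearize $e^{-ik\theta_j}$ in $\mu$, and diagonalize with the discrete Fourier matrix; where you invoke $\overline{\M}\M=N\,\Id$ on $\phi=\M\varphi$ directly, the paper re-sums the explicit formula for $\tilde\theta_j$, which is the same computation. Two of your remarks make explicit points the paper's proof passes over: that the $\OO(\varepsilon)$ corrections to $\theta_j$ vanish at $\mu=0$ by uniqueness of the continued polygon, and that the multiplicative error in $d_k$ should be read as a statement about the linear (Jacobian) part of $D$, which is all Lemma~\ref{lem:from_masses_to_coefficients} uses.
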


\begin{proof}
    First, from Theorem~\ref{thm:posicions_respecte_a_les_masses}, it is immediate to obtain that
\[
\begin{aligned}
    q_N &= \mathcal{O}(\varepsilon) , \\
    q_j &= \omega^j(1+i \tilde{\theta}_j) + \mathcal{O}(\varepsilon)+ \mathcal{O}_2(\mu), \quad j=0,\ldots,N-1,
\end{aligned}
\]
where
\[
\tilde{\theta}_j = \sum_{r=0}^{N-1} \left(\frac{1}{N} \sum_{k=1}^{N-1} c_k (\omega^{k(j-r)} - \omega^{-k r}) \right) \mu_r , \quad j=0,\ldots,N-1 ,
\]
with $c_k$ given in Theorem~\ref{thm:posicions_respecte_a_les_masses}.

We want to expand, up to first order in $\mu$, the expression in \eqref{eq:simplified_values_Dns} given by
\[
D_{n,s} = (-1)^{n+s} \varepsilon \left(\sum_{j=0}^{N} m_j \, q_j^n \overline{q_j}^s \right) \left(1 + \mathcal{O}_2(\mu) + \mathcal{O}(\varepsilon) \right) .
\]

From now on we assume that $n, s \ge 0$, $n+s \ge 2$, $1 \le s-n \le N-1$, as is the case for the coefficients we are interested in.

From the expressions of the positions, we arrive at 
\[
    D_{n,s} = (-1)^{n+s} \varepsilon \left(\sum_{j=0}^{N-1} \omega^{j(n-s)} (\mu_j+i (n-s) \tilde{\theta}_j ) +\mathcal{O}_2(\mu) + \mathcal{O}(\varepsilon) \right) .
\]

Define
\[
\nu_k = \sum_{j=0}^{N-1} \omega^{-k j} \mu_j , \quad k=0,\ldots,N-1.
\]
We have that
\[
\begin{aligned}
    \sum_{j=0}^{N-1} \omega^{j(n-s)} \tilde{\theta}_j &= \sum_{j=0}^{N-1} \omega^{j(n-s)} \sum_{r=0}^{N-1} \left(\frac{1}{N} \sum_{k=1}^{N-1} c_k (\omega^{k(j-r)} - \omega^{-k r}) \right) \mu_r \\
    &= \frac{1}{N} \sum_{r=0}^{N-1} \left(\sum_{k=1}^{N-1} c_k \sum_{j=0}^{N-1} (\omega^{k(j-r)+j(n-s)} - \omega^{-k r+j(n-s)}) \right) \mu_r \\
    &= \frac{1}{N} \sum_{r=0}^{N-1} \left(\sum_{k=1}^{N-1} \omega^{-k r} c_k \sum_{j=0}^{N-1} (\omega^{j(k+n-s)} - \omega^{j(n-s)}) \right) \mu_r \\
    &= \frac{1}{N} \sum_{r=0}^{N-1} \left(\sum_{k=1}^{N-1} \omega^{-k r} c_k \sum_{j=0}^{N-1} \omega^{j(k+n-s)} \right) \mu_r = \frac{1}{N} \sum_{k=1}^{N-1} c_k \sum_{j=0}^{N-1} \omega^{j(k+n-s)} \sum_{r=0}^{N-1} \omega^{-k r} \mu_r \\
    &= \frac{1}{N} \sum_{k=1}^{N-1} c_k \sum_{j=0}^{N-1} \omega^{j(k+n-s)} \nu_k = \frac{1}{N} \sum_{k=1}^{N-1} c_k  \nu_k \sum_{j=0}^{N-1} \omega^{j(k+n-s)} = c_{s-n} \nu_{s-n} .
\end{aligned}
\]
In conclusion,
\[
 D_{n,s} = (-1)^{s-n}\varepsilon \left(1- i (s-n) c_{s-n} \right) \nu_{s-n} \left(1+\mathcal{O}_2(\mu) + \mathcal{O}(\varepsilon) \right).
\]
Therefore, for $k=1,\ldots,N-1$,
\[
\begin{aligned}
    D_k &= (-1)^{k}\varepsilon \left(1- i k c_{k} \right) \left(1+\mathcal{O}_2(\mu) + \mathcal{O}(\varepsilon) \right) \nu_{k}\\
    &= (-1)^{k}\varepsilon \left(1- i k c_{k}\right) \left(1+\mathcal{O}_2(\mu) + \mathcal{O}(\varepsilon) \right) \sum_{j=0}^{N-1} \omega^{-kj} \mu_j.
\end{aligned}
\]

\end{proof}

\subsection{The rank of the Jacobian matrix of Melnikov coefficients is maximal}
\begin{thm}\label{thm:maximal_rank_jacobian}
Let $D$ be as in Proposition~\ref{prop:expansion_Melnikov_positions}. Then, for any integer $N\ge 3$ and $\mu$ small enough, the Jacobian matrix $\partial_\mu D$ has maximal rank, $\mathrm{rank}(\partial_\mu D)= N-1$.
\end{thm}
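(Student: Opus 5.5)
Using Proposition~\ref{prop:expansion_Melnikov_positions}, the Jacobian at $\mu=0$ factors as
\[
\partial_\mu D_{\mid \mu=0} = \mathrm{diag}\,(d_1,\ldots,d_{N-1})_{\mid\mu=0}\, \hat{\overline{\M}},
\]
where $\hat{\overline{\M}}$ is the $(N-1)\times N$ matrix made of rows $1,\dots,N-1$ of $\overline{\M}$. Higher-order terms in $\mu$ contribute $\OO(\mu)$ corrections, since the factor $(1+\OO_2(\mu)+\OO(\varepsilon))$ multiplies a quantity that is linear in $\mu$. Maximal rank is an open condition, so it suffices to prove that $\partial_\mu D_{\mid\mu=0}$ has rank $N-1$ (for $\varepsilon$ small and fixed); it then persists for $\mu$ small. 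If one wants to restrict to the hyperplane $\sum_\ell\mu_\ell=0$ (the normalizing condition of Theorem~\ref{thm:posicions_respecte_a_les_masses}), nothing changes: that hyperplane is exactly the kernel of row $0$ of $\overline{\M}$, and $\hat{\overline{\M}}$ restricted to it is still surjective onto $\C^{N-1}$.

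The first step is the rank of $\hat{\overline{\M}}$. Since $\overline{\M}\M=N\,\Id$, the matrix $\overline{\M}$ is invertible, so any $N-1$ of its rows are linearly independent. Hence $\hat{\overline{\M}}$ has rank $N-1$.

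The second step is to show that each diagonal entry $d_k$ is nonzero. We have $d_k=(-1)^k\varepsilon(1-ikc_k)(1+\OO(\varepsilon))$ with $\varepsilon>0$, so it suffices that $1-ikc_k\neq 0$. The coefficients are $c_k=-a_k/b_k$, where $b_k$ is real (Lemma~\ref{lem:DthetatildeG_rang}) and $a_k$ is purely imaginary (Lemma~\ref{lem:DmutildeG}). Thus $c_k$ is purely imaginary, $-ikc_k$ is real, and $1-ikc_k=1+ik a_k/b_k$. Writing $a_k=i\alpha_k$ with $\alpha_k\in\R$, this becomes $1-k\alpha_k/b_k$. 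So we need $b_k\neq k\alpha_k$, which is an explicit inequality between two trigonometric sums in $k$ and $N$.

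This inequality is the main obstacle, and I would attack it by a sign argument. For $k\neq1,N-1$, the quantity $b_k$ is a negative sum of the terms $\frac{1+\cos^2}{\sin^3}\sin^2(\pi kj/N)$, which is large and negative. The term $\alpha_k=-\frac N4\sum_\ell \cot(\pi\ell/N)\csc(\pi\ell/N)\sin(2\pi k\ell/N)$ should have a definite sign. I would expect the ratio $\alpha_k/b_k$ to be negative, so that $1-k\alpha_k/b_k>1$. This requires comparing the two sums, for instance through the identity $\sum_\ell \cot\csc\,\sin(2\pi k\ell/N)$ obtained by discrete summation by parts. The cases $k=1,N-1$, which carry the additional $\pm N^2/2$ terms, need separate care; the Remark after Lemma~\ref{lem:DthetatildeG_rang} (via Proposition~\ref{prop:negativity_Ck}) already handles $b_1=b_{N-1}\neq0$.

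If the sign argument fails for small $N$, I would fall back on a finite direct check of the explicit numbers for those $N$. Note that $c_k$ depends only on $N$ and not on $\varepsilon$, so a nonzero value of $1-ikc_k$ persists after adding the $\OO(\varepsilon)$ correction once $\varepsilon$ is small.

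Combining the two steps, $\mathrm{diag}(d_k)$ is invertible and $\hat{\overline{\M}}$ has full row rank, so their product has rank $N-1$.
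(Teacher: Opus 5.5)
Your reduction matches the paper's. Since $\hat{\overline{\M}}$ has full row rank and the rank condition is open, everything comes down to $1-ikc_k\neq 0$ for all $k=1,\dots,N-1$ and all $N\ge 3$. But that inequality is the whole content of the theorem, and you leave it as an expectation that is false.

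For $2\le k\le N-2$ one has $\alpha_k=-\frac N4\mathcal{A}_k$ and $b_k=-\frac N4\mathcal{B}_k$, with $\mathcal{A}_k,\mathcal{B}_k$ as in \eqref{eq:def_Ak}--\eqref{eq:def_Bk}. Hence $ikc_k=k\alpha_k/b_k=k\mathcal{A}_k/\mathcal{B}_k$ with $\mathcal{B}_k>0$, and your hoped-for sign $\alpha_k/b_k<0$ means $\mathcal{A}_k<0$.
\begin{itemize}
\item Since $\mathcal{A}_{N-k}=-\mathcal{A}_k$ and $\mathcal{B}_{N-k}=\mathcal{B}_k$, this can hold for at most about half of the indices.
\item Concretely, for $N=5$, $k=2$ one finds $\mathcal{A}_2=2\bigl(\cot\frac{\pi}{5}-\cot\frac{2\pi}{5}\bigr)\approx 2.10$ and $\mathcal{B}_2\approx 15.6$, so $ikc_2\approx 0.27>0$.
\item Worse, fix $k$ and let $N$ be large. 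The terms with $\pi s/N$ close to $0$ (and, symmetrically, close to $\pi$) are $\approx 2k^2N/(\pi s)$ in both $k\mathcal{A}_k$ and $\mathcal{B}_k$. So both grow like $\frac{4k^2N}{\pi}\log N$ with the same leading coefficient. Thus $ikc_k\to 1^-$, and $1-ikc_k$ is positive but only of order $1/\log N$.
\end{itemize}
No sign or crude size comparison can detect a margin that thin. Since it occurs for every large $N$, your fallback of a finite numerical check cannot replace a proof.

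The missing idea is the exact identity of Lemma~\ref{lem:BkA_identity}, which captures this cancellation. Set $x_s=\pi s/N$ and $E_j=\sum_{s=1}^{N-1}\cos(2jx_s)/\sin x_s$. Both $\mathcal{A}_k$ and $\mathcal{B}_k$ are explicit linear combinations of the $E_j$. Using $E_{j-1}-E_j=2\cot\bigl(\frac{(2j-1)\pi}{2N}\bigr)$, their difference becomes
\[
\mathcal{B}_k-k\mathcal{A}_k=\sum_{j=1}^{k}(2j-1)(2k-2j+1)\cot\Bigl(\frac{(2j-1)\pi}{2N}\Bigr).
\]
This is positive after pairing $j$ with $k+1-j$ and using $\cot x+\cot y=\sin(x+y)/(\sin x\sin y)$. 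It gives $k\mathcal{A}_k/\mathcal{B}_k<1$.
\begin{itemize}
\item This settles $2\le k\le N-2$.
\item It also settles $k=1$: the $\pm N^2/2$ shifts give $ic_1=(\mathcal{A}_1-2N)/(\mathcal{B}_1-2N)$, which equals $1$ only if $\mathcal{A}_1=\mathcal{B}_1$.
\item The index $k=N-1$ is not covered, since $i(N-1)c_{N-1}=(N-1)(\mathcal{A}_{N-1}+2N)/(\mathcal{B}_{N-1}-2N)$ is not controlled by that ratio. Here a sign argument does work: $\mathcal{A}_{N-1}+\mathcal{B}_{N-1}=\sum_s\sin x_s>0$ and $\mathcal{A}_{N-1}<-2N$ for $N\ge 12$ make this quantity negative. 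Only $3\le N\le 11$ is left to a numerical check.
\end{itemize}
The rest of your argument is fine: the full rank of $\hat{\overline{\M}}$, the openness of the rank condition, and the reduction to $1-ikc_k\neq 0$.
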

Taking into account Proposition~\ref{prop:expansion_Melnikov_positions}, in order to prove Theorem~\ref{thm:maximal_rank_jacobian} it is enough to check that \begin{equation}\label{eq:def_Ck}
    \mathcal{C}_k(N) \coloneqq i k c_k    
\end{equation}
satisfies $\mathcal{C}_k(N) \neq 1$ for every $N\ge 3$ and $k\in\{1,\dots,N-1\}$.

Let us start by considering, for $N \ge 3$ and $k\in\{1,\dots,N-1\}$,
\begin{equation}\label{eq:def_Rk}
R_k(N) \coloneqq k\,\frac{\mathcal A_k(N)}{\mathcal B_k(N)},
\end{equation}
where
\begin{equation}\label{eq:def_Ak}
    \mathcal{A}_k(N) \coloneqq \sum_{s=1}^{N-1}\frac{\cos(\pi s/N)}{\sin^2(\pi s/N)}\,
\sin(2 k \pi s/N)
\end{equation}
and
\begin{equation}\label{eq:def_Bk}
    \mathcal{B}_k(N) \coloneqq \sum_{s=1}^{N-1}\frac{1+\cos^2(\pi s/N)}{\sin^3(\pi s/N)}\,
\sin^2(k \pi s/N).
\end{equation}

\begin{lem}\label{lem:BkA_identity}
    For any $N \ge 3$ and $k\in\{1,\ldots,N-1\}$, the following identity holds:
    \[
    \mathcal{B}_k(N) - k \mathcal{A}_k(N) = \sum_{j=1}^{k} (2j-1)(2k-2j+1) \cot\left(\frac{(2j-1) \pi}{2 N}\right),
    \]
    where $\cot$ denotes the cotangent.
\end{lem}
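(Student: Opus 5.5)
The plan is to prove a pointwise trigonometric identity for the summand and then sum it over $s$ with a closed-form evaluation.

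Put $x=\pi s/N$. The summand of $\mathcal{B}_k(N)-k\mathcal{A}_k(N)$ is then
\[
g_k(x)=\frac{(1+\cos^2 x)\sin^2(kx)}{\sin^3 x}-\frac{k\cos x\,\sin(2kx)}{\sin^2 x}.
\]
The claim is that
\[
g_k(x)=\sum_{j=1}^{k}(2j-1)(2k-2j+1)\sin\big((2j-1)x\big)
\]
as an identity of functions. A sanity check at $k=1$ gives $g_1=(1+\cos^2x-2\cos^2x)/\sin x=\sin x$, which agrees.

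The key observation is that $g_k$ is an exact derivative. Since $\frac{d}{dx}\sin^2(kx)=k\sin(2kx)$ and $\frac{d}{dx}\frac{\cos x}{\sin^2 x}=-\frac{1+\cos^2 x}{\sin^3 x}$, the product rule gives
\[
g_k(x)=-\frac{d}{dx}\left(\cos x\,\frac{\sin^2(kx)}{\sin^2 x}\right).
\]
Next, expand the Fejér-type kernel as
\[
\frac{\sin^2(kx)}{\sin^2 x}=k+2\sum_{m=1}^{k-1}(k-m)\cos(2mx).
\]
Multiplying by $\cos x$ and using product-to-sum formulas, each $\cos((2j-1)x)$, $j=1,\dots,k$, receives contributions from the frequencies $2(j-1)$ and $2j$. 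Its coefficient is $(k-j+1)+(k-j)=2k-2j+1$; the case $j=1$ uses the constant term $k$ together with half of $2(k-1)$. Hence
\[
\cos x\,\frac{\sin^2(kx)}{\sin^2 x}=\sum_{j=1}^{k}(2k-2j+1)\cos\big((2j-1)x\big).
\]
Differentiating and changing sign produces exactly the claimed expansion of $g_k$.

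Summing over $s$ then reduces the lemma to the identity
\[
\sum_{s=1}^{N-1}\sin(as)=\frac{\sin\big(a(N-1)/2\big)\sin(aN/2)}{\sin(a/2)},
\]
applied with $a=(2j-1)\pi/N$. In that case $\sin(aN/2)=\sin((2j-1)\pi/2)=(-1)^{j+1}$. Likewise $\sin(a(N-1)/2)=\sin((2j-1)\pi/2-a/2)=(-1)^{j+1}\cos(a/2)$. The product of the signs is $1$, so the sum equals $\cot(a/2)=\cot\big((2j-1)\pi/(2N)\big)$. Note that $a/2\in(0,\pi)$ is not a multiple of $\pi$ for $1\le j\le k\le N-1$, so the formula applies.

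The main obstacle is keeping track of the coefficients in the product-to-sum step, in particular the boundary terms at $j=1$ and $j=k$. To guard against errors there, I would double-check that step against the direct computation at $k=2$.
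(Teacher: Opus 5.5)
Your proof is correct, and it takes a different route from the paper's. I checked the derivative identity, the Fejér-kernel expansion, the coefficient count including the boundary cases $j=1$ and $j=k$, and the evaluation of $\sum_{s}\sin((2j-1)\pi s/N)$; all are right.

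\textbf{The paper's route.} The paper never isolates a pointwise identity for the summand. It introduces the auxiliary sums $T_k=\sum_s \sin((2k-1)x_s)/\sin^2 x_s$, $E_k=\sum_s \cos(2kx_s)/\sin x_s$ and $U_j=\sum_s \sin((2j-1)x_s)$. Product-to-sum formulas then give the linear relations $\mathcal{A}_k=(T_{k+1}+T_k)/2$, $T_{k+1}-T_k=2E_k$, $E_{j-1}-E_j=2U_j$ and $\mathcal{B}_k=2\sum_{j\le k}T_j-\sum_{j\le k}U_j$. It reaches $\sum_j h_{j,k}U_j$ through an Abel-type summation, so the weights $h_{j,k}=(2j-1)(2k-2j+1)$ only emerge from the telescoping bookkeeping. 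The cotangent evaluation of $U_j$ is stated without proof.

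\textbf{What your route buys.} Your observation that the summand equals $-\frac{d}{dx}\bigl(\cos x\,\sin^2(kx)/\sin^2 x\bigr)$ produces the expansion $g_k(x)=\sum_j h_{j,k}\sin((2j-1)x)$ in two conceptual steps. It also explains the weights: $2j-1$ is the frequency brought down by differentiation, and $2k-2j+1$ is the coefficient of $\cos((2j-1)x)$ in $\cos x$ times the Fejér kernel. In addition, you prove the cotangent evaluation that the paper only asserts.

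\textbf{What the paper's route buys.} It is purely mechanical: it needs only elementary product-to-sum identities and never requires spotting the derivative or kernel structure.

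\textbf{A side remark.} Since $2j-1$ is odd, $\sin((2j-1)\pi/(2N))$ never vanishes. Your argument therefore proves the identity for every $k\ge 1$, not just for $k\le N-1$.
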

\begin{proof}
    Fix $N\ge 3$ and denote
    \[
    x_s \coloneqq \frac{\pi s}{N}.
    \]
    Taking into account the trigonometric identity
    \[
    \sin((2k+1) x) + \sin((2k-1)x) = 2 \sin(2k x) \cos(x),
    \]
    we can write $\mathcal{A}_k$ defined in \eqref{eq:def_Ak} as
    \begin{equation}\label{eq:AkTk}
        \mathcal{A}_k = \frac{T_{k+1}+T_{k}}{2},
    \end{equation}
    where
    \begin{equation}\label{eq:def_T}
    T_k \coloneqq \sum_{s=1}^{N-1} \frac{\sin((2 k - 1) x_s)}{\sin^2(x_s)}, \quad k \ge 1.
    \end{equation}
    Let
     \begin{equation}\label{eq:def_E}
    E_k \coloneqq \sum_{s=1}^{N-1} \frac{\cos(2 k x_s)}{\sin(x_s)}, \quad k \ge 0 .
    \end{equation}
    
    Then, taking into account the trigonometric identity
    \[
    \sin((2k+1)x) - \sin((2k-1)x) = 2 \sin(x) \cos(2 k x) ,
    \]
    it is immediate that
    \begin{equation}\label{eq:TkEk}
        T_{k+1} - T_k = 2 E_k , \quad k \ge 1 .
    \end{equation}
    Note that $E_0 = T_1$. Thus, from \eqref{eq:TkEk}, we arrive at
    \begin{equation}\label{eq:TE}
    T_k = E_0 + 2 \sum_{j=1}^{k-1} E_j , \quad k \ge 1,
    \end{equation}
    where, for $k=1$, we interpret the empty sum as $0$.
    
    In conclusion, from \eqref{eq:AkTk} and \eqref{eq:TE}, we obtain that
    \begin{equation}\label{eq:AE}
    \mathcal{A}_k = E_0 + E_{k} + 2 \sum_{j=1}^{k-1} E_j . 
    \end{equation}

    For $\mathcal{B}_k$ defined in \eqref{eq:def_Bk}, we use that $\cos^2(x)= 1 - \sin^2(x)$ together with the identity
    \[
    \sum_{j=1}^{k} \sin((2j-1) x) = \frac{\sin^2(k x)}{\sin(x)}
    \]
    to obtain
    \begin{equation}\label{eq:BTU}
    \mathcal{B}_k = 2 \sum_{j=1}^{k} T_j - \sum_{j=1}^{k} U_j ,
    \end{equation}
    where
    \begin{equation}\label{eq:def_U}
    U_j \coloneqq \sum_{s=1}^{N-1} \sin((2j-1) x_s), \quad j \ge 1.
    \end{equation}
    Note that
    \begin{equation}\label{eq:U_cot}
        U_j = \cot\left(\frac{(2j-1)\pi}{2N}\right).
    \end{equation}

    Moreover, taking into account \eqref{eq:def_E}, the trigonometric identity
    \[
    \cos(2 (j-1) x)-\cos(2 j x) = 2 \sin(x) \sin((2j-1) x)
    \]
    and \eqref{eq:def_U}, we deduce that
    \begin{equation}\label{eq:EjUj}
    E_{j-1}-E_{j} = 2 \sum_{s=1}^{N-1} \sin((2 j -1) x_s) = 2 U_j .
    \end{equation}
    Therefore, from \eqref{eq:BTU}, \eqref{eq:TE} and \eqref{eq:EjUj}
    \begin{equation}\label{eq:BE}
    \begin{aligned}
    \mathcal{B}_k &= 2 \sum_{j=1}^{k} \left(E_0 + 2 \sum_{r=1}^{j-1} E_r \right) - \frac{1}{2} \sum_{j=1}^{k} (E_{j-1}-E_{j}) \\
    &= 2 k E_0 - \frac{E_0-E_k}{2} + 4 \sum_{j=1}^{k-1} (k-j) E_j .
    \end{aligned}
    \end{equation}
    From \eqref{eq:AE} and \eqref{eq:BE}, we obtain
    \[
    \mathcal{B}_k - k \mathcal{A}_k = (k - 1/2) E_0 - (k-1/2) E_k + 2 \sum_{j=1}^{k-1} (k-2j) E_j 
    \]
    If we call $h_{j,k}\coloneqq (2j-1)(2k-2j+1)$, then
    \[
    \begin{aligned}
        \mathcal{B}_k - k \mathcal{A}_k &= \frac{1}{2} h_{1,k} E_0 - \frac{1}{2} h_{k,k} E_k + \frac{1}{2} \sum_{j=1}^{k-1} (h_{j+1,k}-h_{j,k}) E_{j} \\
        &= \frac{1}{2}  \sum_{j=1}^{k} h_{j,k} (E_{j-1}-E_{j}) .
    \end{aligned}
    \]
    Finally, taking into account \eqref{eq:EjUj} and \eqref{eq:U_cot},
    \[
    \mathcal{B}_k - k \mathcal{A}_k = \sum_{j=1}^{k} h_{j,k} \cot\left(\frac{(2j-1)\pi}{2N}\right),
    \]
    with $h_{j,k}=(2j-1) (2k-2j+1)$.
\end{proof}

\begin{thm}\label{thm:Rk_not_1}
    For any $N \ge 3$ and $k\in\{1,\ldots,N-1\}$, we have that $R_k(N) < 1$.
\end{thm}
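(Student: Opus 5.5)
The plan is to reduce $R_k(N)<1$ to a sign statement via Lemma~\ref{lem:BkA_identity}. Since every term of $\mathcal{B}_k(N)$ in~\eqref{eq:def_Bk} is nonnegative, and the term $s=1$ is strictly positive ($\sin(k\pi/N)\neq 0$ for $1\le k\le N-1$), we have $\mathcal{B}_k(N)>0$. Hence $R_k(N)<1$ is equivalent to $k\mathcal{A}_k(N)<\mathcal{B}_k(N)$, that is, to
\[
\mathcal{B}_k(N)-k\mathcal{A}_k(N)=\sum_{j=1}^{k}(2j-1)(2k-2j+1)\cot\left(\frac{(2j-1)\pi}{2N}\right)>0 .
\]
The weights $h_{j,k}=(2j-1)(2k-2j+1)$ are positive and symmetric under $j\mapsto k+1-j$. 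The cotangent is positive when $(2j-1)<N$ and negative when $(2j-1)>N$, so the difficulty is only for $k>(N+1)/2$.

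For $k\le (N+1)/2$, every angle $(2j-1)\pi/(2N)$ lies in $(0,\pi/2]$. Then all cotangents are nonnegative and the $j=1$ term is strictly positive, which settles this case.

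For larger $k$, I will pair terms using $\cot(\pi-x)=-\cot x$. The index $j$ with odd number $2j-1$ and the index $j'$ with $2j'-1=2N-(2j-1)$, that is $j'=N+1-j$, have opposite cotangents. A negative term at $j'$, with $N+1-k\le j\le (N)/2$ roughly, is cancelled by the positive term at $j=N+1-j'$, which also lies in $\{1,\dots,k\}$. So it suffices to show $h_{j,k}\ge h_{N+1-j,k}$ whenever $j<N+1-j\le k$, with strict inequality or some unpaired positive term remaining. Since $h_{j,k}$ is a concave quadratic in $j$ symmetric about $(k+1)/2$, this amounts to checking that $j$ is at least as close to $(k+1)/2$ as $N+1-j$. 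Compute $|(k+1)/2-j|$ versus $|N+1-j-(k+1)/2|$: the second minus the first, for $j\le (k+1)/2$, equals $N+1-k-1\ge 1>0$ when $N+1-j\ge (k+1)/2$, and it is similarly nonnegative otherwise. Thus $h_{j,k}>h_{N+1-j,k}$ strictly, and after pairing the sum is a sum of nonnegative terms. The term $j=1$, paired with $j'=N\le k$, is impossible since $k\le N-1$, so $j=1$ is unpaired and strictly positive. Any term with $2j-1=N$ vanishes.

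The main obstacle I expect is the careful bookkeeping of this pairing. We need that every negative index $j'\le k$ has its partner $N+1-j'$ inside $\{1,\dots,k\}$, which holds because $N+1-j'<j'$. We also need that the weight comparison holds uniformly, handling the cases where $j$ and $N+1-j$ fall on the same or opposite sides of the axis $(k+1)/2$. If the distance comparison becomes messy, I will instead compute directly $h_{j,k}-h_{N+1-j,k}=(N+1-2j)\cdot 2(2N+1-2k)\cdot(\text{positive})$, factoring the difference of quadratics. Since $2j<N+1$ and $k<N$, this makes positivity explicit.
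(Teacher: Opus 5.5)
Your argument is correct, but it pairs the terms differently from the paper. Both proofs start from Lemma~\ref{lem:BkA_identity}. The paper then uses the symmetry of the \emph{weights}, $h_{k+1-j,k}=h_{j,k}$, and pairs $j$ with $k+1-j$. Since the two angles add up to $k\pi/N$, the identity $\cot x+\cot y=\sin(x+y)/(\sin x\sin y)$ turns the sum into $\frac12\sin(k\pi/N)\sum_{j=1}^{k} h_{j,k}\bigl(\sin\frac{(2j-1)\pi}{2N}\,\sin\frac{(2k-2j+1)\pi}{2N}\bigr)^{-1}$. Every term there is manifestly positive, because all the angles lie in $(0,\pi)$, so the paper needs no case split on $k$ and no tracking of negative terms.

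You instead use the symmetry of the \emph{cotangents}, pairing $j$ with $N+1-j$ via $\cot(\pi-x)=-\cot x$. This forces you to separate $k\le (N+1)/2$ from $k>(N+1)/2$, to check that every negative index has its partner in $\{1,\dots,k\}$, and to compare weights. Your distance-to-the-axis argument for $h_{j,k}\ge h_{N+1-j,k}$ is right.

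Your fallback factorization, however, is wrong. The correct identity is $h_{j,k}-h_{N+1-j,k}=4(N+1-2j)(N-k)$; for example, $N=5$, $k=4$, $j=2$ gives $15-7=8$. It has the sign you need, and it would let you skip the case analysis around $(k+1)/2$ entirely.

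Overall, the paper's route is shorter and gives an expression that is positive term by term. Yours is more hands-on but equally elementary, and it reaches the same conclusion.
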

\begin{proof}
    Clearly, from its definition in \eqref{eq:def_Bk}, we have that $\mathcal{B}_k(N) > 0$. Thus $R_k(N) < 1$ is equivalent to $\mathcal{B}(N) - k \mathcal{A}_k(N) > 0$. 

    From Lemma~\ref{lem:BkA_identity}, we have that
    \[
    \mathcal{B}_k(N) - k \mathcal{A}_k(N) = \sum_{j=1}^{k} h_{j,k} \cot\left(\frac{(2j-1) \pi}{2 N}\right),
    \]
    where the coefficients $h_{j,k} \coloneqq (2j-1)(2k-2j+1)$ satisfy the symmetry $h_{k+1-j,k}=h_{j,k}$.

    Therefore
    \[
    \mathcal{B}_k(N) - k \mathcal{A}_k(N) = \frac{1}{2} \sum_{j=1}^{k} h_{j,k} \left(\cot\left(\frac{(2j-1) \pi}{2 N}\right) +\cot\left(\frac{(2k-2j+1) \pi}{2 N}\right) \right).
    \]

    Using the trigonometric identity $\cot(x)+\cot(y)=\frac{\sin(x+y)}{\sin(x) \sin(y)}$, we arrive at
    \[
    \mathcal{B}_k(N) - k \mathcal{A}_k(N) = \frac{1}{2} \sin(k \pi/N) \sum_{j=1}^{k} (2j-1)(2k-2j+1) \frac{1}{\sin\left(\frac{(2j-1) \pi}{2 N}\right)} \frac{1}{\sin\left(\frac{(2k-2j+1) \pi}{2 N}\right)} > 0.
    \]
\end{proof}

\begin{prop}\label{prop:negativity_Ck}
    For $N \ge 12$ and $k=N-1$, we have that $\mathcal{C}_k(N)$ defined in \eqref{eq:def_Ck} satisfies 
    \[
    \mathcal{C}_{N-1}(N) < 0 , \quad \text{ for } \, N \ge 12.
    \]
\end{prop}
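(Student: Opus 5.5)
The plan is to compute $\mathcal{C}_{N-1}(N)=i(N-1)c_{N-1}$ explicitly from Lemmas~\ref{lem:DthetatildeG_rang} and~\ref{lem:DmutildeG}. Then I reduce its sign to a single inequality for the sum $S(N)\coloneqq\sum_{s=1}^{N-1}1/\sin(\pi s/N)$, and check that inequality for $N\ge 12$.

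\emph{Step 1: closed form for $\mathcal{C}_{N-1}$.} For $k=N-1$ we have $\sin^2(\pi(N-1)s/N)=\sin^2(\pi s/N)$, so $\mathcal{B}_{N-1}=\mathcal{B}_1$. We also have $\sin(2\pi(N-1)s/N)=-\sin(2\pi s/N)$, so $\mathcal{A}_{N-1}=-\mathcal{A}_1$. Substituting into the formulas for $a_{N-1}$ and $b_{N-1}$ gives
\[
a_{N-1}=\tfrac{iN}{4}(\mathcal{A}_1-2N),\qquad b_{N-1}=-\tfrac{N}{4}(\mathcal{B}_1-2N).
\]
Since $c_{N-1}=-a_{N-1}/b_{N-1}$, this yields
\[
\mathcal{C}_{N-1}(N)=-(N-1)\,\frac{\mathcal{A}_1-2N}{\mathcal{B}_1-2N}.
\]
So it suffices to show that $\mathcal{A}_1>2N$ and $\mathcal{B}_1>2N$. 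This also shows $b_1=b_{N-1}\neq0$, as claimed in the Remark after Lemma~\ref{lem:DthetatildeG_rang}.

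\emph{Step 2: reduction to $S(N)$.} Write $\mathcal{A}_1=2\sum_s\cos^2(x_s)/\sin(x_s)$ with $x_s=\pi s/N$, and use $\cos^2=1-\sin^2$. Together with $\sum_{s=1}^{N-1}\sin(x_s)=\cot(\pi/2N)$, this gives
\[
\mathcal{A}_1=2S(N)-2\cot\!\left(\tfrac{\pi}{2N}\right),\qquad \mathcal{B}_1=2S(N)-\cot\!\left(\tfrac{\pi}{2N}\right).
\]
Hence $\mathcal{B}_1>\mathcal{A}_1$, and both claims follow from the single inequality
\[
S(N)>N+\cot\!\left(\tfrac{\pi}{2N}\right).
\]
The right-hand side is at most $N+2N/\pi$.

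\emph{Step 3: proving $S(N)>N(1+2/\pi)$ for $N\ge12$.} This is the main obstacle, since $S(N)\sim(2N/\pi)\log N$ and the margin is only logarithmic. The crude bound $1/\sin x\ge 1/x$ gives only $S\ge (N/\pi)H_{N-1}$ roughly, which works only for $N\gtrsim 90$.

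I would first sharpen the bound. I use the symmetry $s\leftrightarrow N-s$, keeping only $s\le N/2$, together with $1/\sin x\ge 1/x+x/6$ on $(0,\pi/2]$. This yields
\[
S(N)\ge \frac{2N}{\pi}\Bigl(H_{\lfloor (N-1)/2\rfloor}+\text{(explicit positive correction)}\Bigr).
\]
Using $H_m\ge\log m+\gamma$, the inequality reduces to an elementary one of the form $\log(N/2)+\gamma+\dots>\pi/2+1$. This holds for all $N$ beyond an explicit threshold $N_0$.

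For the finitely many $12\le N<N_0$, I would evaluate $S(N)-N-\cot(\pi/2N)$ directly; a rigorous interval evaluation is enough, since each quantity is a finite trigonometric sum. These values should be positive from $N=12$ on, which is presumably the origin of the threshold in the statement. If $N_0$ comes out large, a better alternative is a midpoint/Euler--Maclaurin comparison of $S(N)$ with $\frac{N}{\pi}\int 1/\sin$, that is, with $(2N/\pi)\log\cot(\pi/4N)$. Its error is explicitly controlled by convexity of $1/\sin$, which should push $N_0$ essentially down to $12$.
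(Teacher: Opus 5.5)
Your proposal is correct. Steps 1 and 2 are the paper's reduction, and Step 3 takes a different route.

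\textbf{Steps 1--2.} The paper writes $\mathcal{C}_{N-1}(N)=(N-1)\frac{\mathcal{A}_{N-1}+2N}{\mathcal{B}_{N-1}-2N}$. This is your formula, since $\mathcal{A}_{N-1}=-\mathcal{A}_1$ and $\mathcal{B}_{N-1}=\mathcal{B}_1$. It then uses $\mathcal{A}_{N-1}+\mathcal{B}_{N-1}=\sum_s\sin(\pi s/N)>0$, which is your identity $\mathcal{B}_1-\mathcal{A}_1=\cot(\pi/2N)$. This leaves $\mathcal{A}_1(N)>2N$ to prove.

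\textbf{The paper's Step 3.} It never passes to $S(N)$ or to asymptotics. After pairing $s\leftrightarrow N-s$, it writes
$\cos^2(\pi s/N)/\sin(\pi s/N)=\frac{N}{\pi s}f(\pi s/N)$, with $f(t)=t\cos^2 t/\sin t$ decreasing on $(0,\pi/2)$. It discards the nonnegative terms with $s\ge 6$ and obtains $\mathcal{A}_1(N)\ge\frac{N}{12}\mathcal{A}_1(12)$ for every $N\ge12$. The exact value $\mathcal{A}_1(12)=6+2\sqrt2+\frac{2\sqrt3}{3}+6\sqrt6>24$ then finishes. So $N=12$ is just the base case of a monotone comparison, with no harmonic-number estimates and no finite check.

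\textbf{Your Step 3.} Your route is heavier and, as written, leaves the threshold $N_0$ uncomputed, but it closes without any finite check. Use exact $H_m$ rather than $\log m+\gamma$, keep the $x/6$ term, and for even $N$ keep the middle term $s=N/2$. Your lower bound for $S(N)$, divided by $2N/\pi$, is then increasing in $N$ within each parity class. It already exceeds $1+\pi/2\approx2.571$ at $N=12$ (about $2.586$) and at $N=13$ (about $2.654$), so all $N\ge12$ are covered. As you guessed, $12$ is sharp for this inequality: $S(11)\approx17.66<11+\cot(\pi/22)\approx17.96$.

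\textbf{Your fallback.} Since $\csc$ is convex, the midpoint comparison gives $S(N)\le\frac{2N}{\pi}\log\cot(\pi/4N)$, which is an upper bound. A convexity-based lower bound would have to come from the trapezoidal side of Hermite--Hadamard instead.
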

\begin{proof}
    From its definition,
    \[
    \mathcal{C}_{N-1}(N) = (N-1) \frac{\mathcal{A}_{N-1}(N) + 2N}{\mathcal{B}_{N-1}(N) - 2N} ,
    \]
    with
    \[
    \begin{aligned}
        \mathcal{A}_{N-1}(N) &= -2 \sum_{s=1}^{N-1} \frac{\cos^2(\pi s/N)}{\sin(\pi s/N)} ,\\
        \mathcal{B}_{N-1}(N) &= \sum_{s=1}^{N-1} \frac{1+\cos^2(\pi s/N)}{\sin(\pi s/N)} .
    \end{aligned}
    \]
    Hence
    \[
    \mathcal{A}_{N-1}(N)+\mathcal{B}_{N-1}(N) = \sum_{s=1}^{N-1} \frac{1-\cos^2(\pi s/N)}{\sin(\pi s/N)} = \sum_{s=1}^{N-1} \sin(\pi s/N) > 0.
    \]
    Therefore $\mathcal{B}_{N-1}(N) > -\mathcal{A}_{N-1}(N)$ and the result will be obtained once we prove that $\mathcal{A}_{N-1}(N) <- 2N$ for $N \ge 12$.

    First, pairing the equal terms with indices $s$ and $N-s$, we obtain
    \begin{equation}\label{eq:A1_pairing}
    \mathcal{A}_{N-1}(N) = - 4 \sum_{s=1}^{\lfloor(N-1)/2 \rfloor} \frac{\cos^2(\pi s/N)}{\sin(\pi s/N)} .
    \end{equation}

    Now, defining the auxiliary function $f \colon (0, \pi/2) \to \mathbb{R}^+$,
    \[
    f(t) \coloneqq \frac{t \cos^2(t)}{\sin(t)} ,
    \]
    we can write
    \begin{equation}\label{eq:A1_auxf}
        \mathcal{A}_{N-1}(N) = - \frac{4N}{\pi} \sum_{s=1}^{\lfloor(N-1)/2 \rfloor} \frac{f(\pi s/N)}{s}.
    \end{equation}

    A direct computation gives that $f$ is strictly decreasing on $(0, \pi/2)$.

    In what follows we assume $N \ge 12$. Note that $0 < \pi s/N < \pi/2$ for $1 \le s \le \lfloor (N-1)/2 \rfloor$ and that, since $f$ is decreasing, $f(\pi s/N) \ge f(\pi s/12)$.

    From \eqref{eq:A1_auxf} it is immediate that
    \[
    \mathcal{A}_{N-1}(N) \le -\frac{4N}{\pi}\sum_{s=1}^{5} \frac{f(\pi s/N)}{s} \le -\frac{4N}{\pi} \sum_{s=1}^{5} \frac{f(\pi s/12)}{s} = -\frac{N}{3} \sum_{s=1}^{5} \frac{\cos^2(\pi s/12)}{\sin(\pi s/12)} = \frac{N}{12} \mathcal{A}_{11}(12),
    \]
    where in the last equality we have used \eqref{eq:A1_pairing}.

    In conclusion,
    \[
    \mathcal{A}_{N-1}(N) \le \frac{N}{12} \mathcal{A}_{11}(12).
    \]

    Since $\mathcal{A}_{11}(12) = -(6+2 \sqrt{2} + 2\sqrt{3}/3 + 6 \sqrt{6}) < -24$, we conclude that $\mathcal{A}_{N-1}(N) < -2N $ for $N \ge 12$.
\end{proof}

\begin{proof}[Proof of Theorem~\ref{thm:maximal_rank_jacobian}]
    We recall that, by Proposition~\ref{prop:expansion_Melnikov_positions}, it is enough to check that $\mathcal{C}_k(N) \coloneqq i k c_k$ satisfies $\mathcal{C}_k(N) \neq 1$ for every $N\ge 3$ and $k\in\{1,\dots,N-1\}$.
    
    Note that $\mathcal{C}_k(N) = R_k(N)$ for $2 \le k \le N-2$, while for $k=1, N-1$ we have that
    \[
    \begin{aligned}
        \mathcal{C}_1(N) &= \frac{\mathcal{A}_1(N) -2N}{\mathcal{B}_1(N) - 2N} , \\
        \mathcal{C}_{N-1}(N) &= (N-1) \frac{\mathcal{A}_{N-1}(N) +2N}{\mathcal{B}_{N-1}(N) - 2N} .
    \end{aligned}
    \]
    Therefore, Theorem~\ref{thm:Rk_not_1} immediately gives that $\mathcal{C}_k(N) \neq 1$ for $1 \le k \le N-2$. 

    Finally, Proposition~\ref{prop:negativity_Ck} gives that $\mathcal{C}_{N-1}(N) \neq 1$ for $N \ge 12$. The cases $N=3,\ldots,11$ can be checked through numerical computations with sufficient precision.
\end{proof}

\section{Tangencies of arbitrarily high order between the invariant manifolds of infinity}\label{sec:tangencies}

The following theorem implies Theorem~\ref{thm:tangencies_sec2}.

\begin{thm}
\label{thm:tangencies}
Let $\ell \in \N$ be odd and fixed  and  $N = 2 \ell +1$.
    For any $G_0\gg 1$, there exists $\varepsilon_0>0$ such that, for any value of the central mass $m > 1/\varepsilon_0$, there exist $\nu_j = \mathcal{O}(G_0^{\ell+1} e^{- G_0^3/3})+\mathcal{O}(1/m)$, $j=1,\dots,\ell$, such that the invariant manifolds of infinity of the $(N+1)+1$ restricted planar circular problem associated to the central configuration with masses $m_j = 1+\mu_j$, $j=0,\dots, N-1$, $m_N = m$, where
    \begin{equation}
    \label{def:mu_nu}
    \mu_j = \begin{cases}
        \nu_j, \qquad & j = 1,\dots,\ell,\\
        \nu_{N-j}, \qquad & j = \ell+1,\dots,N-1,
    \end{cases}
    \end{equation}
    and $\mu_0 = - \sum_{j=1}^{N-1} \mu_j$
    have a tangency of order $\ell$ that splits generically. 
\end{thm}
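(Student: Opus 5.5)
The plan is to reduce the existence of a tangency of order $\ord=\ell$ to a finite-dimensional problem for the Melnikov potential $L$. We then solve that problem with the Vandermonde structure of the harmonics and pass to the true splitting function by the implicit function theorem, using the $\mathcal{O}(\varepsilon)$ remainder in Theorem~\ref{thm:Melnikov_domina}.

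\textbf{Step 1: the splitting function.} By Theorem~\ref{thm:Melnikov_domina}, on a compact range of $v$ with $y_h(v)\neq 0$ we have $Y^u_{\phi_0}-Y^s_{\phi_0}=-\varepsilon\,y_h(v)^{-1}\bigl((G_0^3-r_h(v)^{-2})\,\partial_\beta L(\beta(v))+\mathcal{O}(\varepsilon)\bigr)$, where $\beta(v)=G_0^3v+\phi_0-\alpha_h(v)$. Since $\beta'(v)=G_0^3-r_h^{-2}\neq0$ for $G_0>1$ and the prefactor does not vanish, a zero of multiplicity $\ell+1$ of the difference in $v$ corresponds, at leading order, to a zero of multiplicity $\ell+1$ of $f(\beta):=\partial_\beta L(\beta)$. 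Symmetric masses $\mu_j=\mu_{N-j}$ give, by Proposition~\ref{prop:symmetry_CC}, real $D_{n,s}$ and the reversibility $(r,y,\phi,G)\mapsto(r,-y,-\phi,G)$. By Proposition~\ref{prop:Fourier_expansion_Melnikov_RPCN2BP},
\[
f(\beta)=-2\sum_{k\ge1}kL^{[k]}\sin(k\beta).
\]

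\textbf{Step 2: the parameters and their effect on the harmonics.} With $N=2\ell+1$ and symmetric $\mu$ as in~\eqref{def:mu_nu}, the discrete Fourier coefficients $\hat\nu_k=\sum_j\omega^{-kj}\mu_j$ are real and satisfy $\hat\nu_k=\hat\nu_{N-k}$. The map $(\nu_1,\dots,\nu_\ell)\mapsto(\hat\nu_1,\dots,\hat\nu_\ell)$ is an invertible cosine transform. Combining this with Proposition~\ref{prop:expansion_Melnikov_positions} and Theorem~\ref{thm:maximal_rank_jacobian} ($d_k\neq0$), the map $\nu\mapsto(D_1,\dots,D_\ell)$ is a local diffeomorphism up to $\mathcal{O}_2(\mu)+\mathcal{O}(\varepsilon)$. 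Hence $a_k:=kL^{[k]}\approx kC_kd_k\varepsilon^{-1}\hat\nu_kG_0^{k-3/2}e^{-kG_0^3/3}$ for $1\le k\le\ell$ can be prescribed independently.

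The remaining harmonics must be shown to be negligible.
\begin{itemize}
\item For $\ell<k<N$, the coefficient $D_{0,k}$ is linear in $\hat\nu_k=\hat\nu_{N-k}$ and carries the extra factor $e^{-kG_0^3/3}$, which is exponentially smaller than the controlled terms.
\item For $k\ge N$, at $\mu=0$ the $N$-gon gives $D_{0,k}\neq0$ only when $N\mid k$. These terms are $\mathcal{O}(e^{-NG_0^3/3})$, again exponentially negligible.
\end{itemize}
All of these, together with the $\mathcal{O}(G_0^{-1})$ corrections in~\eqref{eq:melnikov_final_coeffs}, will be absorbed as perturbations.

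\textbf{Step 3: the model problem.} For a trigonometric polynomial $f_0(\beta)=\sum_{k=1}^\ell a_k\sin(k\beta)$, we choose the tangency point on the symmetry line. The odd symmetry makes the even derivatives of $f_0$ at $\beta^*=0$ vanish automatically. The odd derivatives give linear equations $\sum_k a_kk^{2i+1}=0$, whose matrix is Vandermonde in $k^2$ and hence nondegenerate. Fixing the normalization (say $a_\ell$, equivalently the size of $\nu_\ell$) and solving for the remaining $a_k$ produces a zero of the required multiplicity.

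This normalization dictates the size of the parameters. Because $a_k$ carries the weight $e^{-kG_0^3/3}$, we rescale $\hat\nu_k=e^{(k-\ell)G_0^3/3}G_0^{\ell-k}\lambda_k$ with $\lambda_k=\mathcal{O}(1)$ bounded. Fixing the overall size at $\mathcal{O}(G_0^{\ell+1}e^{-G_0^3/3})$ keeps all $\nu_j$ in the ball of Theorem~\ref{thm:posicions_respecte_a_les_masses_s2}. The additive $\mathcal{O}(1/m)$ in the statement comes from the $\mathcal{O}(\varepsilon)$ corrections in $d_k$.

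The parity bookkeeping needs care. The exact correspondence between the derivative conditions available at the symmetric point (via $\beta(v)$ and $\phi_0$) and ``order $\ell$'' is what I expect forces $\ell$ odd. If the point $\beta^*=0$ does not give the right count, I would instead choose $\phi_0$ so that the tangency sits at a non-symmetric $\beta^*$. I would then impose $f_0^{(i)}(\beta^*)=0$ for $i=0,\dots,\ell$ on $(a_1,\dots,a_\ell)$ up to scaling, using a generalized Vandermonde argument in $\sin$ and $\cos$.

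\textbf{Step 4: persistence and generic splitting.} Write the conditions ``difference and its first $\ell$ derivatives vanish at $v^*$'' as $\mathcal{F}(v^*,\lambda;\varepsilon,G_0)=0$. Step 3 gives a nondegenerate solution at the limit, since its Jacobian in $(v^*,\lambda)$ is the Vandermonde matrix times nonzero factors. The implicit function theorem then yields a solution for $\varepsilon<\varepsilon_0(G_0)$ and $G_0\gg1$. The same Jacobian shows that the map from $\nu$ to the first $\ell$ Taylor coefficients of the splitting function is a local diffeomorphism, which is the non-degenerate (generic) unfolding.

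\textbf{Main obstacle.} The delicate step is uniformity. The controlled harmonics live at exponentially different scales $e^{-kG_0^3/3}$, while the error in Theorem~\ref{thm:Melnikov_domina} is only $\mathcal{O}(\varepsilon^2)$ relative to an $\mathcal{O}(\varepsilon)$ main term. One must therefore fix $G_0$ first and then take $\varepsilon$ small enough that $\varepsilon\ll e^{-\ell G_0^3/3}$ times polynomial factors. This ordering is why the statement chooses $\varepsilon_0$ after $G_0$.
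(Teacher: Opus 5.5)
There is a genuine gap in Steps~3--4: the model problem you set up has no nondegenerate solution.

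With symmetric masses $L$ is a cosine series, so $f=\partial_\beta L$ is odd about $\beta=0$. Any zero of $f$ there therefore has odd multiplicity, whereas a tangency of order $\ell$ requires multiplicity exactly $\ell+1$, which is even since $\ell$ is odd. Concretely, $f^{(j)}(0)=0$ reduces to the $(\ell+1)/2$ equations $\sum_k a_k k^j=0$, $j=1,3,\dots,\ell$, in $\ell$ unknowns. This is not a square Vandermonde system, and once it holds, $f^{(\ell+1)}(0)=\partial_\beta^{\ell+2}L(0)=0$ automatically. For $\ell=1$, a nonzero $a_1\sin\beta$ has no double zero. For $\ell=3$, your recipe gives $(a_1,a_2,a_3)\propto(5,-4,1)$, i.e.\ $f_0=\beta^5+\mathcal{O}(\beta^7)$, which is a tangency of order $4$.

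Genericity also fails at $\beta=0$. The odd derivatives $\partial_\beta^i L(0)$ vanish for \emph{every} symmetric $\nu$, so $\nu\mapsto(\partial_\beta L,\dots,\partial^\ell_\beta L)(0)$ has rank at most $(\ell-1)/2$, and the unfolding can never be generic there.

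Your fallback of a non-symmetric $\beta^*$ does not help either. A nonzero sine polynomial of degree $\ell$ has at most $2\ell$ zeros on $\T$ counted with multiplicity, and it vanishes at $0$ and $\pi$. A zero of multiplicity $\ell+1$ at $\beta^*\notin\{0,\pi\}$, and hence also at $-\beta^*$, is therefore impossible.

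Structurally, any model problem that is homogeneous in the controlled coefficients has a scaling-invariant solution set. The solution direction then lies in the kernel of $\partial_\lambda\mathcal{F}$, so the implicit function theorem of Step~4 cannot apply. The free normalization you fix in Step~3 is exactly this degeneracy.

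The missing idea is that the $N$-th harmonic must be kept. The term $L^{[N]}\cos(N\beta)$, with $D_N(0)\neq0$ at the regular $N$-gon, is the parameter-independent forcing that makes the problem inhomogeneous. The paper solves $W_{\le\ell}\mathcal{E}_{\le\ell}+\Delta_\beta\mathcal{R}_{N\mid\mathcal{D}_{\le\ell}=0}=0$ at $\beta=\pi/2$, not at $0$. Since $N$ is odd, setting $E_{2j}=0$ leaves only odd harmonics, which are odd about $\pi/2$. The conditions $\partial^i_\beta L(\pi/2)=0$ with $i$ even then hold automatically. Those with $i$ odd form a square invertible $\frac{\ell+1}{2}\times\frac{\ell+1}{2}$ system for $E_1,E_3,\dots,E_\ell$, with matrix $(-1)^{i+j+1}(2j-1)^{2i-1}$.

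This solution has the following properties:
\begin{enumerate}
\item It has the size of the forcing, $\|\mathcal{E}^0_{\le\ell}\|\lesssim G_0^{N-3/2}e^{-NG_0^3/3}$.
\item The nondegeneracy $\sigma=\partial_\beta^{\ell+2}L\neq0$ comes from the $N$-th harmonic.
\item Since $\pi/2$ is a symmetry point only of the odd-harmonic subspace, reinstating the even harmonics as variables gives a block-triangular Jacobian in $(\mathcal{E}_{\le\ell},\beta)$, with invertible diagonal blocks $\tilde W_{\le\ell}(\pi/2)$ and $\sigma$.
\end{enumerate}

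This also fixes the scale $\mathcal{D}_{\le\ell}=\mathcal{C}_{\le\ell}^{-1}\mathcal{E}^0$, i.e.\ $\nu=\mathcal{O}(G_0^{\ell+1}e^{-(\ell+1)G_0^3/3})$. At that scale harmonic $N$ is of leading order. Harmonics $\ell+1,\dots,2\ell$, the $\mu$-dependence of $L^{[N]}$ and the harmonics beyond $N$ are smaller by at least a factor $e^{-G_0^3/3}$ (up to powers of $G_0$), and are absorbed in a contraction argument. In your Step~2 the controlled harmonics are taken to dominate the $N$-th one, so the leading-order problem is precisely the degenerate homogeneous one above.
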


\begin{rem}
    The claim also holds for even $\ell$, taking $N = 2 \ell$ instead of $N = 2 \ell + 1$ and adapting the proof accordingly.
\end{rem}

\begin{proof} 
We recall that $\varepsilon = 1/m$. In our setting, if $\varepsilon = 0$, the invariant manifolds coincide. Consider the Melnikov potential $L$ in~\eqref{def:Melnikov}. Let $T^{u,s} = T_0+T_1^{u,s}$,
solutions of the Hamilton-Jacobi equation~\eqref{eq:HJ}, be the generating functions of the unstable and stable invariant manifolds of $\gamma_{G_0}$ given by Proposition~\ref{prop:Delta_de_T}. Their difference $\varepsilon \Delta = T^u-T^s$, by Proposition~\ref{prop:Delta_de_T}, satisfies
\begin{equation}
\label{def:Delta_L}
\Delta(v,\xi,\mu(\nu),\beta,\varepsilon,G_0 )  =  L(\xi+G_0^3v,\mu(\nu),\beta,\varepsilon,G_0) + \mathcal{O}_1 (\varepsilon),
\end{equation}
where $\mu(\nu)$ is defined through~\eqref{def:mu_nu}.
The manifolds have a tangency of order $\ell$ at  $\nu = \nu^0$, $\beta = \beta_0$, for some $G_0$ and $\varepsilon $, if 
\begin{equation}
\label{eq:tangency_order_l}
    \mathcal{S}(\nu^0, \beta^0,G_0,\varepsilon) = \begin{pmatrix}\mathcal{S}_1 \\ \vdots \\ \mathcal{S}_{\ell+1} \end{pmatrix}_{\mid  \;\nu  = \nu^0, \beta  = \beta^0 } = \begin{pmatrix}\partial_\beta \Delta\\ \vdots \\ \partial^{\ell+1}_\beta \Delta\end{pmatrix}_{\mid  \;\nu  = \nu^0, \beta  = \beta^0 } = 0
\end{equation}
and $\partial^{\ell+2}_\beta {\Delta}_{\mid  \;\nu  = \nu^0, \beta  = \beta^0 } \neq 0$.
It splits if $\mathrm{rank}\, \partial  (\mathcal{S}_1,\dots,\mathcal{S}_\ell) / \partial \nu_{\mid \nu^0,\beta_0} = \ell$. As a matter of fact, we will see that  $\mathrm{rank}\, \partial  \mathcal{S} / \partial ( \nu,\beta)_{\mid \nu^0,\beta_0} = \ell+1$.

By the standard implicit function theorem, it is enough to check that~\eqref{eq:tangency_order_l} holds for $\varepsilon = 0$, that is, for the Melnikov function $L$.

The rest of the proof consists in finding non-degenerate solution of the corresponding equation for the Melnikov funcion (see~\eqref{eq:tangency} below). This is done  by solving a fixed point equation  around a suitable approximate solution. We emphasize that the equation~\eqref{eq:tangency} is highly non-linear in one of its unknowns. Hence, this suitable approximation is obtained by exploiting the symmetries of the problem. 

We introduce $k\in \N$, $k \ge 1$ by $\ell = 2k -1$.

Let 
\begin{equation}
\label{def:LmenoroigualqueN}
L_{\le N-1} (\beta,\mu,\varepsilon, G_0)= L^{[0]}(\mu,\varepsilon, G_0) + 2 \sum_{k =1}^{N-1} L^{[k]}(\mu,\varepsilon, G_0) \cos(k \beta)
\end{equation}
be the truncation up to harmonic $N-1$ of the Melnikov potential in~\eqref{def:Melnikov_RPCN2BP}  and $\mathcal{R}$ and $L_{>N}$ be such that
\begin{equation}
\label{def:R}
 L = L_{\le N-1} + \mathcal{R}
\end{equation}
and
\begin{equation}
\label{def:LmesgranqueN}
 \mathcal{R}(\beta,\mu,\varepsilon, G_0) = L^{[N]}(\mu,\varepsilon, G_0) \cos(N \beta)+ L_{> N}(\beta,\mu,\varepsilon, G_0).
\end{equation}
In view of Proposition~\ref{prop:Fourier_expansion_Melnikov_RPCN2BP}, we write 
\begin{equation}
    \label{def:DCGamma}
    L_{\le N-1} - L^{[0]} = \Gamma^\top \mathcal{C}  \mathcal{D}
\end{equation}
where
\begin{equation}
\begin{aligned}
    \mathcal{D}^\top & = (D_{1}, D_{2},\dots,D_{N-1}), \\
    \mathcal{C} & = \mathrm{diag}\,(L^{[1]}/D_{1},\dots,L^{[N-1]}/D_{N-1}), \\
    \Gamma(\beta)^\top & = (\cos \beta, \dots, \cos (N-1) \beta),
\end{aligned}
\end{equation}
and the coefficients $D_j$, $j=1,\dots N-1$ where introduced in Proposition~\ref{prop:expansion_Melnikov_positions}. We remark that $\mathcal{D}$ collects all the dependence on the masses and is independent of $G_0$. 

First we show that we can use $\mathcal{D}_{\le\ell}^\top  = (D_{1}, D_{2},\dots,D_{\ell})$ as parameters, instead of $\nu$.  To do so, we introduce
\begin{equation}
\label{def:C_menor_o_igual_que_ell}
\begin{aligned}
\mathcal{C}_{\le \ell} & = \mathrm{diag}\,(L^{[1]}/D_{1},\dots,L^{[\ell]}/D_{\ell}), \\
\mathcal{C}_{> \ell} & = \mathrm{diag}\,(L^{[\ell+1]}/D_{\ell+1},\dots,L^{[2\ell]}/D_{2\ell}).
\end{aligned}
\end{equation}
\begin{lem}
\label{lem:from_masses_to_coefficients}
Let $L^{[\le \ell]} = (L^{[1]}, \dots,L^{[\ell]})^\top$. It satisfies
\[
L^{[\le \ell]} = \mathcal{C}_{\le \ell} \mathcal{D}_{\le\ell} =  \mathcal{C}_{\le \ell} ( \mathcal A \nu + \OO_2(\nu)),
\]
where $\mathcal A$ is a constant matrix, depending only on $\ell$, satisfying $\det \mathcal A \neq 0$.

Consequently, for any $j>\ell$, we can write $D_j$ as a function $D_j (\mathcal{D}_{\le\ell})$, independent of $G_0$. They satisfy $D_j(0) = 0$, $j \neq 0 \mod N$, $D_N(0) \neq 0$.
\end{lem}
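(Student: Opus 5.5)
The plan is to make the linear map from the mass parameters $\nu=(\nu_1,\dots,\nu_\ell)$ to $\mathcal{D}_{\le\ell}$ explicit using Proposition~\ref{prop:expansion_Melnikov_positions}, show that it is invertible, and then invoke the analytic inverse function theorem. The statement about $D_j(0)$ I will check directly on the regular polygon.

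\emph{Step 1: the first-order term in $\nu$.} Substitute the symmetric masses \eqref{def:mu_nu}, with $\mu_0=-\sum_{j\ge1}\mu_j$, into $\hat{\overline{\M}}\mu$. For $k=1,\dots,\ell$ this gives
\[
(\hat{\overline{\M}}\mu)_k=\mu_0+\sum_{j=1}^{N-1}\omega^{-kj}\mu_j=\sum_{j=1}^{\ell}\bigl(\omega^{kj}+\omega^{-kj}-2\bigr)\nu_j=\sum_{j=1}^{\ell}2\Bigl(\cos\tfrac{2\pi kj}{N}-1\Bigr)\nu_j .
\]
This expression is real, in agreement with Proposition~\ref{prop:symmetry_CC} and \eqref{eq:symmetry_Dns}. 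Combining with Proposition~\ref{prop:expansion_Melnikov_positions}, I get $\mathcal{D}_{\le\ell}=\varepsilon\bigl(\mathcal{A}\nu+\OO_2(\nu)+\OO(\varepsilon)\OO(\nu)\bigr)$, where
\[
\mathcal{A}=\mathrm{diag}\bigl((-1)^k(1-\mathcal{C}_k(N))\bigr)_{k=1}^{\ell}\,\mathcal{K},\qquad \mathcal{K}_{kj}=2\bigl(\cos(2\pi kj/N)-1\bigr).
\]
The factor $\varepsilon$ and the $\OO(\varepsilon)$ correction can be absorbed into $\mathcal{C}_{\le\ell}$, or equivalently into a normalisation of $\mathcal{D}$; they do not affect invertibility for small $\varepsilon$. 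The identity $L^{[\le\ell]}=\mathcal{C}_{\le\ell}\mathcal{D}_{\le\ell}$ is just the definition of $\mathcal{C}_{\le\ell}$ in \eqref{def:C_menor_o_igual_que_ell}, via Proposition~\ref{prop:Fourier_expansion_Melnikov_RPCN2BP}.

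\emph{Step 2: $\det\mathcal{A}\neq0$.} The diagonal factor is nonsingular because $\mathcal{C}_k(N)\neq1$ for all $k$, which is exactly what was established in the proof of Theorem~\ref{thm:maximal_rank_jacobian}. This is where I expect the real content to lie, and it has already been done. For $\mathcal{K}$ I argue structurally rather than computing a determinant. The map $\nu\mapsto\mu$ is injective onto the $\ell$-dimensional space of vectors that are symmetric ($\mu_j=\mu_{N-j}$) and have zero sum. On that space the discrete Fourier transform is injective; its image has zeroth component $0$ and satisfies $\hat\mu_k=\hat\mu_{N-k}$. Since $N=2\ell+1$, the image is therefore determined by $(\hat\mu_1,\dots,\hat\mu_\ell)$. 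Hence $\nu\mapsto\mathcal{K}\nu$ is injective, and so $\det\mathcal{K}\neq0$.

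\emph{Step 3: $D_j$ as functions of $\mathcal{D}_{\le\ell}$, and their values at $0$.} All $D_{n,s}$ are analytic in $(\mu,\varepsilon)$ and independent of $G_0$. By Step 2 and the inverse function theorem (applied after dividing by $\varepsilon$), $\nu$ is an analytic function of $\mathcal{D}_{\le\ell}$, for each small $\varepsilon$, near $0$. Consequently each $D_j$ with $j>\ell$ becomes an analytic function $D_j(\mathcal{D}_{\le\ell})$, still independent of $G_0$. Now set $\mathcal{D}_{\le\ell}=0$. Then $\nu=0$, hence $\mu=0$, and by the uniqueness in Theorem~\ref{thm:posicions_respecte_a_les_masses} the configuration is the exact centered polygon $q_j=\omega^j$, $q_N=0$, for every $\varepsilon$. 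Therefore
\[
D_{n,s}\propto\sum_{j=0}^{N-1}\omega^{j(n-s)},
\]
which vanishes unless $s-n\equiv0 \bmod N$ and equals $N$ otherwise, up to a nonzero scaling factor from Proposition~\ref{prop:solucio_circular_periode_2pi}. Thus $D_j(0)=0$ for $j\not\equiv0\bmod N$, and $D_N(0)=D_{0,N}(0)\neq0$. Note that $D_N$ lies outside the range covered by Proposition~\ref{prop:expansion_Melnikov_positions}, so this direct computation is needed rather than the first-order expansion.
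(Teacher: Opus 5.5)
Your proof is correct, and its overall structure matches the paper's. You restrict Proposition~\ref{prop:expansion_Melnikov_positions} to the symmetric masses, use $\mathcal{C}_k(N)\neq 1$ from Theorem~\ref{thm:maximal_rank_jacobian} for the diagonal factor, invert, and read off $D_j(0)$ from the exact centered polygon at $\nu=0$.

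The one genuinely different ingredient is how you show that the $\ell\times\ell$ Fourier block is invertible. The paper (Lemma~\ref{lem:sobre_M_times_tilde_I}) computes $A^2$ explicitly, with $2\ell-1$ on the diagonal and $-2$ off it, and concludes by Gershgorin. That computation is valid for the matrix with entries $2\cos(2\pi kj/N)$. That matrix omits the contribution of $\mu_0=-2\sum_j\nu_j$, because the paper's $\mathcal{I}$ only produces $\mu_1,\dots,\mu_{N-1}$.

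The block that actually appears is your $\mathcal{K}=A-2\mathbf{1}\mathbf{1}^\top$. For it, $\mathcal{K}^2=N(\mathrm{Id}+2\mathbf{1}\mathbf{1}^\top)$, which is not diagonally dominant once $\ell\ge 3$, so the Gershgorin step does not transfer. The conclusion still holds: $A\mathbf{1}=-\mathbf{1}$ gives $\det\mathcal{K}=N\det A$. Your structural argument (injectivity of the DFT on symmetric zero-sum vectors, together with $\hat\mu_k=\hat\mu_{N-k}$ and $N=2\ell+1$) applies directly to the correct matrix and needs no computation. The paper's route is quantitative, but only after this repair.

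You are also more careful than the paper on two points it glosses over:
\begin{enumerate}
\item The factor $\varepsilon$ in $d_k$ means $\mathcal{A}$ is independent of $\varepsilon$ only after that factor is taken out.
\item $D_N(0)\neq 0$ lies outside the range of Proposition~\ref{prop:expansion_Melnikov_positions}, and you verify it directly.
\end{enumerate}
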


\begin{proof}[Proof of Lemma~\ref{lem:from_masses_to_coefficients}]
We first observe that the relation among the mass parameters $\mu$ and $\nu$ can be written as
$\mu = \mathcal{I} \nu$, where
\[
\mathcal{I}^{\top} = (\Id_{\ell\times \ell},\widetilde{\Id}_{\ell\times \ell})
\]
and $\widetilde{\Id}_{\ell\times \ell}$ is the $\ell \times \ell$ exchange matrix, that is, with $1$'s at the antidiagonal and $0$'s otherwise.

Then, by Proposition~\ref{prop:expansion_Melnikov_positions},
\begin{equation}
\label{eq:D_en_funcio_de_nu}
\mathcal{D}_{\le\ell} = (\Id_{\ell\times \ell},0) \mathcal{D} = (\Id_{\ell\times \ell},0) \widetilde{\mathcal{D}} \hat{\overline{\M}} \mu = (\Id_{\ell\times \ell},0) \widetilde{\mathcal{D}} \hat{\overline{\M}} \mathcal{I} \nu ,
\end{equation}
where
\[
\widetilde{\mathcal{D}} = \mathrm{diag}\,(d_1,\ldots,d_{N-1})
\]
satisfies $\det \widetilde{\mathcal{D}}_{\mu = 0} \neq 0$.

\begin{lem}
    \label{lem:sobre_M_times_tilde_I} Let $A,B$ be the $\ell \times \ell$ matrices such that $(\hat{\overline{\M}} \mathcal{I})^\top = (A^\top,B^\top)$. Then, $\det A \neq 0$.
\end{lem}
\begin{proof}[Proof of Lemma~\ref{lem:sobre_M_times_tilde_I}] An immediate computation shows that
\[
(A^2)_{i,j} = 
\begin{cases}
    2 \ell-1, \qquad &\textrm{if $i=j$},\\
    -2, \qquad &\textrm{if $i\neq j $}
\end{cases}
\]
Then, the claim follows from Gershgorin circle theorem.
\end{proof}  
Hence, using Lemma~\ref{lem:sobre_M_times_tilde_I} and~\eqref{eq:D_en_funcio_de_nu}, we have that
\[
\mathcal{D}_{\le\ell} = \mathrm{diag}\,(d_1,\ldots,d_{\ell}) A \nu = \mathrm{diag}\,(d_1,\ldots,d_{\ell})_{\nu = 0} A \nu + \OO_2(\nu).
\]
Finally, the vanishing properties of the coefficients $D_j$ follow from the symmetry properties of the $N$-gon central configuration when $\nu = 0$.
\end{proof}

From now on, we will use $\mathcal{D}_{\le\ell}$ as the parameters in $L$, provided that $\mathcal{D}_{\le\ell}$ are small enough.

The manifolds have a tangency of order $\ell$ at  $\mathcal{D}_{\le\ell} = \mathcal{D}_{\le\ell}^0$, $\beta = \beta_0$ if
\begin{equation}
\label{eq:tangency}
    F(\mathcal{D}_{\le\ell}^0, \beta^0,G_0) = \begin{pmatrix}\partial_\beta L \\ \vdots \\ \partial^{\ell+1}_\beta L \end{pmatrix}_{\mid  \;\mathcal{D}_{\le\ell}  = \mathcal{D}_{\le\ell}^0, \beta  = \beta_0 } = 0
\end{equation}
and $\partial^{\ell+2}_\beta L_{\mid  \;\mathcal{D}_{\le\ell}  = \mathcal{D}_{\le\ell}^0,\beta = \beta_0} \neq 0$. It splits if $\mathrm{rank}\, \partial  F / \partial ( \mathcal{D}_{\le\ell},\beta)_{\mid \mathcal{D}_{\le\ell}^0,\beta_0} = \ell+1$.


We split $\Gamma^\top = (\Gamma_{\le \ell}^{\top},\Gamma_{>\ell}^\top)$, where $\Gamma_{\le \ell}^{\top} = (\cos \beta, \dots, \cos \ell \beta)$, and write
\[
\mathcal{C} = \begin{pmatrix} \mathcal{C}_{\le \ell} & 0 \\
0 & \mathcal{C}_{>\ell} 
\end{pmatrix}, 
\]
where $\mathcal{C}_{\le \ell}$ and $\mathcal{C}_{> \ell}$ were defined in~\eqref{def:C_menor_o_igual_que_ell}.

We introduce $W_{\le \ell}$ and $W_{> \ell}$ by
\begin{equation}
\label{def:WmenysWmes}
\left\{
\begin{aligned}
(W_{\le \ell})_{i,j} & = \partial^i_\beta (\cos j \beta),\\
(W_{> \ell})_{i,j} & = \partial^{i}_\beta (\cos (j+\ell) \beta), 
\end{aligned}
\right. \qquad 1  \le j \le \ell, \; 1\le i \le \ell+1.
\end{equation}
and $\tilde W_{\le \ell}$, the $\ell \times \ell$ matrix such that $(\tilde W_{\le \ell})_{i,j} = (W_{\le \ell})_{i,j}$, $1\le i,j\le \ell$.

\begin{lem}
    \label{lem:sobre_W_Gamma} $\det \tilde W_{\le \ell} = 0$ if and only if $\beta = k \pi$, $k\in \Z$.
\end{lem}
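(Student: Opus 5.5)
The plan is to argue by linear dependence of the columns rather than computing the determinant explicitly.

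\emph{The easy direction.} The first row of $\tilde W_{\le \ell}$ is $(\partial_\beta \cos j\beta)_{j} = (-j\sin j\beta)_{j=1,\dots,\ell}$. If $\beta = k\pi$, then $\sin j\beta = 0$ for every $j$. So the first row vanishes and $\det \tilde W_{\le \ell} = 0$.

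\emph{The converse.} Assume $\det \tilde W_{\le \ell}=0$ at some $\beta$ with $\sin\beta\neq 0$; I will derive a contradiction. There is then a nonzero $c=(c_1,\dots,c_\ell)^\top$ with $\tilde W_{\le \ell}\, c = 0$. Set $f(x)=\sum_{j=1}^{\ell} c_j\cos(jx)$. The kernel condition says exactly that $f^{(i)}(\beta)=0$ for $i=1,\dots,\ell$. Hence $g:=f'$, given by
\[
g(x) = -\sum_{j=1}^{\ell} j c_j \sin(jx),
\]
vanishes at $\beta$ to order at least $\ell$. Its coefficients $-jc_j$ are not all zero, so $g\not\equiv 0$.

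\emph{Key step.} Every sine polynomial of degree $\ell$ factors as $\sum_{j=1}^\ell b_j\sin(jx) = \sin x\; P(\cos x)$, where $P$ is a polynomial of degree at most $\ell-1$. This follows from $\sin(jx) = \sin x\, U_{j-1}(\cos x)$, with $U_{j-1}$ the Chebyshev polynomials of the second kind, which are linearly independent; hence $P\not\equiv 0$ whenever $g\not\equiv 0$.

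\emph{Order count.} Since $\sin\beta\neq0$, the function $x\mapsto P(\cos x)$ vanishes to order at least $\ell$ at $x=\beta$. Because $\frac{d}{dx}\cos x = -\sin x \neq 0$ at $\beta$, the map $x\mapsto\cos x$ is a local analytic diffeomorphism near $\beta$, and composing with it preserves the order of a zero. Therefore $P$ has a zero of order at least $\ell$ at $\cos\beta$. A nonzero polynomial of degree at most $\ell-1$ cannot have such a zero, which is the contradiction. Hence $\det\tilde W_{\le\ell}\neq 0$ whenever $\beta\notin\pi\mathbb{Z}$.

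The only step needing care is the factorization through $U_{j-1}$ together with the order-of-vanishing transfer under the change of variable $t=\cos x$. Both are standard, so I do not expect a real obstacle.
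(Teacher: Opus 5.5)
Your proof is correct, but it takes a different route from the paper. The paper asserts, via an unspecified ``direct computation'', the closed form $\det \tilde W_{\le \ell} = c_\ell (\sin\beta)^{\ell(\ell+1)/2}$ with an explicit nonzero constant $c_\ell$, from which both directions are immediate.

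You instead prove non-vanishing qualitatively:
\begin{itemize}
\item you turn a kernel vector into a sine polynomial $g=f'$ with a zero of order $\ell$ at $\beta$;
\item you factor $g=\sin x\,P(\cos x)$ through the Chebyshev polynomials $U_{j-1}$;
\item you transfer the zero order through the local diffeomorphism $x\mapsto\cos x$, which contradicts $\deg P\le \ell-1$.
\end{itemize}

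The closed form gives more: the exact order of vanishing at $\beta\in\pi\mathbb{Z}$ and the explicit value $|c_\ell|$ at $\beta=\pi/2$. Downstream, though, the paper only uses invertibility at the fixed value $\beta=\pi/2$, so your qualitative statement suffices. It also has the advantage of being fully checkable rather than resting on an omitted computation.

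Your factorization is in fact exactly what produces the paper's formula. $\tilde W_{\le\ell}$ is the Wronskian matrix of the functions $-j\sin(jx)$, $j=1,\dots,\ell$. Apply the rules $W(uf_1,\dots,uf_\ell)=u^\ell\, W(f_1,\dots,f_\ell)$ and $W(p_1\circ\phi,\dots,p_\ell\circ\phi)=(\phi')^{\ell(\ell-1)/2}\, W(p_1,\dots,p_\ell)\circ\phi$ with $u=\sin x$ and $\phi=\cos x$. Together with $W(U_0,\dots,U_{\ell-1})=2^{\ell(\ell-1)/2}\prod_{k=0}^{\ell-1}k!$, this gives $\det\tilde W_{\le\ell}=(-1)^{\ell(\ell+1)/2}\,2^{\ell(\ell-1)/2}\prod_{k=1}^{\ell}k!\,(\sin\beta)^{\ell(\ell+1)/2}$, matching the paper's $c_\ell$ exactly.
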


\begin{proof}[Proof of Lemma~\ref{lem:sobre_W_Gamma}] 
A direct computation shows that
$\det \tilde W _{\le \ell} = c_\ell (\sin \beta) ^{\ell(\ell+1)/2}$ with constant $c_\ell = (-1)^{\ell (\ell+1)/2} 2^{\ell (\ell-1)/2} \prod_{k=1}^{\ell} k! \neq 0$.
\end{proof}
Hence, we can rewrite~\eqref{eq:tangency} as
\begin{equation}
\label{eq:tangencies_2}
F = W_{\le \ell} \mathcal{C}_{\le \ell} \mathcal{D}_{\le \ell}+
W_{> \ell}  \mathcal{C}_{> \ell} \mathcal{D}_{> \ell} + \Delta_\beta \mathcal{R} = 0,
\end{equation}
where $\mathcal{D}^\top = (\mathcal{D}_{\le \ell}^\top, \mathcal{D}_{> \ell}^\top)$ and
\[
\Delta_\beta \mathcal{R} = (\partial_\beta \mathcal{R}, \dots, \partial_\beta^{\ell+1} \mathcal{R})^\top.
\]
We emphasize that, in~\eqref{eq:tangencies_2}, $\mathcal{D}_{> \ell}$ is a function of $\mathcal{D}_{\le \ell}$ and $\Delta_\beta \mathcal{R}$ is a function of $\mathcal{D}_{\le \ell}$ and $\beta$.

In order to solve~\eqref{eq:tangencies_2}, we first solve the auxiliary equation
\begin{equation}
    \label{eq:tangencies_simplificades}
    \tilde F = W_{\le \ell} \mathcal{E}_{\le \ell} + \Delta_\beta \mathcal{R}_{N\mid \mathcal{D}_{\le \ell} = 0} = 0,
\end{equation}
where $\mathcal{E}_{\le \ell}^\top = (E_1,\dots,E_{\ell})$ and $\mathcal{R}_N = \mathcal{R} - L_{>N}$, that is, 
\begin{equation}
\label{def:RN}
\mathcal{R}_N(D_{N},\beta,G_0) = L^{[N]}(D_N,G_0) \cos N \beta.
\end{equation}
We emphasize that equation~\eqref{eq:tangencies_simplificades} has unknowns $\mathcal{E}_{\le \ell}$ and $\beta$ and has a nonlinear dependence on $\beta$.

We recall that we are assuming that $N = 2\ell+1$ and $\ell = 2k-1$, $k\ge 1$.
\begin{lem}
\label{lem:tangencies_simplificades}
Equation~\eqref{eq:tangencies_simplificades} admits a solution $( \mathcal{E}_{\le \ell},\beta) = ( \mathcal{E}_{\le \ell}^0,\pi/2)$, where
\[
E_{2j}^0 = 0, \qquad 1 \le j \le k-1,
\]
and $\|\mathcal{E}_{\le \ell}^0\| \le K G_0^{N-3/2} e^{-N G_0^3/3}$, for some constant $K$ independent of $G_0$. Furthermore, 
\[
\partial \tilde F/\partial (\mathcal{E},\beta)_{\mid (\mathcal{E}_{\le \ell}^0,\pi/2)} 
= \begin{pmatrix}
\tilde W_{\le \ell \mid \beta = \pi/2} & 0\\
  b^\top & \sigma
\end{pmatrix},
\]
where $\sigma$ and $b$
satisfy, for some constant $\tilde K >0$ independent of $G_0$,
\[
|\sigma| \ge \tilde K G_0^{N-3/2} e^{-N G_0^3/3}, \qquad \|b\| \le \tilde K G_0^{N-3/2} e^{-N G_0^3/3}.
\]
Consequently,
\[
\left(\partial \tilde F/\partial (\mathcal{E},\beta)_{\mid (\mathcal{E}_{\le \ell}^0,\pi/2)} \right)^{-1}
= \begin{pmatrix}
\tilde W_{\le \ell \mid \beta = \pi/2}^{-1} & 0\\
  \tilde b^\top & \sigma^{-1}
\end{pmatrix},
\]
where
\[
\|\tilde b\| \le \tilde K.
\]
\end{lem}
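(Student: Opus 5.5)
The plan is to solve \eqref{eq:tangencies_simplificades} explicitly at $\beta=\pi/2$ using the parity of the derivatives of $\cos j\beta$ at that point, and then to read the Jacobian off the same parity bookkeeping. Write $\lambda_N := L^{[N]}(D_N(0),G_0)$. By Lemma~\ref{lem:from_masses_to_coefficients}, $D_N(0)\neq0$, so Proposition~\ref{prop:Fourier_expansion_Melnikov_RPCN2BP} gives $|\lambda_N|\asymp G_0^{N-3/2}e^{-NG_0^3/3}$. Equation \eqref{eq:tangencies_simplificades} then reads
\[
\partial_\beta^i\Big(\sum_{j=1}^{\ell} E_j\cos j\beta+\lambda_N\cos N\beta\Big)=0, \qquad i=1,\dots,\ell+1 .
\]
At $\beta=\pi/2$ we have $\partial^i_\beta\cos j\beta = j^i\cos((i+j)\pi/2)$, which vanishes unless $i+j$ is even. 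Since $N$ is odd, the $\lambda_N$ term appears only in the rows with $i$ odd, and those rows involve only the odd $E_j$. The rows with $i$ even involve only the even $E_j$ and carry no forcing.

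First step: solve the square system. Take the first $\ell$ equations and use Lemma~\ref{lem:sobre_W_Gamma}, which says $\tilde W_{\le\ell}$ is invertible at $\beta=\pi/2$. By the parity splitting, this square system decouples into a homogeneous one for the even $E_j$, which forces $E_{2j}^0=0$, and an inhomogeneous one for the odd $E_j$. Solving gives
\[
\mathcal E^0_{\le\ell}=-\tilde W_{\le\ell}^{-1}\lambda_N\,(\partial^i_\beta\cos N\beta)_{i\le\ell},
\]
hence $\|\mathcal E^0_{\le\ell}\|\le K|\lambda_N|$. The remaining equation has $i=\ell+1$, which is even because $\ell$ is odd. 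It involves only the even $E_j$, which are zero, and no $\lambda_N$ term, so it holds automatically.

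Second step: the Jacobian. Differentiating row $i$ in $\beta$ gives row $i+1$ evaluated at the solution. For $i\le\ell$ these all vanish, which produces the zero upper-right block. The $\mathcal E$-derivative of the first $\ell$ rows is $\tilde W_{\le\ell}(\pi/2)$. In the last row, the $\mathcal E$-entries give $b$, and I would bound them using the structure above, relying on the smallness of $\mathcal E^0$ and $\lambda_N$ wherever $\beta$-dependence enters. The corner entry is
\[
\sigma=\partial_\beta^{\ell+2}\Big(\sum E_j^0\cos j\beta+\lambda_N\cos N\beta\Big)\Big|_{\pi/2}=\lambda_N\, c_{\ell},
\]
with $c_\ell$ a constant independent of $G_0$.

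Main obstacle: showing $c_\ell\neq0$. I would argue by contradiction. If $c_\ell=0$, the trigonometric polynomial $g(\beta)=\cos N\beta+\sum_{j\ \mathrm{odd}\le\ell}a_j\cos j\beta$, with $a_j=E_j^0/\lambda_N$, would have $\partial^i g(\pi/2)=0$ for $i=1,\dots,\ell+2$. Substituting $\beta=\pi/2+x$ and using oddness of the frequencies, $g'(\pi/2+x)=\sum_{m}\pm(2m+1)a_{2m+1}\cos((2m+1)x)$, an even function with $k+1$ distinct frequencies $1,3,\dots,\ell,N$. Vanishing of its even derivatives of orders $0,2,\dots,2k=\ell+1$ gives $k+1$ linear conditions whose matrix is a Vandermonde matrix in the distinct squares of these frequencies, up to nonzero diagonal factors. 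Hence all coefficients vanish, contradicting the coefficient $\pm N$ of $\cos Nx$. This gives $|\sigma|\ge\tilde K|\lambda_N|$. Finally, the block-triangular inverse has $\tilde b^\top=-\sigma^{-1}b^\top\tilde W_{\le\ell}^{-1}$, and $\|\tilde b\|\le\tilde K$ follows from $\|b\|\lesssim|\lambda_N|\lesssim|\sigma|$.
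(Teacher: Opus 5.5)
Most of your argument is correct and follows the paper. This includes the parity rule $\partial_\beta^i\cos(j\beta)|_{\pi/2}=j^i\cos\bigl((i+j)\pi/2\bigr)$ and the decoupling: you get $E^0_{2j}=0$ from invertibility of $\tilde W_{\le\ell}(\pi/2)$, whereas the paper imposes it and inverts the odd block directly. It also includes the bound $\|\mathcal{E}^0_{\le\ell}\|\le K|\lambda_N|$, the automatic vanishing of the even row $i=\ell+1$, and the zero upper-right block. Your Vandermonde proof that $\sigma=c_\ell\lambda_N$ with $c_\ell\neq0$ is sounder than the paper's justification. The paper appeals to non-singularity at $\beta=\pi/2$ of $(W_{\le\ell},w)$, whose rows are derivatives of orders $1,\dots,\ell+1$. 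But that matrix annihilates $(\mathcal{E}^0_{\le\ell},\lambda_N)\neq0$, so it is singular. The relevant matrix replaces the order-$(\ell+1)$ row by the order-$(\ell+2)$ row, which is exactly what your argument handles.

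The gap is the bound on $b$. Since $\tilde F$ is linear in $\mathcal{E}$, the $\mathcal{E}$-entries of the last row are $b_j=\partial_\beta^{\ell+1}\cos(j\beta)|_{\beta=\pi/2}=j^{\ell+1}\cos\bigl((\ell+1+j)\pi/2\bigr)$. They involve neither $\mathcal{E}^0$ nor $\lambda_N$, so there is no smallness to rely on. By your own parity rule they vanish for odd $j$, but equal $\pm j^{\ell+1}$ for even $j\in\{2,\dots,\ell-1\}$. Hence $\|b\|\asymp 1$ whenever $\ell\ge3$; only for $\ell=1$ is $b=0$. So the exponential bound on $\|b\|$ is false and cannot be proved. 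For the unscaled matrix, $\tilde b^\top=-\sigma^{-1}b^\top\tilde W_{\le\ell}^{-1}$ then has size $\asymp|\lambda_N|^{-1}$, so your last step $\|b\|\lesssim|\lambda_N|\lesssim|\sigma|$ fails as well. The paper's proof is silent on $b$. What is true, and all that is used later, is $\|b\|\le\tilde K$. In the proof of Proposition~\ref{prop:tangencies} one inverts $A_1=(\partial\tilde F/\partial(\mathcal{E},\beta))\,\mathrm{diag}(1,\dots,1,\sigma^{-1})$. Its lower-right entry is $1$, and its inverse has lower-left block $-b^\top\tilde W_{\le\ell}^{-1}$, bounded independently of $G_0$. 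You should prove this $O(1)$ bound, which is immediate from the formula for $b_j$, and state the inverse estimate for the column-rescaled matrix instead.
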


\begin{proof}[Proof of Lemma~\ref{lem:tangencies_simplificades}]
Observe that, for any $i,j\in \N$, $i\ge 1$, $j\ge 0$,
\[
\partial_\beta^{2i}(\cos (2j+1) \beta)_{\mid \beta = \pi/2} = (-1)^i (2j+1)^{2i} \cos ((2j+1) \pi/2) =  0.
\]
Hence, taking $\beta = \pi/2$ and $E_{2j} = 0$, $j=1,\dots, k-1$, since $N$ is odd, the even components of~$\tilde F$ in~\eqref{eq:tangencies_simplificades} vanish. Denoting $\mathcal{E}_{\le \ell}^{\mathrm{odd}} = (E_{2j-1})_{1 \le j \le k}^\top$, the odd components of $\mathcal{E}_{\le \ell}$, the odd components of~$\tilde F$ are
\[
W_{\le \ell\mid \beta = \pi/2}^{\mathrm{odd}} \mathcal{E}_{\le \ell}^{\mathrm{odd}} + \Delta_\beta \mathcal{R}_{N\mid \beta = \pi/2}^{\mathrm{odd}} = 0,
\]
where
\[
(W_{\le \ell\mid \beta = \pi/2}^{\mathrm{odd}})_{i,j} = (-1)^i (2j-1)^{2i-1} \sin ((2j-1) \beta)_{\mid \beta = \pi/2} = (-1)^{i+j+1} (2j-1)^{2i-1}, \quad 1 \le i,j \le k.
\]
Hence, $W_{\le \ell\mid \beta = \pi/2}^{\mathrm{odd}}$ is invertible, with norm only depending on $N$. Moreover, by Proposition~\ref{prop:Fourier_expansion_Melnikov_RPCN2BP}, 
\[
\|\Delta_\beta \mathcal{R}_{N\mid \beta = \pi/2}^{\mathrm{odd}}\| \le K G_0^{N-3/2} e^{-N G_0^3/3},
\]
which proves  the existence of  the solution $(\mathcal{E}_{\le \ell}^0,\pi/2)$.

To compute $\partial \tilde F/\partial (\mathcal{E},\beta)_{\mid (\mathcal{E}_{\le \ell}^0,\pi/2)}$ we first remark that, by construction, $\tilde F_{j+1} = \partial_\beta \tilde F_j$, $j= 1, \dots, \ell$. Hence,
$\partial_\beta \tilde F_{j\mid ( \mathcal{E}_{\le \ell}^0,\pi/2)} = 0$, for $j= 1, \dots, \ell$. 
Also, in view of~\eqref{eq:tangencies_simplificades},  $\partial (\tilde F_1, \dots, \tilde F_{\ell})^\top/\partial \mathcal{E}_{\le \ell} = \tilde W_{\le \ell}$. The claim on the coefficient $\sigma$
follows from the bounds on $\mathcal{E}_{\le \ell}^0$ and the fact that the matrix
$
(W_{\ell},w),
$
where $w_j = \partial^j_\beta (\cos (N\beta))$, $j=1,\dots, \ell+1$, is non-singular at $\beta = \pi/2$.
\end{proof}

The following lemma is an immediate consequence of Proposition~\ref{prop:Fourier_expansion_Melnikov_RPCN2BP}.

\begin{lem} 
\label{lem:fita_de_C}There exists $K>0$ such that the matrices $\mathcal{C}_{\le \ell}$ and $\mathcal{C}_{>\ell}$ satisfy
\[
\|\mathcal{C}_{\le \ell}^{-1}\| \le K G_0^{3/2-\ell} e^{\ell G_0^3/3},\qquad
\|\mathcal{C}_{> \ell}\| \le K G_0^{\ell-3/2} e^{-(\ell+1) G_0^3/3}.
\]
\end{lem}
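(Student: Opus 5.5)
The plan is to read off both bounds entry by entry from the asymptotics of Proposition~\ref{prop:Fourier_expansion_Melnikov_RPCN2BP}. Both $\mathcal{C}_{\le \ell}$ and $\mathcal{C}_{>\ell}$ are diagonal. So $\|\mathcal{C}_{\le\ell}^{-1}\|$ is the largest of the quantities $|D_k/L^{[k]}|$ for $1\le k\le \ell$, and $\|\mathcal{C}_{>\ell}\|$ is the largest of $|L^{[k]}/D_k|$ for $\ell+1\le k\le 2\ell$, up to a factor depending only on $N$ if another matrix norm is used.

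I take the diagonal entries from~\eqref{eq:melnikov_final_coeffs}, with the convention that $\varepsilon^{-1}D_k$ is the $\OO(1)$, $G_0$-independent mass coefficient. Dividing by it gives
\[
\frac{L^{[1]}}{D_1}= C_1 G_0^{-3/2}e^{-G_0^3/3}\bigl(1+\OO(G_0^{-1})\bigr), \qquad
\frac{L^{[k]}}{D_k}= C_k G_0^{k-3/2}e^{-kG_0^3/3}\bigl(1+\OO(G_0^{-1})\bigr),\quad k\ge 2.
\]
Since $C_k>0$ and $G_0\gg1$, each entry is bounded above and below by constant multiples of its leading term. The constants depend only on $k\le 2\ell=N-1$, so finitely many, and are therefore uniform.

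For $\mathcal{C}_{>\ell}$, the entries with $k\ge \ell+1\ge 2$ are bounded by $K G_0^{k-3/2}e^{-kG_0^3/3}$. The function $k\mapsto G_0^{k}e^{-kG_0^3/3}$ is decreasing for $G_0\gg1$, so the maximum is attained at $k=\ell+1$. This gives $K G_0^{\ell-1/2}e^{-(\ell+1)G_0^3/3}$. I then need to absorb the extra power $G_0$ into $e^{-G_0^3/3}$ to reach the stated form $G_0^{\ell-3/2}e^{-(\ell+1)G_0^3/3}$. This absorption is not literally possible inside a fixed exponential rate. The honest step here is to check the precise power in $N(k,k,0)$ using Proposition~22 of~\cite{DKdlRS2019}; this is the step I expect to need care. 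If that source confirms $G_0^{k-3/2}$ for $L^{[k]}$, the top entry $k=\ell+1$ must be compared directly with the exponent claimed. The bound that is actually used later is the product estimate $\|\mathcal{C}_{\le\ell}^{-1}\|\,\|\mathcal{C}_{>\ell}\|\lesssim G_0^{\,\text{poly}}e^{-G_0^3/3}\to 0$, and for that only the exponential rates matter.

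For $\mathcal{C}_{\le\ell}^{-1}$, the entries are bounded by $K G_0^{3/2-k}e^{kG_0^3/3}$ for $2\le k\le\ell$, and by $K G_0^{3/2}e^{G_0^3/3}$ for $k=1$. These increase in $k$ for $G_0$ large. When $\ell\ge 3$ (recall $\ell$ is odd), the $k=\ell$ entry dominates the $k=1$ entry, because $e^{(\ell-1)G_0^3/3}$ beats any power of $G_0$. This yields $K G_0^{3/2-\ell}e^{\ell G_0^3/3}$. The case $\ell=1$ must be handled separately, since the only entry is then the $k=1$ one with its different power of $G_0$. I would treat it the same way as the $\mathcal{C}_{>\ell}$ issue: what enters the fixed point argument is the exponential gap between $\mathcal{C}_{\le\ell}^{-1}$ and $\mathcal{C}_{>\ell}$, and that gap holds in every case.
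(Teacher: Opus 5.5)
Your entrywise reading of Proposition~\ref{prop:Fourier_expansion_Melnikov_RPCN2BP} is exactly the paper's argument; the paper simply calls the lemma an immediate consequence of that proposition. Your computation of the entries is also correct.

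What you should change is the tentative middle step. There is nothing left to check in \cite{DKdlRS2019}. Proposition~\ref{prop:Fourier_expansion_Melnikov_RPCN2BP} already gives the entries as asymptotic equalities with $C_k>0$ and relative error $\OO(G_0^{-1})$. So, with your normalization, the $(\ell+1)$-st diagonal entry of $\mathcal{C}_{>\ell}$ has absolute value at least $\tfrac12 C_{\ell+1}G_0^{\ell-1/2}e^{-(\ell+1)G_0^3/3}$ for $G_0$ large. Hence no constant $K$ can give $\|\mathcal{C}_{>\ell}\|\le K G_0^{\ell-3/2}e^{-(\ell+1)G_0^3/3}$. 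The second inequality is misprinted; the correct and sharp bound is $\|\mathcal{C}_{>\ell}\|\le K G_0^{\ell-1/2}e^{-(\ell+1)G_0^3/3}$. In the same way, for $\ell=1$ the first inequality fails, since $\|\mathcal{C}_{\le 1}^{-1}\|\asymp G_0^{3/2}e^{G_0^3/3}$. For odd $\ell\ge 3$ the first inequality holds as stated, by exactly your argument. If you want an independent check of the exponent, rescale $\tau-i=G_0^{-3/2}w$ near the pole $\tau=i$, which is also the critical point of the phase $\tau+\tau^3/3$; this gives $N(k,k,0)\asymp G_0^{k-1/2}e^{-kG_0^3/3}$.

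So the provable statement is the lemma with these corrected exponents, and your proposal proves exactly that. Nothing downstream breaks. For odd $\ell\ge3$, the $F_3$ bounds in Lemma~\ref{lem:propietats_de_F1F2F3F4} are precisely what $G_0^{\ell-1/2}$ produces when combined with \eqref{C_menor_o_igual_que_ell_meny_u_E_super_0} and the first bound. Those bounds are $\|F_3(0,0)\|\le KG_0^{2\ell+1/2}e^{-(2\ell+2)G_0^3/3}$ and $\|\partial F_3\|\le KG_0e^{-G_0^3/3}$. For $\ell=1$ some polynomial prefactors there change. As you say, this is harmless, because the contraction only needs the gap $e^{-G_0^3/3}$; moreover the renormalization only uses $\ell\ge3$.

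Finally, your normalization $D_k\mapsto\varepsilon^{-1}D_k$ is not merely a convention. In Proposition~\ref{prop:expansion_Melnikov_positions}, $D_k$ carries a factor $\varepsilon$, so the entries of $\mathcal{C}$ carry $\varepsilon^{-1}$ and $\|\mathcal{C}_{>\ell}\|$ is unbounded as $\varepsilon\to0$. The lemma only makes sense after replacing $(\mathcal{C},\mathcal{D})$ by $(\varepsilon\,\mathcal{C},\varepsilon^{-1}\mathcal{D})$. This leaves $\mathcal{C}\mathcal{D}$, and hence $L$, unchanged, and it is what is actually used at $\varepsilon=0$ in the proof of Theorem~\ref{thm:tangencies}.
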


Finally, we solve~\eqref{eq:tangencies_2}.

\begin{prop}
\label{prop:tangencies}
    Let $G_0\gg 1$. Then, equation~\eqref{eq:tangencies_2} has a solution 
    \[
    (\mathcal{D}_{\le \ell}^0,\beta^0) = (\mathcal{C}_{\le \ell}^{-1} (\mathcal{E}_{\le \ell}^0+\tilde{\mathcal{E}}_{\le \ell}^0),\pi/2+\sigma^{-1}\alpha_0)
    \]
with $\| \tilde{\mathcal{E}}_{\le \ell}^0 \|, |\alpha_0| \le K G_0^{2\ell+1/2} e^{-(2\ell+2) G_0^3/3}$.
Moreover, $\det DF_{\mid (\mathcal{D}_{\le \ell}^0,\beta^0)} \neq 0$.
\end{prop}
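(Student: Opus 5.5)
We look for the solution as a perturbation of the solution of the auxiliary equation~\eqref{eq:tangencies_simplificades} given by Lemma~\ref{lem:tangencies_simplificades}. We use the unknowns $\mathcal{E}_{\le\ell} = \mathcal{C}_{\le\ell}\mathcal{D}_{\le\ell}$ and $\beta$. In these unknowns equation~\eqref{eq:tangencies_2} reads
\[
W_{\le\ell}\mathcal{E}_{\le\ell} + \Delta_\beta\mathcal{R}_{N\mid\mathcal{D}_{\le\ell}=0} + \mathcal{G}(\mathcal{E}_{\le\ell},\beta)=0,
\]
where
\[
\mathcal{G} = W_{>\ell}\mathcal{C}_{>\ell}\mathcal{D}_{>\ell}(\mathcal{C}_{\le\ell}^{-1}\mathcal{E}_{\le\ell}) + \Delta_\beta\bigl(\mathcal{R}-\mathcal{R}_{N\mid\mathcal{D}_{\le\ell}=0}\bigr).
\]
The plan is to write $\mathcal{E}_{\le\ell}=\mathcal{E}^0_{\le\ell}+\tilde{\mathcal{E}}$ and $\beta=\pi/2+\sigma^{-1}\alpha$. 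We expand $\tilde F$ to first order around $(\mathcal{E}^0_{\le\ell},\pi/2)$ and invert the linear part with the explicit inverse from Lemma~\ref{lem:tangencies_simplificades}. This turns the equation into a fixed point problem $(\tilde{\mathcal{E}},\alpha)=\mathcal{T}(\tilde{\mathcal{E}},\alpha)$, where $\mathcal{T}$ collects $\mathcal{G}$ and the quadratic remainder of $\tilde F$. The scaling $\sigma^{-1}\alpha$ is chosen so that, with $\|\tilde b\|\le\tilde K$ and $\tilde W_{\le\ell\mid\beta=\pi/2}^{-1}$ bounded only in terms of $N$, the inverse has norm $\OO(1)$ in $(\tilde{\mathcal{E}},\alpha)$.

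The key step is to bound $\mathcal{G}$ on the ball of radius $\rho=KG_0^{2\ell+1/2}e^{-(2\ell+2)G_0^3/3}$. On this ball $\|\mathcal{E}\|\lesssim G_0^{N-3/2}e^{-NG_0^3/3}$. By Lemma~\ref{lem:fita_de_C}, $\mathcal{D}_{\le\ell}=\mathcal{C}_{\le\ell}^{-1}\mathcal{E}$ is of size $G_0^{N-\ell}e^{-(N-\ell)G_0^3/3}$, which is small, so Lemma~\ref{lem:from_masses_to_coefficients} applies. Since $D_j(0)=0$ for $\ell<j<N$, each such $D_j$ is $\OO(\|\mathcal{D}_{\le\ell}\|)$. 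Combined with $\|\mathcal{C}_{>\ell}\|\le KG_0^{\ell-3/2}e^{-(\ell+1)G_0^3/3}$, this gives
\[
\|W_{>\ell}\mathcal{C}_{>\ell}\mathcal{D}_{>\ell}\|\lesssim G_0^{\ell-3/2+N-\ell}e^{-(N+1)G_0^3/3}=G_0^{2\ell-1/2}e^{-(2\ell+2)G_0^3/3},
\]
using $N=2\ell+1$. The remaining pieces of $\mathcal{G}$ are controlled by Proposition~\ref{prop:Fourier_expansion_Melnikov_RPCN2BP}. The term $\mathcal{R}_N-\mathcal{R}_{N\mid\mathcal{D}_{\le\ell}=0}$ is $L^{[N]}$ times $D_N(\mathcal{D}_{\le\ell})-D_N(0)=\OO(\|\mathcal{D}_{\le\ell}\|)$, so it is much smaller. $L_{>N}$ is bounded by harmonic $N+1$, that is $G_0^{N-1/2}e^{-(N+1)G_0^3/3}=G_0^{2\ell+1/2}e^{-(2\ell+2)G_0^3/3}$. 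All of these are within $\rho$.

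Next we check the contraction. The $\beta$-derivative of $\mathcal{G}$ is of size $e^{-(N+1)G_0^3/3}$ up to powers of $G_0$. After the rescaling $\beta-\pi/2=\sigma^{-1}\alpha$ it picks up a factor $\sigma^{-1}\sim e^{NG_0^3/3}$, leaving $e^{-G_0^3/3}$, which is small. The $\mathcal{E}$-derivative is small by the same estimates. For the quadratic terms of $\tilde F$ we need care. The $\beta\beta$ second derivatives are $\OO(\|\mathcal{E}^0\|)$, and after the factor $\sigma^{-2}$ they give contributions of size $\sigma^{-1}|\alpha|^2\cdot\OO(1)$, which is small because $|\alpha|\le\rho$ and $\rho\sigma^{-1}\ll1$. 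The mixed $\mathcal{E}\beta$ terms are $\OO(\sigma^{-1}|\alpha|\|\tilde{\mathcal{E}}\|)$, which is also small. The $\mathcal{E}\mathcal{E}$ terms vanish, since $\tilde F$ is linear in $\mathcal{E}$.

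I expect this bookkeeping of the nonlinear $\beta$ dependence to be the main obstacle: we must ensure that the large factor $\sigma^{-1}$ is always compensated by exponentially small factors, and that the entries $\tilde b$ stay bounded. Once this is done, Banach's fixed point theorem gives $(\tilde{\mathcal{E}}^0,\alpha_0)$ with the stated bounds, and we set $\mathcal{D}^0_{\le\ell}=\mathcal{C}^{-1}_{\le\ell}(\mathcal{E}^0+\tilde{\mathcal{E}}^0)$. Finally, $DF$ at the solution is $\partial\tilde F/\partial(\mathcal{E},\beta)$ plus small perturbations, composed with the invertible map $\mathrm{diag}(\mathcal{C}_{\le\ell},1)$. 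The block-triangular matrix of Lemma~\ref{lem:tangencies_simplificades} has nonzero determinant $\sigma\det\tilde W_{\le\ell\mid\beta=\pi/2}$, with $\det\tilde W_{\le\ell\mid\beta=\pi/2}\neq0$ by Lemma~\ref{lem:sobre_W_Gamma}. The perturbation, measured in the scaled variables, is small relative to it, so $\det DF\neq0$.
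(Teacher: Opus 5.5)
Your proposal is correct and is essentially the paper's own argument. It uses the same unknowns $\mathcal{D}_{\le \ell}=\mathcal{C}_{\le \ell}^{-1}(\mathcal{E}^0_{\le \ell}+\tilde{\mathcal{E}}_{\le \ell})$ and $\beta=\pi/2+\sigma^{-1}\alpha$, the same splitting of $F$ into $\tilde F$ plus perturbative pieces (your $\mathcal{G}$ is the paper's $F_2+F_3+F_4$), the same contraction on a ball of radius of order $G_0^{2\ell+1/2}e^{-(2\ell+2)G_0^3/3}$, and the same reading of $\det DF\neq 0$ off the scaled Jacobian as a small perturbation of an invertible matrix with $\OO(1)$ inverse.

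The obstacle you flag about $\tilde b$ in fact disappears in the scaled variables. There the linear part is $\begin{pmatrix}\tilde W_{\le \ell\mid\beta=\pi/2} & 0\\ b^\top & 1\end{pmatrix}$, whose inverse has lower-left block $-b^\top\tilde W_{\le \ell\mid\beta=\pi/2}^{-1}=\OO(1)$ whatever the size of $b$. This matters because $b$ is the last row of $W_{\le\ell}(\pi/2)$, which is in general of order one rather than exponentially small.
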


\begin{proof}[Proof of Proposition~\ref{prop:tangencies}]

To do so, we introduce 
$\tilde{\mathcal{E}}_{\le \ell}$ 
and $\alpha$ by setting 
$\mathcal{D}_{\le \ell} = \mathcal{C}_{\le \ell}^{-1} (\mathcal{E}_{\le \ell}^0 + \tilde{\mathcal{E}}_{\le \ell})$ and $\beta = \pi/2+\sigma^{-1} \alpha$, where  $\mathcal{E}_{\le \ell}^0$ and $\sigma$ are given in Lemma~\ref{lem:tangencies_simplificades}.
We write the function $F$ in~\eqref{eq:tangencies_2} as
\begin{equation}
\label{def:F1F2F3F4}
  F (\mathcal{C}_{\le \ell}^{-1} (\mathcal{E}_{\le \ell}^0 + \tilde{\mathcal{E}}_{\le \ell}), \pi/2+\sigma^{-1} \alpha) = \sum_{r=1}^4 F_r(\tilde{\mathcal{E}}_{\le \ell},\alpha) 
\end{equation}
where
\begin{align}
    \label{def:F1}
    F_1(\tilde{\mathcal{E}}_{\le \ell},\alpha) & = \tilde F( \mathcal{E}_{\le \ell}^0 +  \tilde{\mathcal{E}}_{\le \ell}, \pi/2+\sigma^{-1}\alpha) \\
    \label{def:F2}
    F_2(\tilde{\mathcal{E}}_{\le \ell},\alpha) & = \Delta_\beta \mathcal{R}_N(\mathcal{C}_{\le \ell}^{-1} (\mathcal{E}_{\le \ell}^0 + \tilde{\mathcal{E}}_{\le \ell}), \pi/2+\sigma^{-1}\alpha) - \Delta_\beta \mathcal{R}_N(0, \pi/2+\sigma^{-1}\alpha)\\
    F_3(\tilde{\mathcal{E}}_{\le \ell},\alpha) & = W_{>\ell}(\pi/2+\sigma^{-1}\alpha) \mathcal{C}_{> \ell} \mathcal{D}_{>\ell} \left(\mathcal{C}_{\le \ell}^{-1} (\mathcal{E}_{\le \ell}^0 + \tilde{\mathcal{E}}_{\le \ell}) \right)\\
    F_4(\tilde{\mathcal{E}}_{\le \ell},\alpha) & = L_{>N} (\mathcal{C}_{\le \ell}^{-1} (\mathcal{E}_{\le \ell}^0 + \tilde{\mathcal{E}}_{\le \ell}), \pi/2+\sigma^{-1}\alpha)
\end{align}
where we recall that $\tilde F$ was defined in~\eqref{eq:tangencies_simplificades}, $\mathcal{R}_N$ in~\eqref{def:RN}, $W_{>  \ell}$ in~\eqref{def:WmenysWmes}, and $L_{> N}$ in~\eqref{def:LmesgranqueN}.
The following lemma gathers the bounds of $F_j$, $j=1,\dots,4$, in a suitable ball.

\begin{lem}
    \label{lem:propietats_de_F1F2F3F4} 
    \label{lem:fites_de_F_j_tangencies_melnikov} 
    There exists $C,K>0$, independent of $G_0$, such that, for all $(\tilde{\mathcal{E}}_{\le \ell},\alpha)$ satisfying $\|\tilde{\mathcal{E}}_{\le \ell}\|,\|\alpha\| \le C G_0^{2\ell+1/2} e^{-(2\ell+2) G_0^3/3}$, the following holds.
\begin{enumerate}
    \item 
    \[
    F_1(\tilde{\mathcal{E}}_{\le \ell},\alpha) = (\partial \tilde F/\partial (\mathcal{E}_{\ge \ell},\beta))_{\mid (\mathcal{E}_{\ge \ell}^0,\pi/2)}
    \begin{pmatrix}
         \tilde{\mathcal{E}}_{\le \ell} \\ \sigma^{-1}\alpha
    \end{pmatrix} + \OO_2( \tilde{\mathcal{E}}_{\le \ell},\sigma^{-1}\alpha),
    \]
    \item 
    \[
    \begin{aligned}
    \|F_2(0,0)\| & \le K G_0^{3\ell+1/2} e^{-(3\ell+2) G_0^3/3}, \\
    \left\|\partial_{\tilde{\mathcal{E}}_{\le \ell}} F_2(\tilde{\mathcal{E}}_{\le \ell},\alpha)\right\|, \left\|\partial_{\alpha} F_2(\tilde{\mathcal{E}}_{\le \ell},\alpha)\right\|& \le K G_0^{\ell+1} e^{-(\ell+1) G_0^3/3},
    \end{aligned}
    \]
    \item
    \[
    \begin{aligned}
    \|F_3(0,0)\| & \le K G_0^{2\ell+1/2} e^{-(2\ell+2) G_0^3/3}, \\
    \left\|\partial_{\tilde{\mathcal{E}}_{\le \ell}} F_3(\tilde{\mathcal{E}}_{\le \ell},\alpha)\right\|, \left\|\partial_{\alpha} F_3(\tilde{\mathcal{E}}_{\le \ell},\alpha)\right\|& \le K G_0 e^{-G_0^3/3},
    \end{aligned}
    \]
    \item 
    \[
    \begin{aligned}
    \|F_4(0,0)\| & \le K G_0^{2\ell+1/2} e^{-(2\ell+2) G_0^3/3}, \\
    \left\|\partial_{\tilde{\mathcal{E}}_{\le \ell}} F_4(\tilde{\mathcal{E}}_{\le \ell},\alpha)\right\|, \left\|\partial_{\alpha} F_4(\tilde{\mathcal{E}}_{\le \ell},\alpha)\right\|& \le K G_0^{\ell + 2} e^{-(\ell+2) G_0^3/3}.
    \end{aligned}
    \]
\end{enumerate}
\end{lem}

\begin{proof}[Proof of Lemma~\ref{lem:propietats_de_F1F2F3F4}]
We first remark that, from Lemmas~\ref{lem:tangencies_simplificades}  and~\ref{lem:fita_de_C},
\begin{equation}
    \label{C_menor_o_igual_que_ell_meny_u_E_super_0}
    \|\mathcal{C}_{\le \ell}^{-1} \mathcal{E}_{\le \ell}^0\| \le K G_0^{\ell+1} e^{-(\ell+1) G_0^3/3}.
\end{equation}
We will use this bound several times along this proof.

The proof of 1 follows from 
\[
F_1(\tilde{\mathcal{E}}_{\le \ell},\alpha) = \tilde F( \mathcal{E}_{\le \ell}^0 + \tilde{\mathcal{E}}_{\le \ell}, \pi/2+ \sigma^{-1}\alpha) - \tilde F( \mathcal{E}_{\le \ell}^0 , \pi/2),
\]
Lemma~\ref{lem:tangencies_simplificades} the definition of the functions $\tilde F$, Lemmas~\ref{lem:fita_de_C},  and~\ref{lem:tangencies_simplificades}.

Now we check 2. We first observe that, by Propositions~\ref{prop:Fourier_expansion_Melnikov_RPCN2BP} and~\ref{prop:expansion_Melnikov_positions}, taking into account the definition or $\mathcal{R}_N$ in~\eqref{def:RN}, there exists $\rho >0$ such that
\begin{equation}
\label{fita_derivades_KN}
    \sup_{\|(D_N,\beta)\|< \rho} \| \partial_{D_N} \Delta_\beta \mathcal{R}_N \|, \sup_{\|(D_N,\beta)\|< \rho} \| \partial_{\beta} \Delta_\beta \mathcal{R}_N \| \le K G_0^{2\ell-1/2} e^{-(2\ell+1) G_0^3/3}.
\end{equation}
Hence, combining~\eqref{C_menor_o_igual_que_ell_meny_u_E_super_0} and~\eqref{fita_derivades_KN}, we get
\[
\|F_2(0,0)\| = \|\Delta_\beta \mathcal{R}_N(\mathcal{C}_{\le \ell}^{-1} \mathcal{E}_{\le \ell}^0 , \pi/2) - \Delta_\beta \mathcal{R}_N(0, \pi/2)\| \le K G_0^{3\ell+1/2} e^{-(3\ell+2)G_0^3/3}.
\]
Moreover, since
\[
\partial_{\tilde{\mathcal{E}}_{\le \ell}} F_2 (\tilde{\mathcal{E}}_{\le \ell},\alpha) =
\partial_{D_N} L^{[N]}_{\mid D_n(\mathcal{C}_{\le \ell}^{-1} (\mathcal{E}_{\le \ell}^0 + \tilde{\mathcal{E}}_{\le \ell}))} \cos N(\pi/2+\sigma^{-1} \alpha) \partial_{\mathcal{D}_{\le \ell}} {D_N}_{\mid \mathcal{C}_{\le \ell}^{-1} (\mathcal{E}_{\le \ell}^0 + \tilde{\mathcal{E}}_{\le \ell})} \mathcal{C}_{\le \ell}^{-1},
\]
and, by~\eqref{eq:D_en_funcio_de_nu}, the derivative $\partial_{\mathcal{D}_{\le \ell}} {D_N}$ only depends on $\ell$, the second part of the claim follows immediately from Propositions~\ref{prop:Fourier_expansion_Melnikov_RPCN2BP} and~\ref{prop:expansion_Melnikov_positions}.

The last part of 2 follows immediately from~\eqref{fita_derivades_KN} and the bound of $\sigma^{-1}$ provided by Lemma~\ref{lem:tangencies_simplificades}.

Now we check 3. Recall that, by~\eqref{eq:D_en_funcio_de_nu}, $\mathcal{D}_{\le \ell} = 0$ if and only if $\mu=0$, for $\|\mu\| \le \mu^*$. Then, since, by the symmetry of the regular $(2\ell+1)$-gon central configuration, ${\mathcal{D}_{>\ell}}_{\mid\mu = 0} = 0$, we have that
\[
\begin{aligned}
\|F_3(0,0)\| & = \|W_{>\ell}(\pi/2) \mathcal{C}_{> \ell} \mathcal{D}_{>\ell} \left(\mathcal{C}_{\le \ell}^{-1} \mathcal{E}_{\le \ell}^0  \right) - W_{>\ell}(\pi/2) \mathcal{C}_{> \ell} \mathcal{D}_{>\ell} \left(0\right)\| \\
& \le K \|\mathcal{C}_{> \ell}\| \|\mathcal{C}_{\le \ell}^{-1} \mathcal{E}_{\le \ell}^0 \|.
\end{aligned}
\]
Then, the claim on $F_3(0,0)$ follows from Lemma~\ref{lem:fita_de_C} and~\eqref{C_menor_o_igual_que_ell_meny_u_E_super_0}.

For the derivative with respect to $\tilde{\mathcal{E}}_{\le \ell}$, we have that, by Lemma~\ref{lem:fita_de_C} and using that $\| W_{>\ell}\|$ and $\| \partial_{\mathcal{D}_{\le \ell}} \mathcal{D}_{>\ell} \|$ are bounded independently of $G_0$ in the domain we are considering,
\[
\left\|\partial_{\tilde{\mathcal{E}}_{\le \ell}} F_3(\tilde{\mathcal{E}}_{\le \ell},\alpha)\right\|
\le \| W_{>\ell}\| \| \mathcal{C}_{> \ell}\| \| \partial_{\mathcal{D}_{\le \ell}} \mathcal{D}_{>\ell} \|\|\mathcal{C}_{\le \ell}^{-1}\| \le K G_0 e^{-G_0^3/3}.
\]
For the derivative with respect to $\alpha$, we use again that ${\mathcal{D}_{>\ell}}_{\mid \mu = 0} = 0$, inequality~\eqref{C_menor_o_igual_que_ell_meny_u_E_super_0} and Lemmas~\ref{lem:tangencies_simplificades} and~\ref{lem:fita_de_C} to obtain
\[
\begin{aligned}
\left\|\partial_{\alpha} F_3(\tilde{\mathcal{E}}_{\le \ell},\alpha)\right\| & = \|\partial_{\beta} W_{>\ell}(\pi/2+ \sigma^{-1}\alpha) \mathcal{C}_{> \ell} \left(\mathcal{D}_{>\ell} \left(\mathcal{C}_{\le \ell}^{-1} (\mathcal{E}_{\le \ell}^0 + \tilde{\mathcal{E}}_{\le \ell})   \right) - \mathcal{D}_{>\ell} \left(0\right)\right)\sigma^{-1}\| \\
& \le K \sigma^{-1} \|\mathcal{C}_{> \ell}\| \sup_{\|\tilde{\mathcal{E}}_{\le \ell}\| \le C K G_0^{2\ell+1/2} e^{-(2\ell+2) G_0^3/3}} \|\mathcal{C}_{\le \ell}^{-1} (\mathcal{E}_{\le \ell}^0 + \tilde{\mathcal{E}}_{\le \ell})  \| \\
& \le K G_0 e^{-G_0^3/3}.
\end{aligned}
\]

Finally, we check 4. The claim on $F_4(0,0)$ follows immediately from  Propositions~\ref{prop:Fourier_expansion_Melnikov_RPCN2BP} and~\ref{prop:expansion_Melnikov_positions}.

As for the derivative with respect $\tilde{\mathcal{E}}_{\le \ell}$, using that we have written $\mu$ as a function of $\mathcal{D}_{\le \ell}$, Propositions~\ref{prop:Fourier_expansion_Melnikov_RPCN2BP} and~\ref{prop:expansion_Melnikov_positions} and Lemma~\ref{lem:fita_de_C}, we have that
\[
\begin{aligned}
\|\partial_{\tilde{\mathcal{E}}_{\le \ell}} F_4(\tilde{\mathcal{E}}_{\le \ell},\alpha)\| & = 
\| \partial_{\mu} {L_{>N}}_{\mid \mathcal{C}_{\le \ell}^{-1} (\mathcal{E}_{\le \ell}^0 + \tilde{\mathcal{E}}_{\le \ell})),\pi/2+\sigma^{-1}\alpha} \partial_{\mathcal{D}_{\le \ell}} \mu_{\mid \mathcal{C}_{\le \ell}^{-1} (\mathcal{E}_{\le \ell}^0 + \tilde{\mathcal{E}}_{\le \ell}))} \mathcal{C}_{\le \ell}^{-1}\| \\
& \le K G_0^{\ell + 2} e^{-(\ell+2) G_0^3/3}.
\end{aligned}
\]
To deal with the derivative with respect to $\alpha$, we first remark that, by the symmetry of the regular $(2\ell+1)$-gon central configuration and Proposition~\ref{prop:Fourier_expansion_Melnikov_RPCN2BP}, 
\[
\|\partial_\beta L_{>N} (0, \pi/2+\sigma^{-1}\alpha)\| \le K G_0^{2(2\ell+1)-3/2} e^{-2(2\ell+1) G_0^3/3}.
\]
Then, using the bound of $\sigma^{-1}$ in Lemma~\ref{lem:tangencies_simplificades}, Proposition~\ref{prop:Fourier_expansion_Melnikov_RPCN2BP} and~\eqref{C_menor_o_igual_que_ell_meny_u_E_super_0}
\[
\begin{aligned}
    \|\partial_{\alpha} F_4(\tilde{\mathcal{E}}_{\le \ell},\alpha)\| \le &  \|\partial_\beta L_{>N} (\mathcal{C}_{\le \ell}^{-1} (\mathcal{E}_{\le \ell}^0 + \tilde{\mathcal{E}}_{\le \ell}), \pi/2+\sigma^{-1}\alpha)-\partial_\beta L_{>N} (0, \pi/2+\sigma^{-1}\alpha)\| |\sigma^{-1}| \\
    & + \|\partial_\beta L_{>N} (0, \pi/2+\sigma^{-1}\alpha)\| |\sigma^{-1}| \\
    \le & K G_0^{\ell+2} e^{-(\ell+2) G_0^3/3} + K  G_0^{2\ell+1} e^{-(2\ell+1) G_0^3/3}.
\end{aligned}
\]
\end{proof}
Using~\eqref{def:F1F2F3F4}, introducing the matrices
\begin{equation}
    \label{def:A1}
    A_1 = (\partial \tilde F/\partial (\mathcal{E}_{\ge \ell},\beta))_{\mid (\mathcal{E}_{\ge \ell}^0,\pi/2)} I_{\sigma^{-1}}, \qquad I_a = \mathrm{diag}\,(1,\dots,1,a),
\end{equation}
and denoting $\xi = (\tilde{\mathcal{E}}_{\le \ell},\alpha)^\top$, we rewrite equation~\eqref{eq:tangencies_2} as the fixed point equation
\begin{equation}
    \label{eq:punt_fix_tangencies_pel_Melnikov}
    \xi =  \mathcal{F} (\xi)
\end{equation}
where, 
\begin{equation}
    \label{def:F_punt_fix_tangencies_pel_Melnikov}
    \mathcal{F} (\xi) = - A_1^{-1} (F(\xi)-A_1 \xi).
\end{equation}
We simply check that $\mathcal{F}$ is a contraction on $\{\xi = (\tilde{\mathcal{E}}_{\le \ell},\alpha)^\top \mid \,\|\xi \| \le C G_0^{2\ell+1/2} e^{-(2\ell+2) G_0^3/3}\}$, where $\|\xi\| = \|\tilde{\mathcal{E}}_{\le \ell}\|+\|\alpha\|$.

Indeed, we first observe that, by Lemma~\ref{lem:fites_de_F_j_tangencies_melnikov},
\[
\|\mathcal{F}(0) \| \le \sum_{j=2}^4 \|F_j(0)\| \le K  G_0^{2\ell+1/2} e^{-(2\ell+2) G_0^3/3}.
\]

Next, since, by Lemma~\ref{lem:tangencies_simplificades},
\[
A_1^{-1} = \begin{pmatrix}
\tilde W_{\le \ell \mid \beta = \pi/2}^{-1} & 0\\
  \tilde b^\top & 1
\end{pmatrix},
\]
where $\|\tilde b\|$ is bounded independently of $G_0$, we have that $\|A_1^{-1}\|$ is bounded independently of $G_0$. 

Now we bound the Lipschitz constant of $F(\xi)-A_1 \xi$ for $\|\xi \| \le C G_0^{2\ell+1/2} e^{-(2\ell+2) G_0^3/3}$.
Observe that, from~\eqref{def:F_punt_fix_tangencies_pel_Melnikov},
\[
F(\xi)-A_1 \xi = F_1(\xi) - A_1 \xi + \sum_{j=2}^4 F_j(\xi).
\]
We bound the Lipschitz constant of each of the terms above.

By 1 of Lemma~\ref{lem:fites_de_F_j_tangencies_melnikov} and the bound of $\sigma^{-1}$ provided by Lemma~\ref{lem:tangencies_simplificades},
\[
\sup_{\|\xi\| \le C G_0^{2\ell+1/2} e^{-(2\ell+2) G_0^3/3}} \|D F_1(\xi) - A_1 \| \le K G_0^{2\ell+1/2} e^{-(2\ell+2) G_0^3/3} |\sigma^{-1}| \le K G_0 e^{-G_0^3/3}.
\]
Also, from 2, 3, and 4 of Lemma~\ref{lem:fites_de_F_j_tangencies_melnikov}, for $\|\xi \| \le C G_0^{2\ell+1/2} e^{-(2\ell+2) G_0^3/3}$,
\[
\|D F_j(\xi)\| \le K \max\{K G_0 e^{-G_0^3/3}, G_0^{\ell+1} e^{-(\ell+1) G_0^3/3}, G_0^{\ell + 2} e^{-(\ell+2) G_0^3/3},\}.
\]
Taking $G_0$ large enough, the first claim is proven. The second one is immediate.
\end{proof}
\end{proof}

\section{Renormalization of a high-order tangency in the parabolic setting}\label{uni_sec:renormalization}
This section follows the lines of Section~6 in \cite{GMP2025}.
\subsection{Preliminaries}
Let $\ord \in \mathbb{N}$, $\ord \ge 3$ and suppose that we have $\ord$ free parameters 
\[
\varepsilon = (\varepsilon_0, \varepsilon_1, \ldots, \varepsilon_{\ord-1}) \in \mathbb{R}^\ord.
\]
In our case $\varepsilon$ consists of some of the mass parameters $\mu_j$.

Suppose, as proved in our case in Section~\ref{sec:tangencies}, that the one-dimensional stable and unstable invariant manifolds of infinity $\gamma^{s,u}$ of the Poincaré map $\mathcal{P}_{\varepsilon}$ in \eqref{def:mapa_de_Poincare} have a tangency of order $\ord$ at $\varepsilon=0$ which unfolds generically in $\varepsilon$. 

As in \cite{GMP2025}, after applying McGehee compactification, we obtain a new map $f_\varepsilon=f_\varepsilon(q,p)$ for which the origin $O=(0,0)$ is a fixed point with one-dimensional stable and unstable manifolds $W^{s,u}(O)$ displaying a tangency of order $\ord$ which unfolds generically. Moreover, since $D f_\varepsilon (O) = \mathrm{Id}$, the origin $O$ is a degenerate (parabolic) saddle. 

In spite of this degeneracy, the fact that the potential $V=V(r,\phi)$ defined in \eqref{def:V_N2} is even in $\phi$ allows us to apply the normal form in \cite{GMP2025} and therefore obtain asymptotics for the Shilnikov maps; these are recalled in Appendix~\ref{uni_sec:Shilnikov}. Denote by
$\mathcal{K}_{\varepsilon}$ and $\Psi_{\mathcal{K}_{\varepsilon}}$, respectively, the Hamiltonian and the Poincaré return map in the new coordinates after applying the normal form; in the notation we have made explicit their dependence on the parameters $\varepsilon$.

Moreover, by virtue again of the aforementioned normal form, we may assume that the invariant manifolds are straightened and $W_{loc}^s(O)=\{p=0\}$, $W_{loc}^u(O)=\{q=0\}$.

In summary, we have the existence of two points, $(a, 0) \in W_{loc}^s(O)$ and $(0, \tilde{a}) \in W_{loc}^u(O)$, satisfying:
\begin{itemize}
    \item There exists a (unique) number $L \in \mathbb{N}$ for which $\Psi_{\mathcal{K}_{\varepsilon}}^L(0,\tilde{a})=(a,0)$.
    \item For any neighborhood $R$ of $(a,0)$, there exists a small connected arc $\gamma_R \subset W_{loc}^u(O)$ which passes through $(0,\tilde{a})$, satisfies that $\Psi_{\mathcal{K}_{\varepsilon}}^L(\gamma_R)\subset R$ and moreover has a tangency of order $\ord$ with $W_{loc}^s(O)$ at $(a,0)$ when $\varepsilon=0$.
    \item The tangency of order $\ord$ between $\Psi_{\mathcal{K}_{\varepsilon}}^L(\gamma_R)$ and $W_{loc}^s(O)$ when $\varepsilon=0$ unfolds generically in $\varepsilon$.
\end{itemize}

\subsection{Global map}

In the next lemma, we describe the local structure of the map
\begin{equation}\label{uni_eq:definitionglobalmap}
\Psi_{glob, \varepsilon}:=\Psi_{\mathcal{K}_{\varepsilon}}^L
\end{equation}
from a neighborhood of $(0,\tilde{a})$ to a neighborhood of $(a,0)$. More concretely we define the complex balls
\begin{equation}\label{uni_eq:ballsglobalmap}
\begin{split}
R^u_{\rho}=&\{z=(q,p)\in \mathbb{C}^2\colon |z-(0,\tilde a)|\leq \rho\},\\
R^s_{\rho}=&\{z=(q,p)\in \mathbb{C}^2\colon |z-(a,0)|\leq \rho\}.
\end{split}
\end{equation}
\begin{lem}\label{uni_lem:globalmap}
        Let $(a,0)\in W^{s}_{loc}$ and $(0,\tilde a)\in W^{u}_{loc}$ be the homoclinic points constructed above and let $\Psi_{glob,\varepsilon}$ be the map in \eqref{uni_eq:definitionglobalmap}. Then, for $\rho>0$ small enough,  the map $\Psi_{glob,\varepsilon}:R^{u}_{\rho}\to R^s_{2\rho}$ is well defined, real-analytic and the dependence on $\varepsilon$ is real-analytic. Moreover, uniformly for $(x,\tilde a+y)\in R^{u}_{\rho}$, and $\varepsilon$ in a sufficiently small complex disk centered at zero
        \begin{multline*}
    \Psi_{glob,\varepsilon}:(x,\tilde a+y)\mapsto (a+b x+c y+X(x,y;\varepsilon), \\ \varepsilon_0 + d x+\varepsilon_1 y +\cdots+\varepsilon_{\ord-1} y^{\ord-1} -y^{\ord+1}+Y(x,y;\varepsilon)),  
        \end{multline*}
        for some constants $b,c$ and $d>0$ and 
        \[
        X(x,y;\varepsilon)=\mathcal O_2(\varepsilon)+\mathcal{O}_2(x,y),\quad\quad Y(x,y;\varepsilon)=\mathcal O_2(\varepsilon)+\mathcal{O}(x(x+y))+\mathcal{O}_{\ord+2}(x,y).
        \]
    \end{lem}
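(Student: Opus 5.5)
We treat $\Psi_{glob,\varepsilon}=\Psi_{\mathcal{K}_\varepsilon}^L$ as a finite composition of real-analytic maps. The plan is to obtain regularity first, then read the normal form off from the tangency geometry at $\varepsilon=0$, and finally use the generic unfolding to realize the parameter terms.

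\textbf{Regularity.} The point $(0,\tilde a)$ lies in $W^u_{loc}(O)$ away from the degenerate fixed point $O$. The $L$ iterates that carry it to $(a,0)$ stay in a compact region where the normal-form Hamiltonian $\mathcal{K}_\varepsilon$ and its Poincaré map are real-analytic, jointly in the parameters (Appendix~\ref{uni_sec:Shilnikov} and the analyticity in $\mu,\varepsilon$ from Theorem~\ref{thm:posicions_respecte_a_les_masses_s2} and \cite{McGeheestablemfold}). By continuity and compactness, a complex polydisk $R^u_\rho$ is mapped into $R^s_{2\rho}$ for $\rho$ and $|\varepsilon|$ small. This gives well-definedness and real-analyticity in $(x,y,\varepsilon)$.

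\textbf{Structure at $\varepsilon=0$.} Write $\Psi_{glob,0}(x,\tilde a+y)=(a+\hat X,\hat Y)$ and expand in $x,y$.
\begin{itemize}
\item Since $\{q=0\}=W^u_{loc}$ and $\{p=0\}=W^s_{loc}$, the restriction $y\mapsto\hat Y(0,y)$ describes how the unstable arc meets $\{p=0\}$.
\item Tangency of order $\ord$ at $(a,0)$ means $\hat Y(0,y)=\kappa y^{\ord+1}+\mathcal{O}(y^{\ord+2})$ with $\kappa\neq0$. This is the meaning of order used in Section~\ref{sec:tangencies}, where $\partial_\beta^j$ vanishes for $j\le\ord+1$ with a sign normalization.
\item The first component gives $b=\partial_x\hat X(0,0)$ and $c=\partial_y\hat X(0,0)$.
\item Area preservation and $\partial_y\hat Y(0,0)=0$ give $\det D\Psi_{glob,0}=-c\,\partial_x\hat Y(0,0)=1$ (up to the conformal factor). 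Hence $d=\partial_x\hat Y\neq0$ and $c\neq0$.
\item Replacing $q\mapsto -q$ if necessary makes $d>0$. Rescaling $y$ (equivalently $p$ on $W^u_{loc}$) and possibly reflecting normalizes $\kappa=-1$.
\end{itemize}
The remaining terms of $\hat Y$ are $\mathcal{O}(x(x+y))$ from the mixed and $x^2$ terms, together with $\mathcal{O}_{\ord+2}(x,y)$.

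\textbf{Parameter dependence.} The generic unfolding hypothesis says that the map
\[
\varepsilon\mapsto\bigl(\partial_y^j\hat Y_\varepsilon(0,0)\bigr)_{0\le j\le \ord-1}
\]
has invertible derivative at $\varepsilon=0$. This is the rank statement of Theorem~\ref{thm:tangencies}, transported through the analytic normal-form change. We reparametrize $\varepsilon$ by these Taylor coefficients, which is an analytic diffeomorphism near $0$, and rename them $\varepsilon_0,\dots,\varepsilon_{\ord-1}$.

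The $\varepsilon$-dependence of $\partial_y^\ord\hat Y$ and higher coefficients needs separate treatment. It perturbs the coefficient of $y^\ord$ by $\mathcal{O}(\varepsilon)$, and a translation $y\mapsto y+\mathcal{O}(\varepsilon)$ kills that term (a Tschirnhaus shift, absorbed into the parameter redefinition). The coefficient of $y^{\ord+1}$ then stays $-1+\mathcal{O}(\varepsilon)$, which we rescale back to $-1$. Terms in the first component, and $\varepsilon$-corrections to $b$, $c$, $d$, are mixed into $\mathcal{O}(\varepsilon)$ contributions. With these coordinate shifts the leftovers fit into $X=\mathcal{O}_2(\varepsilon)+\mathcal{O}_2(x,y)$ and $Y=\mathcal{O}_2(\varepsilon)+\dots$. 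Residual linear-in-$\varepsilon$ pieces are absorbed by an affine redefinition of $\varepsilon$ and by moving the base points $(a,0)$ and $(0,\tilde a)$ analytically in $\varepsilon$.

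\textbf{Main obstacle.} The delicate step is this bookkeeping of $\varepsilon$-linear terms. Every term of size $\varepsilon\, x$, $\varepsilon\, y^{j}$ with $j\ge\ord$, or $\varepsilon$ in the first component must either be absorbed by analytic $\varepsilon$-dependent coordinate changes preserving the straightened manifolds, or be shown to be genuinely $\mathcal{O}_2(\varepsilon)$. I would handle it by applying the implicit function theorem to the $\ord$ lowest Taylor coefficients jointly, and checking that the generic-unfolding rank condition is exactly the needed nondegeneracy.
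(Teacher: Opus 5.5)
The paper gives no proof of this lemma. It is stated as the order-$\ord$ analogue of the corresponding lemma in \cite{GMP2025}, and rests only on the bulleted setup: straightened manifolds, a tangency of order $\ord$, and a generic unfolding. Three parts of your proposal are exactly what that setup implicitly uses:
\begin{itemize}
\item your regularity argument;
\item your identification of the structure at $\varepsilon=0$, namely a zero of order $\ord+1$ of $y\mapsto\hat Y_0(0,y)$, and $c,d\neq0$ by invertibility;
\item the reparametrization of $\varepsilon$ by the $\ord$ lowest Taylor coefficients of $y\mapsto\hat Y_\varepsilon(0,y)$.
\end{itemize}
Two small remarks on these. First, continuity gives $R^u_\rho\to R^s_{C\rho}$ with $C$ comparable to $\|D\Psi_{glob,0}(0,\tilde a)\|$, not $2\rho$. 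Second, the transport of the rank condition of Theorem~\ref{thm:tangencies} needs its short argument. Write $\hat Y_\varepsilon(0,y)=U_\varepsilon(y)\,\delta_\varepsilon(s_\varepsilon(y))$, with $U_\varepsilon\neq0$, $\delta_\varepsilon$ the splitting function and $s_\varepsilon$ an analytic reparametrization. The drift $s_\varepsilon(0)-s_0(0)=\mathcal{O}(\varepsilon)$ mixes the nonzero coefficient of order $\ord+1$ only into the coefficients of $y^{\ord}$ and higher. Hence the Jacobian of the first $\ord$ coefficients changes by an invertible triangular matrix.

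Two steps, however, do not work as written.

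\textbf{The coordinate normalizations are not available.} The lemma concerns the fixed map $\Psi_{glob,\varepsilon}$ in the fixed normal-form coordinates at the fixed points $(a,0)$, $(0,\tilde a)$. These are the coordinates in which the Shilnikov maps of Proposition~\ref{uni_prop:Shilnikov} are computed (on the positive quadrant, of the form $(x_T,\eta)$ and $(\xi,y_T)$), and composed with $\Psi_{glob,\varepsilon}$ in \eqref{uni_eq:Psirenorm}. So the following moves all change the object of the lemma: $q\mapsto -q$ (which sends $(a,0)$ out of the quadrant), rescaling or reflecting $y$ alone, the translation $y\mapsto y+\mathcal{O}(\varepsilon)$, and moving the base points with $\varepsilon$.

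The sign of $d$ has to be derived instead. The return map preserves $g\,dq\wedge dp$ with $g$ of constant sign (the pull-back of $dr\wedge dy$). Hence $\det D\Psi_{glob,0}(0,\tilde a)=-cd=g(0,\tilde a)/g(a,0)>0$, which is not $1$ in general. Moreover $c<0$: near the Kepler loop, moving forward along $W^u$ away from $O$ means moving along $W^s$ towards $O$, i.e.\ decreasing $q$. So $d>0$.

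Likewise, the coefficient $\kappa$ of $y^{\ord+1}$ is a geometric constant and cannot be normalized by a coordinate change here. It is simply absorbed by taking $\alpha^{\ord}=-\partial_\eta\tilde y_T/\kappa$ in Lemma~\ref{uni_lem:rescalingtoHenon}. Its sign is harmless for odd $\ord$, which is the case produced by Theorem~\ref{thm:tangencies}.

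\textbf{The $\varepsilon$-linear bookkeeping cannot be resolved as you propose.} The rank condition lets you impose exactly $\ord$ conditions: that the coefficients of $y^0,\dots,y^{\ord-1}$ in $\hat Y_\varepsilon(0,y)$ equal $\varepsilon_0,\dots,\varepsilon_{\ord-1}$. The $\varepsilon$-linear parts of the coefficient of $y^{\ord}$, of $\hat X_\varepsilon(0,0)$, and of $b,c,d,\kappa$ are further quantities, in general nonzero. No reparametrization of $\varepsilon$ removes them, and even moving the base points would leave $b,c,d,\kappa$ untouched. So an implicit function argument on the $\ord$ lowest coefficients cannot produce $X=\mathcal{O}_2(\varepsilon)+\mathcal{O}_2(x,y)$, nor the absence of an $\varepsilon\,y^{\ord}$ term in $Y$.

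The missing idea is that these terms need not be removed. What you can prove is the stated form with $\mathcal{O}(\varepsilon)$ corrections to $a,b,c,d,\kappa$ and an extra $\mathcal{O}(\varepsilon)\,y^{\ord}$ term in $Y$. This suffices for Lemma~\ref{uni_lem:rescalingtoHenon}, where $\varepsilon=\varepsilon_\ast+\upsilon=\mathcal{O}(T^{-2/3})$:
\begin{itemize}
\item The $\varepsilon\,y^{\ord}$ term contributes an error $\mathcal{O}(\alpha^{\ord}T^{-2/3})P^{\ord}$. This is the same type of error that already comes from $\partial_\eta^{\ord}\mathcal{P}(\xi_\ast,\eta_\ast;\varepsilon_\ast)=\mathcal{O}(T^{-2/3})$, and it is negligible against $\alpha^{\ord+1}$ precisely because $\ord\ge3$.
\item The corrections to $b,c,d,\kappa$ only perturb constants that are fixed later through $\alpha$, $\gamma$ and symplecticity.
\item The shift of $a$ is absorbed in the fixed point of Lemma~\ref{uni_lem:renormfirststep}.
\end{itemize}
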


We provide estimates in complex domains in order to make use of Cauchy estimates to obtain bounds for certain derivatives in the next section. 

\subsection{Renormalization}\label{uni_subsec:renormalization}
In this section we prove Proposition~\ref{uni_prop:mainresultrenormalization}, which ensures that a suitably renormalized iteration of the return map to a neighborhood of a homoclinic tangency of order $\ord$ converges, in the real-analytic topology, to the H\'enon map of degree $\ord+1$. The proof is divided in several steps, in subsequent lemmas.

\begin{prop}\label{uni_prop:mainresultrenormalization}
    Let $f_{\varepsilon}$ denote the Poincar\'{e} map and assume that, for $\varepsilon=0$, the invariant manifolds of the degenerate saddle $O$, $W^s(O)$ and $W^u(O)$, possess a homoclinic tangency of order $\ord$ which unfolds generically in $\varepsilon$. Denote by $z$ the corresponding  homoclinic tangency for the map $f_{\varepsilon}$. Then,  there exists $L\in\mathbb{N}$ such that, for any $T\in\mathbb N$ sufficiently large, there exist
    \begin{itemize}
        \item an affine reparametrization $\varphi_{T}:[-2,2]^\ord \to\mathbb{R}^\ord$, which maps $\varphi_{T}:\kappa\mapsto \varepsilon=\varphi_{T}(\kappa)$, for $\kappa=(\kappa_0, \kappa_1, \ldots, \kappa_{\ord-1})$ and
        \item a real-analytic change of variables $\Psi_{\kappa,T}: B_{10}\to \mathbb{C}^2$
    \end{itemize}
    such that, as $T\to\infty$,
    \[
    \varphi_{T}([-2,2]^\ord)\to 0,\qquad\qquad \Psi_{\kappa,T}(B_{10})\to z,
    \] 
    and,  uniformly for $(Q,P)\in B_{10}$, \begin{equation}\label{uni_eq:convergenceHenon}
    \Psi_{\kappa,T}^{-1}\circ f_{\varphi_{T}(\kappa)}^{L+T}\circ \Psi_{\kappa,T}(Q,P)=(P,\kappa_0-Q+\kappa_1 P + \cdots + \kappa_{\ord-1} P^{\ord-1} -P^{\ord+1})+\mathcal{O}(T^{-\beta}),
    \end{equation}
    for some positive real number $\beta>0$.
\end{prop}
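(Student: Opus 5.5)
Following the scheme of Section~6 in \cite{GMP2025}, the plan is to write the first-return map near the tangency as the composition $f^{L+T}_\varepsilon = \Psi_{glob,\varepsilon}\circ \Psi_{loc,\varepsilon}^{T}$. Here $\Psi_{loc,\varepsilon}^T$ is the $T$-th iterate of $\Psi_{\mathcal{K}_\varepsilon}$ near the degenerate saddle: it sends a neighbourhood of $(a,0)\in W^s_{loc}$ to a neighbourhood of $(0,\tilde a)\in W^u_{loc}$. The global map $\Psi_{glob,\varepsilon}=\Psi^L_{\mathcal{K}_\varepsilon}$ has the normal form of Lemma~\ref{uni_lem:globalmap}.

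For the local passage I would use the Shilnikov-map asymptotics recalled in Appendix~\ref{uni_sec:Shilnikov}. These express $\Psi^T_{loc}$ in cross form: given the incoming coordinate $p_0$ (distance to $W^s$) near $(a,0)$ and the outgoing coordinate $q_T$ (distance to $W^u$) near $(0,\tilde a)$, the remaining coordinates are determined. Explicitly,
\[
q_T = \alpha_T\,(x_0 + \dots), \qquad p_0 = \alpha_T\,(y_T+\dots),
\]
with a polynomially small contraction rate $\alpha_T\sim c\,T^{-\gamma}$ in place of the exponential $\lambda^T$ of the hyperbolic case. I will also need polynomial bounds on the derivatives of the corrections, obtained from the complex-domain estimates via Cauchy estimates. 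This is the parabolic replacement for the $\lambda$-lemma. I expect it to be the main obstacle, since all errors must be shown to be $\mathcal{O}(T^{-\beta})$ rather than exponentially small. The $x$-dependence of the corrections and the area-preservation defect must be controlled at polynomial scales, and the scalings below must not amplify them.

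Next I compose. A point $(a+x_0, p_0)$ goes to $(q_T,\tilde a + y)$ and then, under $\Psi_{glob}$, to
\[
\bigl(a + b q_T + c y+\dots,\; \varepsilon_0 + d q_T + \varepsilon_1 y+\cdots+\varepsilon_{\ord-1}y^{\ord-1} - y^{\ord+1} + \dots\bigr).
\]
Writing the return map in the variables $(x_0,y)$ (equivalently $(p_0, y)$ via the cross form), I would rescale
\[
y = \delta_T P,\qquad p_0 = \alpha_T\delta_T Q\ \text{(up to the affine shift coming from $c$)},
\]
where $\delta_T$ is chosen so that the leading terms balance: $\delta_T^{\ord+1}\sim \alpha_T\delta_T$, i.e.\ $\delta_T = \alpha_T^{1/\ord}$. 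In these variables the second component becomes $-P^{\ord+1}$ plus $\kappa$-terms. The term $dq_T$ produces $-Q$ (after fixing signs and using $\det=1$ of the rescaled linear part), and the new first component is $P$. The parameters are then set by the affine map
\[
\varepsilon_0 = \alpha_T\delta_T\kappa_0 + (\text{shift to cancel }a\text{-offset and constant terms}),\qquad \varepsilon_j=\alpha_T\delta_T^{1-j}\kappa_j,\quad 1\le j\le \ord-1,
\]
which tends to $0$ as $T\to\infty$.

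Finally I would collect the remainders. These are $X,Y$ from Lemma~\ref{uni_lem:globalmap} with $\mathcal{O}(x(x+y))$, $\mathcal{O}_{\ord+2}$ and $\mathcal{O}_2(\varepsilon)$ terms, the $c y$ cross term (absorbed by the affine change $\Psi_{\kappa,T}$), and the Shilnikov corrections. I would check that each becomes $\mathcal{O}(\delta_T)$ or $\mathcal{O}(T^{-\beta'})$ on $B_{10}$ after rescaling, using Cauchy estimates on the complex balls. Since $\alpha_T,\delta_T$ are powers of $T$, some $\beta>0$ results, and $\Psi_{\kappa,T}(B_{10})$ shrinks to $z$.
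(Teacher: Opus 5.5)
There is a genuine gap, and it is in the step you flag as the main obstacle. You model the parabolic passage as the hyperbolic one with $\lambda^T$ replaced by a single rate $\alpha_T$, but the Shilnikov maps of Proposition~\ref{uni_prop:Shilnikov} have a two-scale structure.

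In cross coordinates, $x_T\approx(\pi/16T)^{2/3}\eta^{-1}$ and $y_T\approx(\pi/16T)^{2/3}\xi^{-1}$. So the exit distance to $W^u$ is governed at leading order by the \emph{exit} coordinate $\eta$, and the entrance distance to $W^s$ by the entrance coordinate $\xi$. Hence $x_T,y_T,\partial_\eta x_T,\partial_\xi y_T=\mathcal{O}(T^{-2/3})$, while the derivatives that play the role of $\lambda^T$ are $\partial_\xi x_T,\partial_\eta y_T\sim T^{-5/3}$. No single $\alpha_T$ makes your formulas $q_T=\alpha_T(x_0+\dots)$, $p_0=\alpha_T(y_T+\dots)$ and the balance $\delta_T=\alpha_T^{1/\ord}$ consistent.

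The balance must use the small scale, $\delta_T^\ord=\partial_\eta\tilde y_T(\xi_*,\eta_*)\sim T^{-5/3}$. This $\delta_T$ is the paper's $\alpha$, and your $\kappa$-scales $\alpha_T\delta_T^{1-j}$ then agree with the paper's $\alpha^{\ord+1-j}$. But the second component of $(\Psi^s_T)^{-1}\circ\Psi_{glob,\varepsilon}\circ\Psi^u_T$ also contains two families of terms, both of size $T^{-2/3}\delta_T^j$:
\begin{itemize}
\item $d\,\partial_\eta^j\tilde x_T\,\delta_T^jP^j/j!$, coming from $d\,q_T$;
\item $\partial_\xi^j\tilde y_T\,(c\,\delta_T P)^j/j!$, coming from inverting $\Psi^s_T$ at $\xi'=\xi_*+c\,\delta_T P+\dots$.
\end{itemize}
Relative to the normalization $\partial_\eta\tilde y_T\,\delta_T=\delta_T^{\ord+1}$, these have size $T^{-2/3}\delta_T^{j-\ord-1}$, which is already of order $T$ for $j=1$.

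The two families do not cancel. Indeed $\partial_\eta\tilde x_T,\partial_\xi\tilde y_T<0$, $d>0$, and area preservation at the fixed point ($-cd\gamma\approx1$ with $\gamma\approx1$) forces $c<0$. So $d\,\partial_\eta\tilde x_T-c\,\partial_\xi\tilde y_T$ is a sum of two negative terms of order $T^{-2/3}$. With your unshifted choice $\varepsilon_j=\alpha_T\delta_T^{1-j}\kappa_j$ for $j\ge1$, the rescaled map has a coefficient of order $T$ in front of $P$. Your final check that every remainder is $\mathcal{O}(T^{-\beta'})$ therefore fails. Likewise, rescaling $p_0$ by $\alpha_T\delta_T$ relies on the hyperbolic relation $p_0\approx\lambda^T y$; here $p_0=\tilde y_T$ moves at rate $T^{-2/3}$ in $\xi$.

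What is needed is what Lemmas~\ref{uni_lem:renormfirststep} and~\ref{uni_lem:rescalingtoHenon} do:
\begin{itemize}
\item center at a fixed point $(\xi_*,\eta_*)$ of $\Psi_{\varepsilon_*}$ with $\xi_*,\eta_*,\varepsilon_*=\mathcal{O}(T^{-2/3})$;
\item rescale the cross coordinates themselves, $(\xi,\eta)=(\xi_*+\alpha Q,\eta_*+\alpha P)$, with $\alpha^\ord=\partial_\eta\tilde y_T(\xi_*,\eta_*)$;
\item shift \emph{every} unfolding parameter, $\varepsilon_j=\varepsilon_{*,j}+\mathcal{O}(T^{-2/3})+\alpha^{\ord+1-j}\kappa_j$ for $0\le j\le\ord-1$, so that the $\ord$ parameters absorb the spurious coefficients of $1,P,\dots,P^{\ord-1}$.
\end{itemize}
The coefficient of $P^\ord$ has no parameter left to absorb it. Its relative size is $T^{-2/3}\alpha^{-1}=T^{-2/3+5/(3\ord)}$, which tends to $0$ only for $\ord\ge3$. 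This is exactly where the paper uses $\ord\ge3$: it dispenses with the derivative condition of Lemma~\ref{uni_lem:renormfirststep} at the price of this restriction. That your argument never needs any restriction on $\ord$ is a symptom of the missing two-scale analysis.

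The rest of your plan matches the paper: the global/local splitting, Cauchy estimates on complex domains, and area preservation at the fixed point to normalize the $Q$-coefficient.
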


\begin{rem}
    We state Proposition~\ref{uni_prop:mainresultrenormalization} for $f_\varepsilon$ instead that for $\mathcal{P}_\varepsilon$ because for the former we can properly make sense of the degenerate saddle $O$ as a fixed point; of course an analogous proposition holds for the map $\mathcal{P}_\varepsilon$ and the invariant manifolds of infinity.
\end{rem}

Let $\Psi^{u,s}_T:\Sigma_\delta\to\mathbb{C}^2$ be the unstable and stable Shilnikov maps constructed in Proposition~\ref{uni_prop:Shilnikov}.

Then, for $T$ large enough,  the map $\Psi_\varepsilon:\mathcal B_\delta\to\mathbb{C}^2$ given by 
\begin{equation}\label{uni_eq:Psirenorm}
\Psi_\varepsilon(\xi,\eta) \coloneqq (\Psi_T^s)^{-1}\circ \Psi_{glob,\varepsilon}\circ\Psi_T^u(a+\xi,\tilde a +\eta)
\end{equation}
is real-analytic on 
\[
\mathcal B_{\delta}=\{z\in\mathbb C^2\colon \mathrm{dist}(z,[-\sigma,\sigma]^2)\leq \delta\},
\]
for some constant $\sigma$ depending only on the quantity $a$ in the definition of $\Sigma$ in \eqref{uni_eq:dfnsigmadelta}. We recall that these are small but fixed constants.
In Lemma~\ref{uni_lem:renormfirststep} we claim that, for any $T\in\mathbb N$ large enough, there exists a certain value $\varepsilon_T$ for which there exists a fixed point for the map $\Psi_{\varepsilon_T}$ and, moreover, the image under the map $\Psi_{\varepsilon_T}$ of a vertical segment passing through this fixed point  is an arc which has a tangency of order $\ord$ with a horizontal segment. Then, we will see in Lemma~\ref{uni_lem:rescalingtoHenon} that, locally around this point, a rescaling of the map $\Psi_{\varepsilon_T}$ is close to the (critical) H\'{e}non map of degree $\ord+1$. The following notation will be used. We write  
\[
\tilde x_T(\xi,\eta)=x_T( a+\xi,\tilde a+\eta)\qquad\qquad\text{and}\qquad\qquad\tilde y_T(\xi,\eta)=y_T( a+\xi, \tilde a+\eta),
\]
where $x_T, y_T$ where given in Proposition~\ref{uni_prop:Shilnikov}.

The following lemma was proved in \cite{GMP2025} for the case of a quadratic tangency (i.e., a tangency of order $1$). The proof can be readily adapted for tangencies of any order.

\begin{lem}\label{uni_lem:renormfirststep}
    For any $T\in\mathbb N$ large enough, there exist real  $\xi_\ast,\eta_\ast,\varepsilon_\ast=\mathcal{O}(T^{-2/3})$, depending on $T$, such that $\Psi_{\varepsilon_\ast}(\xi_\ast,\eta_\ast)=(\xi_\ast,\eta_\ast)$ and the $\eta$-coordinate of the derivative with respect to $\eta$ of $\Psi_{\mathcal{K}_{\varepsilon}}$ evaluated at $(\xi_\ast, \eta_\ast; \varepsilon_\ast)$ equals zero.
\end{lem}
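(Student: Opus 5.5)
We need to prove Lemma~\ref{uni_lem:renormfirststep}: find $\xi_\ast,\eta_\ast,\varepsilon_\ast=\mathcal{O}(T^{-2/3})$ such that $(\xi_\ast,\eta_\ast)$ is a fixed point of $\Psi_{\varepsilon_\ast}$ and the $\eta$-component of $\partial_\eta\Psi_{\varepsilon_\ast}$ vanishes there. The plan is to turn these two requirements into a system of three real equations in $(\xi,\eta,\varepsilon_0)$, with the remaining parameters $\varepsilon_1=\dots=\varepsilon_{\ord-1}=0$, and to solve it by a perturbative (contraction or implicit-function) argument around an explicit approximate solution. The building blocks are the asymptotics of the Shilnikov maps $\Psi^{u,s}_T$ from Proposition~\ref{uni_prop:Shilnikov} (Appendix~\ref{uni_sec:Shilnikov}) and the normal form of the global map in Lemma~\ref{uni_lem:globalmap}.

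\textbf{Step 1: leading-order form of $\Psi_\varepsilon$.} Following \cite{GMP2025}, the Shilnikov map $\Psi^u_T$ sends $(a+\xi,\tilde a+\eta)$ to a point near $(0,\tilde a)$ whose coordinates are $\tilde x_T(\xi,\eta)$ and $\tilde a+\tilde y_T(\xi,\eta)$. In the parabolic setting the relevant small coordinate is of size $T^{-2/3}$, reflecting the $r\sim t^{2/3}$ escape law, rather than exponentially small. Inverting $\Psi^s_T$ similarly controls the deviation of the image point from $(a,0)$. Substituting into Lemma~\ref{uni_lem:globalmap}, the second component of $\Psi_\varepsilon$ should read, to leading order,
\[
\varepsilon_0 + d\,\tilde x_T + \varepsilon_1 \tilde y_T+\dots -\tilde y_T^{\ord+1} + Y ,
\]
rescaled by the stable Shilnikov map. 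The first component should be essentially $\xi$-independent to leading order. The key fact to extract is that $\partial_\eta \tilde y_T$ is bounded away from zero while $\tilde x_T,\partial\tilde x_T=\mathcal{O}(T^{-2/3})$ with the usual Cauchy-estimate control of derivatives on $\mathcal B_\delta$.

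\textbf{Step 2: locating the critical point.} The condition that the $\eta$-derivative of the $\eta$-component vanishes becomes
\[
(\ord+1)\tilde y_T^{\ord}\,\partial_\eta\tilde y_T = d\,\partial_\eta\tilde x_T + \partial_\eta Y + \dots .
\]
Since the right-hand side is $\mathcal{O}(T^{-2/3})$, this forces $\tilde y_T(\xi,\eta)$ to be small. I would take $\eta$ near the value $\eta_0(\xi)$ at which $\tilde y_T(\xi,\eta_0)=0$; this value is $\mathcal{O}(T^{-2/3})$ after the choice of base points $a,\tilde a$. Because the critical-point equation is degenerate of order $\ord$ in $\tilde y_T$, I would solve it by treating $\eta$ as an unknown with an $\ord$-th root scaling. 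A cleaner alternative is to differentiate the tangency relation: use that at $\varepsilon=0$ the image curve has an order-$\ord$ tangency, and the $Y=\mathcal{O}(x(x+y))+\mathcal{O}_{\ord+2}$ structure guarantees that the derivative equation has a simple root in an auxiliary variable. This step is the main obstacle. The degeneracy means a naive implicit function theorem fails, and one must rescale $\tilde y_T$ by the appropriate power of $T$ so that the equation becomes non-degenerate.

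\textbf{Step 3: fixed point and the choice of $\varepsilon_0$.} With $\eta=\eta_\ast(\xi)$ determined, the first fixed-point equation is $\xi=\Psi^{(1)}_\varepsilon(\xi,\eta_\ast(\xi))$. Its right-hand side has $\xi$-derivative $\mathcal{O}(T^{-2/3})$, so it is a contraction and yields $\xi_\ast$. The second equation is affine in $\varepsilon_0$ with coefficient $1$ up to $\mathcal{O}(\varepsilon)$ corrections, so it is solved by $\varepsilon_0=\varepsilon_\ast$; genericity of the unfolding from Lemma~\ref{uni_lem:globalmap} enters exactly here. All quantities come out of order $T^{-2/3}$ by the size estimates on $\tilde x_T,\tilde y_T$. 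Finally, I would check that the solutions are real, using the real-analyticity of all the maps and the uniqueness provided by the contraction.
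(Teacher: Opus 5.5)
Your Step~1 misreads the Shilnikov maps, and Step~2 is built on that misreading. By \eqref{uni_eq:formShilnikovmaps}, $\Psi^u_T(\xi,\eta)=(x_T(\xi,\eta),\eta)$. So the $y$-argument fed into the global map is $\eta$ itself, and, as in \eqref{uni_eq:components_glob_circ_unst},
\[
\mathcal P(\xi,\eta;\varepsilon)=\varepsilon_0+d\,\tilde x_T+\varepsilon_1\eta+\dots+\varepsilon_{\ell-1}\eta^{\ell-1}-\eta^{\ell+1}+Y(\tilde x_T,\eta;\varepsilon).
\]
This is not a polynomial in $\tilde y_T$. Two further facts you rely on are false:
\begin{itemize}
\item $\partial_\eta\tilde y_T=\mathcal O(T^{-5/3})$ is tiny, not bounded away from zero.
\item $\tilde y_T\simeq(\pi/16T)^{2/3}(a+\xi)^{-1}$ never vanishes, so there is no $\eta_0(\xi)$ with $\tilde y_T(\xi,\eta_0)=0$.
\end{itemize}
Write $\Psi_\varepsilon(\xi,\eta)=(\mathcal Q,\eta')$ with $\tilde y_T(\mathcal Q,\eta')=\mathcal P$. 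Since $\partial_\eta\tilde y_T\neq 0$, the derivative condition is $\partial_\eta\mathcal P=\partial_\xi\tilde y_T\,\partial_\eta\mathcal Q$, that is
\[
(\ell+1)\eta^{\ell}=\varepsilon_1+2\varepsilon_2\eta+\dots+(\ell-1)\varepsilon_{\ell-1}\eta^{\ell-2}+R_T,\qquad R_T=d\,\partial_\eta\tilde x_T+\partial_\eta\bigl[Y(\tilde x_T,\eta;\varepsilon)\bigr]-\partial_\xi\tilde y_T\,\partial_\eta\mathcal Q .
\]
Here $R_T$ is generically of exact order $T^{-2/3}$; its leading part contains $(\pi/16T)^{2/3}\bigl(c\,a^{-2}-d\,\tilde a^{-2}\bigr)$.

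With your choice $\varepsilon_1=\dots=\varepsilon_{\ell-1}=0$ this reads $(\ell+1)\eta^\ell=R_T$. Its real roots satisfy $|\eta_\ast|\sim T^{-2/(3\ell)}\gg T^{-2/3}$ for $\ell\ge 2$, and for even $\ell$ the sign of $R_T$ may forbid real roots altogether. The first fixed-point equation $\xi=b\tilde x_T+c\eta+X$ then gives $\xi_\ast\simeq c\,\eta_\ast$, which is also too large. Here $c\neq0$, by the relation $-cd\gamma=1+\mathcal O(T^{-2/3})$. No rescaling brings you to the claimed bounds. The degeneracy you flagged as the main obstacle is intrinsic to using $\eta$ as the unknown when $\ell\ge2$; for $\ell=1$ the equation $-2\eta=R_T$ is non-degenerate, which is why the quadratic argument works.

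The missing idea is to spend the parameter $\varepsilon_1$, which enters the derivative condition linearly with coefficient $1$. Fix $\eta_\ast=0$ (or any value of size $\mathcal O(T^{-2/3})$) and set $\varepsilon_2=\dots=\varepsilon_{\ell-1}=0$. Then solve the two fixed-point equations and the derivative condition for $(\xi,\varepsilon_0,\varepsilon_1)$:
\begin{itemize}
\item At the origin all residuals are $\mathcal O(T^{-2/3})$.
\item The Jacobian is a small perturbation of a diagonal matrix with entries $\pm1$. This uses $\partial_\xi\tilde x_T=\mathcal O(T^{-5/3})$, Cauchy estimates on $\mathcal B_\delta$ for the second derivatives of $\tilde x_T,\tilde y_T$, and the smallness of $\partial_\varepsilon X$ and $\partial_\varepsilon Y$.
\end{itemize}
The implicit function theorem then gives a unique real solution with $\xi_\ast,\eta_\ast,\varepsilon_\ast=\mathcal O(T^{-2/3})$.
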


\begin{rem}
    In fact, in what follows, we only use that $\Psi_{\varepsilon_\ast}(\xi_\ast,\eta_\ast)=(\xi_\ast,\eta_\ast)$. The condition on the derivative of $\Psi_\varepsilon$ will not be used in our case, at the expense of assuming that the order of the tangency is high enough; more concretely, it will be enough to assume $\ord \ge 3$.
\end{rem}

\begin{figure}[h]
\begin{center}
\includegraphics[width=.6\textwidth]{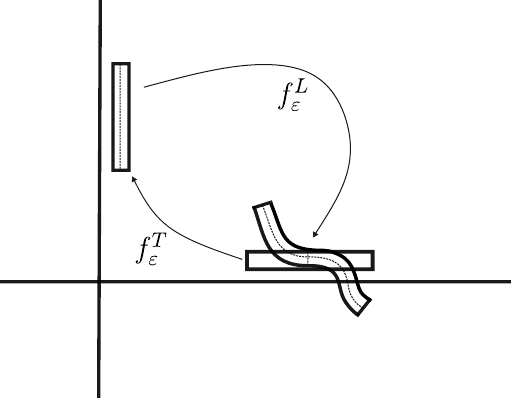}
\end{center}
\caption{Local and global maps in the case of a tangency of order $\ord=4$.}\label{fig:renormalizacion_quintic}
\end{figure}

We now define a suitable rescaling of the map $\Psi_\varepsilon$ in \eqref{uni_eq:Psirenorm} around the point $(\xi_*,\eta_*)$ for values of $\varepsilon$ close to $\varepsilon_*$. More precisely, we prove the following result.

\begin{lem}\label{uni_lem:rescalingtoHenon}
Let $\ord \in \mathbb{N}$, $\ord \ge 3$, be the order of the tangency. 

Denote by $B_{10}\subset \mathbb{C}^2$ the complex ball around the origin of radius $10$. 

Let $h_\alpha:(Q,P)\mapsto (\xi,\eta)$ be the change of variables given by
\[
h_\alpha (Q,P)=(\xi_\ast + \alpha Q, \eta_\ast + \alpha P)
\]
where $\alpha^\ord=\partial_\eta \tilde y_T(\xi_*,\eta_*)$.

Moreover, let $\varepsilon(\kappa) \coloneqq \varepsilon_* + \upsilon(\kappa)$, where $\upsilon$ is a certain function with components $\upsilon=(\upsilon_0, \upsilon_1, \ldots, \upsilon_{\ord-1})$ satisfying $\upsilon_j(\kappa)= \mathcal{O}(T^{-2/3}) + \alpha^{\ord+1-j} \kappa_j$ for $j=0,1,\ldots,\ord-1$.

Then, uniformly for $(Q,P)\in B_{10}\subset \mathbb{C}^2$ and $\kappa\in[-2,2]^{\ord}$ we have
\[
(h_\alpha)^{-1}\circ \Psi_{\varepsilon(\kappa)}\circ h_\alpha (Q,P)= H_\kappa(Q,P) +\mathcal{O}(T^{-\beta}),
\]
where $\beta > 0$, $H_\kappa(Q,P)= (c P, \kappa_0 + d \gamma Q + \kappa_1 P + \cdots + \kappa_{\ord-1} P^{\ord-1} - P^{\ord+1})$ and $\gamma=\displaystyle \frac{\partial_\xi \tilde x_T(\xi_*,\eta_*)}{\partial_\eta \tilde y_T(\xi_*,\eta_*)}$.
\end{lem}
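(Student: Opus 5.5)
The plan is to compute the map $(h_\alpha)^{-1}\circ\Psi_{\varepsilon}\circ h_\alpha$ directly. We Taylor-expand around the fixed point $(\xi_*,\eta_*)$ given by Lemma~\ref{uni_lem:renormfirststep}, and we use the explicit form of the global map in Lemma~\ref{uni_lem:globalmap} together with the asymptotics of the Shilnikov maps $\Psi^{u,s}_T$ from Proposition~\ref{uni_prop:Shilnikov}. We read $\Psi_\varepsilon$ in ``cross'' form. The point $\Psi^u_T(a+\xi,\tilde a+\eta)$ has coordinates $(\tilde x_T(\xi,\eta),\tilde a+\tilde y_T(\xi,\eta))$ near $(0,\tilde a)$. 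We apply $\Psi_{glob,\varepsilon}$ and then pull back by $(\Psi^s_T)^{-1}$, which near $(a,0)$ amounts, to leading order, to reading off the first component and the second component again.

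Substituting into Lemma~\ref{uni_lem:globalmap} gives
\[
\Psi_\varepsilon(\xi,\eta)\approx\Bigl(c\,\tilde y_T+b\,\tilde x_T+X,\;\varepsilon_0+d\,\tilde x_T+\sum_{j=1}^{\ord-1}\varepsilon_j\tilde y_T^{\,j}-\tilde y_T^{\,\ord+1}+Y\Bigr),
\]
up to the corrections coming from $(\Psi^s_T)^{-1}$, which are controlled by Proposition~\ref{uni_prop:Shilnikov}. We then insert $(\xi,\eta)=(\xi_*+\alpha Q,\eta_*+\alpha P)$ and expand $\tilde x_T,\tilde y_T$ to first order at $(\xi_*,\eta_*)$. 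By Cauchy estimates on the complex domain $\Sigma_\delta$, the second derivatives of $\tilde x_T,\tilde y_T$ are controlled by the same powers of $T$ as the first ones, so the remainders in the expansion are $\mathcal O(\alpha^2)$ relative to the linear terms.

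With $\alpha^\ord=\partial_\eta\tilde y_T(\xi_*,\eta_*)$, we have $\tilde y_T-\tilde y_T(\xi_*,\eta_*)=\alpha^{\ord+1}P+\alpha\,\partial_\xi\tilde y_T\,Q+\dots$. Hence the term $-\tilde y_T^{\,\ord+1}$, after expansion in powers of $P$ and division by $\alpha$, produces $-P^{\ord+1}$ at top order. It also produces lower powers $P^j$, $j\le \ord$, with coefficients depending on $\tilde y_T(\xi_*,\eta_*)$. The term $d\,\tilde x_T$ gives $d\,\alpha^{-1}\alpha\,\partial_\xi\tilde x_T\,Q/\alpha^{\ord}$ after normalization, that is $d\gamma Q$. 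The first component gives $cP$ plus small terms, since $\tilde y_T$ dominates $\tilde x_T$.

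We then choose $\upsilon(\kappa)$. The dependence on $\varepsilon_0,\dots,\varepsilon_{\ord-1}$ is affine to leading order, and the terms $\mathcal O_2(\varepsilon)$ are negligible because $\varepsilon_*=\mathcal O(T^{-2/3})$. The coefficient of $P^j$ in the second component, after normalization, is triangular in $\upsilon$. We therefore solve for the $\mathcal O(T^{-2/3})$ parts of $\upsilon_j$ so as to cancel all spurious lower-order coefficients: the constant term, which vanishes at $\kappa=0$ because $(\xi_*,\eta_*)$ is a fixed point, and the $P^j$ terms for $1\le j\le\ord$, including the $P^{\ord}$ term. Setting $\upsilon_j=(\text{this correction})+\alpha^{\ord+1-j}\kappa_j$ produces exactly $\kappa_jP^j$. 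The hypothesis $\ord\ge3$ is what allows us to absorb the $P^{\ord}$ coefficient and the mixed terms without using the derivative condition of Lemma~\ref{uni_lem:renormfirststep}.

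The main obstacle I expect is bounding all remaining terms by $\mathcal O(T^{-\beta})$ uniformly on $B_{10}$. These are the mixed terms $QP$ and $Q^2$, and also the terms coming from $X$ and from $Y=\mathcal O(x(x+y))+\mathcal O_{\ord+2}$. I would first quantify $\alpha$, $\gamma$ and the ratios of derivatives of $\tilde x_T,\tilde y_T$ in powers of $T$ using Proposition~\ref{uni_prop:Shilnikov}, checking that $\alpha\to0$ polynomially and that $\gamma$ stays bounded. I would then verify term by term that each remainder carries a positive power of $\alpha$ or of $T^{-1}$ after division by $\alpha^{\ord+1}$. The delicate point is the anisotropy between $\partial_\xi\tilde x_T$ and $\partial_\eta\tilde y_T$ at the degenerate saddle.
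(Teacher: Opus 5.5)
There is a genuine gap: you set up the composition $(\Psi^s_T)^{-1}\circ\Psi_{glob,\varepsilon}\circ\Psi^u_T$ incorrectly, and this breaks the scaling. By \eqref{uni_eq:formShilnikovmaps}, $\Psi^u_T(a+\xi,\tilde a+\eta)=(\tilde x_T(\xi,\eta),\tilde a+\eta)$. The second coordinate is $\eta$ itself, not $\tilde y_T$. So the argument $y$ of the global map in Lemma~\ref{uni_lem:globalmap} is $\eta=\eta_*+\alpha P$. This is what produces $c\alpha P$ in the first component, and $-\alpha^{\ord+1}P^{\ord+1}$ and $\upsilon_j\alpha^jP^j$ in the second.

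Moreover, $(\Psi^s_T)^{-1}$ is far from ``reading off the coordinates''. Its second output $\eta'$ is defined implicitly by $\tilde y_T(\xi',\eta')=\mathcal P$. Since $\partial_\eta\tilde y_T\asymp T^{-5/3}$, it expands that direction by a factor of order $T^{5/3}$. This is exactly where the normalization comes from. A displacement $\alpha V'$ of $\eta'$ shows up in $\mathcal P$ as $\partial_\eta\tilde y_T\,\alpha V'=\alpha^{\ord+1}V'$. This is matched against $-\alpha^{\ord+1}P^{\ord+1}$ and against $d\,\partial_\xi\tilde x_T\,\alpha Q=\alpha^{\ord+1}d\gamma Q$.

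The paper implements this by writing the conjugacy as $\Psi_{glob,\varepsilon}\circ\Psi^u_T\circ h_\alpha=\Psi^s_T\circ h_\alpha\circ(H_\kappa+W)$. It solves the first component for $U$ and substitutes it into the second. There the term $\partial_\xi\tilde y_T\,\alpha(cP+U)$, of size $T^{-2/3}\alpha$, is cancelled by the $\mathcal O(T^{-2/3})$ parts of the $\upsilon_j$.

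In your version the bookkeeping cannot close:
\begin{itemize}
\item Expanding $-\tilde y_T^{\,\ord+1}$ with $\tilde y_T-\tilde y_T(\xi_*,\eta_*)=\alpha^{\ord+1}P+\cdots$ gives $P^{\ord+1}$ with coefficient $\alpha^{(\ord+1)^2}$. Neither division by $\alpha$ nor by $\alpha^{\ord+1}$ turns this into $-P^{\ord+1}$.
\item You divide by $\alpha$ for that term, but implicitly by $\alpha^{\ord+1}$ to obtain $d\gamma Q$.
\item $c\tilde y_T$ gives $c\alpha^{\ord}P$, not $cP$, after division by $\alpha$.
\end{itemize}

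Second, the $P^{\ord}$ coefficient cannot be cancelled by $\upsilon$. No parameter multiplies $\eta^{\ord}$ in Lemma~\ref{uni_lem:globalmap}. By the triangularity you mention, $\upsilon_0,\dots,\upsilon_{\ord-1}$ only reach $P^0,\dots,P^{\ord-1}$. The $P^{\ord}$ coefficient is $\mathcal O(T^{-2/3})\alpha^{\ord}$, coming for instance from $\eta_*\neq 0$ in $-(\eta_*+\alpha P)^{\ord+1}$ and from higher derivatives of $\tilde x_T,\tilde y_T$. After division by $\alpha^{\ord+1}$ it becomes $\mathcal O(T^{-2/3}\alpha^{-1})=\mathcal O(T^{-2/3+5/(3\ord)})$, because $\alpha\asymp T^{-5/(3\ord)}$. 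This is small precisely when $\ord\ge 3$, and that is the actual role of the hypothesis.

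Your last paragraph speaks of ``absorbing'' this term, which contradicts your earlier claim of cancelling it. In any case the exponent cannot be derived without the correct size of $\alpha$ relative to $\eta_*,\varepsilon_*=\mathcal O(T^{-2/3})$.
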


\begin{proof}
We want to show that there exists a function $W=W(Q,P)$ with components $W=(U, V)$ satisfying, for some $\beta > 0$, that $W=\mathcal{O}\left(T^{-\beta} \right)$ uniformly for $(Q,P)\in B_{10}$ and such that, on the same domain,
\begin{equation}\label{uni_eq:renormcondproof}
   \Psi_{glob,\varepsilon}\circ \Psi^u_T(a+\xi_\ast+\alpha Q, \tilde{a}+\eta_\ast+\alpha P)= \Psi _T^s\circ h_\alpha \circ (H_\kappa + W)(Q,P).
\end{equation}

Throughout the proof it is important to have in mind that
\[
\begin{aligned}
    &\tilde{x}_T= \mathcal{O}(T^{-2/3}), \quad \partial_\xi \tilde{x}_T = \mathcal{O}(T^{-5/3}), \quad \partial_\eta \tilde{x}_T = \mathcal{O}(T^{-2/3}) ,  \\
    &\tilde{y}_T= \mathcal{O}(T^{-2/3}), \quad \partial_\xi \tilde{y}_T = \mathcal{O}(T^{-2/3}), \quad \partial_\eta \tilde{y}_T = \mathcal{O}(T^{-5/3}) ,
\end{aligned}
\]
as stated in Proposition~\ref{uni_prop:Shilnikov} and that
\[
    \xi_\ast, \eta_\ast, \varepsilon_\ast = \mathcal{O}(T^{-2/3})
\]
as stated in Lemma~\ref{uni_lem:renormfirststep}.

We write the left-hand side of \eqref{uni_eq:renormcondproof} in components as $(\Psi_{glob,\varepsilon}\circ \Psi^u_T)(a+\xi, \tilde{a}+\eta)=(\mathcal{Q}(\xi, \eta; \varepsilon), \mathcal{P}(\xi, \eta; \varepsilon))$ and evaluated at $\xi=\xi_\ast+\alpha Q$ and $\eta=\eta_\ast+\alpha P$.

Note that
\begin{equation}\label{uni_eq:components_glob_circ_unst}
    \begin{aligned}
  \mathcal{Q}(\xi,\eta;\varepsilon)=&b\tilde x_T(\xi,\eta)+c \eta+X(\tilde x_T(\xi,\eta),\eta;\varepsilon)\\
\mathcal P(\xi,\eta;\varepsilon)=&\varepsilon_0 + d \tilde{x}_T(\xi,\eta)+\varepsilon_1 \eta +\cdots+\varepsilon_{\ord-1} \eta^{\ord-1} -\eta^{\ord+1}+Y(\tilde{x}_T(\xi,\eta),\eta;\varepsilon).
\end{aligned}
\end{equation}

In this way, we first work with the left-hand side of \eqref{uni_eq:renormcondproof}. On the one hand, its first component has the expression 
\begin{multline}\label{uni_eq:first_component_left_hand_ast}
\Pi_Q (\Psi_{glob,\varepsilon}\circ \Psi^u_T(a+\xi_\ast+\alpha Q, \tilde{a}+\eta_\ast+\alpha P)) \\
\begin{aligned}
     =& \mathcal{Q}(\xi_\ast+\alpha Q, \eta_\ast + \alpha P; \varepsilon) \\
    =& \mathcal{Q}(\xi_\ast, \eta_\ast; \varepsilon)+ \partial_\xi \mathcal{Q}(\xi_\ast, \eta_\ast; \varepsilon) \alpha Q + \partial_\eta \mathcal{Q}(\xi_\ast, \eta_\ast; \varepsilon) \alpha P \\
    &+ \sum_{j=2}^{\ord} \frac{1}{j!}\partial_\eta^{(j)} \mathcal{Q}(\xi_\ast, \eta_\ast; \varepsilon) \alpha^j P^j  + \mathcal{O}(\alpha^2 T^{-5/3}) + \mathcal{O}(\alpha^{\ord+1})\\
    =& \mathcal{Q}(\xi_\ast, \eta_\ast; \varepsilon) + b \partial_\xi \tilde{x}_T(\xi_\ast, \eta_\ast) \alpha Q + \mathcal{O}(\alpha T^{-2/3} T^{-5/3})  + c \alpha P + \mathcal{O}(\alpha T^{-2/3}) P \\
    &+ \sum_{j=2}^{\ord} \frac{1}{j!}\partial_\eta^{(j)} \mathcal{Q}(\xi_\ast, \eta_\ast; \varepsilon) \alpha^j P^j  + \mathcal{O}(\alpha^2 T^{-5/3}) + \mathcal{O}(\alpha^{\ord+1})
\end{aligned}
\end{multline}

On the other hand, its second component is
\begin{multline}\label{uni_eq:second_component_left_hand_ast}
\Pi_P (\Psi_{glob,\varepsilon}\circ \Psi^u_T(a+\xi_\ast+\alpha Q, \tilde{a}+\eta_\ast+\alpha P)) \\
\begin{aligned}
     =&  \mathcal{P}(\xi_\ast+\alpha Q, \eta_\ast + \alpha P; \varepsilon) \\
    =& \mathcal{P}(\xi_\ast, \eta_\ast; \varepsilon) + \partial_\xi \mathcal{P}(\xi_\ast, \eta_\ast; \varepsilon) \alpha Q + \mathcal{O}(\alpha^2 T^{-5/3}) + \sum_{j=1}^{\ord+1} \frac{1}{j!} \partial_\eta^{(j)} \mathcal{P}(\xi_\ast, \eta_\ast; \varepsilon) \alpha^j P^j + \mathcal{O}(\alpha^{\ord+2})\\
    =& \mathcal{P}(\xi_\ast, \eta_\ast; \varepsilon) + d \partial_\xi \tilde{x}_T(\xi_\ast, \eta_\ast) \alpha Q + \sum_{j=1}^{\ord+1} \frac{1}{j!} \partial_\eta^{(j)} \mathcal{P}(\xi_\ast, \eta_\ast; \varepsilon) \alpha^j P^j  \\
    &+ \mathcal{O}(\alpha^2 T^{-5/3}) + \mathcal{O}(\alpha T^{-2/3} T^{-5/3})+ \mathcal{O}(\alpha^{\ord+2}) .
\end{aligned}
\end{multline}

We are going to use that, according to Lemma~\ref{uni_lem:renormfirststep}, we have that $(\Psi_T^s)^{-1} \circ \Psi_{glob, \varepsilon_\ast} \circ \Psi_T^u (a+\xi_\eta, \tilde{a}+\eta_\ast)=(\xi_\ast, \eta_\ast)$, from where we deduce that $\mathcal{Q}(\xi_\ast, \eta_\ast; \varepsilon_\ast)=\xi_\ast$ and that $\mathcal{P}(\xi_\ast, \eta_\ast; \varepsilon_\ast)=\tilde{y}_T(\xi_\ast, \eta_\ast)$.
Now we expand the left-hand side of \eqref{uni_eq:renormcondproof} around $\varepsilon=\varepsilon_\ast$. We get
\begin{multline}\label{uni_eq:left_hand}
\Psi_{glob,\varepsilon}\circ \Psi^u_T(a+\xi_\ast+\alpha Q, \tilde{a}+\eta_\ast+\alpha P)  \\
    \begin{aligned}
    =& \Psi_{glob,\varepsilon_\ast}\circ \Psi^u_T(a+\xi_\ast+\alpha Q, \tilde{a}+\eta_\ast+\alpha P) \\
    &+\sum_{j=0}^{\ord-1} \partial_{\varepsilon_j}|_{\varepsilon=\varepsilon_\ast} (\Psi_{glob,\varepsilon}\circ \Psi^u_T(a+\xi_\ast+\alpha Q, \tilde{a}+\eta_\ast+\alpha P)) \upsilon_j(\kappa) 
    + \mathcal{O}_2(\upsilon).
    \end{aligned}
\end{multline}
Its first component, taking into account \eqref{uni_eq:first_component_left_hand_ast}, is
\begin{multline}\label{uni_eq:first_component_left_hand}
\Pi_Q(\Psi_{glob,\varepsilon}\circ \Psi^u_T(a+\xi_\ast+\alpha Q, \tilde{a}+\eta_\ast+\alpha P)) \\
    \begin{aligned}
    =& \Pi_Q (\Psi_{glob,\varepsilon_\ast}\circ \Psi^u_T(a+\xi_\ast+\alpha Q, \tilde{a}+\eta_\ast+\alpha P)) + \mathcal{O}(\varepsilon_\ast) \mathcal{O}(\upsilon) \\
    =& \xi_\ast + b \partial_\xi \tilde{x}_T(\xi_\ast, \eta_\ast) \alpha Q + \mathcal{O}(\alpha T^{-2/3} T^{-5/3})  + c \alpha P + \mathcal{O}(\alpha T^{-2/3}) P \\
    &+ \sum_{j=2}^{\ord} \frac{1}{j!}\partial_\eta^{(j)} \mathcal{Q}(\xi_\ast, \eta_\ast; \varepsilon_\ast) \alpha^j P^j  + \mathcal{O}(\alpha^2 T^{-5/3}) + \mathcal{O}(\alpha^{\ord+1}) \\
    &+ \mathcal{O}(\varepsilon_\ast) \mathcal{O}(\upsilon).
    \end{aligned}
\end{multline}
Its second component, taking into account \eqref{uni_eq:second_component_left_hand_ast}, is 
\begin{multline}\label{uni_eq:second_component_left_hand}
\Pi_P (\Psi_{glob,\varepsilon}\circ \Psi^u_T(a+\xi_\ast+\alpha Q, \tilde{a}+\eta_\ast+\alpha P)) \\
    \begin{aligned}
    =& \Pi_P (\Psi_{glob,\varepsilon_\ast}\circ \Psi^u_T(a+\xi_\ast+\alpha Q, \tilde{a}+\eta_\ast+\alpha P)) \\
    &+ (1+\mathcal{O}(T^{-2/3})) \upsilon_0(\kappa) + (\mathcal{O}(T^{-2/3})+\alpha P) \upsilon_1(\kappa) \\
    &+ \sum_{j=2}^{\ord-1} (\mathcal{O}(T^{-2/3}) + \mathcal{O}(\alpha^j) P^j) \upsilon_j(\kappa) + \mathcal{O}_2(\upsilon) \\
    =&\tilde{y}_T(\xi_\ast, \eta_\ast) + d \partial_\xi \tilde{x}_T (\xi_\ast, \eta_\ast) \alpha Q + \sum_{j=1}^{\ord+1} \frac{1}{j!} \partial_\eta^{(j)} \mathcal{P}(\xi_\ast, \eta_\ast; \varepsilon_\ast) \alpha^j P^j \\
    &+ \mathcal{O}(\alpha^2 T^{-5/3}) + \mathcal{O}(\alpha T^{-2/3} T^{-5/3})+ \mathcal{O}(\alpha^{\ord+2}) \\
    &+ (1+\mathcal{O}(T^{-2/3})) \upsilon_0(\kappa) + (\mathcal{O}(T^{-2/3})+\alpha P) \upsilon_1(\kappa)  \\
    &+ \sum_{j=2}^{\ord-1} (\mathcal{O}(T^{-2/3}) + \mathcal{O}(\alpha^j) P^j) \upsilon_j(\kappa) + \mathcal{O}_2(\upsilon).
    \end{aligned}
\end{multline}
Now we expand the right-hand side of \eqref{uni_eq:renormcondproof}, obtaining 
\begin{multline*}
\Psi_T^s \circ h_\alpha \circ (H_\kappa + W)(Q,P) \\
    \begin{aligned}
        =& \Psi_T^s \circ h_\alpha (c P + U(Q,P), \kappa_0 + d \gamma Q  + \kappa_1 P + \cdots \kappa_{\ord-1} P^{\ord-1} - P^{\ord+1} + V(Q,P)) \\
        =& \Psi_T^s(\xi_\ast + \alpha c P + \alpha U(Q,P), \eta_\ast + \alpha (\kappa_0 + d \gamma Q + \kappa_ 1 P + \cdots \kappa_{\ord-1} P^{\ord-1} - P^{\ord+1} + V(Q,P))).
    \end{aligned}
\end{multline*}
Then we have that its first component is equal to 
\begin{equation} \label{uni_eq:first_component_right_hand}
    \Pi_Q(\Psi_T^s \circ h_\alpha \circ (H_\kappa + W)(Q,P)) = \xi_\ast + \alpha c P + \alpha U(Q,P) .
\end{equation}
Its second component is
\begin{multline}\label{uni_eq:second_component_right_hand}
\Pi_P(\Psi_T^s \circ h_\alpha \circ (H_\kappa + W)(Q,P)) \\
    \begin{aligned}
        =& \tilde{y}_T(\xi_\ast, \eta_\ast) + \partial_\xi \tilde{y}_T(\xi_\ast, \eta_\ast) (\alpha c P + \alpha U) \\
        &+ \partial_\eta \tilde{y}_T(\xi_\ast, \eta_\ast) \alpha (\kappa_0 + d \gamma Q + \kappa_1 P + \cdots + \kappa_{\ord-1} P^{\ord-1} - P^{\ord+1} + V) \\
        &+ \sum_{j=2}^{\ord} \frac{1}{j!} \partial_\xi^{(j)} \tilde{y}_T(\xi_\ast, \eta_\ast) (\alpha c P + \alpha U)^j + \mathcal{O}(\alpha^2 T^{-5/3}) + \mathcal{O}(\alpha^{\ord+1} T^{-2/3}) .
    \end{aligned}
\end{multline}
Comparing \eqref{uni_eq:first_component_left_hand} and \eqref{uni_eq:first_component_right_hand}, we obtain that
\begin{equation}\label{uni_eq:U_solved}
    \begin{aligned}
\alpha U(Q, P) =&  b \partial_\xi \tilde{x}_T(\xi_\ast, \eta_\ast) \alpha Q + \mathcal{O}(\alpha T^{-2/3} T^{-5/3}) + \mathcal{O}(\alpha T^{-2/3}) P \\
&+ \sum_{j=2}^{\ord} \frac{1}{j!}\partial_\eta^{(j)} \mathcal{Q}(\xi_\ast, \eta_\ast; \varepsilon_\ast) \alpha^j P^j  + \mathcal{O}(\alpha^2 T^{-5/3}) + \mathcal{O}(\alpha^{\ord+1}) + \mathcal{O}(\varepsilon_\ast) \mathcal{O}(\upsilon).
\end{aligned}
\end{equation}
This completely determines $U(Q,P)$, although this expression still depends on $\upsilon(\kappa)$.

Now we plug the value of $U(Q,P)$ given in \eqref{uni_eq:U_solved} at \eqref{uni_eq:second_component_right_hand} and expand the resulting expression in $Q, P$. We obtain
\begin{multline}\label{uni_eq:second_component_right_hand_noU}
\Pi_P(\Psi_T^s \circ h_\alpha \circ (H_\kappa + W)(Q,P)) \\
    \begin{aligned}
        =& \tilde{y}_T(\xi_\ast, \eta_\ast) + \partial_\xi \tilde{y}_T(\xi_\ast, \eta_\ast) (\alpha c P + \mathcal{O}(\alpha T^{-2/3}) P + \sum_{j=2}^{\ord} \frac{1}{j!}\partial_\eta^{(j)} \mathcal{Q}(\xi_\ast, \eta_\ast; \varepsilon_\ast) \alpha^j P^j) \\
        &+ \partial_\eta \tilde{y}_T(\xi_\ast, \eta_\ast) \alpha (\kappa_0 + d \gamma Q + \kappa_1 P + \cdots + \kappa_{\ord-1} P^{\ord-1} - P^{\ord+1} + V) \\
        &+ \sum_{k=2}^{\ord} \frac{1}{k!} \partial_\xi^{(k)} \tilde{y}_T(\xi_\ast, \eta_\ast) (\alpha c P + \mathcal{O}(\alpha T^{-2/3}) P + \sum_{j=2}^{\ord} \frac{1}{j!}\partial_\eta^{(j)} \mathcal{Q}(\xi_\ast, \eta_\ast; \varepsilon_\ast) \alpha^j P^j)^k \\
        &+\mathcal{O}(\alpha^2 T^{-5/3}) + \mathcal{O}(\alpha^{\ord+1} T^{-2/3}) +\mathcal{O}(\alpha T^{-5/3} T^{-2/3}) +\mathcal{O}( T^{-2/3} T^{-2/3} \upsilon) .
    \end{aligned}
\end{multline}

To choose the values of $\alpha$, $\gamma$ and $\upsilon(\kappa)$, we compare the coefficients of terms with equal powers of $Q, P$ in \eqref{uni_eq:second_component_left_hand} and \eqref{uni_eq:second_component_right_hand_noU}. More concretely, we want $\alpha$, $\gamma$ and $\upsilon$ such that
\begin{equation}\label{uni_eq:left_equal_right}
\begin{aligned}
    &\tilde{y}_T(\xi_\ast, \eta_\ast) + d \partial_\xi \tilde{x}_T (\xi_\ast, \eta_\ast) \alpha Q + \sum_{j=1}^{\ord+1} \frac{1}{j!} \partial_\eta^{(j)} \mathcal{P}(\xi_\ast, \eta_\ast; \varepsilon_\ast) \alpha^j P^j \\
    &+ \mathcal{O}(\alpha^2 T^{-5/3}) + \mathcal{O}(\alpha T^{-2/3} T^{-5/3})+ \mathcal{O}(\alpha^{\ord+2}) \\
    &+ (1+\mathcal{O}(T^{-2/3})) \upsilon_0(\kappa) + (\mathcal{O}(T^{-2/3})+\alpha P) \upsilon_1(\kappa)  \\
    &+ \sum_{j=2}^{\ord-1} (\mathcal{O}(T^{-2/3}) + \mathcal{O}(\alpha^j) P^j) \upsilon_j(\kappa) + \mathcal{O}_2(\upsilon) \\
    =& \tilde{y}_T(\xi_\ast, \eta_\ast) + \partial_\xi \tilde{y}_T(\xi_\ast, \eta_\ast) (\alpha c P + \mathcal{O}(\alpha T^{-2/3}) P + \sum_{j=2}^{\ord} \frac{1}{j!}\partial_\eta^{(j)} \mathcal{Q}(\xi_\ast, \eta_\ast; \varepsilon_\ast) \alpha^j P^j) \\
        &+ \partial_\eta \tilde{y}_T(\xi_\ast, \eta_\ast) \alpha (\kappa_0 + d \gamma Q + \kappa_1 P + \cdots + \kappa_{\ord-1} P^{\ord-1} - P^{\ord+1} + V) \\
        &+ \sum_{k=2}^{\ord} \frac{1}{k!} \partial_\xi^{(k)} \tilde{y}_T(\xi_\ast, \eta_\ast) (\alpha c P + \mathcal{O}(\alpha T^{-2/3}) P + \sum_{j=2}^{\ord} \frac{1}{j!}\partial_\eta^{(j)} \mathcal{Q}(\xi_\ast, \eta_\ast; \varepsilon_\ast) \alpha^j P^j)^k \\
        &+\mathcal{O}(\alpha^2 T^{-5/3}) + \mathcal{O}(\alpha^{\ord+1} T^{-2/3}) +\mathcal{O}(\alpha T^{-5/3} T^{-2/3}) +\mathcal{O}( T^{-2/3} T^{-2/3} \upsilon) .
\end{aligned}
\end{equation}
It is immediate that choosing $\alpha^\ord = \partial_\eta \tilde{y}_T(\xi_\ast, \eta_\ast)$ and $\gamma = \frac{\partial_\xi \tilde{x}_T(\xi_\ast, \eta_\ast)}{\partial_\eta \tilde{y}_T(\xi_\ast, \eta_\ast)}$, then there is $\upsilon_j(\kappa)= \mathcal{O}(T^{-2/3}) + \alpha^{\ord+1-j} \kappa_j$, $j=0,1,\ldots,\ord-1$ such that \eqref{uni_eq:left_equal_right} becomes
\begin{multline}\label{uni_eq:V_solved}
\mathcal{O}(\alpha^2 T^{-5/3}) + \mathcal{O}(\alpha T^{-2/3} T^{-5/3})+ \mathcal{O}(\alpha^{\ord+2}) \\
    \begin{aligned}
    =&\partial_\eta \tilde{y}_T(\xi_\ast, \eta_\ast) \alpha  V(Q,P) 
    \\
    &
    + \mathcal{O}(\alpha^\ord T^{-2/3})P^\ord +\mathcal{O}(\alpha^2 T^{-5/3}) + \mathcal{O}(\alpha^{\ord+1} T^{-2/3}) +\mathcal{O}(\alpha T^{-5/3} T^{-2/3}).
\end{aligned}
\end{multline}
Therefore, for $\ord \in \mathbb{N}$ such that $\frac{-2}{3} < \frac{-5}{3 \ord}$, which is true for $\ord \ge 3$, we obtain that there exists $V=\mathcal{O}
(T^{-\beta})$ for some $\beta >0$ satisfying \eqref{uni_eq:V_solved}.
Moreover, from \eqref{uni_eq:U_solved} it can be readily check that for $\ord \ge 3$ we also have $U=\mathcal{O}(T^{-\beta})$.
\end{proof}

Finally, in order to complete the proof of Proposition~\ref{uni_prop:mainresultrenormalization}, we introduce the linear map $\hat{h}:(Q,P)\mapsto (-(d\gamma)^{-1} Q, P)$
and consider $h_T= h\circ\hat{h}$. Then, recalling the definition of $\Psi_{\mathcal{K}_{\varepsilon}}$ in \eqref{uni_eq:Psirenorm}, we obtain that
\begin{equation}\label{uni_eq:asymptoticHenonproof}
h_T^{-1}\circ(\Psi^s_T)^{-1}\circ \Psi_{\mathcal{K}_{\varphi(\kappa)}}^L\circ\Psi^{u}_T\circ h_T(Q,P)=(-cd\gamma P,\kappa_0 + d \gamma Q + \kappa_1 P + \cdots + \kappa_{\ord-1} P^{\ord-1} - P^{\ord+1})+\mathcal{O}(T^{-\beta}),\qquad\qquad 
\end{equation}
with 
\[
\varphi(\kappa)=\varepsilon_*+\upsilon(\kappa).
\]
Moreover, since $(\xi_*,\eta_*)$ is a fixed point of the map $\Psi_{\varepsilon_*}$ and the map $\Psi_{\varepsilon_*}$ preserves a symplectic form, we must have
\[
1=\mathrm{det} D\Psi_{\varepsilon_*}= -cd\gamma+\mathcal{O}(T^{-2/3}).
\]

The result finally follows by taking into account
\[
\Psi_{\mathcal{K}}^T = \Psi_T^u \circ (\Psi_T^s)^{-1} .
\]

Since the Poincaré maps $\Psi_{\mathcal{K}_\varepsilon}$ and $f_\varepsilon$ are related by the change of variables of the normal form, the result may be equivalently stated for the latter map.

\appendix
\section{Shilnikov maps}\label{uni_sec:Shilnikov}
In this appendix we recall the definition and asymptotics of the Shilnikov maps in the case of a degenerate (parabolic) saddle which were obtained in \cite{GMP2025}.

 Let $\mathcal{K}$ be the Hamiltonian obtained after applying the normal form from \cite{GMP2025} and consider the return map on the section $\{\phi=0\}$ associated to its Hamiltonian flow, which we denote by  $\Psi_{\mathcal{K}}:(q,p)\mapsto (\bar q,\bar p)$. Then $O=(0,0)$ is a (degenerate) fixed point for $\Psi_{\mathcal K}$ and its local stable (unstable) manifold is given by the $q$ axis ($p$ axis). 
 
 Let $a>0$ be a sufficiently small fixed constant and define the subset
\begin{equation}\label{uni_eq:dfnsigmadelta}
\Sigma=\{(\xi,\eta)\in\mathbb R^2\colon |\xi-a|,|\eta-a|\leq a/2\}.
\end{equation}
Define also, for any $(\xi_0,\eta_0)\in \Sigma$, the sections
\[
\sigma^s_{\xi_0}=\{q=\xi_0,\ 0\leq p\leq a/4 \},\qquad\qquad \sigma^u_{\eta_0}=\{p=\eta_0,\  0\leq q\leq a/4\},
\]
which are, respectively, transverse to the local stable and local unstable manifolds.
 
 Let $a>0$ be now such that
\[
\{(q,p)\in[0,3a/2]^2\}\subset V_\rho \times V_\rho,
\]
where $V_\rho$ is the complex sectorial domain $V_\rho \coloneqq \{z\in B_{\rho}(0)\subset\mathbb C \colon |\mathrm{Im}z| < \rho \kappa |\mathrm{Re}z|\}$, with $\rho, \kappa >0$ small, in which the normal form is defined.

For any $n\in\mathbb N$ denote by $\psi^s_{\xi_0,n}$ the unique connected component of $\Psi_{\mathcal K}^n(\sigma^s_{\xi_0})\cap \{(q,p)\in[0,3a/2]^2\}$ which contains the point $\Psi^{n}_{\mathcal K}(\xi_0,0)$ and by $\psi^u_{\eta_0,n}$ the unique connected component of $\Psi^{-n}_{\mathcal K}(\sigma^u_{\eta_0})\cap \{(q,p)\in[0,3a/2]^2\}$ which contains the point $\Psi^{-n}_{\mathcal K}(0,\eta_0)$. 

For $a>0$ sufficiently small and $n\in\mathbb N$ large enough, for any $(\xi_0,\eta_0)\in \Sigma$, the sets 
\begin{equation}\label{uni_eq:Shilnikovassumption}
\psi^s_{\xi_0,n}\cap \sigma^u_{\eta_0} \qquad\qquad \psi^{u}_{\eta_0,n}\cap \sigma^s_{\xi_0},
\end{equation}
consist each of a unique point. Then, as in the case of a hyperbolic fixed point, we define the stable and unstable Shilnikov maps by
\begin{equation}\label{uni_eq:Shilnikovmapsdefinition}
\begin{array}{rl}
    \Psi^s_{n}:\Sigma  & \to\mathbb R^2 \\
    (\xi,\eta) &\mapsto  \psi^{u}_{\eta,n}\cap \sigma^s_\xi
\end{array}\qquad\qquad \begin{array}{rl}
    \Psi^u_{n}:\Sigma  & \to\mathbb R^2 \\
    (\xi,\eta) &\mapsto  \psi^s_{\xi,n}\cap \sigma^u_\eta
\end{array}
\end{equation}

For the renormalization scheme in Section~\ref{uni_subsec:renormalization}, it is crucial to be able to describe the behavior of these maps (and their derivatives) on a suitable complex neighborhood of $\Sigma$ in \eqref{uni_eq:dfnsigmadelta}. With that aim, we define for $\delta>0$ the set 
\begin{equation}\label{uni_eq:complexextensionsigma}
\Sigma_\delta=\{\tau=(\xi,\eta)\in\mathbb{C}^2\colon \mathrm{dist}(\tau,\Sigma)\leq \delta\}.
\end{equation}

The following proposition from \cite{GMP2025} shows that the maps \eqref{uni_eq:Shilnikovmapsdefinition} are indeed well defined and, moreover, asymptotic expressions for them are provided. 

\begin{prop}\label{uni_prop:Shilnikov}
Let $a>0$ be sufficiently small. Then, there exists  $T_0\gg1$ such that for every $\varepsilon$ in some ball, all $T\in\mathbb N$,  $T\geq T_0$ and $\delta\gtrsim T^{-1}$,
the Shilnikov stable and unstable maps $\Psi^{s}_T$, $\Psi^u_T$ in \eqref{uni_eq:Shilnikovmapsdefinition} are well defined real-analytic maps on $\Sigma_\delta$  of the form 
\begin{equation}\label{uni_eq:formShilnikovmaps}
\Psi^{s}_T(\xi,\eta)=(\xi,y_T(\xi,\eta)),\qquad\qquad \Psi^{u}_T(\xi,\eta)=(x_T(\xi,\eta),\eta),
\end{equation}
Moreover, uniformly for $(\xi,\eta)\in \Sigma_\delta$,
\begin{itemize}
    \item They satisfy the asymptotic expressions
\begin{equation}\label{uni_eq:asymptoticShilnikov}
\begin{split}
x_T(\xi,\eta)= &\left(\frac{\pi}{16 T}\right)^{2/3} \eta^{-1}\left( 1+\mathcal{O}(T^{-1})\right),\\
y_T(\xi,\eta)=& \left(\frac{\pi}{16 T}\right)^{2/3} \xi^{-1}\left( 1+\mathcal{O}(T^{-1})\right),
\end{split}
\end{equation}
\item Their first derivatives satisfy
\begin{align}\label{uni_eq:asymptoticShilnikovderiv}
\partial_\eta x_T(\xi,\eta)= &-\left(\frac{\pi}{16 T}\right)^{2/3} \eta^{-2}\left(1+\mathcal O(T^{-1})\right)\\
\partial_\xi x_T(\xi,\eta)=&\frac{2}{3T^{5/3}}\left(\frac{\pi}{16 }\right)^{2/3} (\xi\eta)^{-5/2}\left(1+\mathcal O(a)\right)\label{uni_eq:asymptoticShilnikovderiv2}
\end{align}
and 
\begin{align}\label{uni_eq:asymptoticShilnikovderivy}
\partial_\xi y_T(\xi,\eta)= &-\left(\frac{\pi}{16 T}\right)^{2/3} \xi^{-2}\left(1+\mathcal O(T^{-1})\right)\\
\partial_\eta y_T(\xi,\eta)=&\frac{2}{3T^{5/3}}\left(\frac{\pi}{16 }\right)^{2/3} (\xi\eta)^{-5/2}\left(1+\mathcal O(a)\right)\label{uni_eq:asymptoticShilnikovderiv2y}
\end{align}
\item Bounds for any higher order derivatives can be readily obtained by application of Cauchy's theorem, since the previous expressions are valid uniformly for $(\xi, \eta) \in \Sigma_\delta$.
\end{itemize}
\end{prop}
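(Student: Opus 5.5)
The statement to prove is Proposition~\ref{uni_prop:Shilnikov}, the asymptotics of the Shilnikov maps near the parabolic fixed point $O$. The plan is to reduce everything to an explicit computation with the normal form Hamiltonian $\mathcal K$ of \cite{GMP2025}.

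\textbf{Step 1: normal form in the corner.} In the sectorial domain $V_\rho\times V_\rho$, the normal form conjugates the return map $\Psi_{\mathcal K}$ to the time-$2\pi$ map of a flow whose principal part is $\dot q = q\,(qp)^{3/2}\,c(1+\dots)$, $\dot p=-p\,(qp)^{3/2}\,c(1+\dots)$ (schematically). The McGehee coordinates have $q,p\sim r^{-1/2}$, so the "hyperbolicity rate" is the cubic-type degenerate rate inherited from $\dot r\sim r^{-1/2}$. The key consequence is that the product $w=qp$ is (up to small corrections) a first integral. The orbit segment from $\{q=\xi\}$ to $\{p=\eta\}$ in time $T$ is then governed by a scalar equation, obtained by integrating the radial equation along the level $qp=w$.

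\textbf{Step 2: solve the boundary value problem.} For given $(\xi,\eta)$ and $n=T$, I find the unique orbit starting at $(\xi,y)$ and ending after $T$ returns at $(x,\eta)$. Integrating $\frac{d}{dt}\log q = c\, w^{3/2}(1+\dots)$ over the transit time gives $\int_\xi^{\eta\text{-side}} \frac{dq}{q\,w^{3/2}}\approx 2\pi T$. In the original variables this is the Kepler parabolic-time relation near infinity, and it yields $w^{3/2}\sim \pi/(16T)$ up to the explicit normalization constant. Hence $w\approx(\pi/16T)^{2/3}$, and therefore $y_T=w/\xi$ and $x_T=w/\eta$ at leading order, which is \eqref{uni_eq:asymptoticShilnikov}.

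\textbf{Step 3: remainders and derivatives.} I treat the exact equations as a fixed point or implicit function problem for $w=w_T(\xi,\eta)$. This gives the relative error $\mathcal O(T^{-1})$, analytically on the complex neighborhood $\Sigma_\delta$ with $\delta\gtrsim T^{-1}$, which is needed so that the sectorial domain is not left.

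Differentiating, the dominant terms of $\partial_\eta x_T$ and $\partial_\xi y_T$ follow directly from $w/\eta$ and $w/\xi$. The subtle point is the cross derivatives $\partial_\xi x_T$ and $\partial_\eta y_T$. At leading order these vanish, since $w$ is independent of $(\xi,\eta)$ to first order. So one must compute $\partial_\xi w$, which comes from the dependence of the transit time on the endpoints: it is $\mathcal O(T^{-5/3})$, with the stated coefficient $(\xi\eta)^{-5/2}$ and error $\mathcal O(a)$.

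\textbf{Expected main obstacle.} This derivative computation is where I expect the difficulty. Higher derivatives then follow from Cauchy estimates on $\Sigma_\delta$. I would cite \cite{GMP2025} for the detailed normal-form bounds that make these steps rigorous.
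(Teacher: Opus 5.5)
The paper does not prove this proposition; it is quoted from \cite{GMP2025}, so your final deferral to that reference is exactly what the paper does. The problem is the heuristic you build around the citation, whose central step is wrong.

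Your model $\dot q=cq(qp)^{3/2}$, $\dot p=-cp(qp)^{3/2}$ has a rate that vanishes on both axes. So $\{p=0\}$ and $\{q=0\}$ would be lines of fixed points rather than $W^{s,u}_{loc}(O)$. Moreover, along $qp=w$ your own integral is $\int_{w/\eta}^{\xi}\frac{dq}{q\,w^{3/2}}=w^{-3/2}\log\frac{\xi\eta}{w}$. This gives $w\sim(\log T/T)^{2/3}$, which no normalization constant turns into $(\pi/16T)^{2/3}$.

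The cubic degeneracy comes from the McGehee variable $x=q+p\sim r^{-1/2}$, which does not vanish on the manifolds. The right model is $\dot q=-q(q+p)^3(1+\dots)$, $\dot p=p(q+p)^3(1+\dots)$, with $qp$ conserved at leading order. This is what happens for the Kepler flow, whose symplectic form is a multiple of $(q+p)^{-3}dq\wedge dp$ in these variables. Normalize one return to unit time and substitute $q=\sqrt w\,s$:
\[
T=\int_{w/\eta}^{\xi}\frac{q^2\,dq}{(q^2+w)^3}=w^{-3/2}\int_{\sqrt w/\eta}^{\xi/\sqrt w}\frac{s^2\,ds}{(1+s^2)^3}=\frac{\pi}{16}\,w^{-3/2}+\mathcal{O}(1),
\]
since $\int_0^\infty s^2(1+s^2)^{-3}\,ds=\pi/16$. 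This is where both the exponent $2/3$ and the constant $\pi/16$ in \eqref{uni_eq:asymptoticShilnikov} come from, with $x_T=w/\eta$ and $y_T=w/\xi$.

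Your mechanism for the cross derivatives, the endpoint dependence of the transit time, is the right one. Carried out with this model, it gives $\partial_\xi T|_{w}=\xi^{-4}(1+\dots)$ and $\partial_w T=-\tfrac32 T/w\,(1+\dots)$. Hence $\partial_\xi w=\frac{2w}{3T}\xi^{-4}$ and
\[
\partial_\xi x_T=\frac{2}{3T^{5/3}}\Big(\frac{\pi}{16}\Big)^{2/3}\xi^{-4}\eta^{-1}\left(1+\mathcal{O}(a)\right),\qquad \partial_\eta y_T=\frac{2}{3T^{5/3}}\Big(\frac{\pi}{16}\Big)^{2/3}\xi^{-1}\eta^{-4}\left(1+\mathcal{O}(a)\right).
\]
These satisfy $\eta^{-3}\partial_\xi x_T=\xi^{-3}\partial_\eta y_T$, which is what preservation of the form above by $\Psi^u_T\circ(\Psi^s_T)^{-1}$ requires.

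They agree with the quoted $(\xi\eta)^{-5/2}$ only on the diagonal $\xi=\eta$. On $\Sigma$ they differ from it by the factor $(\eta/\xi)^{\pm 3/2}$, which is bounded but not $1+\mathcal{O}(a)$. So you should not claim to recover that exact coefficient. What Section~\ref{uni_sec:renormalization} actually uses is that these derivatives are nonzero and of size $T^{-5/3}$, and that survives. The value of $\gamma$ enters only through $-cd\gamma=1$, which follows from symplecticity.
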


As can be seen in Section~\ref{uni_subsec:renormalization}, the Shilnikov maps are an extremely useful tool to understand the homoclinic picture at the degenerate saddle $O$. The fact that $\partial_\xi x_T\neq 0$ and  $\partial_\eta y_T\neq 0$ guarantees that $\Psi_{T}^{s,u}$ are local diffeomorphisms. This is crucial for developing the renormalization scheme in Section~\ref{uni_subsec:renormalization}.

\printbibliography
\end{document}